\documentclass[a4paper,english,oneside,12pt]{amsart}
\usepackage[latin1]{inputenc}
\usepackage{amsmath}
\usepackage{amssymb}
\usepackage{amsfonts}
\usepackage{latexsym}
\usepackage{mathtools}
\usepackage{amscd}
\usepackage{xy} \xyoption{all}
\usepackage{hyperref}
\usepackage{enumerate}
\usepackage{color}

\usepackage{ulem} 
\normalem 

\newcommand{\red}[1]{\textcolor{red}{#1}}
\newcommand{\blue}[1]{\textcolor{blue}{#1}}

\newcommand{\gen}[1]{\left\langle #1\right\rangle}

\newcommand{\TOKEok}[1]{}

\newcommand{\ut}{\widetilde{\mathcal U}}
 
\newcommand{\cok}{\mathsf{cok}}
\newcommand{\N}{\mathbb{N}}
\newcommand{\Z}{\mathbb{Z}}
\newcommand{\R}{\mathbb{R}}

\newcommand{\moprnh}{\mathcal M^o_{\mathcal P}(\mathbf {n'},\mathcal H)}
\newcommand{\mopnh}{\mathcal M^o_{\mathcal P}(\mathbf n,\mathcal H)}
\newcommand{\mopmh}{\mathcal M^o_{\mathcal P}(\mathbf m,\mathcal H)}

\newcommand{\mpppcnh}{\mathcal M^{++}_{\mathcal P}(\mathcal C,\mathbf n,\mathcal H)}
\newcommand{\mpppcmh}{\mathcal M^{++}_{\mathcal P}(\mathcal C,\mathbf m,\mathcal H)}

\newcommand{\elzg}{\textnormal{El}(\Z G)}
 
\newcommand{\elpzg}{\textnormal{El}_{\mathcal P}(\Z G)}

\newcommand{\elpnh}{\textnormal{El}_{\mathcal P}(\mathbf n,\mathcal
  H)}
\newcommand{\elpkh}{\textnormal{El}_{\mathcal P}(\mathbf k,\mathcal
  H)}
\newcommand{\elpnprimeh}{\textnormal{El}_{\mathcal P}(\mathbf{n'},\mathcal  H)}
\newcommand{\elpmh}{\textnormal{El}_{\mathcal P}(\mathbf m,\mathcal  H)}

\newcommand{\elph}{\textnormal{El}_{\mathcal P}(\mathcal H)}
\newcommand{\elphp}{\textnormal{El}_{\mathcal P'}(\mathcal H')}
\newcommand{\elniHi}{\textnormal{El}(n_i,H_i)}

\newcommand{\mpns}{\mathcal M_{\mathcal P}(\mathbf n, S)}
\newcommand{\mps}{\mathcal M_{\mathcal P}(S)}
\newcommand{\mpnzg}{\mathcal M_{\mathcal P}(\mathbf n, \ZG)}

\newcommand{\mpnG}{\mathcal M_{\mathcal P}(\mathbf n, \Z_+G)}
\newcommand{\mpnh}{\mathcal M_{\mathcal P}(\mathbf n,\mathcal H)}
\newcommand{\mpkh}{\mathcal M_{\mathcal P}(\mathbf k,\mathcal H)}
\newcommand{\mpmh}{\mathcal M_{\mathcal P}(\mathbf m,\mathcal H)}

\newcommand{\mpppnh}{\mathcal M^{++}_{\mathcal P}(\mathbf n,\mathcal
  H)}

\newcommand{\mppph}{\mathcal M^{++}_{\mathcal P}(\mathcal H)}

\newcommand{\uph}{\mathcal U_{\mathcal P}(\mathcal H)}
\newcommand{\upnh}{\mathcal U_{\mathcal P}(\mathbf n ,\mathcal H)}

\newcommand{\mopcmh}{\mathcal M^o_{\mathcal P}(\mathcal C, \mathbf
  m,\mathcal H)}

\newcommand{\mopckh}{\mathcal M^o_{\mathcal P}(\mathcal C, \mathbf
  k,\mathcal H)}

\newcommand{\mopcnh}{\mathcal M^o_{\mathcal P}(\mathcal C, \mathbf
  n,\mathcal H)}

\newcommand{\mopcnprimehprime}{\mathcal M^o_{\mathcal P}(\mathcal C, \mathbf
  {n'},\mathcal H')}

\newcommand{\moprcnh}{\mathcal M^o_{\mathcal P}(\mathcal C, \mathbf{n'},\mathcal H)}

\newcommand{\moph}{\mathcal M^o_{\mathcal P}(\mathcal H)}

\newcommand{\mopG}{\mathcal M^o_{\mathcal P}(\mathbb Z_+ G)}
\newcommand{\mopprimeG}{\mathcal M^o_{\mathcal P'}(\mathbb Z_+ G)}
\newcommand{\mopnG}{\mathcal M^o_{\mathcal P} (\mathbf n,\mathbb Z_+  G)}

\newcommand{\moppcG}{\mathcal M^o_{\mathcal P}(\mathcal C, \mathbb Z_+ G)}

\newcommand{\mopch}{\mathcal M^o_{\mathcal P}(\mathcal C,\mathcal H)}

\newcommand{\mopcnG}{\mathcal M^o_{\mathcal P}(\mathcal C,\mathbf  n,\ZZ_+G)}

\newcommand{\rc}{\mathcal R^{\mathcal C}}

\newcommand{\benum}{\begin{enumerate} }
\newcommand{\eenum}{\end{enumerate} }
\newcommand{\bpmat}{\begin{pmatrix} }
\newcommand{\epmat}{\end{pmatrix} }

\newcommand\GL{\textnormal{GL}}

\newcommand\El{\textnormal{El}}
\newcommand\ZG{\mathbb Z G}
\newcommand\ZZ{\mathbb Z}

\newcommand\RR{\mathbb R}

\renewcommand{\phi}{\varphi}

\newcommand{\myId}{I}
\newcommand{\rep}[1]{(#1)}
\numberwithin{equation}{section}
\newtheorem{lemma}[equation]{Lemma}

\newtheorem{corollary}[equation]{Corollary}
\newtheorem{theorem}[equation]{Theorem}
\newtheorem{prop}[equation]{Proposition}
\newtheorem{proposition}[equation]{Proposition}
\newtheorem{observation}[equation]{Observation} 
\theoremstyle{definition}

\newtheorem{definition}[equation]{Definition}
\newtheorem{definitions}[equation]{Definitions}
\newtheorem{question}[equation]{Question}
\newtheorem{problem}[equation]{Problem}

\newtheorem{example}[equation]{Example}

\newtheorem{remark}[equation]{Remark}

\newtheorem{convention}[equation]{Standing Convention} 

\input xy
\xyoption{all}
\newcommand{\perm}[1]{{\mathsf{(#1)}}}
\newcommand{\perme}{\mathsf{e}}

\title{Flow equivalence of G-SFTs}
\author{Mike Boyle}
\author{Toke Meier Carlsen}
\author{S\o ren Eilers}
\date{\today}

\begin{document}

\begin{abstract}
In this paper, a $G$-shift of finite type ($G$-SFT)
is a  shift of finite type together with 
a free continuous shift-commuting action by a finite group $G$. 
We reduce  the classification of $G$-SFTs
up to equivariant flow equivalence to an
algebraic classification of a class of poset-blocked matrices
over the integral group ring of $G$. For a special
case of two irreducible components with $G=\Z_2$,
we compute explicit complete invariants.
We relate our matrix structures to the Adler-Kitchens-Marcus
group actions approach. We give examples of $G$-SFT applications,
including a new connection to involutions of cellular automata. 

 \end{abstract}

\maketitle
\setcounter{tocdepth}{1} 
\tableofcontents

\section{Introduction}

The shifts of finite type (SFTs) are the fundamental building 
blocks of symbolic dynamics.
Elaborations of these include the class of $G$-SFTs:
SFTs equipped with a continuous action 
by a group $G$ which commutes with the shift. 
Apart from a few remarks, in this paper $G$-SFT 
means $G$-SFT with $G$ finite and acting freely
($gx=x$ only when $g=e$).

We 
 will  give an algebraic classification of these 
$G$-SFTs up to equivariant flow equivalence
($G$-flow equivalence). This generalizes the 
$G$-flow equivalence classification for irreducible 
 $G$-SFTs in \cite{BSullivan} and the 
Huang flow equivalence  classification for general SFTs without group action 
\cite{mb:fesftpf,mbdh:pbeim}.

Square matrices over $\Z_+G$ (the positive cone in the integral
group ring of $G$) present $G$-SFTs. When such matrices $A$ and $B$
present nontrivial\footnote{By definition, an
    irreducible SFT is trivial iff
    it contains only one orbit. 
    The only trivial mixing SFT is a single point; so, a 
    trivial mixing  $G$-SFT has $G=\{e\}$.}
mixing $G$-SFTs, 
these $G$-SFTs are $G$-flow equivalent if and only if
there exist $n$ in $\mathbb N$ and identity matrices
  $I_j$, $I_k$ such that 
  there are matrices $U,V$ in the elementary group $\textnormal{El}(n,\Z G)$
  such that $U((I-A)\oplus I_j)V=(I-B)\oplus I_k$.
This reduces the dynamical  classification of $G$-SFTs up to
  $G$-flow equivalence to the algebraic  classification of 
  square matrices over $\ZG$ up to the stabilized elementary equivalence
  described above.
  The algebraic classification is in general highly nontrivial, but 
  far more manageable  than the dynamical problem.

For nonmixing $G$-SFTs,
this stabilized elementary $\Z G$  
equivalence no longer implies  $G$-flow equivalence.
To get an analogous result (Theorem \ref{classification2})
for general $G$-SFTs, we
consider $G$-SFTs presented by matrices in a
special block triangular form, with entries in an $ij$ block
lying in $\Z  H_{ij}$ for some union $H_{ij}$ of
double cosets 
of $G$, and their equivalence by elementary matrices
from a restricted class  
subordinate to this blocked coset structure. 
Using this, we  classify  $G$-SFTs up to $G$-flow equivalence
(Theorems \ref{classification2} and \ref{classification}).

The paper is organized as follows.

In Section \ref{motivationsec}, we discuss  uses of $G$-SFTs, and
  application of the results of this paper.
This includes a new use of the $G$-SFT structure, for
   cellular automata.
Especially,
the main theorem of
the current paper is a key result for the classification
up to
flow equivalence of a large class of irreducible sofic
shifts in \cite{bce:sofic}. 
In Section \ref{backgroundsec}, we give a
bare-bones
review of
the necessary background.
In Section \ref{sec:coset}, we introduce the notion
of coset structure,
  crucial for defining our restricted class of elementary matrices,
  and define  various classes of related matrices. 
We  prove Proposition \ref{pro:right form} which
tells us that, 
in order to classify $G$-SFTs up to
$G$-flow 
equivalence,  
it is enough to work
with square matrices over 
$\ZZ_+G$ having a special block form.

In Section \ref{sec:main theorem}, we present
our classification, with comments.
  The full classification statement is Theorem \ref{classification};
  the essence is the simpler statement of Theorem \ref{classification2}
  for the case the $G$-flow equivalence respects the ordering of
  irreducible components. 
  In each statement, for matrices in a suitable class  chosen
  to present the $G$-SFTs, a matrix condition is necessary and sufficient for
  the $G$-flow equivalence.
  In Section \ref{sec:1to2},  we
  prove a strengthened version of necessity of the matrix condition,
    Theorem \ref{necessary}. 
In Section \ref{sec:factorizationtheoremsetting},
  we present  the functorial Factorization Theorem
\ref{theoremfactor}, 
which we later use to prove sufficiency of the matrix condition 
  in Theorem \ref{classification}, and develop
    the setting for its proof. In Section
    \ref{sec:factorizationtheoremproof}, we give that proof.
  Section \ref{proof}
  contains the proof of Theorem \ref{classification}
 (a short argument appealing to Theorems \ref{necessary} and
    \ref{theoremfactor}), 
a result on range of invariants and a finiteness result. 
In Appendices \ref{cohomologyappendix}, \ref{permutation appendix},
and \ref{resolvingappendix}, we establish 
three  types of positive $\elph$-equivalences which we
use in  the paper.

In Appendix \ref{akmappendix}, we relate 
the  group actions viewpoint on $G$-SFTs 
developed in the Adler-Kitchens-Marcus paper \cite{akmgroup} 
to  our matrix-based setup.
(This is analogous to
the relation of matrices and linear transformations in elementary
linear algebra.) 
Adler, Kitchens and Marcus  were concerned only with
nonwandering SFTs. We explain
in Remark \ref{correspondenceremark}  how our coset structures
give a kind of algebraic calculus to describe the transitions
among irreducible components of a general $G$-SFT.

In Appendix \ref{specialcaseappendix}, we work out 
algebraic invariants of $G$-flow equivalence
for a special class of systems with exactly two irreducible components. 
For the subclass with  $G=\Z_2$, we give a complete algebraic classification, 
with algorithms to answer all questions  (at least,
the natural ones we thought of). This supplies a tractable collection
of examples and points to some of the issues involved in a general
algebraic classification.


\section{Applications of G-SFTs}\label{motivationsec}


$G$-SFTs  arise in several contexts, including the following.
(Below, we occasionally assume 
background reviewed in Section \ref{backgroundsec}.)

\begin{enumerate}[(i)] 
\item In \cite{akmgroup},
  Adler, Kitchens and Marcus introduced invariants of nonwandering
  $G$-SFTs (and a more general class), and used these
  in \cite{akmfactor} to   
classify factor maps between irreducible SFTs up to almost 
topological conjugacy.  Here, a construction replaces a
given factor map with a map $\phi\colon S'\to S$, equivalent up to almost conjugacy, 
which is constant $n$-to-1. The map $\phi$  gives rises to a
continuous function $\tau : S \to S_{  n}$
(with $S_{  n}$ the group of permutations on ${  n}$ symbols). 
This $\tau$ is used as a skewing function to generate a $G$-SFT (with
$G=S_{  n}$) extension $T$ of $S$, with maps $\pi : T\to S$ and
$\alpha: T\to S'$ such that $\pi = \phi \circ \alpha$. 
Group invariants of the $S_{  n}$-action on $T$  are then related to $\phi$
and used for the classification, which requires further constructions.

\item  Field, Golubitsky and Nicol
  used $G$-SFTs  in  studies of ``symmetry in chaos'' and
  related equivariance 
  \cite{Field1983, FieldGolubitsky,FieldNicol2004}.
  
\item The Liv\v{s}ic theorem, restricted to dimension zero, states 
  that two real-valued H\"{o}lder functions on an irreducible
  SFT are cohomologous if and only if on each periodic orbit
  the sum of their output values is the same. $G$-SFTs arise
  as a case of the study of 
  analogous rigidity possibilities for skew products in terms of
  group-valued skewing functions
  (see \cite{parrylivsic,schmidtlivsic,BoSc2}).

\item  
  In \cite{SullivanSmaleFlow1997, SullivanErratum1998,SullivanTwistwise1998},
  Michael
  Sullivan introduced 
    ``twistwise flow equivalence''. 
  Here  $G=\Z_2$. For an SFT basic set of the return map to
  a cross-section under a flow 
  on a 3-manifold,
  the return map to the cross-section induces
  a map on the local stable set which is orientation preserving or
  reversing. This 
   additional data is given  by a function $\overline X \to \Z_2$,
  which can be encoded as a matrix $A$ over $\ZG$. 
  Sullivan found $G$-FE invariants of $A$  
  to produce new invariants of the template 
  (branched two-manifold fitted with an expansive semiflow) for the flow. 
  The complete algebraic $G$-FE classification (for $G=\Z_2$) for the
  mixing case   gave further invariants
  (see \cite[Sec. 7]{BSullivan} and \cite{SullivanBeyond2005} for more).

\item The mapping class group of a nontrivial
  irreducible SFT (see \cite{bc:mcg})
  is the countable group
  of homeomorphisms of the mapping torus of the SFT which respect
  orientation of the suspension flow, up to isotopy. 
   This is a challenging group to understand. 
   The classification of irreducible $G$-SFTs 
   up to $G$-flow equivalence has provided at least a little
   information: for example (see   \cite[Theorem 8.6]{bc:mcg}),
   if the irreducible SFT is defined by a matrix $A$ such that
   $\det (I-A)$ is odd, then 
  the free orientation
  preserving involutions of the mapping torus are contained  in a finite set
  of conjugacy classes of this mapping class group.
  (For a  speculative application related to mapping class
  groups, see   Remark \ref{BFrepn}.)

\item The main result of the current paper
  has already been applied to the classification of sofic shifts up to 
  flow equivalence in \cite{bce:sofic}. We will give more detail on this
  below. 

\end{enumerate}

The automorphism group $\text{Aut}(S)$ of an SFT $S$
is the group of homeomorphisms
commuting with $S$. 
The $G$-SFTs  offer a different tool set  to 
the study of finite subgroups of $\text{Aut}(S)$. 
For simplicity, we consider just the case of $G=\Z_2$.
If $U$ is a  free involution in $\text{Aut}(S)$,
then  the pair $(S,U) $ presents a $G$-SFT.  
When $S$ is $\sigma_n$, 
the full shift on $n$ symbols, this can also be described as
a free involution of 
 an invertible one-dimensional cellular automata.
 A longstanding question (recalled in  
\cite[p.492]{FiebigUperiodic1993})
 asks  for $n=2$ whether two such involutions
 must be conjugate in $\text{Aut}(\sigma_n)$. 
 This conjugacy in the group $\text{Aut}(\sigma_n)$
 is equivalent to topological conjugacy of the corresponding $G$-SFTs.
  By Proposition \ref{groupexfacts}, conjugacy in the group
   is equivalence to strong shift equivalence over $\Z_+G$ of
   presenting matrices.
   
 A necessary condition 
 for this conjugacy (a test) is that the $G$-SFTs
 be $G$-flow equivalent. (Equivalently,  the
 induced involutions of
 the mapping torus are conjugate in the mapping class group of $\sigma_n$.) 
 We prove next that this necessary
 condition is satisfied.

 For the proof, we assume the background and notation of
 Section \ref{backgroundsec}. 
For a matrix $A$ over a commutative ring,
the sequence $(\text{trace}(A^n))_{n\in \N}$
determines and is determined by the polynomial $\det (I-tA)$.
We have $\text{trace}(A^n) = |\text{Fix}(\sigma_A^n)|$ when 
$\sigma_A$  is an SFT 
 defined by
 a matrix $A$ over $\Z_+$. 
 We  assume in the proof some familiarity with the Bowen-Franks group
 invariant of flow equivalence. 
Finally, note that a full shift on an odd number of symbols has
an odd number of fixed points, and therefore admits 
no  free involution.  


\begin{theorem}\label{involutiontheorem} 
  Suppose  $G=\Z_2$, $k$ is a positive integer and
$\sigma_{2k}$ is the full shift  on
  $2k$ symbols. Then there is a  free involution 
  commuting with $\sigma_{2k}$. For any two such involutions
  $U,U'$ the  $G$-SFTs $(\sigma_{2k},U)$, $(\sigma_{2k},U')$ 
   are $G$-flow equivalent. 
\end{theorem}

\begin{proof}
  Let $G=\{e,g\}$. For the existence claim, choose a free
  order two permutation of the $2k$ symbols; this defines
  a one-block code which defines a free involution $U$
  of $\sigma_{2k}$.
  
  Now let $A$ be an $m\times m$  matrix over $\Z_+G$ presenting
  a skew product $G$-SFT isomorphic to one 
defined by $(U,\sigma_{2k})$. 
This  $A$ presents a skewing function on an SFT $\overline T$
which is a factor of $\sigma_{2k}$  under the map $\pi $ which
collapses 
$G$-orbits to points. 
Therefore (e.g. by \cite[Theorem A]{FiebigUperiodic1993}), 
$\det (I -t\overline A)=1-2kt$. So, for every $n$,
\[
|\text{Fix}(T^n)|=|\text{Fix}(\overline T^n)|=
(2k)^n\ .
\]

Let $\text{trace}(A^n)
= \alpha_n e + \beta_n g $.
From the structure of the skew product
construction, one can check that $\alpha_n$ is the number of
fixed points of $\overline T^n$ whose preimages are contained in
$\text{Fix}(T^n)$. 
As $\pi$ maps $\text{Fix}(T^n)$ 2-to-1 into
$\text{Fix}(\overline T^n)$,
we have $\alpha_n = (1/2)(2k)^n$, and therefore
$\beta_n = (1/2)(2k)^n$, for each $n$.
This forces $\det (I-tA)=\det(I-tB)$,
for the $1\times 1$ matrix  $B=(k(e+g))$, because $B^n= \big((1/2)(2k)^ne + (1/2)(2k)^ng\big)$. 
Therefore $\det(I-tA) =  e -t k(e+g)$. 

Because $G=\Z_2$ and
$1-2k$ is odd,
by \cite[Theorem 8.1]{BSullivan}
the matrix $I-A$ is $\text{El}(m,\ZG)$-equivalent
to a diagonal matrix, $D$, which must have determinant
$ e - k(e+g)$. It can happen, for some $k$,
that $ e - k(e+g)$  factors in $\ZG$.
Nevertheless, 
$D$ must be $(e-k(e +g) )\oplus I_{m-1}$.
This is because the Bowen-Franks group
for $\overline T$ 
is isomorphic to  $\Z_{2k-1}$ (because
$\det (I-tA) = 1-2kt$) and is also isomorphic 
to 
$\cok (I-\overline A)$.  
   So,   
 the $\text{El}(\ZG)$ class of $I-A$
 is the same for any choice of free involution $U$.
It follows from \cite[Theorem 6.4]{BSullivan}
(the case $\mathcal P=1$ of Theorem \ref{classification2}) that
all these $G$-SFTs fall in the same 
$G$-flow equivalence class. 
   \end{proof} 
  
  By definition, the involutions $U,U'$ are conjugate on periodic points
  if there is a bijection
  $\text{Per}(T) \to \text{Per}(T')$ which intertwines the
  actions of $(T,U)$ and $T,U'$.
A consequence of Ulf Fiebig's work 
\cite{FiebigUperiodic1993}
(or the arguments above) is that free involutions of
$\sigma_{2k}$ are conjugate on periodic points
(for $k=1$ this is explicitly contained in
\cite[Corollary 1.12]{FiebigUperiodic1993}).
Fiebig did much more, in particular for $G$ actions which
need not be free. In contrast to the free case, we do not
have a $\Z_+G$ matrix framework which handles the nonfree actions. 

\begin{problem}\label{nonfreeproblem}
  Develop a  useful matrix framework of matrices over $\Z_+G$ for 
  $G$-SFTs for which the $G$ action need not be free.
  The framework should in particular
  capture topological conjugacy and flow
  equivalence of the $G$-SFT.
    \end{problem} 
The papers \cite{FiebigUperiodic1993, SilverWilliams2005} are 
 relevant to Problem \ref{nonfreeproblem}. 
 A solution to Problem \ref{nonfreeproblem} would
   give a $\ZG$ matrix framework for 
 the entire vast collection of finite subgroups 
 of $\text{Aut}(\sigma_n)$. It would also naturally involve 
 general, reducible $G$-SFTs.

%
%
%

The discussions above involved $G$-SFTs which are irreducible as SFTs.
The advance of the current paper is in addressing $G$-flow equivalence
of $G$-SFTs which are reducible.
We expect the general reducible case to be meaningful to
related applications (especially, given a solution to Problem
\ref{nonfreeproblem}),  but 
for the most part we have not  developed reducible applications
related to the items above. 
However, our main result, Theorem  \ref{classification}, is
an essential tool for 
our classification in \cite{bce:sofic} of a large collection 
of irreducible  sofic shifts up to flow equivalence
(those which are \lq\lq point extension type\rq\rq , or PET).
We emphasize that our results on  $G$-FE for reducible $G$-SFTs is
used for the flow equivalence classification of sofic shifts which
are irreducible (or, equivalently for flow equivalence,  mixing). 

For simplicity, we will describe only a subclass  $\mathcal C$ of
the PET sofic shifts. Let $\mathcal C$ be the class of 
nontrivial irreducible strictly sofic  shifts
such that for the right Fischer cover $\pi: X\to Y$, the set 
$M= \{ x: |\pi^{-1}(\pi x)|>1\}  $ is a closed proper subset and satisfies 
the following condition for some finite group $G$:
there is a shift-commuting
embedding of $T$  to $M$ which takes $G$-orbits to the fibers
of $\pi$.
Given the easily computed flow equivalence class of the irreducible
SFT $X$, we show in \cite{bce:sofic} that the $G$-flow equivalence
class of $T$ is a complete classification invariant for the
flow equivalence class of the sofic shift $Y$.
For every $X$,
 every $G$-SFT $G$-flow equivalence class arises
in this construction.   This theorem appeals to the full
strength of our classification result Theorem  \ref{classification}
(as noted in \cite[Remark 7.14]{bce:sofic}). 

We  give next an example to indicate how this works. 

\begin{example} \label{feexample} 
Let $A= \left(\begin{smallmatrix} a&b&c&d \\
  b&a&d&c\\ 0&0&k&\ell \\ 0&0&\ell &k
\end{smallmatrix}\right) $ 
be a matrix in which the nonzero letters represent positive
integers. There is a free  involution $\gamma$ on the edge SFT $\sigma_A$
coming from a graph automorphism $\gamma$ corresponding to the 
involution  of vertices $1\leftrightarrow 2$, $3\leftrightarrow 4$.
(E.g., there are $d$ edges from vertex 1 to vertex 4, and
these are mapped bijectively to the $d$ edges from vertex 2 to vertex 3.)
The edge SFT $\sigma_A$, together with $\gamma$, is a
$G$-SFT, with $G=\Z_2$. 

Now take any $4\times 4$ matrix $B$ which is entrywise greater than $A$.
Define a one block code $\phi$ from $X_B$ which  maps two edges (the symbols
of $X_B$) to the same symbol if and only if they are edges of
$X_A$ paired by $\gamma$. The image shift is a mixing strictly sofic
shift. Now suppose
$A'= \left(\begin{smallmatrix} a&b&c'&d' \\
  b&a&d'&c'\\ 0&0&k&\ell\\ 0&0&\ell&k
\end{smallmatrix}\right) $,
again with letters representing positive integers, and again with
$B$ entrywise greater than $A'$. Construct $\phi'$ from $X_B$ just as
$\phi$ was constructed. Are the two image sofic shifts flow equivalent?
The result of \cite{bce:sofic} tell us they are if and only if
the $G$-SFTs on $X_A$ and $X_A'$ are $G$-flow equivalent.

Here, for any choices of the letters, we can translate to the
completely worked classification of Appendix 
\ref{specialcaseappendix}
and look up the answer.
For the translation, first $A$ becomes
$\left(\begin{smallmatrix} ae+bg & ce+dg \\ 0 & ke + \ell g
\end{smallmatrix}\right) $.
There is an isomorphism from $\ZG$ to the subring $R$ of $\Z^2$
consisting of the $(\alpha,\beta)$ such that $\alpha \equiv \beta \mod 2$,
given by $ae+bg\mapsto (a+b,a-b):=(\alpha,\beta) $. 
We apply  this map  entrywise to $A$ 
to arrive
at a matrix
$\left(\begin{smallmatrix} (\alpha_p,\beta_p) &(\alpha,\beta)
  \\ 0& (\alpha_q,\beta_q) 
\end{smallmatrix}\right) $,
where $(\alpha_p,\beta_p)=(a+b  , a-b  )$, 
$(\alpha_q,\beta_q)=(k+\ell  ,k-\ell )$ and $(\alpha, \beta ) =
(c+d, c-d)$. 
 For $A'$ we do the same, arriving at
the same matrix except that
$(\alpha',\beta')$ replaces $(\alpha,\beta)$.
From Proposition \ref{soeasy}
or Proposition \ref{doubled}, we determine the ideal $J$ to which
the classifying Theorem \ref{classifeasydecide} applies; and then we
can determine from Theorem \ref{classifeasydecide} whether the
$G$-flow equivalence holds. Example \ref{z2example} gives a complete
discussion of the classification for
the case $ae+bg = 54 - 42g$, $ke+ \ell g= 16e -8g$. 
\end{example}

\numberwithin{equation}{subsection}

\section{Background} \label{backgroundsec}

This section provides  a
bare-bones review of the background material,
assuming some familiarity with the subject.
For basic background on shifts of finite 
type, see   
\cite{bpk:sd,dlbm:isdc}. 
For a detailed presentation with proofs of the 
basic theory of $G$-SFTs and $G$-flow equivalence for finite 
$G$, see \cite{BSullivan}.
The basic ideas of skew product
constructions are of fundamental importance in various 
branches of dynamics; the exposition in \cite{BSullivan} 
is tailored to our topic and also includes facts specific 
to it.
See \cite{BoSc2} for further developments, and a correction 
  \cite[Appendix A]{BoSc2} to \cite{BSullivan}.

\subsection{{\bf Shifts of finite type and matrices over $\Z_+$.}} 
Given an $n\times n$  square matrix $A$ over $\ZZ_+=\{0,1,\dots\}$, let   $\mathcal G_A$ 
be a graph (in this paper, graph means directed graph) with vertex set 
$\{ 1, \dots , n\}$, edge set $\mathcal E=\mathcal E_A$ and adjacency 
matrix $A$. Define $X_A$ to be the subset of 
$\mathcal E^\Z$ realized by bi-infinite paths in $\mathcal G_A$. With the natural 
topology, $X_A$ is 
a zero-dimensional compact metrizable space. 
The homeomorphism $\sigma_A :X_A \to
X_A$ given by the {\em shift map} $\sigma_A$, defined by $ (\sigma_A (s))_i = s_{i+1}$, 
is the {\it edge SFT} defined by
$A$.  Every SFT is 
topologically conjugate to some edge SFT. 

\subsection{{\bf Matrices over $\Z_+G$.}}
Let $G$ be a finite group, let $\ZZ G$ be the integral group ring of $G$, and let 
$\ZZ_+ G$ be the subset containing the elements 
$\sum_{g\in G} n_g g$ with $n_g\geq 0$ for all $g$. 
Suppose $A$ is a  square matrix  over $\Z_+G$. 
Let $\overline A$ denote the standard \emph{augmentation} of $A$: 
the matrix over $\ZZ_+$ obtained 
by applying entrywise the standard augmentation map, 
 $\sum_{g\in G} n_g g \mapsto \sum_g n_g$.
 
By an \emph{irreducible }
matrix $A$ over $\Z G$ we mean a square matrix over $\Z_+ G$ 
whose augmentation 
$\overline A$ is an irreducible 
matrix.  An  \emph{irreducible component} of $A$ is a 
maximal  irreducible principal submatrix of $A$. A matrix $A$ is said to be \emph{essentially irreducible} if it has a unique irreducible component. If $A$ is essentially irreducible, then its unique irreducible component is called the \emph{irreducible core} of $A$.

An element $\sum_g n_g g$ of $\Z G$ is \emph{$G$-positive} 
when $n_g>0$ for all $g\in G$.{\footnote{\lq\lq $G$-positive\rq\rq\ 
  replaces the term \lq\lq very positive\rq\rq\ used in
  \cite{BSullivan}.}
A matrix $A$ over $\ZZ G$ is \emph{$G$-positive} if every entry is 
 $G$-positive.

\subsection{{\bf $G$-SFTs.}}\label{GSFTs}
In this paper, by a \emph{$G$-SFT} we mean an SFT together with a free continuous 
action on its domain by a finite group $G$ which commutes with the shift. 
 (In general, a ``$G$-SFT'' is not  
restricted to free actions 
or finite groups.)
Two $G$-SFTs are {\it $G$-conjugate} 
(isomorphic as $G$-SFTs)  if there is a topological 
conjugacy between them which intertwines their $G$ actions.  
For a {\it left $G$-SFT}, the $G$ action is from the left: 
$gh: y\mapsto g(hy)$ ($h$ acts first). 
For a {\it right $G$-SFT}, the $G$ action is from the right: 
$gh: y\mapsto (yg)h$ ($g$ acts first). 

\begin{convention} \label{leftconvention} 
Unless mentioned otherwise, 
in this paper a $G$-SFT is a left $G$-SFT (although we might 
sometimes repeat the declaration for clarity). This is the choice which
aligns with matrix invariants (see \cite[Appendix A]{BoSc2}).
(The $G$-SFTs of \cite{akmgroup} are implicitly
left $G$-SFTs;  the $G$-SFTs of  
\cite[p.493]{akmfactor} are right $G$-SFTs.)
\end{convention} 

Suppose $A$ is a  square matrix  over $\Z_+G$. 
Then $A$ can be interpreted as the adjacency 
matrix of a labeled graph $\mathcal G_A$, where the 
underlying graph is $\mathcal G_{\overline A}$, 
and the label of an edge of $\mathcal G_{\overline A}$ 
is the corresponding element of $G$ (so if the $(s,t)$ entry of $A$ is $\sum_{g\in G} n_g g$, then there is for each $g\in G$, $n_g$
is the number of edges from $s$ to $t$ with label $g$). 
The labeled graph defines a 
{\it skewing function} $\tau_A: X_{\overline{A}}\to G$  which sends 
$x$ to the label of $x_0$. The skew product construction then 
gives a homeomorphism 
$T_A:  X_{\overline{A}}\times G \to  X_{\overline{A}}\times G $ 
defined by $(x,g)\mapsto (\sigma_{\overline{A}}(x), g\tau_A (x))$, and $T_A$ is 
an SFT. (We consider every map topologically conjugate  to an edge SFT  
to be SFT.) 
The continuous free left $G$ 
action $g: (x,g')\mapsto (x,gg')$ commutes with $T_A$. 
Together with this action, $T_A$ is a $G$-SFT.
The map collapsing $G$-orbits to points is
  given by $(x,g)\mapsto x$; it defines 
a factor map from
  the SFT $T_A$ to the edge SFT defined by $\overline A$. 
Every $G$-SFT is isomorphic to one presented 
as a group extension in this 
way by some $A$ over $\ZZ_+ G$. 

\subsection{{\bf Cohomology.}} Continuous functions  
$\tau$ and $\rho$ from an SFT $(X,\sigma )$ into $G$ 
are 
 {\em cohomologous}  (written $\tau \sim \rho$) 
if there is another continuous function $\psi$ from $X$ into 
$G$ such that for all $x$ in $X$, 
$\tau (x) =  [\psi(x)]^{-1}\rho (x)\psi(\sigma x)$.
In this equation, the product on the right is 
a product in the group $G$. This is the form 
appropriate for our consideration of {\it left} $G$-SFTs 
(for which $\tau$ skews from the right).  
For {\it right} $G$-SFTs we would use instead the 
equation 
$\tau (x) =  [\psi(\sigma x)]\rho (x)\psi(x) ^{-1}$ for all $x$. 
For nonabelian $G$, these coboundary equations  are not equivalent.
The following 
result is fundamental  for us.

\begin{proposition} \cite[Proposition 2.7.1]{BSullivan} \label{groupexfacts}
Suppose $G$ is a finite group and  
$A, B$ are square matrices over $\Z_+G$. Then the following are 
equivalent. 
\begin{enumerate}
\item
$A$ and $B$ are strong shift equivalent over $\Z_+G$. 
\item 
There is a topological conjugacy 
$\varphi\colon X_{\overline A}\to X_{\overline B}$ 
such that \\ $\tau_B \sim \tau_A \circ \varphi$. 
\item 
The $G$-SFTs $T_A$ and  $T_B$ are $G$-conjugate.
\end{enumerate} 
\end{proposition}  
As seen in the Section \ref{motivationsec},
  $G$-SFTs may arise in some setting directly, or in terms of
  a  function from an SFT  into $G$, corresponding to (2) above. Both possibilities
  are addressed by the  matrix invariant  (1).

\subsection{{\bf Flow equivalence.}} 
Let $Y$ be a compact metrizable space. 
In this paper, a  {\em flow} on $Y$ is a continuous  $\RR$-action on
$Y$ with no fixed point. 
Two flows are {\em topologically conjugate}, or 
{\em conjugate}, if 
there is a homeomorphism intertwining their $\RR$-actions. 
Two flows are {\em equivalent} if there is a homeomorphism 
between their domains taking $\R$-orbits to $\RR$-orbits and 
preserving orientation (i.e., respecting the direction of the flow).
A {\em cross-section} to a flow $\gamma:Y\times\RR\to Y$ is a closed subset $C$ of $Y$ such that the restriction
of $\gamma$ to $C\times\RR$ is a surjective local homeomorphism onto $Y$. In that case, the {\em return time function} $\tau_C:C\to\RR$ given by $\tau_C(x)=\min\{t>0:\gamma(x,t)\in C\}$ is well defined and continuous. The map $r_C:C\to C$ given by $r_C(x)=\gamma(x,\tau_C(x))$ is call the {\em return map of $C$}. A {\em section} of a flow is the return map of a cross-section of the flow.

For $i=1,2$ suppose $S_i: X_i\to X_i$ is a homeomorphism 
of a compact metrizable space, 
and $Y_i$ is its mapping torus with the induced suspension flow. 
  The homeomorphisms  $S_1,S_2$ are 
  {\it flow equivalent} if they
are  topologically conjugate to sections of 
 a common flow; 
equivalently, after a 
continuous time  change, the flows on $Y_1$ and $Y_2$ 
become topologically conjugate;
equivalently, 
there is a homeomorphism $Y_1\to Y_2$ 
which on each $Y_1$ flow orbit is an orientation preserving 
homeomorphism to a $Y_2$ flow orbit.  
A {\it flow equivalence}
 $S_1\to S_2$ is such a homeomorphism. 
%

By a {\em G-flow} we mean a flow together with a continuous 
free left $G$-action which commutes with the flow. 
A free $G$ action commuting with a section lifts to a free $G$ 
action commuting with the flow. 
Two $G$-flows are {\em $G$-conjugate} if 
the flows are topologically conjugate by a map which 
intertwines the $G$-actions.
Two $G$-flows are {\em G-equivalent} if the flows are equivalent by a map 
which intertwines the 
$G$-actions (i.e., by a $G$-flow equivalence).  


The standard theory carries over to the $G$ setting. 
We call two $G$-homeomorphisms {\em G-flow equivalent} if they are 
conjugate to $G$-sections of the same $G$-flow. $G$-sections of two 
$G$-flows are $G$-flow equivalent if and only if the flows are 
$G$-equivalent. 

If $A$ and $B$ are square matrices over $\Z_+G$,
  then a $G$-flow equivalence $T_A \to T_B$ of their $G$-SFTs
  induces a flow equivalence of the SFTs defined by their
  standard augmentations $\overline A, \overline B$.

\subsection{{\bf Positive equivalence.}} 
Suppose $B,B',U,V$ are $n\times n$ matrices over $\Z G$ with $U,V$
 in $GL(n, \mathbb Z G)$.
We say $(U,V): B\to B'$ is  an
  equivalence if $UBV=B'$.
  If $\{U,V\}$ is contained in a subset
  $\mathcal M$ of $GL(n, \mathbb Z)$, then it is an
  $\mathcal M$-equivalence, and the matrices $B,B'$ are
  $\mathcal M$-equivalent.

A {\em basic elementary matrix}
is a matrix $E_{st}(x)$, which denotes a square matrix 
equal to the identity except for perhaps the off-diagonal 
$st$ entry (so, $s\neq t$),
which is equal to an element $x$ of $\ZG$.
Suppose  $E=E_{st}(g)$ and $A$ is a square matrix 
over $\Z_+G$  such that $g$ is a summand of $A(i,j)$
(i.e., $g\in G$ and the coefficient of $g$ in 
  $A(i,j)$ is positive).
Then we say that each of the  equivalences 
\begin{align*}
 (E,I)\colon \rep{I-A} \to E(I-A)\ , \qquad 
 &(E^{-1},I)\colon E(I-A) \to \rep{I-A}\ , \\
 (I,E)\colon (I-A) \to (I-A)E  \ , \qquad  
&(I,E^{-1})\colon (I-A)E \to \rep{I-A} 
\end{align*} 
is a {\em basic positive $\ZZ G$-equivalence}. 
Here the  equivalences $(E,I)$ and $(I,E)$ are {\em forward} 
and the other two are 
{\em backward}. 
An equivalence $(U,V):\rep{I-A}\to \rep{I-B}$ is a {\em positive $\ZZ G$-equivalence} 
if it is a composition of basic positive equivalences.

A basic positive equivalence $\rep{I-A}\to \rep{I-B}$ induces a $G$-flow
equivalence $T_A \to T_B$. Every 
$G$-flow equivalence $T_A \to T_B$ is induced (up to isotopy, see \cite[Section 6]{mb:fesftpf}) 
by a  {\em positive $\ZZ G$-equivalence}. For a justification of this claim, we refer to
\cite{BSullivan}; for more on its place in the positive K-theory 
classifications for symbolic dynamics, see \cite{Boylepositivek}.  

The elementary group
  $\text{El}(n,\Z G )$ is the group of $n\times n$ matrices which
  are products of basic elementary matrices. A positive equivalence
  $(I-A)\to (I-B)$ 
  through $n\times n$ matrices is an   $\text{El}(n,\Z G )$
  equivalence, but in 
   general, 
an $\textnormal{El}(n,\Z G)$   equivalence
     need not be a positive $\ZZ G$
equivalence, even if $\overline A$ is primitive  (see for instance \cite[Example 4.3]{BSullivan}). Therefore, we do not in general have that an equivalence $\rep{I-A}\to \rep{I-B}$ induces a $G$-flow
equivalence $T_A \to T_B$.
Still,  we will in Theorem \ref{theoremfactor}
  show that if $A$ and $B$ satisfy
  specified conditions, and the equivalence
  $(U,V):\rep{I-A}\to \rep{I-B}$ preserves {specified 
  structures (the poset structure, the cycle components
  (see later in this section) and the coset structure
  (see Section \ref{sec:coset})), then
it must be a positive $\ZZ G$-equivalence and thus
induce  a $G$-flow equivalence $T_A \to T_B$ (see Theorem \ref{classification}).

For the proofs in Appendices \ref{cohomologyappendix}
and \ref{permutation appendix}, we 
will use the graphical viewpoint described next (this description can also be
 found in \cite{BSullivan}).

\subsection{{\bf A row cut basic positive equivalence.}}\label{subsec:rowcut} 
Suppose $(E,I)\colon \rep{I-A} \to \rep{I-B}$ is a basic forward
positive equivalence, $E=E_{st}(g)$. Then $A$ and $B$ agree except perhaps 
in row $s$, where 
\begin{align*} 
B(s,r)\ &= \ A(s,r)+gA(t,r) \qquad 
\textnormal{if } r\neq t\ ,\qquad \quad \text{and}  \\
B(s,t)\ &= \ A(s,t) + gA(t,t)-g\ \ . 
\end{align*}
Consequently the labeled graph $\mathcal G_B$ associated to $B$ is 
constructed from the labeled graph $\mathcal G_A $ as follows.
An edge $e$ from $s$ to $t$ with label $g$ is deleted from $\mathcal G_A$. 
Then, for each $\mathcal G_A$-edge $f$ beginning at $t$, 
 an additional edge (called $[ef]$) from $s$ to $r$ 
with label $gh$
(where $h$ is the $G$-label  of $f$ and 
$r$ is the terminal vertex of $f$)
 is added in to form $\mathcal G_B$. 
 We refer to this type of positive equivalence as a
$(g,s,t)$ {\it row cut} (of the matrix $A$, or of
   an edge $e$ labeled $g$),
   or just a row cut. When $E(s,t)=p\leq A(s,t)$, 
   we may likewise
   refer to the positive equivalence implemented by $(E,I)$ as
 a $(p,s,t)$ row cut. 

See Figure~\ref{rowcut} for an example 
of a $(g,s,t)$ {\it row cut} of an edge from $s$ to $t$ labeled $g$.
For a matrix example corresponding to Figure~\ref{rowcut},
  with $(s,t)=(1,2)$, and $r$ in  Figure~\ref{rowcut} set to $r=3$,    
we use matrices   
 $E=E_{st}(g)$ and  
\[
A= \begin{pmatrix} 
p_{11} & g+p_{12} & p_{13} \\ 
0 & h' &h'' \\ 
p_{31}&p_{32}&p_{33} 
\end{pmatrix} 
\quad \textnormal{and} \quad 
B= \begin{pmatrix} 
p_{11} & gh'+p_{12} & gh''+ p_{13} \\ 
0 & h'& h'' \\ 
p_{31}&p_{32}&p_{33} 
\end{pmatrix} 
\]
in which the $p_{ij}$ are arbitrary 
elements of $\ZZ_+ G$, 
suppressed from the figure,  and row $2$ 
has just two entries for simplicity. 
The change from $\mathcal G_A$ to $\mathcal G_B$ is the 
replacement of the dashed edge of the left 
graph with the dashed edges of the right graph. 
On the left, $g,h',h''$ are labels of edges $e,f',f''$; 
on the right $gh', gh''$ label edges named $[ef'],[ef'']$. 

\begin{figure}[htb]
	\begin{center}  
\centerline{
\xymatrix{  
{ } 
& { } & { } 
& 
*+[o][F-]{r}
\\ 
*+[o][F-]{s}
\ar@{-->}[rr]_{g}
& { } 
& 
*+[o][F-]{t}
\ar[ur]_{h''} 
        \ar@(r,d)^<<<<<{h'}[]  
&{ }
}
\xymatrix{ 
 { } & { } & { } & { } \\ 
 { } &  \ar@{~>}[r]  & { } & { }
}
\xymatrix{  
{ } 
& { } & { } 
& 
*+[o][F-]{r}
\\ 
*+[o][F-]{s}
\ar@{-->}[rr]_{gh'}
\ar@{-->}[urrr]^{gh''}
 & { } 
& 
*+[o][F-]{t}
\ar[ur]_{h''} 
        \ar@(r,d)^<<<<<{h'}[]  
&{ }
}
}
	\end{center}
	\caption{A row cut of an edge from $s$ to $t$.}
	\label{rowcut}
\end{figure}
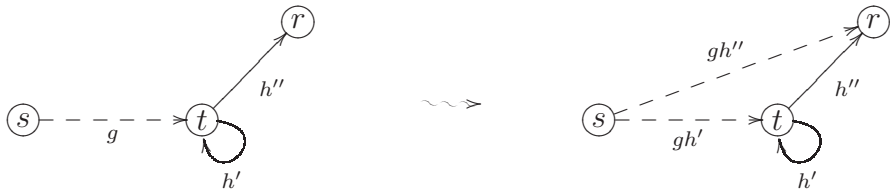

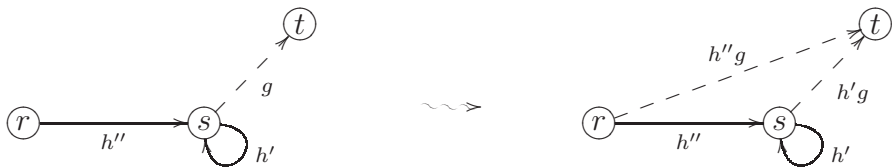
\begin{figure}[htb]
	\begin{center}  
\centerline{
\xymatrix{  
{ } 
& { } & { } 
& 
*+[o][F-]{t}
\\ 
*+[o][F-]{r}
\ar[rr]_{h''}
& { } 
& 
*+[o][F-]{s}
\ar@{-->}[ur]_{g} 
        \ar@(r,d)^<<<<<{h'}[]  
&{ }
}
\xymatrix{ 
 { } & { } & { } & { } \\ 
 { } &  \ar@{~>}[r]  & { } & { }
}
\xymatrix{  
{ } 
& { } & { } 
& 
*+[o][F-]{t}
\\ 
*+[o][F-]{r}
\ar[rr]_{h''}
\ar@{-->}[urrr]^{h''g}
 & { } 
& 
*+[o][F-]{s}
\ar@{-->}[ur]_{h'g} 
        \ar@(r,d)^<<<<<{h'}[]  
&{ }
}
}
	\end{center}
	\caption{A column-cut of an edge from $s$ to $t$.}
	\label{colcut}
\end{figure}

The correspondence of the graphs $\mathcal G_A, \mathcal G_B$
induces a bijection of 
$\sigma_A$-orbits and $\sigma_B$-orbits,  e.g. 
$$\dots\  b\, e\, f''\, c\, e\, f'\, f'\, d \ \dots 
\ \ \leftrightarrow \ \ 
\dots\  b\, [ef'']\, c\, [ef']\, f'\, d \dots \ \ .
$$ 
This bijection of orbits does not arise from a bijection of points for the 
SFTs, but it does correspond to a $G$-equivariant homeomorphism of their mapping tori (after 
changing time  by a factor of 2 over the clopen sets $\{ x: x_0=[ef]\}$, the 
new flow is conjugate to the old one), which lifts to a $G$-equivariant 
homeomorphism of the respective mapping tori.

\subsection{{\bf A column cut basic positive equivalence.}}\label{subsec:colcut}
The other type of 
basic forward
positive equivalence is   
$(I,E)\colon \rep{I-A} \to \rep{I-B}$, with $E=E_{st}(g)$. 
Then $A$ and $B$ agree except perhaps 
in column $t$, where 
\begin{align*} 
B(r,t)\ &= \ A(r,t)+A(r,s)g \qquad 
\textnormal{if } r\neq t\ ,\qquad \quad \text{and}  \\
B(s,t)\ &= \ A(s,t)+A(t,t)g -g\ \ . 
\end{align*}
The labeled graph $\mathcal G_B$ associated to $B$ is 
constructed from the labeled graph $\mathcal G_A $ as follows.
An edge $e$ from $s$ to $t$ with label $g$ is deleted from $\mathcal G_A$. 
Then, for each $\mathcal G_A$-edge $f$ ending at $s$, 
 an additional edge (called $[fe]$) from $r$ to $t$ 
with label $hg$
(where $h$ is the $G$-label of $f$ and 
$r$ is the initial vertex of $f$)
 is added in to form $\mathcal G_B$. 
We refer to this type of positive equivalence as a 
$(g,s,t)$ {\it column cut} (of the matrix $A$, or of
   an edge $e$ labeled $g$), or just a column cut.
Figure \ref{colcut} gives the column-cut analogue of Figure
\ref{rowcut}.

\begin{example}\label{cutandtrim}
When $A$ is a square matrix  over $\Z_+G$ with some diagonal
    entry $A_{tt}=0$, 
    row cuts may be applied to zero out each entry of 
    column $t$, and then column cuts to zero out each entry of 
    row $t$, giving a permutation matrix $P$ and a smaller matrix $M$ such that
    $P^{-1}AP = M \oplus 0$. 
    We call this type of operation a \emph{trim move}. Note
    $A$ and $M$ define $G$-SFTs which are $G$-flow equivalent. 
    For instance,
with $G$ the symmetric group $S_{  3}$,  
using $A_{22}=0$  
and the standard notation recalled in
  Definition \ref{ex_o}, 
the matrix 
\[
A=\begin{pmatrix}\perm{23}&\perm{12}\\\perm{13}&0\end{pmatrix}
  \]
  can be row cut, using 
  $(E_{12}(\perm{12}),I): (I-A) \to (I-B)$,
to the matrix 
\[
B= \begin{pmatrix}\perm{23}+\perm{132}&0\\\perm{13}&0\end{pmatrix}\ , 
\]
which can be column cut, using
  $(I,E_{21}(\perm{13}): (I-B) \to (I-C)$, to the matrix 
\[
C=\begin{pmatrix}\perm{23}+\perm{132}&0\\0&0\end{pmatrix}
  =(\perm{23}+\perm{132}) \oplus 0 
    \ . 
  \]
\end{example}

\subsection{{\bf Poset-blocked matrices.}} 
\label{posetblocksubsec} 
In order to handle general $G$-SFTs (having more than one irreducible
component), as for the case $G=\{e\}$ addressed in 
\cite{mb:fesftpf,mbdh:pbeim} 
we need to consider matrices with block structures 
corresponding to irreducible components and transitions between them. 
Throughout this paper, $\mathcal P = \{ 1, \dots , N \}$ 
is a poset (partially ordered set) with a partial order relation $\preceq $ 
chosen such that $i\preceq j \implies i\le j$. We will write $i\prec j$ if $i\preceq j$ and $i\ne j$. For a vector of positive integers $\mathbf n = (n_1, \dots , n_N)$, let $n=\sum_{j=1}^Nn_j$, and let $\mathcal I_i=\{(\sum_{j=1}^{i-1}n_j)+1,(\sum_{j=1}^{i-1}n_j)+2,\dots,\sum_{j=1}^in_j\}$ for each $i\in\mathcal P$. If $s\in\{1,2,\dots,n\}$, then we let $i(s)$ be the unique integer such that $s\in\mathcal{I}_{i(s)}$. For an $n\times n$ matrix $A$ and $i,j\in\mathcal P$, we let $A\{i,j\}$ denote the submatrix of $A$ obtained by deleting the rows corresponding to indices not belonging to $\mathcal I_i$ and columns corresponding to indices not belonging to $\mathcal I_j$. The
matrix $A$ is called an \emph{$( \mathbf n, \mathcal P )$-blocked matrix} if $A\{ i,j \} \neq 0 \implies i \preceq j$.
For $S$ a subset of $\Z G$, 
  we let $\mpns$ denote the set of $( \mathbf n, \mathcal P )$-blocked matrices with entries
  in $S$, and we
  let $\mps$ be the union over $\mathbf n$ of the sets
  $\mpns$.

The set $\mopnG$ is the set of matrices $A$
in $\mathcal M_{\mathcal P} (\mathbf n , \ZZ_+G)$ 
satisfying the following conditions: 
\begin{enumerate} 
\item
Each diagonal block $A\{i,i\}$ is essentially irreducible. 
\item 
If $i\prec j$, then there are $r>0$, an index $s$ corresponding to a row in the irreducible core of $A\{i,i\}$, and an index $t$ corresponding to a column in the irreducible core of $A\{j,j\}$ such that 
$A^r(s,t) \neq 0$. 
\end{enumerate}

For $A\in \mopnG$,  $i$ in $\mathcal P$ corresponds explicitly
to an irreducible component of the SFT defined by $X_{\overline A}$,
  with $i\prec j$ if and only there exists an orbit in $X_{\overline A}$
    backwardly asymptotic to component $i$ and forwardly asymptotic to
    component $j$.  If 
    $A\in \mopG$ and $A'\in \mopprimeG$, then a flow equivalence $T_A\to T_B$
    induces a poset isomorphism $\mathcal P \to \mathcal P'$. We say
    the flow equivalence \emph{respects the component order} if this isomorphism
    is $k\mapsto k$, $1\leq k\leq N$. 

We let $\mopG$ be the union over $\mathbf n$ of the sets 
$\mopnG$.

\subsection{{\bf Cycle components.}}\label{sec:cyclecomponents} 
For a matrix $A$ in $\mopnG$, 
a {\it cycle component} 
 is 
a component $i$ in $\mathcal P$ such that the 
irreducible core of $\overline{A\{i,i\}}$ is a cyclic 
permutation matrix. The cycle components contribute 
significantly to technical difficulties in the classification 
of $G$-SFTs up to $G$-flow equivalence. 
For $A$ in  $\mopnG$, 
$\mathcal C(A)$ denotes the set of its cycle components.  
For a subset 
$\mathcal C$ of $\mathcal P$, 
\[
\mopcnG \ := \ \{ A \in 
\mopnG : \mathcal C(A)=\mathcal C \}\ .
\]

We let $\moppcG$ be the union over $\mathbf n$ of the sets 
$\mopcnG$.

\subsection{Stabilizations} \label{subsec:stabilization}

{An unblocked 
matrix $A'$ is a {\it stabilization} (or {\it 0-stabilization}) of an 
$m\times n$ matrix $A$ if $A$ equals an upper left corner of $A'$, and
$A'$ is zero in all remaining entries. (I.e.,
$A'(s,t) =A(s,t) $ if $1\leq s\leq m$ and $ 1\leq t \leq n$, and otherwise
$A'(s,t)=0$.) 
  A matrix $A'$ in $\mathcal M_{\mathcal P} (\mathbf n' , \Z G)$ is a 
{\it stabilization} (or {\it 0-stabilization}) of 
a matrix $A$ in $\mathcal M_{\mathcal P} (\mathbf n , \Z G)$ 
if $\mathbf{n'} \geq \mathbf n$
(i.e., $n'_i\ge n_i$
for $1\leq i\leq N$)
and for
$1\leq i,j \leq n$, the $i,j$ block submatrix of  
$A'$ is a stabilization of the  $i,j$ block submatrix of  
$ A$.}

{A matrix $M'$ in $\mathcal M_{\mathcal P} (\mathbf n' , \Z G)$ is a 
{\it 1-stabilization}  of
a matrix $M$ in $\mathcal M_{\mathcal P} (\mathbf n , \Z G)$
if $M'-I$ is a 0-stabilization of $M-I$.  
When its entries come from $\Z_+G$, a  matrix 
$A$ and its 0-stabilizations define the same $G$-SFT,
but it is $I-A$ and its 1-stabilizations which will share the
algebraic invariants for $G$-flow equivalence.}

\numberwithin{equation}{section}
\section{$(G,\mathcal P )$ coset structures} \label{sec:coset}

The classification up to $G$-flow equivalence of 
 $G$-SFTs $T_A$ defined by irreducible matrices $A$ over $\Z_+G$ 
 required a reduction
to the case that the \lq\lq weights group\rq\rq\ of $T_A$ is
all of $G$. (This essentially amounts to reducing to the case that
$T_A$ is mixing as an SFT, as recalled in Appendix \ref{akmappendix}.)
For general $A$ over $\Z_+G$, we will need
an analogous reduction on
the irreducible components of $A$, and then we will 
capture invariants of transitions between components 
using  double coset conditions.
In this section we prepare the formal structure for this. 
We begin with the double coset conditions. 
 
Below, $G$ is the given finite group and $\mathcal P=\{1, \dots , N\}$ 
is the given finite
poset,
with a partial order
relation $\preceq$ satisfying $i\preceq j\implies i\le j$. 
Let $H_i$ and $H_j$ be subgroups of $G$.
An $(H_i,H_j)$ 
double coset is a nonempty set equal to $H_igH_j$ for some 
$g$ in $G$.

\begin{definition} \label{defn:cosetstructure} 
 A $(G,\mathcal P)$ coset structure $\mathcal H$
is a function which assigns to each pair $(i,j)$ in 
$\mathcal P \times \mathcal P$ such that $i\preceq j$ 
a nonempty subset $H_{ij}$ of $G$ such that 
\begin{equation} \label{cosetstructurecondition}
i\preceq j \preceq k \implies 
H_{ij}H_{jk} \subset H_{ik} \ .
\end{equation}
Consequently, $H_{ii}$ (also denoted $H_i$) is a 
subgroup of $G$ and for $i\prec j$ 
$H_{ij}$ is a nonempty union of 
$(H_i,H_j)$ double  cosets. 
\end{definition} 
If $H_i,H_j$ are subgroups of an abelian group $G$,
  then  $H_iH_j$ is a group, and a double coset
  $H_igH_j$ is a coset $gH_iH_j$. For general $G$, 
  $\mathcal H$ is actually
  a double coset structure; we use \lq\lq coset structure\rq\rq\
  for brevity.

  \begin{definition}\label{ex_o}
    (Notation) $\mathcal P_n $ denotes the
    poset $\{1, \dots , n\}$ with the linear order:
    $i\prec j$ iff $i< j$.
\end{definition}
    In the next two examples, we consider
      $\mathcal H$  a  coset
      structure  for $(G      , \mathcal P_3 )$, with $G$ abelian,
      using additive notation. 
Here the possibilities for $\mathcal H$ are as follows. 
    \begin{itemize}
    \item
      $H_1,H_2,H_3$ are arbitrary subgroups of $G$,
    \item
      $H_{12}$ is an arbitrary  nonempty union of cosets of  $H_1+H_2$,
          \item
            $H_{23}$ is an arbitrary  nonempty union of cosets of  $H_2+H_3$,
          \item
            $H_{13}$ is an arbitrary union of cosets of $H_1+H_3$ 
            containing  $H_{12}+H_{23}$; because $G$ is abelian,
            $H_{12}+H_{23}$ is a union
            of cosets of    $H_1+H_2+H_3$. 
    \end{itemize}
    \begin{example} 
      Let $G= \Z /27$, $H_1=H_3= 9G = \{0,9,18\}$,
      $H_2=H_{12}=H_{23}=3G = \{0,3,\dots , 24\}$,
      $H_{13} = \{1,10,19\} \cup 3G$. So, $H_1+H_2+H_3=3G$. 
      Now $H_{13} $ contains a coset of $H_1+H_2+H_3$, but
      $H_{13} $ is not a union of cosets  of $H_1+H_2+H_3$.
    \end{example}
    \begin{example} 
    Let  $G=\Z /30$, 
    $H_1=15G$,  $H_2=6G$ and
    $H_3= 10G$. 
    Then $H_1+H_2=3G$, $H_2+H_3=2G$,  $H_1+H_3=5G$ and
    $H_1+H_2+H_3=G$. Because $H_{13}$ contains a coset of
    $H_1+H_2+H_3$, $H_{13}$ must be $G$. 
\end{example} 
\begin{remark}       For       $G$
      not necessarily abelian, with subgroups $H_i, H_j$,
      let us recall some elementary facts about the double cosets
      $H_igH_j$, $g\in G$. 
      As in Example \ref{ex_i})
  $H_iH_j$
  need not be a group;
  $H_igH_j$ need not be a coset of a subgroup of $G$;
  $H_igH_j$ is a union of right cosets of $H_i$ (or left cosets of
  $H_j$),  but the union is not arbitrary. 
  A double coset  $H_igH_j$ is the orbit of $g$ under the
  (right) action on $G$ by
  $H_i \oplus H_j$  given by $(h,k): g\mapsto h^{-1}gk$.
  Consequently, two $(H_i,H_j)$ double  cosets are equal or disjoint.
  Thus, if there are exactly $r$ $(H_i,H_j)$ double cosets in $G$,
  there are exactly $2^r -1$  sets which are nonempty unions of
$(H_i,H_j)$   double cosets. 
For $g\in G$, define the subgroup 
 \begin{align*} 
 M_{ij}(g) &=  \{ h\in H_i : \exists k \in H_j,\ h^{-1}gk = g\} \\
& =  \{ h\in H_i : g^{-1}hg \in H_j\}
 = H_i \cap gH_jg^{-1}\ .
 \end{align*}
  Then the isotropy group of $g$ for this action
  of   $H_i \oplus H_j$ on $G$ is 
  \[
  \{ (h,k) \in H_i \oplus H_j : h\in M_{ij}(g), k = g^{-1}hg \}\ ,
  \]
  with cardinality $|M_{ij}(g)|$, and therefore 
 $|H_igH_j | = |H_i||H_j|/|M_{ij}(g)|$. In particular,
as the cardinality $|M_{ij}(g)|$ of the isotropy group might vary with $g$,
so might the cardinality of the double coset $|H_igH_j | $.
  \end{remark} 

\begin{definition}\label{ex_o}
 (Notation)
    $S_{  n}$ is the group of permutations of
    $\{\mathsf 1, \dots , \mathsf n\}$
    (to avoid confusion, we reserve  sans serif numbers  to indicate the elements on which $S_{  n}$ acts).
    $A_{  n}$ is the alternating group in
        $S_{  n}$. 
    We use cycle notation to
     denote  elements of $S_{  n}$.  
For example, $\perm{123}$ is the cyclic permutation
$\mathsf 1 \to \mathsf 2 \to \mathsf 3$;  $\perm{12}\perm{13} = \perm{132}$. We let $\perme$
 denote the identity. For a subset $K$ of a group,
$\gen{K}$ denotes the subgroup generated by $K$; for example,
in $S_{  n}$, $\gen{\perm{12}}=\{ \perm{12}, \perme \}$.  
\end{definition}

\begin{example}\label{ex_i}
Suppose $(G,\mathcal P)=(S_{  3},
    \mathcal P_2)$ with coset structure $\mathcal H$. If
    $H_{11}=S_{  3}$ or  $H_{22}=S_{  3}$,
then $H_{12}$ must be $S_{  3}$.
 At the opposite extreme,
 if $H_{11}=H_{22}=\{\perme\}$,
  then $H_{12}$ can be any nonempty subset of $S_{  3}$.
    Now suppose $H_1= \gen{\perm{12}}$ and $H_2= \gen{\perm{13}}$.
    For $g$ in $S_{  3}$,
    \[
    |H_1gH_2| =
    |H_1||H_2|/|H_1\cap gH_2g^{-1}| = 4/|H_1\cap gH_2g^{-1}| \ . 
    \]
    So,  $|H_1 gH_2| =2 $ if
    $gH_2g^{-1}  =H_1$, and otherwise
    $|H_1 gH_2| =4 $ . 
There are exactly
two double cosets $H_1gH_2$:   
$D_1=\{ \perm{23}, \perm{132}\}$ and its complement $D_2$ in 
$S_{  3}$. Neither double coset is a subgroup of $S_{  3}$.
$D_1$ is the only coset of $H_1$ which is a double coset. 
$D_2$ is neither a left coset nor a right coset of a subgroup of  $S_{  3}$.
$D_1$ is a right coset of $H_1$ and a left coset of $H_2$,
but it is neither a left coset of $H_1$ nor a right coset of $H_2$.  
\end{example}
\begin{example} 
    Suppose $(G,\mathcal P)=(S_{  3},\mathcal P_3)$
    with coset structure $\mathcal H$.
Let $H_1 =H_{12} = \gen{\perm{12}}$,
  $H_2=\{ \perme\}$, 
$H_3= \perm{13}$ with
$ H_{12}= H_{1}$  and  
    $ H_{23}=H_{3}\cup \perm{23}H_{3}$ . 
As seen in Example \ref{ex_i}, 
there are two $(H_1, H_3)$ double cosets: 
$\{ \perm{23}, \perm{132} \}$, and its complement in $S_{  3}$.
    By \eqref{cosetstructurecondition},
    $H_{13} $ contains $H_{12}H_{23}$,
    which here contains $\{ \perme, \perm{23}\}$. 
Therefore     $H_{13} $ 
    intersects both $(H_1, H_3)$ double cosets,
    and  must equal 
    $S_{  3}$.
  \end{example}

\begin{definition} \label{defn:cosetstucturecohomology}
Two $(G,\mathcal P)$ coset structures $\mathcal H$, 
$\mathcal H'$ are $G$-cohomologous if there exist 
elements $\gamma_1, \dots , \gamma_N$ in $G$ such that 
\[
i \preceq j \implies H_{ij}= \gamma_i^{-1}H'_{ij}\gamma_j \ .
\]
\end{definition} 

The ``$G$'' in ``$G$-cohomologous'' matters
(see Example \ref{Gmatters}).
Still,  because $(G,\mathcal P)$
is fixed, 
we sometimes write just ``coset structure'' in place of 
``$(G, \mathcal P)$ coset structure''.

\begin{example}\label{Gmatters} Let 
  $\mathcal P$ be  
  the trivial poset $\mathcal P_1$. Then $(G, \mathcal P)$
  coset structures $\mathcal H, \mathcal H'$
  are cohomologous iff the groups
  $H_{1}, H'_1$ are conjugate subgroups of $G$.
  Define order two subgroups of
$ S_{  4}$, $H_{1}=\gen{\perm{12}\perm{34}}$ and $ H'_1=\gen{\perm{13}\perm{14}}$. 
  Let $H$ (isomorphic to $\Z/2\oplus \Z/2$) be the
  subgroup generated by $H_1$ and $H'_1$. 
  Then $H_{1}, H'_1$ are conjugate as subgroups of $S_{  4}$,
  but not as subgroups of $H$. 
\end{example}

  \begin{example}\label{Gabeliancosetstructureclass}
    Suppose $G$ is abelian and $\mathcal H$, $\mathcal H'$ are
    $(G,\mathcal P_2)$ coset structures such that
    $\mathcal H_1=\mathcal H'_1$ and $\mathcal H_2 = \mathcal H'_2$.
    If $|\mathcal H_{12}|=1=|\mathcal H'_{12}|$,
    Then $\mathcal H$    and $\mathcal H'$ are $G$-cohomologous if
    and only if  $\mathcal H'_{12}= g+    \mathcal H_{12}$
    for some $g$ in $G$. This 
 always holds if 
    $|\mathcal H_{12}|=1=|\mathcal H'_{12}|$,
but need not hold if e.g. 
$|\mathcal H_{12}|=2=|\mathcal H'_{12}|$.
  \end{example}

\begin{example}\label{ex_ii} Let $\mathcal H$ be a $(S_{  3}, \mathcal P_2)$
  coset structure, with $H_{11}=H_{22}=\{\perme\}$.
  We noted in Example \ref{ex_i} that
  any nonempty subset of $S_{  3}$ may serve as $H_{12}$. When $H'_{12}$ is another such subset, of course it is necessary that $|H_{12}|=|H_{12}'|$ for the structures to be cohomologous. But this is not a sufficient condition; for instance $H_{12}=\{\perme,\perm{12}\}$ gives a system which is not cohomologous
  to that from $H'_{12}=\{\perme,\perm{123}\}$, as is seen by applying
  the sign function to the defining relation.
\end{example}

\begin{definition} \label{defn:weight}
For a matrix $A$ over $\Z_+G$, 
with $\tau_A$ the associated labeling of edges  
of $\mathcal G_A$, 
the {\em weight} of a path of edges 
$p=p_1p_2\cdots p_k $  in $\mathcal G_A$ 
is defined to be 
$\tau_A(p)=\tau_A(p_1)\tau_A(p_2)\cdots \tau_A(p_k)$.
\end{definition}

\begin{definition} \label{defn:cosetstructureofA}
Suppose 
$A\in \mopG$. Then a 
$(G,\mathcal P)$ coset structure $\mathcal H$ for $A$ 
is defined as follows.  
\begin{enumerate}
\item 
For each $i\in \mathcal P$, choose a vertex $v(i)$ 
from the irreducible core of the block $A\{ i,i\}$. 
\item 
For $i\preceq j$,  $H_{ij}$ is the set of 
weights of paths from $v(i) $ to $v(j)$. 
\end{enumerate} 
The group $H_{ii}$ (also denoted $H_i$) was 
called a {\it weights group} for $A_{ii}$ in 
  \cite{BSullivan}.  
\end{definition}

\begin{example}\label{ex_iii}
With $G=S_{  3}$, consider the matrices
\[
A_1=\begin{pmatrix}
\perm{12}&\perm{132}\\
\perm{123}&\perme
\end{pmatrix},  
A_2=\begin{pmatrix}
\perm{12}&\perm{13}\\
0&\perm{12}
\end{pmatrix},  
A_3=
  \begin{pmatrix}0
&\perm{12}& 0\\
\perm{12}&\perme&\perm{12}\\
0&0&\perme
\end{pmatrix} .
\]
For $A_1 $, $\mathcal P = \mathcal P_1$.  
  If $v(1)=1$, then 
  $H_{1}=\gen{\perm{12}}$;
  if 
  $v(1)=2$, then from $\perm{132} =\perm{123}^{-1} $ we have 
  $H_{1}=\perm{123}\gen{\perm{12}}\perm{123}^{-1} =
\gen{\perm{13}}$.
  For $A_2$ and $A_3$, $\mathcal P= \mathcal P_2$.
For $A_2$, 
$H_{1}=H_{2}=\gen{\perm{12}}$ and
$H_{12}= S_{  3} \setminus \gen{\perm{12}}$. 
For $A_3$,
$H_{1}=H_{2}=\{\perme\}$;  
if $v(1)=1$ then  $H_{12}=\{\perme\}$,
and if
$v(1)=2$ then  $H_{12}=\{\perm{12}\}$. 
\end{example}

Even though the example above shows that they may be
    different,
all coset structures for $A$ will
  be $G$-cohomologous. To see this, suppose 
$v_1,v_2$ are vertices in the irreducible core 
of $A\{ i,i\}$ and $\gamma_i$ is the weight of a path 
from $v_1$ to $v_2$. 
Replacing a 
choice $v(i)=v_2$ with the choice $v(i)=v_1$ has the effect 
of replacing $H_{i}$ with $H'_i:=\gamma_i H_{i}\gamma_i ^{-1}$, 
and replacing $H_{ij}$ with $H'_{ij}:=\gamma_iH_{ij}$ 
when $i\prec j$. Therefore $\mathcal H$ and $\mathcal H'$ 
are $G$-cohomologous.

\begin{definition} \label{structureclassdefn} 
The {\it $(G,\mathcal P)$ coset structure class 
of $A$} is 
the $G$-co\-ho\-mo\-lo\-gy class  of  a 
$(G,\mathcal P)$ coset structure for $A$. 
\end{definition}

\begin{example}
  Let $G=\Z/4\Z$ with generator $g$.
  By Example \ref{Gabeliancosetstructureclass},
  $\left(\begin{smallmatrix} 2&2\\0&2 
  \end{smallmatrix}\right)$ and
    $\left(\begin{smallmatrix} 2&g\\0&2 
  \end{smallmatrix}\right)$ have the same coset structure class,
  but 
  $\left(\begin{smallmatrix} 2&1+g\\0&2 
  \end{smallmatrix}\right)$ and
    $\left(\begin{smallmatrix} 2&1+g^2\\0&2 
  \end{smallmatrix}\right)$ do not have the same coset structure class.
  \end{example}

  \begin{example}\label{whyelpnh} 
    Suppose $G$ is a nontrivial group. Define
    $A=\left(\begin{smallmatrix} 1&1&1\\0&1&1 \\
      0&1&1    \end{smallmatrix}\right)$ and
    $A'=\left(\begin{smallmatrix} 1&1+g&1\\0&1&1 \\
      0&1&1    \end{smallmatrix}\right)$ .
 Coset structures
    $\mathcal H, \mathcal H'$ for
    $A,A'$ are given by $H_1=H_2=H_{12}=H'_1=H'_2=\{1\}$ and
    $H'_{12} = \{1,g\}$. 
Because $|H'_{12}| \neq |H_{12}|$, 
the coset structures $\mathcal H, \mathcal H'$
    are not $G$-cohomologous (and therefore, by our structure theorem,
    the matrices $A,A'$ define $G$-SFTs which are not $G$-flow equivalent).
    However, $(I-A)$ and $(I-A')$ are
    $\textnormal{El}(\mathbf n, \mathcal P_2,\Z_+G)$-equivalent,
    with $\mathbf n= (1,2)$, because
    \begin{align*}
      E_{13}(g)(I-A)
      &=
    \left(\begin{smallmatrix} 1&0&g\\0&1&0 \\
      0&0&1    \end{smallmatrix}\right)
    \left(\begin{smallmatrix} 0&-1&-1\\0&0&-1 \\
      0&-1&0    \end{smallmatrix}\right)
    =\left(\begin{smallmatrix} 0&-1-g&-1\\0&0&-1 \\
      0&-1&0    \end{smallmatrix}\right)
    =I-A'\ .
    \end{align*} 
     A smaller example is given
    by   
    $C=\left(\begin{smallmatrix} 1&2\\0&2
    \end{smallmatrix}\right)$ and
    $C'=\left(\begin{smallmatrix} 1&2+g\\0&2
    \end{smallmatrix}\right)$, which respectively define $G$-SFTs
    which respectively
    are $G$-flow equivalent to those defined by $A$ and $A'$.
   \end{example}

Example \ref{whyelpnh} shows that 
  $\text{El}(\mathcal P, \mathbf n, \Z G )$ 
equivalence, even by matrices with $i$th diagonal
  block entries in $\Z_+H_i$ for all $i$, does not 
  give an algebraic relation  capturing $G$-flow equivalence.
  This is why we are 
   led to introduce $\elpnh$
  equivalence later in this section.
\begin{example}
Notice that it can happen that not every coset structure in the
$(G,\mathcal P)$ coset structure class of a matrix $A$ is a
$(G,\mathcal P)$ coset structure for
$A$. If for example $(G,\mathcal P)=(S_{  3},\mathcal P_2)$ and 
\[
A=\begin{pmatrix}\perme&\perme\\0&\perme\end{pmatrix},
\]
 then $H_{11}=H_{12}=H_{22}=\{\perme\}$ is the only $(S_{  3},\mathcal P_2)$ coset structure for
    $A$; but $H'_{11}=H'_{22}=\{\perme\}$,
   $H'_{12}=\{\perm{12}\}$ is also in the $(S_{  3},\mathcal P_2)$ coset structure class
   of $A$.
\end{example}

The classes 
$\mathcal M_{\mathcal P} (\mathbf n , \Z G)$ and
$\mopcnG$ 
  used below were defined in
Subsections \ref{posetblocksubsec} and \ref{sec:cyclecomponents}.
\begin{definition}\label{defn:mopcnh}
Let   $\mathcal H$ be a $(G,\mathcal P)$ coset structure. 
  \begin{itemize}
    \item 
      $\mpnh$ is the set of matrices $A\in\mathcal M_{\mathcal P} (\mathbf n ,
      \Z G)$ such that for $i\preceq j$, the entries of $A\{i,j\}$
belong to $\Z H_{ij}$. 
\item 
 $\mopcnh$ is the set of  $A$ in 
$\mpnh\cap\mopcnG$
  such that
  $\mathcal H$ is a $(G,\mathcal P)$ coset structure for $A$.
\item
  $\mopch = \cup_{\mathbf n}\, 
  \mopcnh$.
  \item $\moph = \cup_{\mathcal C}\, \mopch$.
  \end{itemize} 
\end{definition}


\begin{definition}\label{conditionc1}  
A matrix in $\moppcG$
satisfies Condition $\mathcal C_1$ (or, is $\mathcal C_1$) 
if it belongs to $\mopcnG$ for $\mathbf n =(n_1,n_2,\dots,n_N)$ 
such that the following condition holds for all $i\in\mathcal P$: $n_i=1$ if and only if $i\in\mathcal C$.  
\end{definition}

A square matrix $A$ over $\ZZ_+ G$ is \emph{nondegenerate} if it has no zero row and no zero
column. Notice that this is equivalent to 
the graph $\mathcal{G}_{\overline{A}}$ being nondegenerate
(that is, every vertex of $\mathcal{G}_{\overline{A}}$ belongs to a bi-infinite path).
The next proposition applies in particular to a nondegenerate
  matrix $A$ (which cannot be
  nilpotent).

\begin{proposition}\label{pro:right form}
  Let $A$ be a 
nonnilpotent square matrix over $\ZZ_+ G$. Then there is an $N$, a
  partial order $\preceq$ on $\mathcal{P}:=\{1,\dots,N\}$ satisfying
  $i\preceq j\implies i\le j$, a subset $\mathcal C$ of $\mathcal P$,
  a $(G,\mathcal P)$ coset structure $\mathcal H$, and a 
  $\mathcal C_1$ matrix $B\in\mopch$ with each diagonal block irreducible such that $T_A$ and $T_B$ are
  $G$-flow equivalent. 
$B$ can be produced algorithmically
    from $A$. 
\end{proposition}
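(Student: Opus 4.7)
My plan is to produce $B$ from $A$ through a finite sequence of effective moves, each of which preserves the $G$-flow equivalence class, following the template of the analogous reductions for ordinary SFTs from \cite{mb:fesftpf,mbdh:pbeim}. First (\emph{cleanup}), I iteratively delete any vertex of $\mathcal G_{\overline A}$ with no incoming or no outgoing edge --- such vertices do not lie on any bi-infinite path, so their removal leaves $X_{\overline A}$ and therefore $T_A$ unchanged --- and then identify the irreducible components of the resulting graph. These components are partially ordered by reachability, giving a poset $\mathcal P = \{1, \ldots, N\}$; choosing a compatible linear order and conjugating $A$ by the corresponding permutation matrix (a topological $G$-conjugacy) puts $A$ into $(\mathbf n, \mathcal P)$-blocked form in $\mopnG$, with $\mathcal C$ equal to its set of cycle components.

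Second (\emph{cycle collapse}), for each $i \in \mathcal C$ whose underlying cycle has length $\ell > 1$ and successive $G$-labels $g_1, \ldots, g_\ell$, I apply a sequence of row-cut basic positive $\Z G$ equivalences of the type illustrated in Figure~\ref{rowcut}: each cut eliminates one edge of the cycle while re-routing its continuations by left multiplication with the appropriate group element, and combined with subsequent removal of any newly stranded vertex this collapses the cycle to a single vertex carrying a self-loop of weight $g_1 g_2 \cdots g_\ell$. Incoming and outgoing intercomponent transitions are updated consistently by the positive equivalence. After this phase the resulting matrix satisfies $n_i = 1$ for every $i \in \mathcal C$, so Condition~$\mathcal C_1$ holds.

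Third (\emph{coset structure}), I eliminate any peripheral vertex remaining inside each non-cycle diagonal block --- i.e.\ any vertex not belonging to its unique irreducible core --- by absorbing it via further basic positive equivalences: each such vertex has only finitely many incoming and outgoing edges, and iterated row or column cuts can merge it into either the irreducible core or into the edges between blocks without leaving the $G$-flow equivalence class. Once every diagonal block of the resulting matrix $B$ consists only of its irreducible core, I choose a basepoint $v(i)$ in that core and, for $i \preceq j$, define $H_{ij}$ as the set of weights of paths from $v(i)$ to $v(j)$ in $\mathcal G_B$. Path concatenation gives $H_{ij} H_{jk} \subseteq H_{ik}$, so $\mathcal H = (H_{ij})$ is a $(G,\mathcal P)$ coset structure and, by construction, a coset structure for $B$; pre- and post-composition by loops at the basepoints exhibits $H_{ij}$ as a union of $(H_i,H_j)$ double cosets and forces every entry of every off-diagonal block $B\{i,j\}$ into $\Z H_{ij}$, so that $B \in \mopcnh$.

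The main obstacle I foresee is the third phase, specifically the treatment of peripheral vertices within essentially irreducible blocks. Without their absorption, the set $H_{ij}$ constructed from basepoints need not contain all edge labels appearing in $B\{i,j\}$ (because a path from a peripheral vertex $u$ to a vertex $w$ in another block need not arise from a loop at $v(i)$ followed by a path through $u$ to $w$ and a loop at $v(j)$), so the candidate $\mathcal H$ would fail to be a coset structure for $B$ in the sense required by $\mopcnh$. Carrying the absorption out cleanly requires careful tracking of how each peripheral vertex interacts with both its own block and the neighbouring blocks, and a verification that the needed basic positive equivalences are indeed available in each configuration; once this is done, the remaining steps --- computing irreducible components, performing row cuts and choosing basepoints --- are manifestly effective, so the whole procedure is algorithmic.
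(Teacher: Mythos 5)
There is a genuine gap in your third phase. After choosing basepoints $v(i)$ and setting $H_{ij}$ equal to the set of weights of paths from $v(i)$ to $v(j)$, you assert that pre- and post-composition by loops at the basepoints ``forces every entry of every off-diagonal block $B\{i,j\}$ into $\Z H_{ij}$.'' This is false even when every diagonal block is exactly its irreducible core and no peripheral vertices remain. For a counterexample take $G=\Z/4\Z = \langle g\rangle$, $\mathcal P=\{1\}$, and
\[
B = \begin{pmatrix} 0 & g & 0\\ 0 & 0 & g \\ g^2 & 0 & 0\end{pmatrix}.
\]
With $v(1)=1$ the weights group is $H_1=\{e\}$, yet $B(1,2)=g\notin \Z H_1$. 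What pre- and post-composition by basepoint loops actually gives you is that $H_{ij}$ is a union of $(H_i,H_j)$ double cosets and that the weight of any path \emph{from $v(i)$ to $v(j)$} lies in $H_{ij}$; it does not constrain the label of an individual edge whose endpoints are not basepoints. So even after your absorption of peripheral vertices, the resulting matrix need not belong to $\mpnh$, and the construction stops short of $\mopch$.

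The paper's proof supplies exactly the ingredient you are missing: after reaching a nondegenerate $\mathcal C_1$ matrix $A_5$, it conjugates by a diagonal matrix $D$ with $D(s,s)=d_s\in G$, where $d_s$ is the weight of a chosen path from the basepoint of $s$'s block to $s$. By Proposition \ref{cohomologyaspositive} this conjugation is a positive $\Z G$ equivalence, hence preserves $G$-flow equivalence, and one then checks directly that each entry $B(s,t)=d_s A_5(s,t)d_t^{-1}$ equals the weight of an actual path from $v(i)$ to $v(j)$, so it lies in $H_{ij}$. This diagonal re-labelling (a cohomology change), rather than elimination of peripheral vertices, is what places the entries in the correct cosets; your diagnosed difficulty about absorbing peripheral vertices is real but secondary, and resolving it alone would not complete the argument. (Two smaller divergences worth noting: the paper's handling of transition states goes through out-splitting so that each has a single out-edge, and the reduction to the $\mathcal C_1$ form is obtained by adapting \cite[Lemma A.1]{mb:fesftpf} and passing to the nondegenerate core rather than by ad hoc cycle collapse.)
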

\begin{example}
The first two of the  
matrices
\[
\begin{pmatrix}
0&\perm{12}\\\perm{13}&0
\end{pmatrix},
\begin{pmatrix}
\perme+\perm{12}
\end{pmatrix},
\begin{pmatrix}
\perme&\perm{12}&0\\0&0&\perme\\
0&0&\perme
\end{pmatrix}
\]
are not $\mathcal C_1$, and the third does not satisfy the
diagonal block irreducibility
 condition of Proposition \ref{pro:right form}.
 The algorithm outlined in the proof below will give, respectively,
\[
\begin{pmatrix}
\perm{123}
\end{pmatrix},
\begin{pmatrix}
\perme&\perm{12}\\\perme&\perm{12}
\end{pmatrix},
\begin{pmatrix}
\perme&\perm{12}\\0&\perme
\end{pmatrix}
\]
which
 satisfy both conditions.
See Example \ref{cutandtrim} for details of the first example.

For an example of the step $B=DAD^{-1}$ in the proof
of Prop. \ref{pro:right form}, we use $\mathcal P = \mathcal P_2$,
$G= S_{  3}$, $H_1 = H_2 = H_{12} = A_{  3}$,
$\nu(1)=1$ and  $\nu(2)=4$. Then   
\begin{align*}
  B=   DAD^{-1} = &
\left(\begin{smallmatrix}
  \perme&\perme&\perme &\perm{123}&\perm{123} \\
  \perm{123}&\perm{213}&\perm{123}&\perme&\perm{123}& \\
  \perme&\perme&\perme&\perme&\perme& \\
  0&0&0&  \perme & \perme \\
  0&0&0&  \perm{132} & \perm{123}  
\end{smallmatrix}\right)  
 \\ 
= \left(\begin{smallmatrix}
  \perme&0&0 &0&0 \\ 0&\perm{12}&0 &0&0 \\ 0&0&\perme &0&0 \\
  0&0&0 &\perme&0
  \\ 0&0&0 &0&\perm{23}  \end{smallmatrix}\right)
&\left(\begin{smallmatrix}
  \perme&\perm{12}&\perme &\perm{123}&\perm{13} \\
  \perm{23}&\perm{123}&\perm{23}&\perm{12}&\perm{123}& \\
  \perme&\perm{12}&\perme&\perme&\perme& \\
  0&0&0&  \perme & \perm{23} \\
  0&0&0&  \perm{12} & \perm{132}  
\end{smallmatrix}\right)
\left(\begin{smallmatrix}
  \perme&0&0 &0&0 \\ 0&\perm{12}&0 &0&0 \\ 0&0&\perme &0&0 \\
  0&0&0 &\perme&0 \\ 0&0&0 &0&\perm{23}  \end{smallmatrix}\right) 
\end{align*}
\end{example}

\begin{proof}[Proof of Prop. \ref{pro:right form} ]
 If a diagonal entry of $A$ is zero, then we may apply the trimming
  move of Example \ref{cutandtrim} to produce a smaller matrix
  over $\Z_+G$, which defines a flow equivalent $G$-SFT,
  and which is also not nilpotent. 
  By iteration of this move, we may assume without     loss of generality
  that every diagonal entry of $A$ is nonzero.
Then 
    there is a
    poset $\mathcal P = \{1, \dots ,N\}$, a vector
 $\mathbf n =(n_1,n_2,\dots,n_N)$ and  
    a permutation matrix $P$ and such that $PAP^{-1}\in \mopnG$.
 We may assume $PAP^{-1}=A$. 

 Let $\mathcal C$ be the set of
    cycle components for $A$ in $\mathcal P$.
         For $1\leq i \leq N$, set $m_i = \sum_{h<i} n_h$ and
     $\mathcal I_i =\{m_i+1, \dots , m_i +n_i\}$.
     Let $A\{i,j\}$ be the submatrix of $A$ on
     indices $\mathcal I_i \times \mathcal I_j$.
     For $i\in \mathcal C$,
     $\mathcal I_i$ must now be a      singleton. If 
       $i\notin \mathcal C $ and $\mathcal I_i$ is a singleton,
  then there is a state splitting $A\to A'$ which increases
  $n_i$ from 1 to 2 such that  every entry of $A'\{i,i\}$ is nonzero.
  On iteration of this process, passing to 
  a new $A$ and a new $\mathbf n$,
  we have $A\in \mopnG$,  with each  $A\{i,i\} $  irreducible,  
  such that  $i\in \mathcal C \iff n_i = 1$.

We now turn to the coset structure. For each $i$ in $\mathcal P$, pick an 
index $v(i)$ in $\mathcal I_i$, 
and with these choices define a 
$(G,\mathcal P)$ coset structure $\mathcal H$ 
for $A$ as in Definition \ref{defn:cosetstructureofA}. 
Next, for each $i\in \mathcal P$ and 
each index $s$ in $\mathcal I_i$,  
because $\mathcal I_i = \mathcal I^*_i$ we may
choose $d_s$ the weight of some path 
from $v(i)$ to $s$ and $b_s$ the weight 
of some path from $s$ to $v(i)$. Let 
$D$ be the diagonal matrix with 
$D(s,s)=d_s$.  Define $B=DAD^{-1}$. 
There is a positive $\Z G$-equivalence 
from $I-A$ to $I-B$ (Proposition \ref{cohomologyaspositive}), 
so $T_B$ is $G$-flow equivalent to $T_{A}$.

Suppose $s\in \mathcal I_i$ and 
$t\in \mathcal I_j$.  We claim that 
$B(s,t)\in \Z H_{ij}$. 
To prove this claim, note that  $b_td_t$ is 
the weight of a path from
$t$ to $t$. Pick $k>0$ such that 
$(b_td_t)^k=e$. Then 
$(b_td_t)^{k-1}b_t=(d_t)^{-1}$, 
and therefore  $(d_t)^{-1}$ is the weight of a 
path from $t$ to $v(j)$. 
Therefore 
$B(s,t) = d_sA(s,t)(d_t)^{-1} $ is 
the weight of a path from $v(i)$ to 
 $v(j)$, and therefore is in $ H_{ij}$.   

Because $I-A$ and $I-B$ are positive $\Z G$  
equivalent, a coset structure for $B$ 
must be $G$-cohomologous to 
the coset structure $\mathcal H$ of $A$. 
By construction, a coset structure $\mathcal H'$ for 
$B$ defined from the vertex choices $v(i)$
has $\mathcal H'_{ij} \subset \mathcal H_{ij}$ if $i\preceq j$. 
By the $G$-cohomology, this containment 
must be equality, so 
$\mathcal H$ is a coset structure for $B$, 
and $B\in \mopch$. 
\end{proof}

\begin{remark}\label{correspondenceremark} 
Suppose  $A\in \mpppnh$ and $T_A$ is the $G$-SFT defined by $A$
  in Section \ref{GSFTs}. 
Let $\pi \colon X_{\overline A} \times G \to X_{\overline A} $ be the map
collapsing $G$-orbits. For $i\in \mathcal P$,
let $\overline T_i : \overline X_i \to \overline X_i$ be the 
mixing  $\Z$-SFT defined by $\overline{A\{i,i\}}$.
Let $X_i = \overline X_i \times H_i$, and let $T_i$ be the restriction of
$T_A$ to $X_i$. Then $H_i$ is the isotropy group of $X_i$, and 
$\pi^{-1}(\overline X_i) $ is the disjoint union
of $ |G/H_i|$ mixing $\Z$-SFTs;
the mixing SFTs $g_1X_i, g_2X_i$ are equal if and only if
$g_1H_i =g_2H_i$. 
Let us write $g_1X_i \to g_2X_j$ if there exists a point
backwardly asymptotic to $g_1X_i $ and forwardly asymptotic to
$g_2X_j$. Then $g_1X_i \to g_2X_j$  if and only if
$X_i \to (g_1)^{-1}g_2X_j$. From the  definition of $T_A$, we
see $X_i \to gX_j$ iff $g\in H_{ij}$, and this holds for
every element or no element of a 
double coset $H_igH_j$. 
\end{remark}

We  now give terminology for the equivalences fundamental
  to our results.
For a positive vector $\mathbf n=(n_1,\dots,n_N)$, let $\elpnh$ be the group of matrices generated by the basic 
elementary matrices in $\mpnh$. 
We define an \emph{$\elpnh$-equivalence} to be an equivalence 
$(U,V): \rep{I-A}\to \rep{I-B}$ with $U,V$ in $\elpnh$ 
and $A,B$ in $\mpnh$.

A \emph{basic positive $\elpnh$-equivalence} is a basic 
positive $\Z G$-equivalence which is also an
 $\elpnh$-equivalence. 
A  \emph{positive $\elpnh$-equivalence} is 
defined to be a composition of 
basic positive $\elpnh$-equivalences.

\emph{An (positive) $\elph$-equivalence} 
from $I-A$ to $I-B$ is defined to be 
any (positive) $\elpnh$-equivalence $(U,V): \rep{I-A'}\to \rep{I-B'}$ 
such that $A',B'$ are stabilizations of $A,B$. It is easy to check that if there is a (positive) $\elph$-equivalence 
from $I-A$ to $I-B$, and a (positive) $\elph$-equivalence 
from $I-B$ to $I-C$, then there is a (positive) $\elph$-equivalence 
from $I-A$ to $I-C$.
%

\section{The main results} \label{sec:main theorem}

  We can now state the primary result of the paper.
 The $\mathcal C_1$ condition in the statement was given 
in Definition \ref{conditionc1}.
Given 
$\gamma= (\gamma_1, \dots , \gamma_N)\in G^N$,
we 
define  $D^{\mathbf n}_{\gamma} \in \mpnG$ 
to be the diagonal matrix $D$ such that $D(s,s) = \gamma_{i(s)}$
  (i.e., for $1\leq i\leq N$, the  diagonal block
  $D\{i,i\}$  is $\gamma_i I$).

\begin{theorem} \label{classification2} 
  Suppose $G$ is a finite group; $\mathcal P=\{1,\dots,N\}$ is a poset; 
  $\mathcal H$ 
and $\mathcal H'$
 are $(G, \mathcal P)$ coset structures; 
 $A$ and $B$ are nondegenerate  $\mathcal C_1$ matrices in
$ \mopcnh$ 
 and  $\mopcnprimehprime$, respectively. 
   Then  the following are equivalent. 
\begin{enumerate} 
\item 
  There is a $G$-flow equivalence of
  the $G$-SFTs $T_A$ and $T_B$ which respects the component ordering. 
\item 
For some   $\gamma \in G^N$, for 
 $C= (D^{\mathbf n}_{\gamma})^{-1}BD^{\mathbf n}_{\gamma}$  
  there exist $\mathbf m$ and $\mathcal C_1$  stabilizations
 $A^{<0>},C^{<0>}$ 
of $A,C$ in $\mopcmh$ such that
the matrices 
$I-A^{<0>}$ and $I-C^{<0>}$ 
are $\elpmh$-equivalent. 
\end{enumerate} 
\end{theorem}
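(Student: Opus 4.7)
The plan is to prove the equivalence $(1) \Leftrightarrow (2)$ by invoking the two main technical results already advertised in the introduction: Theorem \ref{necessary}, which underlies $(1) \Rightarrow (2)$, and the Factorization Theorem \ref{theoremfactor}, which gives $(2) \Rightarrow (1)$. The bookkeeping around $\gamma$ and the $D^{\mathbf n}_\gamma$ conjugation will account for the fact that $\mathcal H$ and $\mathcal H'$ are a priori only fixed representatives of $G$-cohomology classes of coset structures.

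For $(2) \Rightarrow (1)$, I would argue as follows. Assume we are given $\gamma \in G^N$, $\mathcal C_1$ stabilizations $A^{<0>}, C^{<0>} \in \mopcmh$, and an $\elpmh$ equivalence $(U,V)\colon I - A^{<0>} \to I - C^{<0>}$, where $C = (D^{\mathbf n}_\gamma)^{-1} B D^{\mathbf n}_\gamma$. Since $A^{<0>}$ and $C^{<0>}$ are both nondegenerate $\mathcal C_1$ matrices in $\mopcmh$ (they agree on cycle-component structure, poset, and coset structure), the Factorization Theorem \ref{theoremfactor} applies and promotes the $\elpmh$ equivalence to a positive $\ZZ G$ equivalence. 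By Section 2.5 this induces a $G$-flow equivalence $T_{A^{<0>}} \to T_{C^{<0>}}$ which respects the component ordering. Next, 0-stabilization does not alter the underlying $G$-SFT, so $T_{A^{<0>}} = T_A$ and $T_{C^{<0>}} = T_C$. Finally, the conjugation $C = (D^{\mathbf n}_\gamma)^{-1} B D^{\mathbf n}_\gamma$ corresponds to a $G$-cohomology of the skewing function, which by Proposition \ref{cohomologyaspositive} (in the cohomology appendix) is realized by a positive $\ZZ G$ equivalence and hence by a $G$-flow equivalence $T_B \to T_C$. Composing yields the desired $G$-flow equivalence $T_A \to T_B$.

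For $(1) \Rightarrow (2)$, I would start with a $G$-flow equivalence $T_A \to T_B$ respecting component ordering. The classical positive K-theory machinery of \cite{mb:fesftpf, BSullivan} shows that, after taking suitable $1$-stabilizations of $I-A$ and $I-B$, the flow equivalence is induced by a positive $\ZZ G$ equivalence between the stabilized matrices. The content of Theorem \ref{necessary} is to refine this: one shows that along the chain of basic positive equivalences one may, after pre-composing $B$ with a cohomology change $B \mapsto (D^{\mathbf n}_\gamma)^{-1} B D^{\mathbf n}_\gamma$ to match the representative of the coset-structure class, arrange that every intermediate matrix lies in $\mopcmh$ and every elementary matrix lies in $\elpmh$. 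The choice of $\gamma \in G^N$ is forced by the fact that $\mathcal H$ and the coset structure class of $B$ need only be $G$-cohomologous, so a single diagonal conjugation realigns $\mathcal H'$ with $\mathcal H$ block by block. Given Theorem \ref{necessary}, statement (2) follows immediately.

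The main obstacle will be Theorem \ref{necessary}: controlling the coset structure across a whole sequence of basic row/column cuts while staying inside $\mopch$ and $\elph$ is the delicate step, because individual basic positive equivalences drawn from $\elph$ need not preserve the poset/cycle/coset data, and one must use the three types of positive $\elph$ equivalences of Appendices \ref{cohomologyappendix}, \ref{permutation appendix}, and \ref{resolvingappendix} to absorb the obstructions, with the $\gamma$-conjugation serving as the global correction at the level of coset structures. The Factorization Theorem does the heavy lifting in the opposite direction, but for the present theorem the proof itself is just the combination of these two inputs together with the invariance of $T_A$ under 0-stabilization and under $G$-cohomology of the skewing function.
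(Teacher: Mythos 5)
Your proposal is correct and follows essentially the same route as the paper: $(1)\Rightarrow(2)$ is delegated to Theorem \ref{necessary} (in the component-order-respecting case, which forces the poset isomorphism $\alpha$ to be the identity and eliminates the $Q^{\mathbf n}_\alpha$ factor), and $(2)\Rightarrow(1)$ invokes the Factorization Theorem \ref{theoremfactor} to upgrade the $\elpmh$ equivalence to a positive one, then uses the invariance of $T$ under $0$-stabilization and Proposition \ref{cohomologyaspositive} for the $B\mapsto C$ conjugation. One small inaccuracy: $A^{<0>}$ and $C^{<0>}$, being proper stabilizations, are generally not nondegenerate, but the Factorization Theorem does not require nondegeneracy (it needs $\mathcal C_1$ and $\mathcal C_2$, the latter achievable by further stabilization), so this does not affect the argument.
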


By  condition (2) of Theorem \ref{classification2},
  there is an immediate corollary.
\begin{corollary} 
The $(G,\mathcal P)$ coset structure class is
  an invariant of component-order-respecting $G$-flow equivalence.
\end{corollary}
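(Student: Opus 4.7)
The plan is to deduce the corollary from the implication $(1) \Rightarrow (2)$ of Theorem \ref{classification2} combined with the fact, recorded after Definition \ref{defn:cosetstructureofA}, that any two coset structures for a single matrix are $G$-cohomologous. Assume $A \in \mopcnh$ and $B \in \mopcnprimehprime$ are nondegenerate $\mathcal{C}_1$ matrices and $T_A, T_B$ are $G$-flow equivalent respecting the component ordering. Theorem \ref{classification2} then produces $\gamma = (\gamma_1, \dots, \gamma_N) \in G^N$ and, for $C := (D^{\mathbf n}_\gamma)^{-1} B D^{\mathbf n}_\gamma$, $\mathcal{C}_1$ stabilizations $A^{<0>}, C^{<0>} \in \mopcmh$. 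In particular, by Definition \ref{defn:mopcnh}, $\mathcal H$ is a coset structure for $C^{<0>}$; my goal is then to show that $\mathcal H'$ is $G$-cohomologous to $\mathcal H$.

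The middle step is to exhibit a second coset structure on $C^{<0>}$, transparently built from $\mathcal H'$ and $\gamma$. Because $D^{\mathbf n}_\gamma$ is diagonal with $D^{\mathbf n}_\gamma(s,s) = \gamma_{i(s)}$, we have $C(s,t) = \gamma_{i(s)}^{-1} B(s,t)\, \gamma_{i(t)}$, so each edge in $\mathcal G_B$ from $s$ to $t$ with label $g$ corresponds to an edge in $\mathcal G_C$ with label $\gamma_{i(s)}^{-1} g\, \gamma_{i(t)}$. The middle factors telescope along a path, so the weight in $\mathcal G_C$ of a path from $v(i)$ to $v(j)$ equals $\gamma_i^{-1}$ times the weight of the corresponding path in $\mathcal G_B$, times $\gamma_j$. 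Fixing vertex choices that realize $\mathcal H'$ as a coset structure for $B$ in the sense of Definition \ref{defn:cosetstructureofA}, the same vertex choices realize for $C$ the coset structure $\mathcal K$ defined by $K_{ij} = \gamma_i^{-1} H'_{ij}\, \gamma_j$ for $i \preceq j$. Since a $0$-stabilization leaves the irreducible cores of the diagonal blocks unchanged, $\mathcal K$ is also a coset structure for $C^{<0>}$.

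Both $\mathcal H$ and $\mathcal K$ are thus coset structures for the single matrix $C^{<0>}$, hence $G$-cohomologous: there are $\delta_1, \dots, \delta_N \in G$ with $H_{ij} = \delta_i^{-1} K_{ij}\, \delta_j = (\gamma_i \delta_i)^{-1} H'_{ij}\, (\gamma_j \delta_j)$ for every $i \preceq j$. Writing $\tilde\gamma_i := \gamma_i \delta_i$ exhibits $\mathcal H$ and $\mathcal H'$ as $G$-cohomologous, which is the content of the corollary. The only calculation of any substance is the telescoping identity for path weights under diagonal conjugation; since this follows immediately from the diagonal form of $D^{\mathbf n}_\gamma$ and the multiplicativity of weights along concatenated paths, there is no significant obstacle in the argument beyond unpacking the definitions.
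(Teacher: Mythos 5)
Your proof is correct and spells out the argument that the paper leaves implicit behind the phrase ``By condition (2) of Theorem \ref{classification2}, there is an immediate corollary.'' You apply $(1)\Rightarrow(2)$ to obtain $\gamma$ with $C^{<0>}\in\mopcmh$ (so $\mathcal H$ is a coset structure for $C^{<0>}$), observe via the telescoping identity that the diagonal conjugation $C=(D^{\mathbf n}_\gamma)^{-1}BD^{\mathbf n}_\gamma$ carries the coset structure $\mathcal H'$ of $B$ to $K_{ij}=\gamma_i^{-1}H'_{ij}\gamma_j$, and then invoke the remark following Definition \ref{defn:cosetstructureofA} that any two coset structures for a single matrix are $G$-cohomologous; composing gives $\mathcal H\sim_G\mathcal H'$, which is what the corollary asserts.
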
 
We will give a more complicated statement next for a 
  flow equivalence which need not respect component order.   
  By Proposition \ref{pro:right form},
  $G$-SFTs can be presented by matrices in the form addressed by
  Theorem \ref{classification2}.  Therefore, 
  Theorem \ref{classification2} (as elaborated in Theorem
  \ref{classification}) gives
  a classification of $G$-SFTs up to $G$-flow equivalence. 

If $\mathcal P=\{1,\dots,N\}$ and $\mathcal P'=\{1,\dots,N\}$ are finite posets given by partial order relations $\preceq$ satisfying $i\preceq j\implies i\le j$, $\alpha:\mathcal{P}\to\mathcal{P'}$ is a poset isomorphism, and $\mathbf n=(n_1,\dots,n_N)$ is a positive vector, then we denote by $\alpha^*(\mathbf n)$ the vector $(m_1,\dots,m_{N})$ with $m_i=n_{\alpha^{-1}(i)}$, and we let $Q^{\mathbf n}_\alpha$ be the $n\times n$ matrix (where $n=\sum_{j=1}^Nn_j=\sum_{i=1}^{N}m_i$) with entries $q_{rs}=1$ if $r=l+\sum_{j=1}^{\alpha(i)-1}m_j$ and $s=l+\sum_{k=1}^{i-1}n_k$ for some $i\in\mathcal P$ and some $l\in\{1,\dots,n_i\}$, and 0 otherwise. Then $(Q^{\mathbf n}_\alpha)^{-1}AQ^{\mathbf n}_\alpha\in\mathcal M_{\mathcal P}(\mathbf n,\Z_+G)$ whenever $A\in\mathcal M_{\mathcal P'}(\alpha^*(\mathbf n),\Z_+G)$.

\begin{theorem} \label{classification} 
Let $G$ be a finite group, let $\mathcal P=\{1,\dots,N\}$ and $\mathcal P'=\{1,\dots,N'\}$ be finite posets given by partial order relations $\preceq$ satisfying $i\preceq j\implies i\le j$, let $\mathcal C$ and $\mathcal C'$ be subsets of $\mathcal P$ and $\mathcal P'$ respectively, and let $\mathcal H=\{H_{ij}\}_{i,j\in\mathcal P}$ and $\mathcal H'=\{H'_{ij}\}_{i,j\in\mathcal P'}$ be $(G,\mathcal P)$ and $(G,\mathcal P')$ coset structures.	
	
Suppose $A\in\mathcal{M}_{\mathcal{P}}^o(\mathcal{C},\mathbf n,
\mathcal{H})$ and $B\in\mathcal{M}_{\mathcal{P'}}^o(\mathcal{C'},\mathbf n',\mathcal{H'})$ are
$\mathcal C_1$ 
stabilizations of nondegenerate 
matrices.
Then  the following are equivalent. 
\begin{enumerate} 
\item 
The $G$-SFTs $T_A$ and $T_B$ 
are $G$-flow equivalent. 
\item 
  There exists a poset isomorphism $\alpha:\mathcal{P}\to\mathcal{P'}$ such that $\alpha(\mathcal{C})=\mathcal{C'}$, and there exists
$\gamma\in G^N$
  such that $H_{ij}=\gamma_i^{-1}H'_{\alpha(i)\alpha(j)}\gamma_j$ for $i,j\in\mathcal{P}$ with $i\preceq j$, and such that 
for the matrix $$C= (D^{\mathbf n}_{\gamma})^{-1} (Q^{\mathbf n}_{\alpha})^{-1} B Q^{\mathbf n}_{\alpha} D^{\mathbf n}_{\gamma}$$
the following holds: 
there exist 
 $\mathbf m$ and $\mathcal C_1$ 
stabilizations $A^{<0>}$, $C^{<0>}$ 
in $\mopcmh$ of $A,C$ 
such that the following holds: \\ 
the matrices 
$I-A^{<0>}$ and $I-C^{<0>}$ 
are $\elpmh$-equivalent. 
\end{enumerate} 
\end{theorem}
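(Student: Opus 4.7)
\medskip

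The strategy is to reduce Theorem \ref{classification} to the already-established Theorem \ref{classification2} by absorbing the extra freedom (the poset isomorphism $\alpha$ and the diagonal $G$-cohomology adjustment $\gamma$) into a preliminary transformation of $B$ into a matrix $C$ that lives in exactly the same world as $A$, namely in $\mopcnh$. Once $A$ and $C$ are in the same $\mopcnh$, Theorem \ref{classification2} handles the component-order-respecting classification, and the reductions $B \leadsto C$ will be visibly $G$-conjugacies (hence $G$-flow equivalences).

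For $(2)\Rightarrow(1)$, the matrix $C = (D^{\mathbf n}_{\gamma})^{-1}(Q^{\mathbf n}_{\alpha})^{-1} B Q^{\mathbf n}_{\alpha} D^{\mathbf n}_{\gamma}$ is obtained from $B$ by two operations. Conjugation by the permutation $Q^{\mathbf n}_{\alpha}$ is just a relabeling of vertices that reorders blocks according to $\alpha$; by Facts \ref{groupexfacts} this gives a $G$-conjugacy $T_{(Q^{\mathbf n}_{\alpha})^{-1} B Q^{\mathbf n}_{\alpha}} \cong T_B$. Conjugation by $D^{\mathbf n}_{\gamma}$ replaces the cocycle $\tau$ by the cohomologous cocycle $\psi^{-1}\tau\psi\circ\sigma$ with $\psi(x)=\gamma_{i(x_0)}$, so by Facts \ref{groupexfacts} it yields another $G$-conjugacy (equivalently, a positive $\Z G$ equivalence by Proposition \ref{cohomologyaspositive}). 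The hypothesis $H_{ij}=\gamma_i^{-1}H'_{\alpha(i)\alpha(j)}\gamma_j$ is exactly what is needed to ensure that the combined transformation lands $C$ in $\mopcnh$, with the same coset structure $\mathcal H$ as $A$ and with $\alpha(\mathcal C)=\mathcal C'$ matching up cycle components. Now apply Theorem \ref{classification2} to $A$ and $C$: the $\mathcal C_1$ stabilizations $A^{<0>},C^{<0>}$ and the $\elpmh$ equivalence are exactly the input required there, producing a component-order-respecting $G$-flow equivalence $T_A\to T_C$. Composing with $T_C\sim T_B$ yields (1).

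For $(1)\Rightarrow(2)$, a $G$-flow equivalence $\phi\colon T_A\to T_B$ induces a poset isomorphism $\alpha\colon\mathcal{P}\to\mathcal{P}'$ as recorded in Section \ref{posetblocksubsec}. Cycle components are preserved by any flow equivalence (they are intrinsically characterized as those irreducible components whose SFT is a single periodic orbit crossed with $G$), and the free $G$-action respects them, so $\alpha(\mathcal C)=\mathcal C'$. Form the permutation similarity $B'':=(Q^{\mathbf n}_{\alpha})^{-1} B Q^{\mathbf n}_{\alpha}$ (after a preliminary $\mathcal C_1$ stabilization of $B$ to align its size vector with $\alpha^*(\mathbf n)$); then $B''\in \mathcal M^o_{\mathcal P}(\mathcal C,\mathbf n,\alpha^*\mathcal H')$, where $(\alpha^*\mathcal H')_{ij}:=H'_{\alpha(i)\alpha(j)}$ is visibly a $(G,\mathcal P)$ coset structure. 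The relabeling is a $G$-conjugacy $T_{B''}\cong T_B$, so we have a $G$-flow equivalence $T_A\to T_{B''}$ that now \emph{does} respect the component ordering of $\mathcal P$. Theorem \ref{classification2} applied to $A\in\mopcnh$ and $B''\in\mathcal M^o_{\mathcal P}(\mathcal C,\mathbf n,\alpha^*\mathcal H')$ then delivers $\gamma\in G^N$ and the required $\mathcal C_1$ stabilizations and $\elpmh$ equivalence. Because the matrix $C=(D^{\mathbf n}_\gamma)^{-1}B''D^{\mathbf n}_\gamma$ produced by Theorem \ref{classification2} must land in $\mopcnh$ (same coset structure $\mathcal H$), the identity $H_{ij}=\gamma_i^{-1}H'_{\alpha(i)\alpha(j)}\gamma_j$ is forced, completing the verification of (2).

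The main obstacle is the bookkeeping around coset structures and stabilization dimensions when $\mathbf n$ and $\mathbf n'$ do not literally match. One must check that the pullback $\alpha^*\mathcal H'$ really is a $(G,\mathcal P)$ coset structure (the multiplicative inclusion $H_{ij}H_{jk}\subset H_{ik}$ passes through $\alpha$ by its order-preservation), and that the flexibility in $\mathbf m$ allowed by the definition of $\elph$ equivalence lets us take simultaneous stabilizations of $A$ and of $(Q^{\mathbf n}_\alpha)^{-1}BQ^{\mathbf n}_\alpha$ to a common size vector in $\mopcmh$ without disturbing the $\mathcal C_1$ condition or the coset assignments; given these routine verifications, Theorem \ref{classification} is a clean transcription of Theorem \ref{classification2} along the identifications furnished by $\alpha$ and $\gamma$.
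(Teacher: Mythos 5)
Your overall strategy --- to reduce the general statement to the component-order-respecting special case by absorbing $\alpha$ into a permutation conjugacy and $\gamma$ into a diagonal conjugacy --- is conceptually sound and is in fact the right way to \emph{derive} Theorem~\ref{classification2} once Theorem~\ref{classification} is known. The problem is that you have the dependency running in the wrong direction. In this paper Theorem~\ref{classification2} is stated first but is \emph{not} given an independent proof anywhere; the text explicitly frames it as \lq\lq Theorem~\ref{classification2} (as elaborated in Theorem~\ref{classification})\rq\rq, and the only proof actually carried out in Section~\ref{proof} is of Theorem~\ref{classification}, proceeding directly from Theorem~\ref{necessary} for $(1)\implies(2)$ and from the Factorization Theorem~\ref{theoremfactor} (together with Propositions~\ref{cohomologyaspositive} and \ref{permutationprop}) for $(2)\implies(1)$. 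So when you write that you are reducing to \lq\lq the already-established Theorem~\ref{classification2},\rq\rq\ the argument is circular: there is no independent establishment of Theorem~\ref{classification2} to appeal to, and any such establishment would inevitably require invoking the very same Theorems~\ref{necessary} and \ref{theoremfactor}, so nothing would be saved.

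If you wanted to push this route honestly, you would first have to prove Theorem~\ref{classification2} as a standalone result; but its $(1)\implies(2)$ direction is essentially the content of Theorem~\ref{necessary} restricted to order-respecting flow equivalences, and its $(2)\implies(1)$ direction is exactly Theorem~\ref{theoremfactor} plus Proposition~\ref{cohomologyaspositive}, so you would simply be reconstructing the paper's proof with an extra layer of indirection. It is cleaner to do what the paper does: observe that Theorem~\ref{necessary} already produces (given a $G$-flow equivalence) the poset isomorphism $\alpha$, the element $\gamma$, the coset-structure identity $H_{ij}=\gamma_i^{-1}H'_{\alpha(i)\alpha(j)}\gamma_j$, and the $\mathcal C_1$ stabilizations with a positive $\elpmh$ equivalence --- giving $(1)\implies(2)$ in one stroke --- and that for $(2)\implies(1)$ the Factorization Theorem upgrades the $\elpmh$ equivalence to a positive one (hence a $G$-flow equivalence $T_A\to T_C$), after which Propositions~\ref{cohomologyaspositive} and \ref{permutationprop} give the $G$-flow equivalence $T_C\to T_B$. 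As a minor further point, your size-alignment step \lq\lq after a preliminary $\mathcal C_1$ stabilization of $B$ to align its size vector with $\alpha^*(\mathbf n)$\rq\rq\ is not always achievable by stabilizing $B$ alone --- if some $n'_{\alpha(i)}>n_i$ you must stabilize $A$ as well --- and is one of the places where the paper's proof of Theorem~\ref{necessary} (Step~3) does real bookkeeping that you defer to \lq\lq routine verification.\rq\rq
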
 

In Theorem \ref{classification}, 
the implication $(1) \implies (2)$ is a part of 
the more general result Theorem \ref{necessary}. 
The implication $(2) \implies (1)$
is a consequence 
of a much stronger constructive statement, 
the Factorization Theorem  \ref{theoremfactor}.  
We therefore postpone the proof of Theorem \ref{classification} to
Section \ref{proof}.

After finite reductions,
  it is now clear that there is a procedure for determining $G$-flow
  equivalence of $T_A$ and $T_B$ (hence of $G$-SFTs) if
  there is a procedure for answering the following.
\begin{question} \label{decisionquestion}
  Let $G$ be a finite group.
  Given a $(G,\mathcal P)$
  coset structure $\mathcal H$, $\mathcal C \subset \mathcal P$
  and $\mathcal C_1$ matrices $A,C$ in
  $\mopcnh$, does there exist a procedure to decide
  whether the following holds: 
there exists  $\mathbf m$, 
   and $\mathcal C_1$ 
stabilizations $A^{<0>},C^{<0>}$ 
in $\mopcmh$ of $A,C$,  
such that 
the matrices 
$I-A^{<0>}$ and $I-C^{<0>}$ 
are $\elpmh$-equivalent.
\end{question}

We will give a more algebraically phrased
  question next for future reference. 
  An answer yes to Question \ref{decisionquestion2} gives
  an answer yes to Question \ref{decisionquestion}
(using $L= I-A$, $M = I-B$). 
  \begin{question} \label{decisionquestion2}
  Let $G$ be a finite group.
  Given a $(G,\mathcal P)$
  coset structure $\mathcal H$, $\mathcal C \subset \mathcal P$,
and  $\mathbf n$ such that $n_i=1$ for $i\in \mathcal C$, 
  and matrices $L,M$ in
  $\mpnh$, 
 is there  a procedure to decide
  whether the following holds: 
  there exist  $\mathbf m$, 
  with $m_i=1$ for $i\in \mathcal C$,
  and 
1-stabilizations $L^{<1>},M^{<1>}$ of $L,M$ 
in $\mpmh$,  
such that 
the matrices 
$L^{<0>}$ and $M^{<0>}$ 
are $\elpmh$-equivalent.
\end{question}

There is} an affirmative answer to Question \ref{decisionquestion2}
for $G=\{e\}$ (see \cite{bs:decide}),
   and perhaps for all
   $G$.
  In some cases the invariants 
for SFTs (which we can regard as $G$-SFTs with $G=\{e\}$)
have allowed practical 
computation of examples and subclasses (see e.g. 
\cite{dh:fersft,mbdh:pbeim}). 
We note that 
if $G$ is abelian, then there
are only finitely many $G$-flow equivalence classes 
of $G$-SFTs defined by matrices $A$ 
for which the diagonal block determinants of $I-A$ are 
prescribed regular elements (i.e.,
are not zero-divisors) in $\Z G$ (Theorem \ref{nonzerodet}).

For the case $G=\{e\}$, there is 
 a complicated diagram of homomorphisms 
of finitely generated abelian groups 
(the {\it reduced  K-web} of 
 \cite{mbdh:pbeim}, useful for applications to 
Cuntz-Krieger algebras as explained in \cite{seagrer}} which 
(with regard to an appropriate notion of diagram isomorphism) 
is a complete invariant 
for the $\elph$-equivalence.
For general $G$, one can define a $K$-web invariant 
in the same way, using $\ZZ G$-modules and module homomorphisms 
in place of abelian groups and homomorphisms of abelian groups. 
However, this invariant is
no longer complete, because new obstructions arise to passing from
diagram isomorphism to the elementary matrix equivalence. 
(We thank Takeshi Katsura for showing us examples 
of this.) 
Developing a complete invariant 
from the  $\ZZ G$ $K$-web by characterizing the allowed diagram 
isomorphisms is a nontrivial but perhaps accessible
problem. 

We use the $\mathcal C_1$ condition 
in Theorem \ref{classification} 
to get a precise
characterization  in terms of matrix equivalence. 
To see that Theorem \ref{classification} 
would be false if the $\mathcal C_1$ condition were dropped, 
consider the following example: 
\begin{align} 
(I-A) \, U & = I-B \\ \notag 
\begin{pmatrix} 
1-g & -e & -e & 0 \\
0     & 0 & 0   & -e \\
0     & 0 & 0   & -e  \\
0     & 0 & 0  & 1-h  
\end{pmatrix} 
\begin{pmatrix} 
1     & 0 & 0 & 0 \\
0     & 2 & 1 & 0 \\
0     & 1 & 1 & 0  \\
0     & 0 & 0 & 1  
\end{pmatrix} 
&= 
\begin{pmatrix} 
1-g & -3e & -2e & 0 \\
0     & 0 & 0   & -e \\
0     & 0 & 0   & -e  \\
0     & 0 & 0  & 1-h  
\end{pmatrix}  \ . 
\end{align}
Here $e$ is the identity in $G$ ($e=1$ in $\Z G$) 
and $U\in \elph$, for a coset structure
$\mathcal H$ for $A$. But $A$ and $B$  present skewing functions 
on SFTs with 2 and 5 infinite orbits,
respectively,  so $T_A$ and $T_B$ are certainly not $G$-flow
equivalent.  

In the  case $G=\{e\}$ 
we
understand by  \cite[Theorem 3.3]{mb:fesftpf}
 exactly    when an 
$\elph$-equivalence is a positive  
$\elph$-equivalence (i.e., arises from a 
$G$-flow equivalence). We do not know how to do the same for nontrivial $G$,
if a block $A\{i,i\}$ defining a cycle component is allowed to be
  larger than $1\times 1$.
We need to require the $\mathcal C_1$ condition
to be able to prove in  
the Factorization Theorem \ref{theoremfactor} below, 
that every $\elph$-equivalence is a positive  
$\elph$-equivalence.

Partly because of complications arising from 
cycle components, we avoid the 
infinite matrices used in \cite{mb:fesftpf} and \cite{BSullivan}
to describe stabilizations.

\section{From $G$-flow equivalence to positive $\elph$ 
equivalence} \label{sec:1to2}

We will now present and prove Theorem \ref{necessary}, from which we directly get the implication $(1) \implies (2)$ in Theorem \ref{classification}.

First we introduce Condition $\mathcal C_{1+}$, which we shall use in the proof of Theorem \ref{necessary}.

\begin{definition} \label{conditionc1+}
Let $G$ be a finite group, let $\mathcal P=\{1,\dots,N\}$ be a finite poset given by a partial order relation $\preceq$ satisfying $i\preceq j\implies i\le j$, and let $\mathbf n = (n_1, \dots , n_N)$ be a vector of positive integers.
 
Given $\mathcal C \subset \mathcal P$, 
a matrix $ M$ in $\mathcal M_{\mathcal P}  (\mathbf n, \Z G) $  satisfies  
Condition $\mathcal C_{1+}$ if 
for every $i$ in $\mathcal C$ there exists $s_i$ in $\mathcal I_i$ 
 such that the following hold. 
\begin{enumerate} 
\item 
If $\{s,t\}\subset \mathcal I_i$ and $M(s,t)\neq 0$, 
then $(s,t)=(s_i,s_i)$. 
\item 
If $s\in \mathcal I_i$, with 
$M(s,t)\neq 0$ or $M(t,s)\neq 0$, then  
 $s= s_i$. 
\end{enumerate} 
\end{definition} 
Notice that if $M$ is a stabilization of a $\mathcal C_1$ matrix, then $M$ satisfies condition $\mathcal C_{1+}$.

\begin{theorem} \label{necessary} 
Let $G$ be a finite group, let $\mathcal P=\{1,\dots,N\}$ and $\mathcal P'=\{1,\dots,N'\}$ be finite posets given by partial order relations $\preceq$ satisfying $i\preceq j\implies i\le j$, let $\mathcal C$ and $\mathcal C'$ be subsets of $\mathcal P$ and $\mathcal P'$ respectively, and let $\mathcal H=\{H_{ij}\}_{i,j\in\mathcal P}$ and $\mathcal H'=\{H'_{ij}\}_{i,j\in\mathcal P'}$ be $(G,\mathcal P)$ and $(G,\mathcal P')$ coset structures.	
	
Suppose $A\in\mathcal{M}_{\mathcal{P}}^o(\mathcal{C},\mathbf n,\mathcal{H})$ and $B\in\mathcal{M}_{\mathcal{P'}}^o(\mathcal{C'},\mathbf n',\mathcal{H'})$ are stabilizations of nondegenerate matrices, and $T_A$ and $T_B$ are $G$-flow equivalent.
Then there exist a poset isomorphism $\alpha:\mathcal{P}\to\mathcal{P'}$ such that $\alpha(\mathcal{C})=\mathcal{C'}$ and  $\gamma=(\gamma_1,\dots,\gamma_N)\in G^N$ (where $N$ is the number of elements of $\mathcal{P}$) such that $H_{ij}=\gamma_i^{-1}H'_{\alpha(i)\alpha(j)}\gamma_j$ for $i,j\in\mathcal{P}$ with $i\preceq j$, and such that 
for the matrix $$C= (D^{\mathbf n}_{\gamma})^{-1} (Q^{\mathbf n}_{\alpha})^{-1} B Q^{\mathbf n}_{\alpha} D^{\mathbf n}_{\gamma}$$
the following holds: 
there exist 
 $\mathbf m$ and 
stabilizations $A^{<0>},C^{<0>}$ 
in $\mopcmh$ of $A,C$ 
with a positive $\elpmh$-equivalence  
$(U,V): \rep{I-A^{<0>}}  \to 
\rep{I-C^{<0>}}$. 

Moreover, if  $A$ and $B$ satisfy Condition 
$\mathcal C_1$, 
 then the matrices $A^{<0>}$, $C^{<0>}$ can be 
chosen to satisfy Condition $\mathcal C_1$. 
\end{theorem}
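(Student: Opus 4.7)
The plan is to decompose the argument into three steps: extract combinatorial and cohomological invariants from the $G$-flow equivalence; absorb the resulting permutation and diagonal conjugations into $B$ via positive $\ZZ G$ equivalences to obtain $C$ in $\mopcnh$; and then observe that any positive $\ZZ G$ equivalence between matrices both lying in $\mpmh$ is automatically a positive $\elpmh$ equivalence.

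First, from the $G$-flow equivalence $T_A\to T_B$ I would extract $\alpha$ and $\gamma$. The equivalence descends (via the free $G$-action) to a flow equivalence of the quotient SFTs $\sigma_{\overline A}$ and $\sigma_{\overline B}$ and therefore induces a poset isomorphism $\alpha\colon\mathcal P\to\mathcal P'$ between the component posets. It restricts to a bijection $\mathcal C\to\mathcal C'$, because a component is a cycle component exactly when the augmentation of its irreducible core is a cyclic permutation matrix---that is, when it corresponds to a single periodic orbit---and this property is preserved by flow equivalence. The coset structure class is likewise a $G$-flow equivalence invariant: the $H_i$ encode $G$-holonomies of periodic orbits in component $i$ and the $H_{ij}$ encode $G$-weights of transition orbits, and these are cohomology data of the skewing function preserved under the positive $\ZZ G$ equivalences which realize $G$-flow equivalences (cf.\ Facts \ref{groupexfacts}). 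Comparing the coset structure of the reindexed matrix $(Q^{\mathbf n}_\alpha)^{-1}B\,Q^{\mathbf n}_\alpha$ against $\mathcal H$ then produces $\gamma\in G^N$ satisfying $H_{ij}=\gamma_i^{-1}H'_{\alpha(i)\alpha(j)}\gamma_j$ whenever $i\preceq j$.

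Second, with $C:=(D^{\mathbf n}_\gamma)^{-1}(Q^{\mathbf n}_\alpha)^{-1}B\,Q^{\mathbf n}_\alpha D^{\mathbf n}_\gamma$, a direct check using the definition of $\gamma$ shows $C\in\mopcnh$. The permutation appendix and Proposition \ref{cohomologyaspositive} supply positive $\ZZ G$ equivalences realizing the permutation and diagonal conjugations, so $I-B$ is positively $\ZZ G$ equivalent to $I-C$; combined with the positive $\ZZ G$ equivalence realizing the original $G$-flow equivalence between appropriate stabilizations, one obtains a positive $\ZZ G$ equivalence $(U,V)\colon I-A^{<0>}\to I-C^{<0>}$ for some $A^{<0>},C^{<0>}\in\mopcmh$. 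The crux is that $\mpmh$ is closed under basic positive $\ZZ G$ equivalences: for a row-cut $(E_{st}(g),I)$ with $g$ a summand of $A(s,t)\in\Z H_{st}$, the new entries $A(s,r)+gA(t,r)$ lie in $\Z H_{sr}+g\,\Z H_{tr}\subseteq\Z H_{sr}$ by the coset axiom $H_{st}H_{tr}\subseteq H_{sr}$; the block-triangular shape is preserved since $i(s)\preceq i(t)\preceq i(r)$ implies $i(s)\preceq i(r)$; and column cuts are symmetric. Consequently every intermediate matrix of $(U,V)$ lies in $\mpmh$ and every basic factor is a basic positive $\elpmh$ equivalence, so $(U,V)$ is a positive $\elpmh$ equivalence.

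For the $\mathcal C_1$ refinement, $C$ inherits $\mathcal C_1$ from $B$ since permutation and diagonal conjugation preserve block sizes, and the stabilizations $A^{<0>},C^{<0>}$ can be chosen to enlarge only non-cycle blocks---cycle blocks consist of a single vertex whose associated periodic orbit is rigid and requires no splitting in the chain of positive equivalences---hence $A^{<0>},C^{<0>}$ satisfy $\mathcal C_1$. The principal obstacle I anticipate is the rigorous verification that the coset structure class is preserved under an arbitrary $G$-flow equivalence; while the intuition is that it records cohomology data of the skewing function, a clean proof requires tracking cohomology classes through elementary operations generating $G$-flow equivalence. Once this invariance and the closure of $\mpmh$ under basic positive equivalences are in hand, the remaining bookkeeping is routine.
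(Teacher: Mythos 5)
Your closure claim---\textbf{the crux of your Step 2}---is argued only for forward basic positive equivalences, and it fails for backward ones. The paper's basic positive $\ZZ G$ equivalences include the backward equivalences $(E_{st}(g)^{-1},I)\colon E(I-A)\to (I-A)$ and $(I,E_{st}(g)^{-1})$, in which $g$ is a summand of the \emph{target} matrix $A(s,t)$, not the source. Your argument that a row cut stays in $\mpmh$ uses that $g$ is a summand of an entry $M(s,t)\in\ZZ H_{i(s),i(t)}$ of the \emph{source} $M\in\mpmh$, forcing $g\in H_{i(s),i(t)}$. For a backward step from a source $M\in\mpmh$ this is not available: one only knows $g$ is a summand of the target entry, and the target is what one is trying to show lies in $\mpmh$. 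Concretely, suppose $t$ is an isolated index of $M\in\mopcmh$ (row $t$ and column $t$ of $M$ vanish), which happens precisely for the padding indices of a stabilization. Then for any $g\in G$, $(E_{st}(g)^{-1},I)$ applied to $I-M$ produces $I-M'$ with $M'=M$ except that $M'(s,t)=M(s,t)+g$; this satisfies the definition of a backward basic positive $\ZZ G$ equivalence ($M'\geq 0$ and $g$ is a summand of $M'(s,t)$), but if $g\notin H_{i(s),i(t)}$ then $M'\notin\mpmh$. Therefore an arbitrary positive $\ZZ G$ equivalence between two matrices in $\mopcmh$ need not stay in $\mpmh$, and your sentence ``Consequently every intermediate matrix of $(U,V)$ lies in $\mpmh$'' does not follow. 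This is the same phenomenon that makes the whole classification nontrivial: one cannot pass from an unstructured equivalence to a structured one for free, which is exactly what the paper's Factorization Theorem \ref{theoremfactor} is about.

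What the paper actually does in Theorem \ref{necessary} is constructive: it runs the Parry--Sullivan decomposition of the flow equivalence (higher block presentations, return to a discrete cross section, resolving tower, matrix cohomology) and, at every stage, verifies explicitly that the resulting matrices remain in $\moph$ and that the elementary moves used are in $\elph$, invoking Propositions \ref{labellift}, \ref{permutationprop}, and \ref{cohomologyaspositive}. Your Step 1 derivation of $\gamma$ is also short-circuited: the invariance of the coset structure class is not an a priori invariant one can simply ``compare''; in the paper $\gamma$ comes out of a genuine cohomology argument (Parry's rigidity for the transfer function, applied after resolving to a common left- and right-resolving tower), and only then does one know that $D D^{\mathbf r}_\gamma$ has entries in the $H_i$ so that Proposition \ref{cohomologyaspositive} applies. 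You do flag this last point as an anticipated obstacle, which is good; but the unflagged closure gap is the more serious one since it is presented as routine. Finally, your $\mathcal C_1$ remark that cycle blocks ``require no splitting'' is not correct in general; the paper's Steps 5--8 are devoted precisely to redoing the construction so that the cycle blocks of $U$ and $V$ are unipotent upper triangular and can then be cleared, and this requires careful choices at every stage.
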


 Before we give the proof, which  is a nontrivial elaboration of 
the proof for the case that $A,B$ are essentially 
irreducible \cite [Proposition 4.7]{BSullivan}, we outline the argument.

A discrete cross-section for a homeomorphism $T:X\to X$ of a compact zero-dimensional metrizable space $X$ is a clopen subset $K\subset X$ such that every point of $X$ is mapped into $K$ by some positive power of $T$. In this case, for $x\in K$ there is a smallest positive integer $\rho_K (x)$ such that $T^{\rho_K(x)}$ is in $K$ and the return map $R_K: K\to K$ is then the map $ x\to T^{\rho_K(x)} (x)$ (see for example \cite{bce:fei} for details). If $T$ is an SFT, then $R_K$ is again SFT. If $K$ is a $G$-invariant discrete cross-section for $T_A$, then there is a (unique) discrete cross-section $C$ for $\sigma_{\overline{A}}$ such that $K=C\times G$ and $$R_K((x,g))=(R_C(x),g\tau_A(x)\tau_A(\sigma_{\overline{A}}(x))\dots\tau_A(\sigma_{\overline{A}}^{\rho_C(x)-1}(x)))$$ for $x\in C$ and $g\in G$.

The Parry-Sullivan argument \cite{parrysullivan} shows that 
any  flow equivalence of mapping tori of SFTs 
is isotopic to one which is 
induced by a conjugacy of return 
maps to discrete cross-sections (again, see for example \cite{bce:fei} for details).  
It follows that since $T_A$ and $T_B$ are $G$-flow equivalent, there exist $G$-invariant discrete cross-sections $K_A$ and $K_B$ for $T_A$ and $T_B$ such that the return maps $R_{K_A}$ and $R_{K_B}$ are $G$-conjugate. Let $C_A$ and $C_B$ be discrete cross-sections for $\sigma_{\overline{A}}$ and $\sigma_{\overline{B}}$ such that $K_A=C_A\times G$ and $K_B=C_B\times G$.

Our strategy is to first construct matrices $A^{<1>},A^{<2>}\in\mopch$ such that $A^{<2>}$ presents the $G$-SFT $R_{K_A}$, and such that there are  positive $\elph$-equivalences from $I- A$ to $I- A^{<1>}$ and from $I- A^{<1>}$ to $I- A^{<2>}$ (this is done in Step 1 and Step 2). Similarly, we get matrices $B^{<1>},B^{<2>}\in\mathcal{M}_{\mathcal{P'}}^o(\mathcal{C'},\mathcal{H'})$ such that there is a positive $\elphp$-equivalence $\rep{I-B}\to \rep{I-B^{<1>}}$ and a positive $\elphp$-equivalence $\rep{I-B^{<1>}}\to \rep{I-B^{<2>}}$, and such that $T_{B^{<2>}}$ is $G$-conjugate to $R_{K_B}$. Since $R_{K_A}$ and $R_{K_B}$ are $G$-conjugate, it follows that $T_{A^{<2>}}$ and $T_{B^{<2>}}$ are $G$-conjugate. We use this in Step 3 to construct matrices $A^{<3>}\in\mathcal{M}_{\mathcal{P}}^o(\mathcal{C}, \mathbf r,\mathcal{H})$, $B^{<3>}\in\mathcal{M}_{\mathcal{P'}}^o(\mathcal{C'}, \mathbf r',\mathcal{H'})$ and a poset isomorphism $\alpha:\mathcal{P}\to\mathcal{P'}$ such that $\alpha^*(\mathbf r)=\mathbf r'$ and $\alpha(\mathcal{C})=\mathcal{C'}$, and such that there is a positive $\elph$-equivalence from $I-A^{<2>}$ to $I-A^{<3>}$, a positive $\elphp$-equivalence from $I-B^{<2>}$ to $I-B^{<3>}$, and such that $\overline{A^{<3>}}=(Q^{\mathbf r}_\alpha)^{-1}\overline{B^{<3>}}Q^{\mathbf r}_\alpha$ and on this common domain the matrices $A^{<3>}$ and $(Q^{\mathbf r}_\alpha)^{-1}B^{<3>}Q^{\mathbf r}_\alpha$ define skewing functions which are cohomologous. In Step 4 we then find a vector $\gamma \in G^N$ such that $H_{ij}=\gamma_i^{-1}H'_{\alpha(i)\alpha(j)}\gamma_j$ for $i,j\in\mathcal{P}$ with $i\preceq j$, and such that there are positive $\elph$-equivalences
\begin{align*}
 & \rep{I-A^{<3>}} \ \to\ 
  \rep{I- (D_{\gamma}^{\mathbf r})^{-1}(Q^{\mathbf r}_\alpha)^{-1}B^{<3>}Q^{\mathbf r}_\alpha D_{\gamma}^{\mathbf r}} \ \ \ \text{and } \\
  &  \rep{I- (D_{\gamma}^{\mathbf r})^{-1}(Q^{\mathbf r}_\alpha)^{-1}B^{<3>}Q^{\mathbf r}_\alpha D_{\gamma}^{\mathbf r}} \ \to \ \rep{I-C} \ ,
  \end{align*}
where $C= (D^{\mathbf n}_{\gamma})^{-1} (Q^{\mathbf n}_{\alpha})^{-1} B Q^{\mathbf n}_{\alpha} D^{\mathbf n}_{\gamma}$. This completes the proof of the first half of the theorem. 

To show that the matrices $A^{<0>},C^{<0>}$ can be 
chosen to satisfy Condition $\mathcal C_1$ if $A$ and $B$ satisfy Condition 
$\mathcal C_1$, we refine the  construction of Steps 1--4 in order to obtain stabilizations $\widetilde A^{<0>}, \widetilde C^{<0>}\in\mathcal M^o_{\mathcal P}(\mathcal C,\mathcal H)$ of $A$ and $C$ and a positive $\textnormal{El}_{\mathcal P}(\mathcal  H)$-equivalence
\[
(\widetilde U,\widetilde V):( I-\widetilde A^{<0>})
\to ( I-\widetilde C^{<0>})
\]such that for every $i\in \mathcal C$ the matrices
$\widetilde U\{i,i\}$, $\widetilde V\{i,i\}$ are the identity matrix. We then get $\mathcal C_1$ stabilizations $A^{<0>},C^{<0>}$ in $\mopcmh$ of $A,C$ and with a positive $\elpmh$-equivalence
\[
(U,V): \rep{I-A^{<0>}}  \to \rep{I-C^{<0>}}
\]
as wanted by letting $A^{<0>}$, $C^{<0>}$, $U$, $W$ be principal submatrices of $\widetilde A^{<0>}$, $\widetilde C^{<0>}$, $\widetilde U$, $\widetilde V$. This is done in Steps 5--8.

\begin{proof}
{\bf Step 1: Higher block presentation.} We begin by constructing a higher block presentation $A^{<1>}$ of $T_A$ such that the discrete cross-section $C_A$ corresponds to a union of vertices in the graph $\mathcal G_{\overline{A^{<1>}}}$, and such that there is a positive $\elph$-equivalence from $I-A$ to $I-A^{<1>}$.

There is a $k$ and a subset $S$ of the $2k+1$-blocks of $X_{\overline{A}}$ such that $C_A=\{x\in X_{\overline{A}}:x[-k,k]\in S\}$. Let $P_{2k+1}$ be the set of paths in $\mathcal G_A$ of length $2k+1$. For $p\in P_{2k+1}$, let $s(p)$ be the initial vertex of the middle edge of $p$. Index the elements of $P_{2k+1}=\{p_1,p_2,\dots,p_n\}$ such that if $s(p_i)<s(p_j)$, then $i<j$. We will now construct an $n\times n$ matrix $A^{<1>}$ over $\Z_+ G$ (actually it will be a matrix over $G$). Let $1\le s,t\le n$. If there exist edges $e,f$ in $\mathcal G_{\overline{A}}$ such that $p_se=fp_t$, then the $(s,t)$ entry of $A^{<1>}$ is the label of the middle edge of $p_s$. If there are no edges $e,f$ in $\mathcal G_{\overline{A}}$ such that $p_se=fp_t$, then the $(s,t)$ entry of $A^{<1>}$ is 0. It follows from Proposition \ref{labellift} that $A^{<1>}\in\mopch$ and that there is a positive $\elph$-equivalence from $I-A$ to $I-A^{<1>}$. Since $A$ is a stabilization of a nondegenerate matrix, it follows that $A^{<1>}$ is nondegenerate.



{\bf Step 2: Discrete cross-section.} 

In this step we produce a nondegenerate matrix $A^{<2>}\in\mopch$ which 
presents the $G$-SFT $R_{K_A}$,  
and explain that there is a
positive $\elph$-equivalence from $I- A^{<1>}$ 
to $I- A^{<2>}$. 

The matrix $A^{<2>}$ is the adjacency matrix of the labelled graph which has vertex set $\{s\in\{1,2,\dots,n\}:p_s\in S\}$ and where there for each path $p$ in $\mathcal G_{A^{<1>}}$ which starts and ends in vertices $s$ and $t$ for which $p_s,p_t\in S$, but which otherwise go through vertices $v$ for which $p_v\notin S$, is an edge from $s$ to $t$ with label equal to the weight $\tau_{A^{<1>}}(p)$ of $p$ (so in particular, if $e$ is an edge in $\mathcal G_{A^{<1>}}$ which starts and ends in vertices $s$ and $t$ for which $p_s,p_t\in S$, then there is an edge in $\mathcal G_{A^{<2>}}$ from $s$ to $t$ with the same label as $e$). We then have that $T_{A^{<2>}}$ is $G$-conjugate to $R_{K_A}$. 

We will now construct a
positive $\elph$-equivalence from $I- A^{<1>}$ 
to $I- A^{<2>}$. This is accomplished by iterating a 
matrix 
move. Given 
a matrix $M$ 
in $\mopcmh$ 
 and a vertex $s$  such that $M(s,s)=0$, 
the move produces a positive $\elpmh$-equivalence 
 $\rep{I-M}\to \rep{I-M_s}$ 
for    a related matrix $M_{s}$
in $\mopcmh$, where $M_{s}$  has row $s$ and column $s$ zero.    

For a description of this move, 
let $M(r,s) =p\neq 0$, let $E_r$ be the basic elementary 
matrix $E_{r,s}(p)$. 
Let $U$ be the product of these $E_r$ 
 (so, $U(r,s)=M(r,s)$ if $r\neq s$, and in other entries 
$U=I$) 
and 
let  $M'$ be the matrix  such that 
$U(I-M)=I-M'$.  Then 
$(U,I): \rep{I-M} \to \rep{I-M'}$,  as a composition of the equivalences 
$(E_r,I)$,  is a positive
$\elpmh$  equivalence.  Column $s$ of $M'$  is zero. 
Row $s$ of $M'$ equals row $s$ of $M$. 

Next, we zero out row $s$ of $M'$. 
Define $V(s,t) = M(s,t) $ if $s\neq t$ and $V=I$ 
otherwise.  Let $M_{(s)}$
be the matrix such that $(I-M')V = I- M_{(s)}$. Then 
$(I,V): \rep{I-M'}\to \rep{I-M_{(s)}}$ is a positive $\elpmh$-equivalence. 
We have 
\begin{equation*} 
  M_{(s)} (r,t)  = 
  \begin{cases}
	  M(r,t) + M(r,s)M(s,t) &\text{ if }r\neq s\neq t \\
		0 &\text{ if }r=s \text{ or }s=t.
  \end{cases}
\end{equation*} 
The matrix $M_{(s)}$ presents a skewing function 
into $G$ induced by the return map 
to the clopen set of points $x$ for which the 
initial and terminal vertices of $x_0$ do not 
equal $s$.  

Altogether, for $\{1,\dots,n\}\setminus S=\{s(1), \dots ,s(k)\}$, 
set $M_0=A^{<1>}$, and set 
$M_{i} = (M_{i-1})_{(s(i))}$ for $1\leq i \leq k$. Then we have a positive $\elph$-equivalence from $I- A^{<1>}=1-M_0$ to $I-M_k=I- A^{<2>}$.

{\bf Step 3: The resolving tower and  matrix cohomology. } 
 
Similar to how we constructed $A^{<1>}$ and $A^{<2>}$, we can construct nondegenerate matrices $B^{<1>},B^{<2>}\in\mathcal{M}_{\mathcal{P'}}^o(\mathcal{C'},\mathcal{H'})$ such that there is a positive $\elphp$-equivalence $\rep{I-B}\to \rep{I-B^{<1>}}$ and a positive $\elphp$-equivalence $\rep{I-B^{<1>}}\to \rep{I-B^{<2>}}$, and such that $T_{B^{<2>}}$ is $G$-conjugate to $R_{K_B}$. Since $R_{K_A}$ and $R_{K_B}$ are $G$-conjugate, it follows that $T_{A^{<2>}}$ and $T_{B^{<2>}}$ are $G$-conjugate. It therefore follows from
Proposition \ref{groupexfacts} that there is a topological 
conjugacy 
\[
\phi : X_{\overline{A^{<2>}} } \to X_{\overline{B^{<2>}} }
\]
which takes the skewing function $\tau_{A^{<2>}}$ to 
a function cohomologous to $\tau_{B^{<2>}}$. 
In this step we will construct matrices $A^{<3>}\in\mathcal{M}_{\mathcal{P}}^o(\mathcal{C}, \mathbf r,\mathcal{H})$, $B^{<3>}\in\mathcal{M}_{\mathcal{P'}}^o(\mathcal{C'}, \mathbf r',\mathcal{H'})$ and a poset isomorphism $\alpha:\mathcal{P}\to\mathcal{P'}$ such that $\alpha^*(\mathbf r)=\mathbf r'$ and $\alpha(\mathcal{C})=\mathcal{C'}$, and such that there is a positive $\elphp$-equivalence from $I-A^{<2>}$ to $I-A^{<3>}$, a positive $\elphp$-equivalence from $I-B^{<2>}$ to $I-B^{<3>}$, and such that $\overline{A^{<3>}}=(Q^{\mathbf r}_\alpha)^{-1}\overline{B^{<3>}}Q^{\mathbf r}_\alpha$ and on this common domain the matrices $A^{<3>}$ and $(Q^{\mathbf r}_\alpha)^{-1}B^{<3>}Q^{\mathbf r}_\alpha$ define skewing functions which are cohomologous.

There is a standard decomposition for 
the  topological conjugacy $\phi$ \cite{parrynotes} (see also \cite[Theorem 7.1.2]{dlbm:isdc}). 
It follows from this that there is a matrix $M$ over $\Z_+$, with  
one-block conjugacies (given by graph 
homomorphisms)  $\phi_1: \sigma_M\to \sigma_{\overline{A^{<2>}}}$ 
and $\phi_2: \sigma_M \to \sigma_{\overline{B^{<2>}}}$ such that 
\begin{enumerate} 
\item 
$\phi$ is $\phi_1^{-1}$ followed by 
$\phi_2$, 
\item 
$\phi_1$ is left resolving, 
with  $\phi_1^{-1} $ a composition of 
conjugacies given by row splittings, 
\item  
$\phi_2$ is right resolving, with 
$\phi_2^{-1} $  a composition of 
conjugacies given by column splittings. 
\end{enumerate} 
Since $\phi_1^{-1} $ a composition of conjugacies given by row splittings, it follows that there is a $\Z G$-matrix $A^{<3>}$ such that  $\phi_1^{-1} $ can be lifted to a $G$-conjugacy $\psi_A:T_{A^{<2>}}\to T_{A^{<3>}}$, also given by row splittings, and a permutation matrix $P_A$ such that $P_A^{-1}\overline{A^{<3>}}P_A=M$ and $\psi_A((x,g))=(\eta_{P_A}(\phi_1^{-1}(x)),g)$ for $(x,g)\in X_{\overline{A^{<2>}}}\times G$, where $\eta_{P_A}:\sigma_M\to \sigma_{\overline{A^{<3>}}}$ is the conjugacy given by $P_A$. It follows from Proposition \ref{labellift} that $A^{<3>}\in\mopch$ and that there is a positive $\elph$-equivalence from $I-A^{<2>}$ to $I-A^{<3>}$. Since $A^{<2>}$ is nondegenerate, so is $A^{<3>}$. Choose $\mathbf r$ such that $A^{<3>}\in\mathcal{M}_{\mathcal{P}}^o(\mathcal{C}, \mathbf r,\mathcal{H})$.

Similarly, there is a vector $\mathbf r'$, a nondegenerate $\Z G$-matrix $B^{<2.5>}\in\mathcal{M}_{\mathcal{P'}}^o(\mathcal{C'}, \mathbf r',\mathcal{H'})$, a $G$-conjugacy $\psi_B:T_{B^{<2>}}\to T_{B^{<2.5>}}$, a permutation matrix $P_B$ such that $P_B^{-1}\overline{B^{<2.5>}}P_B=M$ and $\psi_B((x,g))=(\eta_{P_B}(\phi_2^{-1}(x)),g)$ for $(x,g)\in X_{\overline{B^{<2>}}}\times G$, where $\eta_{P_B}:\sigma_M\to \sigma_{\overline{B^{<2.5>}}}$ is the conjugacy given by $P_B$, and a positive $\textnormal{El}_{\mathcal P'}(\mathcal  H')$-equivalence from $I-B^{<2>}$ to $I-B^{<2.5>}$. Let $P=P_BP_A^{-1}$. Then $\overline{A^{<3>}}=P^{-1}\overline{B^{<2.5>}}P$. It follows that there is a poset isomorphism $\alpha:\mathcal{P}\to\mathcal{P'}$ such that $\alpha^*(\mathbf r)=\mathbf r'$ and $\alpha(\mathcal{C})=\mathcal{C'}$, and a permutation matrix $P'\in\mathcal{M}_{\mathcal{P'}}(\mathbf r',\Z_+)$ such that $P=P'Q^{\mathbf r'}_\alpha$. Let $B^{<3>}=(P')^{-1}B^{<2.5>}P'\in\mathcal{M}_{\mathcal{P'}}^o(\mathcal{C'}, \mathbf r',\mathcal{H'})$. It then follows from Proposition \ref{permutationprop} that there is a positive $\textnormal{El}_{\mathcal P'}(\mathbf r',\mathcal  H')$-equivalence $\rep{I-B^{<2.5>}}\to \rep{I-B^{<3>}}$. We furthermore have that
$(Q^{\mathbf r}_\alpha)^{-1}B^{<3>}Q^{\mathbf r}_\alpha\in\mathcal{M}_{\mathcal{P}}^o(\mathcal{C}, \mathbf r,\mathcal{H})$, $(Q^{\mathbf r}_\alpha)^{-1}\overline{B^{<3>}}Q^{\mathbf r}_\alpha=\overline{A^{<3>}}$, and $\tau_{(Q^{\mathbf r}_\alpha)^{-1}B^{<3>}Q^{\mathbf r}_\alpha}$ and $\tau_{A^{<3>}}$ are cohomologous.

{\bf Step 4: $\elph$-equivalence. } 

In this step we complete the proof apart from 
the (nontrivial) ``moreover'' statement. 
We continue with the notation of the last step. 

Let $\psi$ be the continuous function from 
$X_{\overline{A^{<3>}}}$ into $G$ such that 
\[
\tau_{A^{<3>}}(x)= (\psi(x))^{-1}\tau_{(Q^{\mathbf r}_\alpha)^{-1}B^{<3>}Q^{\mathbf r}_\alpha}(x)\psi(\sigma_{\overline{A^{<3>}}}(x) )
\]
for all $x\in X_{\overline{A^{<3>}}}$. 
Then 
proof of Parry for 
\cite[Lemma 9.1]{parrylivsic}  
(translated from his vertex 
SFTs to our edge SFTs) 
shows that if $x\in X_{\overline{A^{<3>}}}$, 
then $\psi (x)$ is determined by the initial vertex of the edge $x_0$. 
Because $A^{<3>}$ and $(Q^{\mathbf r}_\alpha)^{-1}B^{<3>}Q^{\mathbf r}_\alpha$ are nondegenerate, this implies that there is  
%
a diagonal matrix $D$, with each diagonal 
element an element of $G$, such that 
\begin{equation}\label{parrydiagonal} 
D^{-1}A^{<3>}D = (Q^{\mathbf r}_\alpha)^{-1}B^{<3>}Q^{\mathbf r}_\alpha.
\end{equation}

We let $M_i$ denote a diagonal 
block $M\{i , i\}$ of a $\mathcal P$-blocked matrix. 
The matrices $A^{<3>}_i$ and 
$((Q^{\mathbf r}_\alpha)^{-1}B^{<3>}Q^{\mathbf r}_\alpha)_i$ are essentially irreducible and 
the group $H_i$ is a weights 
group for $A^{<3>}_i$ and 
$((Q^{\mathbf r}_\alpha)^{-1}B^{<3>}Q^{\mathbf r}_\alpha)_i$. 
 It then follows from the proof of 
Theorem 4.7 of \cite{BSullivan}  that 
there exists $\gamma \in G^N$
 such that for each $i$ the 
 diagonal matrix 
$(DD_\gamma^{\mathbf r})_i$ has every entry in $H_i$. 
Then 
\begin{align*}
(D_{\gamma}^{\mathbf r})^{-1}(Q^{\mathbf m'}_\alpha)^{-1}B^{<3>}Q^{\mathbf m'}_\alpha D_{\gamma}^{\mathbf r} &=   
(D_{\gamma}^{\mathbf r})^{-1}(D^{-1}A^{<3>}D) D_{\gamma}^{\mathbf r} \\ &=  (D D_{\gamma}^{\mathbf r} )^{-1} A^{<3>}(D D_{\gamma}^{\mathbf r} ).
\end{align*} 
Applying Proposition \ref{cohomologyaspositive},  
we have $(D_{\gamma}^{\mathbf r})^{-1}(Q^{\mathbf r}_\alpha)^{-1}B^{<3>}Q^{\mathbf r}_\alpha D_{\gamma}^{\mathbf r} \in \moph$,
with  a positive $\textnormal{El}_{\mathcal P}(\mathcal  H)$-equivalence 
$$\rep{I-A^{<3>}} 
\to \rep{I- (D_{\gamma}^{\mathbf r})^{-1}(Q^{\mathbf r}_\alpha)^{-1}B^{<3>}Q^{\mathbf r}_\alpha D_{\gamma}^{\mathbf r}}.$$   
So, $H_{ij}=\gamma_i^{-1}H'_{\alpha(i)\alpha(j)}\gamma_j$ for $i,j\in\mathcal{P}$ with $i\preceq j$, and we have positive 
$\elph$-equivalences 
\[
\rep{I-A}\ \to\  \rep{I-A^{<3>}}\ \to \ 
\rep{I- (D_{\gamma}^{\mathbf r})^{-1}(Q^{\mathbf r}_\alpha)^{-1}B^{<3>}Q^{\mathbf r}_\alpha D_{\gamma}^{\mathbf r}}. 
\] 
Let $C= (D^{\mathbf n}_{\gamma})^{-1} (Q^{\mathbf n}_{\alpha})^{-1} B Q^{\mathbf n}_{\alpha} D^{\mathbf n}_{\gamma}$. Then $C\in\moph$ because $B\in\mathcal{M}_{\mathcal{P'}}^o(\mathcal{C'},\mathcal{H'})$, $\alpha$ is a poset isomorphism from $\mathcal P$ to $\mathcal P'$ such that $\alpha(\mathcal{C})=\mathcal{C'}$ and $H_{ij}=\gamma_i^{-1}H'_{\alpha(i)\alpha(j)}\gamma_j$ for $i,j\in\mathcal{P}$ with $i\preceq j$.

It remains 
to show that there is a positive 
$\elph$-equivalence 
from $I-C$ to $I- (D_{\gamma}^{\mathbf r})^{-1}(Q^{\mathbf r}_\alpha)^{-1}B^{<3>}Q^{\mathbf r}_\alpha D_{\gamma}^{\mathbf r}$.  
We have proved that there is
a positive $\textnormal{El}_{\mathcal P'}(\mathcal  H')$-equivalence from $I-B$ to 
$I-B^{<3>}$  given by a path 
\[
\rep{I-B'}\xrightarrow{(E_1,F_1)} \cdot 
\xrightarrow{(E_2,F_2)} \cdots  
\xrightarrow{(E_T,F_T)} 
\rep{I-(B^{<3>})'} \  
\]
in which $B'\in\mathcal M^o_{\mathcal P'}(\mathbf m',\mathcal H')$ is a stabilization of $B$, $(B^{<3>})'\in\mathcal M^o_{\mathcal P'}(\mathbf m',\mathcal H')$ is a stabilization of $B^{<3>}$, and the $E_t$ and $F_t$ are basic elementary 
matrices in $\textnormal{El}_{\mathcal P'}(\mathbf m',\mathcal  H')$. But then 
$E_t':=(D_{\gamma}^{\mathbf p'})^{-1}(Q^{\mathbf p'}_\alpha)^{-1}E_tQ^{\mathbf p'}_\alpha D_{\gamma}^{\mathbf p'}$ and $F_t':=(D_{\gamma}^{\mathbf p'})^{-1}(Q^{\mathbf p'}_\alpha)^{-1}F_tQ^{\mathbf p'}_\alpha D_{\gamma}^{\mathbf p'}$ are  
basic elementary 
matrices in $\textnormal{El}_{\mathcal P}(\mathcal  H)$, and  we have a positive $\textnormal{El}_{\mathcal P}(\mathcal  H)$-equivalence
\begin{multline*}
\rep{I-(D_{\gamma}^{\mathbf p'})^{-1}(Q^{\mathbf p'}_\alpha)^{-1}B'Q^{\mathbf p'}_\alpha D_{\gamma}^{\mathbf p'}}\xrightarrow{(E'_1,F'_1)} \cdot 
\xrightarrow{(E'_2,F'_2)}\\ \cdots  
\xrightarrow{(E'_T,F'_T)} 
\rep{I-(D_{\gamma}^{\mathbf p'})^{-1}(Q^{\mathbf p'}_\alpha)^{-1}(B^{<3>})'Q^{\mathbf p'}_\alpha D_{\gamma}^{\mathbf p'}}.  
\end{multline*}
Since $(D_{\gamma}^{\mathbf p'})^{-1}(Q^{\mathbf p'}_\alpha)^{-1}B'Q^{\mathbf p'}_\alpha D_{\gamma}^{\mathbf p'}$ is a stabilization of $C$ and $$(D_{\gamma}^{\mathbf p'})^{-1}(Q^{\mathbf p'}_\alpha)^{-1}(B^{<3>})'Q^{\mathbf p'}_\alpha D_{\gamma}^{\mathbf p'}$$ is a stabilization of $(D_{\gamma}^{\mathbf r})^{-1}(Q^{\mathbf r}_\alpha)^{-1}B^{<3>}Q^{\mathbf r}_\alpha D_{\gamma}^{\mathbf r}$, this shows that there is a positive 
$\elph$-equivalence 
from $I-C$ to $I- (D_{\gamma}^{\mathbf r})^{-1}(Q^{\mathbf r}_\alpha)^{-1}B^{<3>}Q^{\mathbf r}_\alpha D_{\gamma}^{\mathbf r}$ and thus that there is a positive 
$\elph$-equivalence 
from $I-A$ to $I-C$.

{\bf ``Moreover''.} 
For the rest of the proof, we assume that $A$ and $B$ satisfy Condition 
$\mathcal C_1$ . It remains to show that we can find stabilizations $A^{<0>},C^{<0>}\in\mathcal M^o_{\mathcal P}(\mathcal C,\mathbf m,\mathcal H)$ of $A$, $C$ with a positive $\elpmh$-equivalence  
$(U,V): \rep{I-A^{<0>}}  \to \rep{I-C^{<0>}}$. 

For this we refine the  construction of Steps 1-4 in order to obtain stabilizations $\widetilde A^{<0>}, \widetilde C^{<0>}\in\mathcal M^o_{\mathcal P}(\mathcal C,\mathbf t,\mathcal H)$ of $A$ and $C$ and a positive $\textnormal{El}_{\mathcal P}(\mathbf t,\mathcal  H)$-equivalence  $(\widetilde U,\widetilde V):( I-\widetilde A^{<0>}) \to ( I-\widetilde C^{<0>})$ such that for every $i\in \mathcal C$ the matrices
$\widetilde U\{i,i\}$, $\widetilde V\{i,i\}$ are the identity matrix. We then get $\mathcal C_1$ stabilizations $A^{<0>},C^{<0>}$ in $\mopcmh$ of $A,C$ and with a positive $\elpmh$-equivalence $(U,V): \rep{I-A^{<0>}}  \to \rep{I-C^{<0>}}$ as wanted by letting $A^{<0>}$, $C^{<0>}$, $U$, $W$ be principal submatrices of $\widetilde A^{<0>}$, $\widetilde C^{<0>}$, $\widetilde U$, $\widetilde V$.

{\bf Step 5: Getting a $\mathcal C_u$-equivalence 
$\rep{I-A^{<0>}}\to \rep{I-A^{<3>}}$.}

In this step we will show that the positive equivalence 
$I-A \to I-A^{<3>}$ of Steps 1-3 can be chosen  
such that 
there are stabilizations $A''',{A^{<3>}}'''\in\mathcal M^o_{\mathcal P}(\mathcal C,\mathbf s,\mathcal H)$ of $A$ and $A^{<3>}$ and a positive 
$\textnormal{El}_{\mathcal P}(\mathbf s,\mathcal  H)$-equivalence  $(U_A,V_A):( I-A'''') \to ( I-{A^{<3>}}'''')$ such that for every $i\in \mathcal C$ the matrices
$U_A\{i,i\}$, $V_A\{i,i\}$ are unipotent 
upper triangular. 

Recall that the positive $\elph$-equivalence from $I-A$ to $I-A^{<1>}$ is the composition of positive $\elph$-equivalences $\rep{I-A_t}\to \rep{I-A_{t+1}}$ obtained by applying Proposition \ref{labellift}.
At each stage the $\mathcal P$ blocking of 
$A_{t+1}$ is the lift of the $\mathcal P$ blocking 
of $A_t$. 
At step $t$, there is an index 
$s_t$ such that either 
row $s_t$ of $A_t$ is split into two rows, or column $s_t$ is 
split into two columns.  
We will choose $s_t, 1+s_t$ to be the indices associated 
to the splitting (so, if an index $j$ of $A_t$ is greater than 
$s$, then it corresponds to index $j+1$ of $A_{t+1}$).  
In the case that $s_t$ is an index in a cycle component 
we place additional conditions as follows.  
If $A_t(s_t,s_t) =g\neq 0$,  then we require that 
\[ 
A_{t+1}(s_t +1,s_t+1) = g\quad \text{ and } A_{t+1}(s_t ,s_t) = 0 
\] 
in the case $A_t \to A_{t+1}$ is a column splitting, and 
we require 
\[ 
A_{t+1}(s_t +1,s_t+1) = 0\quad \text{ and } A_{t+1}(s_t ,s_t) = g 
\] 
in the case $A_t \to A_{t+1}$ is a row splitting.

When $A_t$ is upper triangular in its cycle component 
diagonal blocks, it follows 
from Proposition \ref{labellift} that $A_{t+1}$ is as well, 
and that there are unipotent 
upper triangular matrices $U_t,V_t$ such that
\[
(U_t,V_t): 
\rep{I-A_t}\to \rep{I-A_{t+1}}
\]
is a positive $\elph$-equivalence.  
By induction, each cycle component block of 
$A^{<1>}$ is upper triangular, and there is a positive $\elph$-equivalence
\[
(U_1,V_1): \rep{I-A'}\to \rep{I-{A^{<1>}}'}
\]
where $A'$ is a stabilization of $A$, ${A^{<1>}}'$ is a stabilization of $A^{<1>}$, and for every $i\in \mathcal C$ the matrices
$U_1\{i,i\}$, $V_1\{i,i\}$ are unipotent 
upper triangular. 

Next consider the restriction of the Step 2 move $M\to M_{(s)}$ 
to a diagonal block $M\{ i,i\}$ with $i\in \mathcal C$.  
Clearly, if  $M\{ i,i\}$ 
is upper triangular, then 
the trimming matrices 
implementing the positive equivalence $\rep{I-M }\to \rep{I-M_s}$
are unipotent upper triangular, and 
$M_{(s)}\{ i,i\}$ is upper triangular.  
Because $A^{<1>}\{ i, i\}$ is  
 upper triangular, it follows by induction that 
$A^{<2>}\{ i, i\}$ is  
 upper triangular and that there is a positive $\elph$-equivalence
 \[
 (U_2,V_2): \rep{I-{A^{<1>}}''}\to \rep{I-{A^{<2>}}''}
 \]
 where ${A^{<1>}}''$ is a stabilization of $A^{<1>}$, ${A^{<2>}}''$ is a stabilization of $A^{<2>}$, and for every $i\in \mathcal C$ the matrices
$U_2\{i,i\}$, $V_2\{i,i\}$ are unipotent 
upper triangular. 
By the argument for Step 1, 
we will likewise have that 
$A^{<3>}$ upper triangular in each cycle component block and that there is a positive $\elph$-equivalence
\[
(U_3,V_3): \rep{I-{A^{<2>}}'''}\to \rep{I-{A^{<3>}}'''}
\]
where ${A^{<2>}}'''$ is a stabilization of $A^{<2>}$, ${A^{<3>}}'''$ is a stabilization of $A^{<3>}$, and for every $i\in \mathcal C$ the matrices
$U_3\{i,i\}$, $V_3\{i,i\}$ are unipotent 
upper triangular. Putting this together we get there is a 
stabilization $A''''\in\mathcal{M}_{\mathcal{P}}^o(\mathcal{C}, \mathbf s,\mathcal{H})$ of $A$ and a stabilization ${A^{<3>}}''''\in\mathcal{M}_{\mathcal{P}}^o(\mathcal{C}, \mathbf s,\mathcal{H})$ of $A^{<3>}$ such that there is a positive 
$\textnormal{El}_{\mathcal P}(\mathbf s,\mathcal  H)$-equivalence
\begin{equation}\label{steppingup}
(U_A,V_A):( I-A'''') \to ( I-{A^{<3>}}'''')
\end{equation} 
such that for every $i\in \mathcal C$ the matrices
$U_A\{i,i\}$, $V_A\{i,i\}$ are unipotent 
upper triangular. 


By the same argument, 
there are
stabilizations $B''''$, ${B^{<2.5>}}''''\in\mathcal{M}_{\mathcal{P'}}^o(\mathcal{C'}, \mathbf s',\mathcal{H}')$ of $B,B^{<2.5>}$ and a positive 
$\textnormal{El}_{\mathcal P'}(\mathbf s',\mathcal  H')$-equivalence  $$(U_B,V_B):( I-B'''') \to ( I-{B^{<2.5>}}'''')$$ such that for every $i\in \mathcal C'$ the matrices
$U_B\{i,i\}$, $V_B\{i,i\}$ are unipotent 
upper triangular.

{\bf Step 6: Clearing out diagonal $\mathcal C$ blocks in $(U,V)$.}

From Step 5 we have 
a stabilizations $A'''',{A^{<3>}}''''\in\mathcal{M}_{\mathcal{P}}^o(\mathcal{C}, \mathbf s,\mathcal{H})$ of $A,A^{<3>}$ and
the positive 
$\textnormal{El}_{\mathcal P}(\mathbf s,\mathcal  H)$-equivalence
\eqref{steppingup}
such that for every $i\in \mathcal C$ the matrices
$U_A\{i,i\}$, $V_A\{i,i\}$ are unipotent 
upper triangular. In this step we will 
show there is a positive 
$\textnormal{El}_{\mathcal P}(\mathbf s,\mathcal  H)$-equivalence 
$\rep{I-A''''} \to \rep{I-A''''}$ such that precomposing $(U,V)$ with this 
equivalence produces an equivalence
\[
(\widetilde U_A, \widetilde V_A):( I-A'''') \to ( I-{A^{<3>}}'''')
\]
such 
that for all $i$ in $\mathcal C$, the diagonal blocks 
$\widetilde U_A\{i,i\}$ and $\widetilde V_A\{i,i\}$ are the identity matrix. 

So, consider $i\in \mathcal C$. Restricted to the block 
$\rep{I-A''''}\{i,i\}:=\rep{I-M}$, our equivalence $U_A(I-A'''')V_A =\rep{I-{A^{<3>}}''''}$  has
the following  block 
triangular form, with central block $1\times 1$:  
\[
\bpmat 
U_{11} & U_{12} & U_{13} \\ 
0       & U_{22} & U_{23} \\ 
0       & 0        & U_{33}  
\epmat
\bpmat 
I       & 0         & 0 \\ 
0       & 1-g    & 0 \\ 
0       & 0        & I
\epmat
\bpmat 
V_{11} & V_{12} & V_{13} \\  
0       & V_{22} & V_{23} \\ 
0       & 0        & V_{33}  
\epmat 
= 
\bpmat 
I       & 0         & 0 \\  
0       & 1-g    & 0 \\ 
0       & 0        & I
\epmat \ .
\] 
The form is determined by placing the unique entry $1-g$ 
as the central block. 
Suppose $\{ s,t \}\subset \mathcal I_i$, $s<t$, 
$s\neq s_i\neq t$ and $E$ is a basic elementary 
matrix of size matching $A''''$ 
with $E(s,t)= \pm h$ 
for some $h$ in $G$. 
(For $E$ in $\elph$, $h$ must be $g^k$ for 
some $k$.)   Then, because $A$ satisfies condition 
$\mathcal C_1$, $(E,E^{-1}): \rep{I-A'''' }\to \rep{I-A'''' }$ is a 
positive $\elpnh$-equivalence. After precomposing 
$(U,V)$ with a suitable composition of these, 
we may assume $U_{11}=I$, $U_{33}=I$ and $U_{13}=0$.
Our matrix equivalence now has the following form 
\begin{align} \label{leftequiv} 
\bpmat 
I      & x        & 0 \\ 
0       & 1       & U_{23} \\ 
0       & 0        & I  
\epmat
\bpmat 
I       & 0         & 0 \\ 
0       & 1-g    & 0 \\ 
0       & 0        & I
\epmat
\bpmat 
V_{11} & V_{12} & V_{13} \\  
0       & 1       & y \\ 
0       & 0        & V_{33}  
\epmat 
&= 
\bpmat 
I       & 0         & 0 \\  
0       & 1-g    & 0 \\ 
0       & 0        & I
\epmat 
\end{align}
which multiplies out to give 
\[
\bpmat 
V_{11} &  V_{12}+x(1-g) & V_{13}+x(1-g)y \\  
0       & 1-g       & (1-g)y + U_{23}V_{33}\\ 
0       & 0        & V_{33}  
\epmat 
= 
\bpmat 
I       & 0         & 0 \\  
0       & 1-g    & 0 \\ 
0       & 0        & I
\epmat  \ .
\]
Consequently we can rewrite the left side of \eqref{leftequiv} as 
\begin{align*}
& \bpmat 
I      & x        & 0 \\ 
0       & 1       & (g-1)y \\ 
0       & 0        & I  
\epmat
\bpmat 
I       & 0         & 0 \\ 
0       & 1-g    & 0 \\ 
0       & 0        & I
\epmat
\bpmat 
I & x(g-1) & x(g-1)y\\  
0       & 1       & y \\ 
0       & 0        & I
\epmat  \ .
\end{align*}
This equivalence $(U,V): \rep{I-M}\to \rep{I-M}$ is a composition
of two equivalences,  $(U_1,V_1)$ followed by 
$(U_2,V_2)$, where 
%
\begin{align*} 
U_1 & = 
\bpmat 
I        & x   & 0\\  
0       & 1       &0  \\ 
0       & 0        & I
\epmat 
\quad \quad \quad \quad \quad 
V_1  = 
\bpmat 
I & x(g-1) & 0\\  
0       & 1       & 0 \\ 
0       & 0        & I
\epmat 
\\ 
U_2& = 
\bpmat 
I & 0& 0\\  
0       & 1       & (g-1)y \\ 
0       & 0        & I
\epmat 
\quad \quad 
V_2 = 
\bpmat 
I        & 0       & 0\\  
0       & 1       & y \\ 
0       & 0        & I
\epmat
 \ .
\end{align*} 
We will see how  these equivalences are related 
to 
positive equivalences $\rep{I-M}\to \rep{I-M}$. 

Consider a term 
$-h$ ($h\in G$) which is part of an entry of $y$
in $V_2$, say  the 
$(s_i,t)$ entry of $V$. Recall $E_{s,t}(\delta )$ denotes 
a basic elementary matrix with off-diagonal entry 
$\delta$ in position $(s,t)$. We define now 
$n\times n$ matrices $E_1,\dots , E_4$. 
$E_1(r,t) = -M(r,s_i)h$ if $r\notin \{s_i,t\}$;
 in other entries, $E_1=I$.
$E_2=E_{s_i,t}(-gh)$; $E_3=E_{s_i,t}(-h)$; 
$E_4= E_{s_i,t}(h)$. Then 
$ E_4  E_2E_1(I-M)E_3 = \rep{I-M}$,  
and applying the multiplications in the order 
indexed gives a positive $\elph$-equivalence. 
It may be easiest to see the argument by 
restricting to a $4\times 4$ principal submatrix. 
For this, suppose $2=s_i,3=t$ and $M(1,2)\neq 0 \neq M(2,4)$. 
Then these principal submatrices (with names 
unchanged for simplicity) have the forms 
\begin{align*} 
I-M &= 
\bpmat 
1-x & -w& 0 &-u \\ 
0 & 1-g & 0 & -z \\ 
0&0&       1  & 0 \\ 
0&0&0&1 
\epmat 
\quad 
E_1 = 
\bpmat 
1 & 0& -wh &0 \\ 
0 & 1 & 0 &0 \\ 
0&0&    1  & 0 \\ 
0&0&0&1 
\epmat 
\\
E_2 &= 
\bpmat 
1 & 0& 0&0 \\ 
0 & 1 & -gh &0 \\ 
0&0&    1  & 0 \\ 
0&0&0&1 
\epmat 
\quad  
E_3 = 
\bpmat 
1 & 0& 0&0 \\ 
0 & 1 & -h &0 \\ 
0&0&    1  & 0 \\ 
0&0&0&1 
\epmat 
=(E_4)^{-1}\ .
\end{align*}  
For the $n\times n$ matrices, we have 
$(E_4E_2E_1,E_3) =(E_4E_{s_i,t}((g-1)h), E_{s_i,t}(-h))$. 

For the case the term is $h$, there is similarly a positive 
equivalence $    F_4 F_3F_1(I-M)F_2 =\rep{I-M}$, 
in which $F_1=E_3$, $F_2=(E_3)^{-1}$, $F_3=E_2^{-1}$ and 
$F_4=E_1^{-1}$. In the $4\times 4$ sample, this 
has the form 
\begin{align*} 
F_1 & = 
\bpmat 
1 & 0& 0&0 \\ 
0 & 1 & -h &0 \\ 
0&0&    1  & 0 \\ 
0&0&0&1 
\epmat =F_2^{-1}
\\ 
F_3 &= 
\bpmat 
1 & 0& 0&0 \\ 
0 & 1 & gh &0 \\ 
0&0&    1  & 0 \\ 
0&0&0&1 
\epmat 
\quad  \ \,
F_4=
\bpmat 
1 & 0& wh &0 \\ 
0 & 1 & 0 &0 \\ 
0&0&    1  & 0 \\ 
0&0&0&1 
\epmat 
\end{align*} 
Here $(  F_4 F_3F_1, F_2 ) =(F_4E_{s_i,t}((g-1)h), E_{s_i,t}(h))$.

A suitable  composition of the 
above equivalences is an equivalence 
which in the $\{ i,i\}$ block 
matches $(U_1,V_1)$. 
Precomposing $(U_A,V_A)$ with the inverse of this 
composition gives the required matrix 
$(\widetilde U_A, \widetilde V_A)$. 

Similarly, there is a positive 
$\textnormal{El}_{\mathcal P'}(\mathbf s',\mathcal  H')$-equivalence
\[
(\widetilde U_B,\widetilde V_B):( I-B'''') \to ( I-{B^{<2.5>}}'''')
\]
such that for every $i\in \mathcal C'$ the matrices
$\widetilde U_B\{i,i\}$, $\widetilde V_B\{i,i\}$ are the identity matrix.

{\bf Step 7: Cohomology.}



In this step we will show that there are stabilizations $B^{<2.5>}_1,B^{<3>}_1\in\mathcal{M}_{\mathcal{P'}}^o(\mathcal{C'}, \mathbf s'_1,\mathcal{H'})$ of $B^{<2.5>}$, $B^{<3>}$ and a positive 
$\textnormal{El}_{\mathcal P'}(\mathbf s_1',\mathcal  H')$-equivalence
\[
(\widetilde U_1',\widetilde V_1'):( I-B^{<2.5>}_1) \to ( I-B^{<3>}_1)
\]
such that for every $i\in \mathcal C'$ the matrices
$\widetilde U_1'\{i,i\}$, $\widetilde V_1'\{i,i\}$ are the identity matrix, and we will show that there are stabilizations $A^{<3>}_1, M_1\in\mathcal{M}_{\mathcal{P}}^o(\mathcal{C}, \mathbf s_1,\mathcal{H})$ of $A^{<3>}$, $(D_{\gamma}^{\mathbf r})^{-1}(Q^{\mathbf r}_\alpha)^{-1}B^{<3>}Q^{\mathbf r}_\alpha D_{\gamma}^{\mathbf r}$ and a positive $\textnormal{El}_{\mathcal P}(\mathbf s_1,\mathcal  H)$-equivalence
\[
(\widetilde U_1,\widetilde V_1):( I-A^{<3>}_1) \to ( I-M_1)
\]
such that for every $i\in \mathcal C$ the matrices
$\widetilde U_1\{i,i\}$, $\widetilde V_1\{i,i\}$ are the identity matrix.

Recall that $B^{<3>}=(P')^{-1}B^{<2.5>}P'$ where $P'\in\mathcal{M}_{\mathcal{P'}}(\mathbf r',\Z_+)$ is a permutation matrix. 
Since $B$ satisfies condition $\mathcal C_1$,
\[
(\widetilde U_B,\widetilde V_B):( I-B'''') \to ( I-{B^{<2.5>}}'''')
\]
is a positive 
$\textnormal{El}_{\mathcal P'}(\mathbf s',\mathcal  H')$-equivalence such that for every $i\in \mathcal C'$ the matrices
$\widetilde U_B\{i,i\}$, $\widetilde V_B\{i,i\}$ are the identity matrix, and $B^{<2.5>}$ is nondegenerate, it follows that $B^{<2.5>}$ satisfies condition $\mathcal C_1$, and thus that $P'\{i,i\}=1$ for every $i\in \mathcal C'$. It therefore follows from Proposition \ref{permutationprop} that there are stabilizations $B^{<2.5>}_1$, $B^{<3>}_1$ of $B^{<2.5>}$, $B^{<3>}$ and a positive 
$\textnormal{El}_{\mathcal P'}(\mathbf s_1',\mathcal  H')$-equivalence
\[
(\widetilde U_1',\widetilde V_1'):( I-B^{<2.5>}_1) \to ( I-B^{<3>}_1)
\]
such that for every $i\in \mathcal C'$ the matrices
$\widetilde U_1'\{i,i\}$, $\widetilde V_1'\{i,i\}$ are the identity matrix.

Recall that $(D D_{\gamma}^{\mathbf r} )^{-1} A^{<3>}(D D_{\gamma}^{\mathbf r} )=(D_{\gamma}^{\mathbf r})^{-1}(Q^{\mathbf m'}_\alpha)^{-1}B^{<3>}Q^{\mathbf m'}_\alpha D_{\gamma}^{\mathbf r}$. Since $A$ satisfies condition $\mathcal C_1$,
\[
(\widetilde U_A,\widetilde V_A):( I-A'''') \to ( I-{A^{<3>}}'''')
\]
is a positive 
$\textnormal{El}_{\mathcal P}(\mathbf s,\mathcal  H)$-equivalence such that for every $i\in \mathcal C$ the matrices
$\widetilde U_A\{i,i\}$, $\widetilde V_A\{i,i\}$ are the identity matrix, and $A^{<3>}$ is nondegenerate, it follows that $A^{<3>}$ satisfies condition $\mathcal C_1$. It then follows from the proof of 
Theorem 4.7 of \cite{BSullivan} that $\gamma \in G^N$ can be chosen such that for each $i\in\mathcal C$ the diagonal matrix 
$(DD_\gamma^{\mathbf r})_i$ is $1$. Applying Proposition \ref{cohomologyaspositive}, we get stabilizations $A^{<3>}_1$, $M_1$ of $A^{<3>}$, $(D_{\gamma}^{\mathbf r})^{-1}(Q^{\mathbf r}_\alpha)^{-1}B^{<3>}Q^{\mathbf r}_\alpha D_{\gamma}^{\mathbf r}$ and a positive $\textnormal{El}_{\mathcal P}(\mathbf s_1,\mathcal  H)$-equivalence
\[
(\widetilde U_1,\widetilde V_1):( I-A^{<3>}_1) \to ( I-M_1)
\]
such that for every $i\in \mathcal C$ the matrices
$\widetilde U_1\{i,i\}$, $\widetilde V_1\{i,i\}$ are the identity matrix.

{\bf Step 8: Conclusion.} 

By composing the equivalences $(\widetilde U_B,\widetilde V_B):( I-B'''') \to ( I-{B^{<2.5>}}'''')$ and $(\widetilde U_1',\widetilde V_1'):( I-B^{<2.5>}_1) \to ( I-B^{<3>}_1)$, we get stabilizations $B_2,B^{<3>}_2\in\mathcal{M}_{\mathcal{P'}}^o(\mathcal{C'}, \mathbf s'_2,\mathcal{H'})$ of $B$, $B^{<3>}$ and a positive 
$\textnormal{El}_{\mathcal P'}(\mathbf s_2',\mathcal  H')$-equivalence
\[
(\widetilde U_2',\widetilde V_2'):( I-B_2) \to ( I-B^{<3>}_2)
\]
such that for every $i\in \mathcal C'$ the matrices
$\widetilde U_2'\{i,i\}$, $\widetilde V_2'\{i,i\}$ are the identity matrix. By multiplying this equivalence with $(D_{\gamma}^{\mathbf s'_2})^{-1}(Q^{\mathbf s'_2}_\alpha)^{-1}$ on the left and $Q^{\mathbf s'_2}_\alpha D_{\gamma}^{\mathbf s'_2}$ on the right we get stabilizations $C_2,M_2\in\mathcal{M}_{\mathcal{P}}^o(\mathcal{C}, \mathbf s_2,\mathcal{H})$ of $C$, $(D_{\gamma}^{\mathbf s_2})^{-1}(Q^{\mathbf s_2}_\alpha)^{-1}B^{<3>}Q^{\mathbf s_2}_\alpha D_{\gamma}^{\mathbf r}$ (where $\alpha^*(\mathbf s_2)=\mathbf s'_2$) and a positive 
$\textnormal{El}_{\mathcal P}(\mathbf s_2,\mathcal  H)$-equivalence
\[
(\widetilde U_2,\widetilde V_2):( I-C_2) \to (I-M_2)
\]
such that for every $i\in \mathcal C$ the matrices
$\widetilde U_2\{i,i\}$, $\widetilde V_2\{i,i\}$ are the identity matrix.
By composing the inverse of this equivalence with the equivalences $(\widetilde U_A, \widetilde V_A):( I-A'''') \to ( I-{A^{<3>}}'''')$ and  $(\widetilde U_1,\widetilde V_1):( I-A^{<3>}_1) \to ( I-M_1)$, we get stabilizations $\widetilde A^{<0>},\widetilde C^{<0>}\in\mathcal{M}_{\mathcal{P}}^o(\mathcal{C}, \mathbf t,\mathcal{H})$ of $A$, $C$ and a positive 
$\textnormal{El}_{\mathcal P}(\mathbf S_{  3},\mathcal  H)$-equivalence
\[
(\widetilde U,\widetilde V):( I-\widetilde A^{<0>}) \to ( I-\widetilde C^{<0>})
\]
such that for every $i\in \mathcal C$ the matrices
$\widetilde U\{i,i\}$, $\widetilde V\{i,i\}$ are the identity matrix.

It remains to obtain the equivalence in 
$\mathcal C_1$ form.
Let $\mathcal I$ be the index set of $\widetilde A^{<0>}$ (and $\widetilde C^{<0>}$), and let $\mathcal I_{\textnormal{sec}}$ be the set of elements $s\in\mathcal I$ such that $i(s)\in\mathcal C$ and $\widetilde A^{<0>}(s,t)=\widetilde A^{<0>}(t,s)=0$ for all $t\in\mathcal I$. Since $\widetilde U(I-\widetilde A^{<0>})\widetilde V=I-\widetilde C^{<0>}$, $\widetilde U\{i,i\}$ and $\widetilde V\{i,i\}$ are the identity matrix for every $i\in \mathcal C$, and $\widetilde A^{<0>}$ and $\widetilde C^{<0>}$ satisfy condition $\mathcal C_{1+}$ (because $A$ and $C$ satisfy condition $\mathcal C_1$), it follows that $\mathcal I_{\textnormal{sec}}$ is equal to the set of $s\in\mathcal I$ such that $i(s)\in\mathcal C$ and $\widetilde C^{<0>}(s,t)=\widetilde C^{<0>}(t,s)=0$ for all $t\in\mathcal I$. Let $\mathcal I_{\textnormal{prim}}$ be the complement in $\mathcal I$ of the $\mathcal I_{\textnormal{sec}}$.
Let $\widetilde W=\widetilde V^{-1}$ and write the equivalence in the 
form 
\begin{equation} \label{Wform}
\widetilde U (I-\widetilde A^{<0>}) = (I-\widetilde C^{<0>})\widetilde W.
\end{equation} 

Let $A^{<0>}$, $C^{<0>}$, $U$, $W$ be the principal submatrices of $\widetilde A^{<0>}$, $\widetilde C^{<0>}$, $\widetilde U$, $\widetilde V$ with index set $\mathcal I_{\textnormal{prim}}$. Then $A^{<0>},C^{<0>}$ are $\mathcal C_1$ stabilizations in $\mopcmh$ of $A,C$.
If we can show 
$U (I-A^{<0>}) = 
(I-C^{<0>}) W$, then we have 
the required 
$\mathcal C_1$-equivalence. For a verification, 
suppose $t,u\in \mathcal I_{\textnormal{prim}}$. Then 
\begin{align*} 
\big( U(I-A^{<0>}) \big) (t,u) \ 
&=\  U(t,u)-(UA^{<0>})(t,u) \\ 
&=\  U(t,u)-\sum_{s\in \mathcal I_{ \textnormal{prim}}} U(t,s)A^{<0>}(s,u) \\
&=\  \widetilde U(t,u)-\sum_{s\in \mathcal I_{ \textnormal{prim}}} \widetilde U(t,s)\widetilde A^{<0>}(s,u) \\ 
& =\  \widetilde U(t,u)-\sum_{ s\in \mathcal I}
\widetilde U(t,s)\widetilde A^{<0>}(s,u) \\ 
&=\ \big( \widetilde U (I-\widetilde A^{<0>})\big) (t,u). 
\end{align*}
Likewise, 
\[
\Big ( (I-C^{<0>})W \Big) (t,u) 
=
\Big ( (I-\widetilde C^{<0>})\widetilde W \Big) (t,u).
\] 
The required equality now follows from \eqref{Wform}.

%
%
%
%
%
%
%
%
%

\end{proof} 


\section{The Factorization Theorem: setting}
\label{sec:factorizationtheoremsetting}

In this section we present  the Factorization Theorem
\ref{theoremfactor}, 
which we shall use to prove the implication $(2) \implies (1)$
in Theorem \ref{classification}, and establish the setting
  for the proof of Theorem \ref{theoremfactor}.

As before, $G$ is a finite group, $\mathcal P=\{1,\dots,N\}$ is a finite poset given by a partial order relation $\preceq$ satisfying $i\preceq j\implies i\le j$, $\mathcal C$ is a subset of $\mathcal P$, and $\mathcal H=\{H_{ij}\}_{i,j\in\mathcal P}$ is a $(G,\mathcal P)$ coset structure.
For the class $\mopcnh$, recall Definition \ref{defn:mopcnh}.

\begin{definition} \label{conditionsc2} 
A matrix $A$ in $\mopcnh$ satisfies condition $\mathcal C_2$ 
if the following holds: if $i\in \mathcal P $ and $i$ is not a cycle component of $A$, 
then there are matrices $U_i,V_i$ in $\elniHi$ 
such that 
$U_i(\myId-A_i)V_i$ is a block diagonal matrix 
with one summand a $2\times 2$ 
identity matrix.  
\end{definition}

\begin{theorem}[Factorization Theorem] \label{theoremfactor} 
Suppose $A$ and $A'$ are  matrices 
in $\mopcnh$  
which satisfy conditions $\mathcal C_1$ and $\mathcal C_2$. 
Then the following are equivalent. 
\begin{enumerate}
\item 
$(U,V): \rep{\myId-A} \rightarrow \rep{\myId-A'}$ 
is an $\elpnh$-equivalence.  
\item 
$(U,V): \rep{\myId-A} \rightarrow \rep{\myId-A'}$ 
is a positive $\elpnh$-equivalence. 
\end{enumerate} 
\end{theorem}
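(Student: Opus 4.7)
The direction $(2)\Rightarrow(1)$ is immediate from the definitions, so the work is in proving $(1)\Rightarrow(2)$. My plan is to first exploit condition $\mathcal C_2$ together with the positive-equivalence toolkit established in Appendices \ref{cohomologyappendix}, \ref{permutation appendix}, and \ref{resolvingappendix} to pre- and post-compose the given equivalence $(U,V)$ with basic positive $\elpnh$ equivalences that bring both $I-A$ and $I-A'$ into a common normal form in which each non-cycle diagonal block $(I-A)\{i,i\}$ contains an explicit $2\times 2$ identity summand. Since those reductions are themselves positive, it suffices to prove the theorem for matrices already in this normal form.

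Next I would factor $U$ and $V$ as products of basic elementary matrices in $\elpnh$ and view the given equivalence as a finite sequence of basic elementary modifications $E_{st}(h)$ applied on the left or right, producing intermediate matrices $I-A_0, I-A_1, \dots , I-A_T$ connecting $I-A$ to $I-A'$ through $\elpnh$ equivalences. The task is to replace each abstract algebraic move by a finite composition of basic positive $\elpnh$ equivalences. The key technique is to use the $2\times 2$ identity summand of a non-cycle diagonal block as scratch: a positive row or column cut against the identity summand can manufacture a cancelling pair of terms $h$ and $-h$ inside a target entry; subsequent positive cuts rearrange these terms to realize the intended elementary modification; and a final positive retraction restores the identity block. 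For modifications supported entirely within a single non-cycle diagonal block, this reduces to the essentially irreducible case, where the positivity is supplied by the techniques underlying \cite[Proposition 4.7]{BSullivan}.

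The main obstacle is the cycle components: their $1\times 1$ diagonal blocks $1-g$ admit no local scratch. On-diagonal operations inside a cycle block do not occur, but off-diagonal modifications $E_{st}(h)$ with $s$ or $t$ in a cycle component must still be positivized. Here condition $\mathcal C_1$ is crucial: it pins each cycle component to a single distinguished index, and the block-triangular shape of $\mppcnh$ guarantees, for each such move, a non-cycle block $j$ with $i \preceq j$ or $j \preceq i$ accessible via $\mathcal H$ through which the needed scratch can be borrowed. The bookkeeping is delicate, because the routing through $j$ must preserve the $\mathcal H$-coset constraints and the block-triangular shape, and because the moves must be processed in a poset-compatible order so that positivizing a later off-diagonal modification does not undo an earlier one; I expect this interlocking of coset, poset, and order constraints to be the hardest part of the argument.

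Concatenating the resulting basic positive equivalences in the order dictated by the initial factorization of $U$ and $V$, and composing with the normal-form reductions at each end, yields a positive $\elpnh$ equivalence $(I-A)\to (I-A')$ that implements the original $(U,V)$, completing the proof of $(1)\Rightarrow(2)$.
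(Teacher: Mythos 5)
Your direction $(2)\Rightarrow(1)$ and your opening reduction are fine, and you correctly identify the two loci of difficulty: the cycle components, and the interaction between the coset structure, poset order, and sequencing of moves. However, the proposal has a genuine gap exactly where you flag the hardest part, and the gap is not a matter of delicate bookkeeping but of a false premise. You claim that for any off-diagonal modification $E_{st}(h)$ touching a cycle component $i$, the block-triangular shape guarantees a non-cycle block $j$ ``accessible via $\mathcal H$'' from which scratch space can be borrowed. This is not true. Take $\mathcal P=\{1,2\}$ with $1\prec 2$, both components in $\mathcal C$, and an arbitrary coset structure: the only off-diagonal block is $\{1,2\}$, both diagonal blocks are $1\times 1$ of the form $g_i-1$, and there is no non-cycle block anywhere. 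The theorem must still hold in this case, so any proof that relies on borrowing identity summands from a neighbouring non-cycle component cannot be correct. The paper handles this via an intrinsic argument (Lemma \ref{2by2}, Case 2): after Proposition \ref{gettingpositive} has arranged that the cycle-to-cycle off-diagonal entries are strictly positive on every double coset $D\in\mathcal R_{ij}$ and $D$-positive otherwise, a careful induction on the augmentation $\overline{p+s}$, driven by the cohomological identity $p+s=ph+gs$ and a cycling argument through summands $w_0, w_1, \dots, w_m=w_0$, shows that a unipotent $1\times 1$ off-diagonal modification can be realized positively \emph{without} any external scratch. That argument has no analogue in your outline.

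A second, related gap: your plan to factor $U,V$ into basic elementary matrices and walk through intermediate matrices $I-A_0,\dots,I-A_T$ does not address the fact that intermediate products $E_k\cdots E_1(I-A)F_1\cdots F_\ell$ generally fail to be of the form $I-A'$ with $A'\ge 0$, so there is no object to which the positivity framework applies mid-sequence. The paper avoids this by abandoning the move-by-move viewpoint in favour of a two-stage structural decomposition: first peel off the block-diagonal part $(\oplus_i U\{i,i\},\ \oplus_i V\{i,i\})$, which is handled one diagonal block at a time by the irreducible-case theorem \cite[Theorem 6.1]{BSullivan} together with Lemma \ref{generallemma} (which supplies the off-diagonal correction $V\in\upnh$ keeping everything in $\mppph$); and second, treat the remaining unipotent factor $(UX^{-1},Y^{-1}V)\in\upnh\times\upnh$ via Lemma \ref{unipotentlemma}, whose induction over the poset rank $\rho$ systematically clears off-diagonal blocks while keeping all intermediate matrices inside $\mppph$. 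Your proposal would need either a similar global decomposition or a stabilization mechanism producing a chain of honest $\mppph$ matrices; neither is present.

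Where you do overlap with the paper: the use of $\mathcal C_2$ to obtain an identity summand in non-cycle diagonal blocks, the appeal to \cite{BSullivan} for the essentially irreducible case, and the recognition that $\mathcal C_1$ pins each cycle component to a single index are all correct and are all used by the paper. But the cycle-cycle argument and the control of intermediate forms are the heart of the theorem, and those are missing.
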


We do not have a sharp statement 
as to which general $\elph$-equivalences are positive $\elph$
equivalences. However, the restriction above to matrices satisfying 
$\mathcal C_1$ and $\mathcal C_2$ is rather mild. 
Condition $\mathcal C_2$
is a harmless technical 
condition (achievable by replacing $A$ with a larger
  stabilization) 
  which is
needed below to apply the 
Factorization Theorem proved in \cite{BSullivan} for the case that the 
presenting matrix $A$ over $\ZZ_+G$ is essentially
irreducible. 
Any $G$-SFT can be presented by 
a matrix in some $\mopcnh$
(by Proposition \ref{pro:right form}) 
which satisfies $\mathcal C_1$
 and $\mathcal C_2$
(by Proposition \ref{gettingpositive}).
Before turning to the setting of Theorem \ref{theoremfactor},
  we remark on  a possible future application.

\begin{remark}\label{BFrepn} Let $S$ be a nontrivial mixing SFT
    defined by a matrix
    $A$ over $\Z_+$. There is a ``Bowen-Franks'' representation, a 
    homomorphism $\beta$ from the mapping class group of $S$ to
    the group of automorphisms of $\cok(I-A)$.
    This map $\beta$ is fundamental to
    understanding the mapping class
    group; for example, quite possibly its kernel is a  simple group. 
    That $\beta $ is surjective 
is     a corollary of the Factorization
Theorem of \cite{mb:fesftpf} (as proved in \cite{mb:fesftpf}),
because every automorphism of
    $\cok(I-A)$ is induced by an elementary equivalence over $\Z$.

The mapping class group of a $G$-SFT, an SFT $T$ with
    $G$ acting by automorphisms, is naturally defined as the centralizer
    in the mapping class group of $T$ of the subgroup of elements induced by the $G$-action. 
        For a mixing $G$-SFT we likewise have a map $\beta$
from this centralizer into  the automorphism group of the $\ZG$-module $\cok(I-A)$.
    The Factorization Theorem tells us that the automorphisms
    in the range of $\beta $ are those induced by elementary $\ZG$ 
    equivalences. The problem of characterizing these has not
    to our knowledge been addressed.  Whatever analogous
    algebraic structures are developed for the case of reducible
    $G$-SFTs,  Theorem \ref{theoremfactor}
    will be a tool for
    characterizing the range of invariants.
    \end{remark}

It will be  convenient to have a setting
  in which we work with
   equivalences $(A-I)\to U(A-I)V$ (rather than $(I-A)\to U(I-A)V$),
   within
   a class of 
    matrices  $A-I$  which are  ``as positive as possible''. We
    develop the apparatus for this next.

\begin{definition} \label{rdefinition}  
Given $\mathcal H$
and $i\prec j$, $\mathcal D_{ij}=\mathcal D_{ij} (\mathcal H)$ is the 
set of $(H_i,H_j)$ double cosets contained in 
$H_{ij}$, and 
$\mathcal R_{ij}=\mathcal R_{ij} (\mathcal H)$ 
is the set of $D \in \mathcal D_{ij}$ such that 
\begin{equation}\label{nonext}
 i\prec k \prec j 
\implies H_{ik}H_{kj}\cap D = \emptyset \ . 
\end{equation}
\noindent
We also  define 
\begin{align} 
\mathcal R^{\mathcal C} &= 
\{ (i,j,D) : D\in\mathcal R_{ij},  i\in \mathcal C
\text{ and } j\in \mathcal C
\} \ .
\end{align}
\end{definition}
\noindent
When $\mathcal H$ is the coset structure of a matrix $A$
  in $\mopnh$, and  $g\in D\in \mathcal R_{ij}$,
  then $g$ cannot be the weight of a path
  which goes from $\mathcal I_i$ to $\mathcal I_j$
  without passing some other $\mathcal I_k$.

 \begin{example}\label{twoDs}
With $G=S_{  3}$, for the $G$-SFT  given by 
\[
 \begin{pmatrix}
\perme&\perme&\perme+\perm{12}\\
0&\perme&\perme\\
0&0&\perm{123}
\end{pmatrix}
\] 
we have
$\mathcal C=\{1,2,3\}$, $H_1=H_2=\{\perme\}$, $H_3=A_{  3}$, $H_{12}=\{\perme\}$, $H_{13}=S_{  3}$,  $H_{23}=A_{  3}$. 
$H_{13}$ contains both 
$(H_1,H_3)$ double cosets, namely $D_1=A_{  3}$ and $D_2=S_{  3}\backslash A_{  3}$. We have 
$(1,3,D_1)\notin \mathcal R^{\mathcal C}$ and $(1,3,D_2)\in \mathcal R^{\mathcal C}$.
 \end{example}

If $D$ is a nonempty subset of $G$, then $\pi_D$ is the projection 
\[
\pi_D : \sum_{g\in G} n_g g\mapsto \sum_{g\in D} n_g g \  . 
\] 
An element $\sum_{g\in G} n_g g$ is $D$-positive if $n_g  \geq 0$ for any $g$ and $n_g>0$ precisely when $g\in D$.   The terms are used for matrices when  the conditions hold entrywise.

\begin{definition}  \label{defnmpppcnh}
$\mpppcnh$ 
  is the set of matrices $M$ in
$\mpnh$
whose blocks $M\{i,j\}$ satisfy the following 
conditions: 
\begin{enumerate}
\item
  $M+I \in \mopcnh$.

\item 
$M\{ i,i\}$ is $H_i$-positive if $i\notin \mathcal C$ . 
\item \label{dijcondition}
  If $i\prec j$ and $D\in \mathcal D_{ij}$, then 
\begin{align*} 
(i)\ \quad (i,j,D) \notin \rc &\implies \pi_D M\{ i,j \} \text{ is }
D\text{-positive} \\
(ii) \quad (i,j,D) \in \rc &\implies \pi_D M\{ i,j \} > 0 \ .
\end{align*}
 \end{enumerate} 
By definition, the condition 
$\pi_D M\{ i,j \}> 0$ means that every entry of 
$\pi_DM\{ i,j \}$ is nonnegative and nonzero.
Note, Condition 1 above implies
  that $M\{ i,i \}$ has the form $(g_i-1)$ for some $g_i\in G$ if $i\in \mathcal C$. 
\end{definition}

\begin{definitions} \textnormal{
An \emph{elementary positive equivalence} in 
$\mpppcnh$ 
is an  
$\elpnh$ 
 equivalence $(U,V)\colon B\to B'=UBV$ such that 
the following hold: 
\begin{enumerate} 
\item 
$B,B' \in 
\mpppcnh$;
\item 
one of $U,V$ equals $\myId$; 
\item 
one of $U,V$
 is a basic elementary matrix. 
\end{enumerate} 
A \emph{positive equivalence} in 
$\mpppcnh$ 
is a composition of 
 elementary positive equivalences in 
$\mpppcnh$. 
For such an equivalence, we  use notations such as 
\[
\begin{CD}
(U,V)\colon B @>>+> B'  
\end{CD}
\quad \textnormal{or} \quad 
\begin{CD}
B @>(U,V)>+> B'  
\end{CD}
\quad \textnormal{or} \quad 
\begin{CD}
B @>>+> B'  
\end{CD}
\ . 
\]
}
\end{definitions}

\begin{observation}\label{observation}
Suppose 
$A,A'$ are in $\mopcnh$;  
$B=A-\myId$, $B'=A'-\myId$;  and 
\[
\begin{CD}
(U,V)\colon B @>>+> B'  
\end{CD}
\ . 
\]
Then $(U,V)\colon \myId-A\to \myId-A'$ is a positive 
$\elpnh$-equivalence. 
\end{observation}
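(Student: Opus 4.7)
The observation is essentially a sign-convention dictionary: the calculus of positive equivalences in $\mpppcnh$ is set up on matrices of the form $B=A-I$, whereas the flow-equivalence calculus of Section~\ref{backgroundsec} acts on $I-A=-B$. The plan is to unfold the hypothesis into a finite sequence of elementary steps and translate each one step by step.

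By the definitions preceding the observation, the positive equivalence $(U,V)\colon B\to B'$ factors as a composition
\[
B=B_0\to B_1\to\cdots\to B_k=B',
\]
where each $(U_j,V_j)\colon B_j\to B_{j+1}$ is an elementary positive equivalence in $\mpppcnh$. Setting $A_j:=B_j+I$ (so $A_0=A$ and $A_k=A'$), the relation $U_jB_jV_j=B_{j+1}$ is equivalent, after negating both sides, to $U_j(I-A_j)V_j=I-A_{j+1}$. Hence each elementary step induces an equivalence $(U_j,V_j)\colon(I-A_j)\to(I-A_{j+1})$ with the same matrix pair $(U_j,V_j)$.

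The heart of the proof is the single-step verification. By the definition of an elementary positive equivalence, one of $U_j,V_j$ is the identity and the other is a basic elementary matrix in $\elpnh$; say $U_j=E_{st}(g)$ for some $g\in G$ and $V_j=I$ (the column case $V_j=E_{st}(g)$ is symmetric). The entrywise effect on $A_j$ is precisely the row-cut formula of Section~\ref{backgroundsec}; in particular the resulting $A_{j+1}$ lies in $\Z_+ G$ because the hypothesis $B_{j+1}\in\mpppcnh$ builds the required positivity directly into the conditions of Definition~\ref{defnmpppcnh}. Consequently $(E_{st}(g),I)\colon(I-A_j)\to(I-A_{j+1})$ is a basic positive $\Z G$ equivalence, and since $E_{st}(g)\in\elpnh$, it is in fact a basic positive $\elpnh$ equivalence. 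Composing across $j=0,\dots,k-1$ yields the desired positive $\elpnh$ equivalence $(U,V)\colon(I-A)\to(I-A')$.

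The only point that might require any care, and hence the mild technical obstacle in the argument, is the case in which the basic elementary matrix appearing in a given elementary step has off-diagonal entry a general element of $\Z H_{ij}$ rather than a single group element $g\in G$; this is resolved by further decomposing $E_{st}(x)$ as a product of matrices $E_{st}(\pm g)$ (with signs determined by the coefficients of $x$) and invoking the $\mpppcnh$-positivity of the intermediate matrices at each monomial sub-step. No genuinely new idea is needed.
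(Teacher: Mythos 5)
The paper gives no proof for this Observation---it is stated as immediate and used without justification in the proof of Theorem~\ref{theoremfactor}.  Your plan is certainly the one the authors have in mind: unroll the $\mpppcnh$-positive equivalence into elementary steps and translate each one via $I-A_j = -B_j$.  But the last paragraph, which you flag as ``the only point that might require any care,'' is exactly where the content of the observation lies, and I don't think your resolution closes it.

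You write that a basic elementary matrix $E_{st}(x)$ with $x$ a general element of $\Z H_{ij}$ can be split into factors $E_{st}(\pm g)$ and that positivity of the intermediates follows from ``the $\mpppcnh$-positivity of the intermediate matrices.''  But the intermediate matrices produced by the monomial sub-steps are not, and are not claimed to be, in $\mpppcnh$---only the two endpoints $B_j$ and $B_{j+1}$ of the elementary step are.  What must be verified is that each intermediate $A$-level matrix lies in $\mathcal M_{\mathcal P}(\mathbf n,\Z_+G)$ and that the relevant group element is actually a summand of the $(s,t)$-entry at the time it is subtracted.  That is a property of a well-chosen \emph{ordering} of the monomial factors, not an automatic consequence.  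The order matters: applying the positive summands of $x$ before the negative ones works cleanly when both $i(s)$ and $i(t)$ are noncycle components, because the $H_j$-positivity of the $\{j,j\}$-block makes $p\,A(t,t)\ge p$ entrywise and keeps the $(s,t)$-entry nonnegative; but when $i(t)\in\mathcal C$ one has $A(t,t)=g_j$ for a single group element, the inequality $p\,A(t,t)\ge p$ fails, and the monomial moves only shuffle coefficients around the $\langle g_j\rangle$-cosets of $H_{ij}$.  There one has to track a conserved ``token count'' on each such coset, note that Condition (3) of Definition~\ref{defnmpppcnh} guarantees only that at least one coset of the relevant double coset carries a positive coefficient, and argue cycle-by-cycle that a valid ordering of the $E_{st}(\pm g)$ exists.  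None of that is in your write-up, and the blanket appeal to $\mpppcnh$-positivity doesn't supply it.  So the main idea is right and matches the authors' intent, but the verification you defer is precisely the nontrivial part, and as written the proposal does not establish it.
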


\begin{prop}\label{gettingpositive}   
Suppose  $A\in \mopcnh$.
    \begin{enumerate}
    \item Suppose $A^{<0>}$ is the stabilization of $A$ in
$\mopckh$, where 
      $k_i= n_i+2$ if $i\notin \mathcal C$ and $k_i=n_i$ if $i\in \mathcal C$. 
Define $\mathbf m= (k_1, \dots , k_n)$
by  $m_i=k_i$ if $i\notin \mathcal C$, $m_i=1$ if $i\in \mathcal C$. 
Then there is a matrix $A'$ in $\mopcmh$, satisfying 
$\mathcal C_1$ and $\mathcal C_2$,
such that 
the $1$-stabilization in $\mopckh$ of $I-A'$ is positive $\elpkh$-equivalent to 
    $I-A^{<0>}$.

\item
Suppose $A$  satisfies  $\mathcal C_1$ and $\mathcal C_2$.      
Then there is a matrix $A'$, satisfying $\mathcal C_1$ and $\mathcal C_2$,
such that
$A'-I \in \mpppcnh$ and there is a 
positive $\elpnh$-equivalence from 
$\myId-A$   to $\myId-A'$.\newline 
\end{enumerate} 
\end{prop}
We postpone the proof of Proposition~\ref{gettingpositive} to later in this section.

Proposition \ref{gettingpositive}, along with 
  Observation \ref{observation},
  tells us that (after accepting  that
  we might need to pass to slightly larger matrices to 
  satisfy $\mathcal C_2$, and then to smaller matrices to satisfy 
  $\mathcal C_1$) we have reduced the problem of showing 
  every  $\elpnh$-equivalence of matrices $I-A$ (with $A$
  from $\mopcnh$) is positive 
to the  problem of showing  
every $\elpnh$-equivalence of matrices
$M$ (with $M$ from $\mpppcnh$)
is a positive equivalence in   $\mpppcnh$.

Unfortunately, as we see in  Example \ref{badRexample} below, 
  Proposition \ref{gettingpositive} would not be true
  if in the definition of $\mpppcnh$
    the condition \lq\lq $\pi_DM\{i,j\} >0$\rq\rq\  
    in 3(ii)    were strengthened to 
    $D$-positivity.\begin{footnote}
      {From its proof, one sees that Prop. \ref{gettingpositive}
        can be strengthened such that,  given $k\in \N$, 
for  $D\in \mathcal D_{ij}\setminus \mathcal R^{\mathcal C}$,
entrywise $A'\{i,j\}\geq k\sum_{g\in D} g$. Such flexibility is not
available 
 for $D$ in $\mathcal R^{\mathcal C}$.}
      \end{footnote}
  This complication  accounts for
  a good deal of the difficulty  of  arguments to come.

\begin{example}\label{badRexample}
With
   $G= S_{  3}$, define matrices over $\Z_+G$,
\[
A_1=
\begin{pmatrix}
\perm{12}&\perme\\
0&\perm{12}
\end{pmatrix}
\quad \quad \text{and} 
  \quad \quad
A_2=  
\begin{pmatrix}
  \perme&\perme&\perme+\perm{12}\\
0&\perme&\perme\\
0&0&\perm{123}
\end{pmatrix} \ . 
\]

For $A_1$, $(G,\mathcal P)=( S_{  3},\mathcal P_2)$; 
  $\mathcal H$ is given by $\gen{\perm{12}}=
  H_1=H_2=  H_{12}$; 
  $\mathcal C=\{1,2\}$; and
  $ D=H_{12}$ is an $(H_1, H_2)$ double coset,  
with   $(1,2, D)\in \mathcal R^{\mathcal C}$. 
 A basic positive
  $\elpnh$-equivalence $(I-A)\to (I-B)$ 
  must be given by right or left multiplication
  of $(I-A)$ by one of four matrices,
  $\left( \begin{smallmatrix} 1&\pm g\\0&1 \end{smallmatrix} \right)$ 
  with $g\in S_{  3}$. The orbit of
  $I-A_1$ under such equivalences consists of
  just $I-A_1$ and $I-A_3$, where $A_3=
  \left( \begin{smallmatrix}   \perm{12}&\perm{12}\\ 0&\perm{12}
  \end{smallmatrix} \right)$. Neither $A_1-I$ nor $A_3-I$ is
$\mathcal D$-positive. (Alternately, we can see that
a matrix  $ B=\left( \begin{smallmatrix}
  \perm{12}&x\\ 0&\perm{12}
\end{smallmatrix} \right)$ defining a $G$-SFT which is $G$-flow equivalent
to $T_{A_1}$, with $x= n_1 \perme + n_2 \perm{12} \in \Z_+G$, must
satisfy $n_1+n_2=1$, because the augmentations
$\overline{A_1}=
\left( \begin{smallmatrix}
  1&1\\ 0&1 
\end{smallmatrix} \right)
$ and $\overline{B}=
\left( \begin{smallmatrix}
  1&n_1+n_2\\ 0&1 
\end{smallmatrix} \right)
$ must define flow equivalent
SFTs.) 

For $A_2$, continuing from  Example \ref{twoDs} 
we have 
$(1,3,D_2)\in \mathcal R^{\mathcal C}$.
A 
 nonidentity basic positive
$\elpnh$-equivalence can only be given by multiplication by
a matrix $E_{st}(\pm g)$, with $g \in S_{  3}$ and $s<t$.
It is straightforward to check that if $I-B$ is in the orbit of
$I-A_1$ under such positive equivalences, then $B(s,t)\in \Z_+A_3$ if
$(s,t)\neq (1,3)$, and for $B(1,3) = \sum_{g\in G} n_g g$ we
have $\sum_{g\notin A_3} n_g =1$.  Thus, $B-I$ cannot be
$\mathcal D_2$-positive.
\end{example}

Before giving the proof of
Proposition
\ref{gettingpositive}, we introduce further notation. 

\begin{definition} \label{deltadefn}
We define 
\begin{align*} 
\delta_{ij} = & \sum_{g\in H_{ij} } g
\quad \quad \quad \text{if } i\prec j \\ 
\delta_{i} = & \sum_{g\in H_{i} } g
\end{align*} 
\end{definition} 
If $i\in \mathcal C$, then 
$\delta_i =  \sum_{m=0}^{ \kappa (g_i) -1 } (g_i)^m $,
where $\kappa (g) $ is the order of $g$ in $G$. 
\begin{definition} \label{rhoandS}
Let $\mathcal S =\{ (i,j) \in \mathcal P \times \mathcal P: 
i\prec j \}$. Define $\mathcal S_0=\emptyset$. Inductively, given 
$\mathcal S_m$, define $\mathcal S_{m+1}$ to be the set of 
$(i,j)$ in $\mathcal S \setminus \cup_{r\leq m} \mathcal S_r$
such that
\[
i\prec k\prec j \implies 
\{(i,k),(k,j) \}\subset \bigcup_{k=0}^m \mathcal S_k \ . 
\]
Define $\rho (i,j) =m$ if $(i,j)\in \mathcal S_m$.
\end{definition}
To make the sets $\mathcal S_m$ more concrete, we prove the next proposition.
For this, we define a path of length $\ell$ in $\mathcal P$ from $i_0$ to
$i_{\ell}$ to be a string $(i_0,i_1, \dots , i_{\ell})$ such that
 $i_{t-1}\prec i_t$ for $1\leq t \leq \ell$. Such a path is {\it maximal} 
if  $(i_{t-1},i_t) \in \mathcal S_1$ for $1\leq t\leq \ell$. For
$i\prec j$, define $\rho (i,j)$ to be the maximum length of a
path from $i$ to $j$.
\begin{proposition} $(i,j)\in \mathcal S_m$ if and only if $\rho (i,j)=m$.
\end{proposition}
\begin{proof}
  The proof is by induction on $m$. The basis step $m=1$ is straightforward.
  Assume the claim is true for $m$.
  If $\rho (i,j) = m+1$, then clearly $(i,j) \in \cup_{t\leq m+1}\mathcal S_t$;
  by the induction hypothesis, $(i,j) \notin \cup_{t\leq m}\mathcal S_t$,
  and therefore $(i,j) \in \mathcal S_{m+1}$. Now suppose $(i,j) \in
  \mathcal S_{m+1}$. Then for every maximal path
  $(i,i_1, \dots, i_k,j)$,
  we have by the induction hypothesis that $\rho(i,i_k)\leq m$, and
  therefore 
  $k\leq m$ and the length of $(i,i_1, \dots, i_k,j)$ is
  at most $m+1$. Therefore $\rho (i,j) \leq m+1$. By the induction
  hypothesis, $\rho(i,j) > m$, and therefore $\rho (i,j) = m+1$.
  \end{proof} 
Note, $i\prec k \prec j$ implies
$\max \{ \rho (i,k), \rho(k,j)\} < \rho(i,j)$.
This is a key point for the proof by induction below.

\begin{example}
With $\mathcal P = \mathcal P_4$, we have
 $\mathcal S$ partitioned into
  $\mathcal S_1=\{(1,2),(2,3),(3,4)\}$,
  $\mathcal S_2=\{(1,3), (2,4)\}$ and
    $\mathcal S_3=\{(1,4)\}$.
\end{example}

\begin{proof}[Proof of Prop. \ref{gettingpositive}]

  (1) Clearly $A^{<0>}$ satisfies $\mathcal C_2$, as does any matrix
    $\elpkh$-equivalent to $A^{<0>}$.  Now apply the initial part
    of the proof of Proposition \ref{pro:right form} to $A^{<0>}$,
    iterating the trimming move of Example \ref{cutandtrim} to produce
    a matrix $A_1$ in which a diagonal entry is zero iff the row and column
    through that entry are zero. By induction, these
    trimming moves are positive 
    $\elpkh$-equivalences in $\mopcnh$, because $A_1 \in \mopcnh$.
    If $i \in \mathcal C$, then the nonzero diagonal entry of
    $A_1\{i,i\}$ must be unique. It follows that $A_1$ is the stabilization of
    a matrix $A'$ as claimed.

  For the proof of (2), suppose $A \in \mopcnh$, satisfying $\mathcal C_1$
    and $\mathcal C_2$. Without loss of generality, suppose also that 
a diagonal entry in $A$ is zero iff the row and column
through that entry are zero. Given  $i\notin \mathcal C$,
let $C_i=A\{i,i\}$. 
Steps 0,3,4,5,6 of \cite[Lemma 6.6]{BSullivan} give an explicit
string of basic positive $\Z H_{i}$-equivalences which send
$I-C_i $ to a matrix $I-B_i$ such that $B_i-I$ is $H_i$-positive. These 
basic positive equivalences are given by row and column cuts
(recall subsections 
\ref{subsec:rowcut},\ref{subsec:colcut})
and therefore the elementary row and column
operations defining them give a string of basic positive $\elpnh$-equivalences
from $I-A$ to some matrix $I-B$ in $\mopcnh$ such that 
$(B\{i,i\}-I)$ is $H_i$-positive. After applying such
equivalences for every $i\notin \mathcal C$, and renaming, 
we may assume without loss of generality that
$A$
satisfies all conditions 
of 
Definition \ref{defnmpppcnh} 
except perhaps condition   (\ref{dijcondition}).

To arrange for condition (3), we consider 
double cosets $D\in \mathcal D_{ij}$ by induction, addressing
the sets $D_{ij}$ in an order with $\rho (i,j)$ nondecreasing. 
The positive equivalences below are  compositions of 
 $\elpnh$-equivalences, of the form
$(I-B) \to (I-C)$ with $B\leq C$. 
So, all the matrices $C$ continue to satisfy
Conditions 1 and 2. Also, if Condition 3 is satisfied by
$B$ for  $D\in\mathcal D_{i'j'}$, then this remains true for $C$.
In particular, when considering $D\in D_{ij}$, if e.g. $\rho (i,k) < \rho (i,j)$
then we may assume Condition 3 holds for $D$ in $D_{ik}$.

We begin with a claim.

{\bf Claim 1.} Suppose $i\notin \mathcal C$ or $j\notin \mathcal C$; 
$(s,t) \in \mathcal I_i \times \mathcal I_j$; $ g\in \mathcal H_{ij}$;  and
$A(s,t)\geq g \in \mathcal H_{ij}$. Let $D= H_igH_j$. Then there is a
positive equivalence $(I-A)\to (I-A')$ where
$A'\geq A$ and $A'\{i,j\}$ is $\mathcal D$-positive. 

\begin{proof}[Proof of Claim 1] We assume $j\notin \mathcal C$ (the
  proof for the case
$i\notin \mathcal C$ is similar).  Let $E= E_{st}(g)$.
  There is a positive equivalence $(I-A) \to (I-B)$ 
(a $(g,s,t)$ row cut equivalence, as in 
Sec \ref{subsec:rowcut}) 
implemented 
by $I-B = E(I-B)$. Here row $s$ of $B\{ i, j\}$ is $gH_j$-positive,
because
\begin{align*}
  B(s,t) &= A(s,t) -g +gA(t,t) \geq g\delta_j \ , \\ 
  B(s,t') &= A(s,t') +gA(t,t') \geq g\delta_j \ \ 
  \text{if } t' \in \mathcal I_j \text{ and } t'\neq  t\ , 
\end{align*}
and the inequalities hold because $(A-I)\{j,j\} $ is $H_j$-positive.
For use in Subcase 2,
after performing a second $(g,s,t)$ row cut equivalence (if necessary),
we obtain $B'$ with 
\begin{align*}
  B'(s,t') &= B(s,t') +gB(t,t') \geq A(s,t') + 2g\delta_j \ \ 
  \text{if } t' \in \mathcal I_j  \ .  
\end{align*}
There are two cases for the rest of the proof.

Subcase 1: $i\notin \mathcal C$. Suppose $t' \in \mathcal I_j$; 
$B(s,t') \geq h$; and $E=E_{st'}$.  Choose $gh\in gH_j$. Then 
there is a positive equivalence to $(I-B) \to (I-C)$
(a $(gh,s,t')$ column cut equivalence, as in 
Sec \ref{subsec:colcut}) implemented by 
$(I-C) = (I-B)E$. Here column $t'$ of $C\{ i,j \}$ is $H_igH_j$
positive, because 
\begin{align*}
  C(s,t') &= B(s,t') -gh + B(s,s) A(s,t') \geq \delta_i g\delta_j\ , \quad
 \\
 C(s',t') &= B(s',t')  + B(s',s)A(s,t') \geq \delta_i g \delta_j  \ ,
   \text{if } s'\in \mathcal I_i \text{ and }s'\neq s \ , 
\end{align*}
where the inequalities hold because $(B-I)\{i,i\}$ is $H_i$-positive.
So, after implementing $(h,s,t')$ column cuts for each $t'$ in $\mathcal I_j$,
we pass to $I-A$ where $A\{i,j\}$ is $\mathcal D$-positive.

Subcase 2: $i\in \mathcal C$. 
Now $\mathcal I_i = \{ s\}$.
Let $B(s,s) = g_i$.
Suppose $t'\in \mathcal I_j$.
Let $(g_i)^{\ell} =1$.
Starting from $B'$, apply $((g_i)^k,s,t')$ column cuts, for
$k=0,1,\dots ,\ell -1$, to produce $C_0, \dots , C_{\ell-1}$.
Because $B'(s,t') \geq 2g\delta_j$, we have 
\begin{align*}
  C_0(s,t') &= B'(s,t') -g +  g_iB'(s,t')  
  > g\delta_j +  g_ig 2\delta_j  \\
  C_1(s,t') &= C_0(s,t') -g_ig +  g_iC_0(s,t') 
   > g\delta_j  + g_ig\delta_j  + 
 (g_i)^22 g\delta_j \\
& \quad \ \ \ \   \dots \\
  C_{\ell -1}(s,t') &  > g\delta_j  + g_ig\delta_j 
  + \dots + (g_{i-1})^{\ell -1} (\delta_j -1)
  = \delta_i g \delta_j \ . 
\end{align*}

\end{proof}

Now we consider $D\in \mathcal D_{ij}$. If $D\in \mathcal R_{ij}$,
then there must be some $g$ in $D$ and some $(s,t)$ in $
\mathcal I_s \times \mathcal I_t$ such that
$A(s,t)\geq g$. If $\{i,j\} \subset \mathcal C$,
then $\mathcal A\{ i,j \}$ has only one entry, and
therefore Condition 3(ii) holds for $D$. 
If $\{i,j\}$ is not contained in $\mathcal C$, then
by  Claim 1 there is a positive equivalence to
some $A'$ with $A'\{i,j\}$ $D$-positive.

From here,  suppose   $(i,j,D)\notin \mathcal R$.
Because $\mathcal H$ is a coset structure for $A$, we deduce that
there exists $k$ such that $i \prec k \prec j$ and
$D\subset H_{ik}H_{kj}$. 
So, given $i\prec k \prec j$, we will finish by
producing a  positive equivalence
replacing $A$ with a matrix $A'$, $A'\geq A$,  
such that $A'\{i,j\}$ is $H_{ik}H_{kj}$-positive.
This argument goes by cases. For an element $i$ of $\mathcal C$, we
let $g_i$ be the unique entry of $A\{i,i\}$. 

{\bf Case 1:} $k \notin \mathcal C$, or $\{ i,j\} \cap \mathcal C =
\emptyset$.
Suppose $(s,r) \in \mathcal I_i \times \mathcal I_k$.
Perform an $(A(s,r),s,r)$ row cut equivalence
on $A$ to produce a matrix $B$ such that
\begin{align*} 
  B(s,r) &= A(s,r)B(r,r) \geq \delta_{ik} \\
  B(s,t) &= A(s,t) +A(s,r) B(r,t) \geq \delta_{ik} \delta_{kj} \quad 
  \text{if } t\in \mathcal I_j\ ,  
  \end{align*}
where the inequalities hold because, by the induction hypothesis,
$(A-I)\{i,k\}$ is $H_{jk}$ positive and 
$(A-I)\{k,j\}$ is $H_{kj}$ positive. Thus row $s$ of
$B\{i,j\}$ is now $H_{ik}H_{kj}$ positive. Repeat as needed for all
$s$ in $\mathcal I_i$ to obtain $A'$ such that  $A'\{ i,j\}$
is $H_{ik}H_{kj}$ positive.

{\bf Case 2:} $\{i,k,j\}\subset \mathcal C$. 
Let $\mathcal I_i \times \mathcal I_k \times I_j= \{(s,r,t)\}$.

Looking only at the principal submatrices
on indices $\{s,r,t\}$, let
$A=\left( \begin{smallmatrix}
  g_i & b & c \\ 0 & g_k & d \\ 0 & 0& g_j
   \end{smallmatrix}\right)
$.
By the induction hypothesis, for $D\in H_{ik} $
we have $\pi_D(b) >0$, and for $D\in H_{kj} $
we have $\pi_D(d) >0$. Therefore
$H_{ik}H_{kj} \subset H_ibH_kdH_j$.
So, it suffices to  give a string of positive equivalences
producing $A'$ with $A'(s,t) \geq \delta_ib\delta_kd\delta_j$. 

Let $\ell = \kappa (g_i)$.
We apply in order $(b(g_k)^t,s,r)$ row cuts, for $t=0, 1, \dots ,
\ell -1$, starting with
$A$.
The first equivalence ($t=0$) 
is  implemented by 
\begin{align*}
\begin{pmatrix} 
1&b&0 \\
0&1&0\\
0&0&1 
\end{pmatrix} 
\begin{pmatrix} 
1-g_i&-b&-c \\
0&1-g_k&-d\\
0&0&1-g_j 
\end{pmatrix} 
&= 
\begin{pmatrix} 
1-g_i&-bg_k&-c -bd\\
0&1-g_k&-d\\
0&0&1-g_j 
\end{pmatrix} 
\end{align*}
After the last ($t=\ell -1$) equivalence we reach 
\[
I-B =
\begin{pmatrix} 
1-g_i&-b(g_k)^{\ell} &-c'\\
0&1-g_k&-d\\
0&0&1-g_j
\end{pmatrix} 
\]
where  $B\geq A$ (because $b(g_k)^{\ell} = b$) and
$c' = -A(s,t) -b(1 + g_k + \dots +(g_k)^{\ell -1}d
 =A(s,t) + b\delta_kd$. 
We follow this with 
 a column cut equivalence, 
\begin{align*}
\begin{pmatrix} 
1-g_i&-b&-c' \\
0&1-g_k&-d\\
0&0&1-g_j 
\end{pmatrix}
\begin{pmatrix} 
1&0&b\delta_k \\
0&1&0\\
0&0&1 
\end{pmatrix} 
&= 
\begin{pmatrix} 
1-g_i&-b&-c''\\
0&1-g_k&-d\\
0&0&1-g_j 
\end{pmatrix} 
\end{align*}
and then repeat the first move to reach a matrix $I-C$, $C\geq A$, 
with $C(s,t) = A(s,t) + (1 +g_i)b \delta_k d$. 
Iterating this process leads to a matrix $I-D$ such that
$D(s,t) = A(s,t)  + \delta_i b\delta_kd :=d'$ .
Setting $\delta''= \delta_i b\delta_kd $, 
 we then apply
the equivalence 
\begin{align*}
\begin{pmatrix} 
1&0&\delta'' \\
0&1&0\\
0&0&1 
\end{pmatrix} 
  \begin{pmatrix} 
1-g_i&-b&-d' \\
0&1-g_k&-d\\
0&0&1-g_j 
\end{pmatrix}
&= 
\begin{pmatrix} 
1-g_i&-b&-d''\\
0&1-g_k&-d\\
0&0&1-g_j 
\end{pmatrix} 
\end{align*}
to reach a matrix $I-E$ for which
$E(s,t) = A(s,t) + \delta_a b \delta_k d g_j$.
Iterating the move that produced $E$ from $A$, we
arrive at $A'$ with
$A'(s,t) = A(s,t) + \delta_a b \delta_k d \delta_j$.

{\bf Case 3:} $\{i,k\}\subset \mathcal C$,  $j\notin \mathcal C$.
Suppose $t\in \mathcal I_j$. Use the Case 2 moves which led
the matrix $D$ with
$D(s,t) = A(s,t) + \delta_a b \delta_k d $. By the induction
hypothesis, $d$ is $H_{kj}$ positive, and therefore
$E(s,t)$ is $H_{ik}H_{kj}$ positive. Iterating this move over
$t\in \mathcal I_j$, we reach $A'$ such that $A'\{ i, j\}$ is
$H_{ik}H_{kj}$ positive.

{\bf Case 4:} $\{j,k\}\subset \mathcal C$,  $i\notin \mathcal C$.
The proof here is similar to the proof for Case 3.

\end{proof}

\section{The Factorization Theorem: proof} \label{sec:factorizationtheoremproof}

This section is devoted to the proof of
  the Factorization Theorem \ref{theoremfactor}. 
We  use and generalize proof techniques 
from  \cite{BSullivan} and \cite{mb:fesftpf}.
We will prove three lemmas, 
and then use them 
  in a short argument to finish the proof of 
  Theorem \ref{theoremfactor}.

Let $\upnh$ be the set of matrices $M$ in 
$\elpnh$ such that every diagonal block 
$M\{ i,i\}$ is the identity matrix. 
We will address equivalences 
$(U,V)$ for matrices $U,V$ in $\upnh$. 

In Lemma \ref{2by2}, we will consider $2\times 2$ 
upper triangular matrices. 
Recall $\mathcal P_{2}=\{ 1,2\}$ with $1\prec 2$.

\begin{definition} \label{2by2defn}
Suppose 
$\mathbf m=(m_1,m_2)$; $\mathcal H$ is  a
$(G,\mathcal P_{2})$ 
coset structure; and 
$B,B'$ are in 
$\mathcal M^{++}_{\mathcal P_{2}}(\mathbf m,\mathcal H)$.
A string of basic elementary 
 positive $\mathcal U_{\mathcal P_{2}}(\mathcal H)$-equivalences 
 \[ 
B\xrightarrow[+]{(E_1,F_1)} \mbox{} 
\xrightarrow[+]{(E_2,F_2)} \dots
\xrightarrow[+]{(E_t,F_t)} 
B'
\] 
is {\it extendable} if 
the matrix products
$E_i\cdots E_1$  
and $F_1\cdots F_i$ are nonnegative,  $1\leq i \leq
t$ . 
In this case, 
with $(U,V)=( E_t\cdots E_2E_1, F_1F_2\cdots F_t)$, 
 we also say  that 
$(U,V):B\to C$ is an  
{\it extendable} positive equivalence.  
\end{definition} 

Our interest in extendable equivalences  is the following. 
Suppose $M,M'$ are in 
$\mpppcnh$ with $2\times 2$ principal 
submatrices $B,B'$ on the same coordinate 
indices $s,t$ contained in a block $\{i,j\}$ 
with $i\prec j$. Then 
an extendable positive equivalence of 
$B,B'$ 
(with respect to the restriction of $\mathcal H$)  
will give (by the same elementary 
operations) a positive equivalence from $M$ to $M'$ 
in $\mpppcnh$.  

\begin{lemma} \label{2by2}
Given matrices $B,B'$ in $\mpppcnh$, 
suppose $(U,V): B\to B'$ is a $\uph$-equivalence which only differs from the identity at blocks $U\{i,j\}$ and $V\{i,j\}$, and that every entry of $ U\{i,j\}$ and $V\{i,j\}$ lies in $\Z_+G$. 
Then $(U,V): B\to B'$ is an extendable positive 
$\elph$
equivalence, and consequently  
\[
\begin{CD}
(U,V)\colon B @>>+> B'  
\end{CD}
\]
\end{lemma}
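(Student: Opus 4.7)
The approach is to decompose $U$ and $V$ as commuting products of basic elementary matrices and apply them sequentially, checking both that each step is a valid basic positive $\elpnh$ move and that the partial products are nonnegative (hence extendable).

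Because $\mathcal I_i$ and $\mathcal I_j$ are disjoint, any two basic elementaries $E_{s,t}(g)$ and $E_{s',t'}(g')$ with $s,s' \in \mathcal I_i$ and $t,t' \in \mathcal I_j$ commute: the product of off-diagonal parts $e_{s,t}e_{s',t'}$ vanishes since $t \in \mathcal I_j$ and $s' \in \mathcal I_i$ cannot coincide. Writing $U(s,t) = \sum_g n_{s,t,g}\,g$ with $n_{s,t,g} \in \Z_+$ (legitimate because every entry of $U\{i,j\}$ lies in $\Z_+G$), we obtain the order-independent factorization $U = \prod_{s,t,g} E_{s,t}(g)^{n_{s,t,g}}$ as a product of matrices of the form $I + g\,e_{s,t}$, and likewise for $V$. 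Every partial product is then $I + (\text{nonnegative matrix supported in block } \{i,j\})$, so extendability is automatic. The plan is to apply the $U$-factors first to produce $UB$ and then the $V$-factors to produce $UBV = B'$.

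It then remains to verify that each elementary step is a valid basic positive step, namely that the coefficient $g$ is a summand of the current relevant entry. When neither $i$ nor $j$ lies in $\mathcal C$, the condition $(i,j,D) \in \rc$ fails for every $D \in \mathcal D_{ij}$, so part (3) of Definition \ref{defnmpppcnh} makes each $\pi_D B\{i,j\}$ $D$-positive; every $g \in H_{ij}$ is then a summand of every entry of $B\{i,j\}$, and since $B\{j,j\}$ and $B\{i,i\}$ are $H$-positive (hence entrywise nonnegative), the row and column operations add only nonnegative contributions to block $\{i,j\}$, so $D$-positivity propagates and each step is valid.

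The expected main obstacle is the cycle case: if $i \in \mathcal C$ or $j \in \mathcal C$ and $D \in \mathcal R_{ij}$, then $\pi_D B\{i,j\}$ is only nonzero, and the diagonal block $B\{j,j\} = g_j - 1$ (respectively $B\{i,i\} = g_i - 1$) is not entrywise nonnegative, so neither initial availability of the required $g$-summand nor nonnegativity of the intermediate matrices is automatic. I plan to exploit the cyclic structure of $H_j = \langle g_j \rangle$ (respectively $H_i = \langle g_i \rangle$): for $j \in \mathcal C$, the move $(E_{s,s_j}(g), I)$ decreases the coefficient of $g$ in $B(s,s_j)$ by one and increases that of $g\,g_j$ by one, conserving the total token count on each right $\langle g_j \rangle$-orbit inside $D$. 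Using $\pi_D B(s,s_j) > 0$ to supply at least one initial token in every orbit encountered and using $B' \in \mpppcnh$ to guarantee that the net shifts prescribed by $U(s,s_j)$ are globally consistent with a realizable sequence of single-token moves, a greedy scheduling around each cyclic orbit (always moving from a currently token-bearing position) yields an order of the $u_g$ row operations in which every step has the required summand present. An analogous argument on left $\langle g_i \rangle$-orbits handles the column operations from $V$ when $i \in \mathcal C$, so that the composition of the sequenced row moves on $B$ with the sequenced column moves on $UB$ is the required extendable positive $\elph$ equivalence from $B$ to $B'$.
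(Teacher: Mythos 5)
Your observation that the basic elementaries supported in the single off-diagonal block $\{i,j\}$ pairwise commute, hence give an order-independent factorization of $U$ and $V$ with all partial products of the form $I+(\text{nonnegative matrix})$, is correct; and your treatment of the case $i,j\notin\mathcal C$ matches the paper's Case~1, where $D$-positivity of the $\{i,j\}$ block and $H$-positivity of the diagonal blocks make every elementary step valid regardless of order.

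The gap is in the cycle case, in the plan to apply \emph{all} $U$-factors first to reach $UB$ and only then the $V$-factors. The matrix $UB$ need not lie in $\mpppcnh$ at all: when $i,j\in\mathcal C$, the $\{i,j\}$ entry of $UB$ is $r+p(g_j-1)$, which can acquire a strictly negative coefficient even though $r$ and $r'=r+p(g_j-1)+(g_i-1)s$ are both in $\Z_+G$. Concretely, take $\mathcal P=\{1,2\}$ with $1\prec 2$, $\mathcal C=\{1,2\}$, $G=\Z/2=\{e,c\}$, $g_1=g_2=c$, $H_{12}=G$, and $B=B'$ the $2\times 2$ matrix with diagonal entries $c-1$ and $B(1,2)=e$. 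With $U(1,2)=c$ and $V(1,2)=e$ one checks $UBV=B$. But $(UB)(1,2)=e+c(c-e)=2e-c$, which has a negative coefficient, and the single row factor $E_{1,2}(c)$ is not a basic positive move because $c$ is not a summand of $B(1,2)=e$. No greedy scheduling of the $U$-factors can repair this (there is only one of them); the only valid positive factorization applies the column move $E_{1,2}(e)$ \emph{first} and then the row move $E_{1,2}(c)$, i.e.\ row and column moves must be interleaved.

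This is exactly what the paper's Case~2 handles and why it is the delicate part of the lemma. Rather than rows-then-columns, the paper inducts on $\overline{p+s}$: when $r\ne r'$ one well-chosen row or column move strictly decreases the count and induction finishes; when $r=r'$ the identity $p+s=pg_j+g_is$ is used to trace a closed cycle $w_0,w_1,\dots,w_m=w_0$ of summands of $p+s$, alternating row and column moves along the cycle, and this walk is sandwiched between a temporary ``boost'' $(E,F)$ (which deposits extra positivity in the $\{i,j\}$ entry from a summand of $r$) and its inverse so that no intermediate coefficient dips below zero. Your token picture is close in spirit, but the schedule must be free to alternate row and column moves (and must sometimes borrow a token before returning it), not segregate them into two phases.
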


\begin{proof}  
\mbox{}\\
\noindent
    {\bf Case 1 
      $i\not\in\mathcal C,j\not\in\mathcal C$:}\\ Since $B,B'$ have positive entries in all relevant diagonal blocks we can simply decompose $U$ and $V$ one entry at a time, thus obtaining an extendable positive $H_{ij}$-equivalence at every step.

\noindent
    {\bf Case 2 $i\in \mathcal C,j\not\in \mathcal C$:}\\
As in Case 1, $(U,I)\colon  B \xrightarrow[+] UB$. So, 
without loss of generality, we can assume
$(U,V)=(I,V)$. Considering compositions, it is enough to
address the case that $V$ has a single nonzero offdiagonal
entry, say, $V(1,2)=s \neq 0$. Then, in the principal submatrices
on indices $\{1,2\}$, the equivalence
$(I,V)\colon B\to BV$ has the form
$B = \left(\begin{smallmatrix} g-1 & r \\ 0 & h \end{smallmatrix}\right)
\to \left(\begin{smallmatrix} g-1 & r+(g-1)s \\ 0 & h \end{smallmatrix}\right)$.
Choose $p$ in $\Z_+H_{ij}$ such that $p>s$; then $ph>s$, because
$h$ is $H_j$-positive. 
Applying the equivalences $(E_{12}(p),I)$, $(I,V)$, $(E_{12}(-p))$ produces
\[
B\to 
\left(\begin{smallmatrix} g-1 & r+ph \\ 0 & h \end{smallmatrix}\right)
\to 
\left(\begin{smallmatrix} g-1 & r+ph +(g-1)s \\ 0 & h \end{smallmatrix}\right)
\to
\left(\begin{smallmatrix} g-1 & r +(g-1)s\\ 0 & h \end{smallmatrix}\right)\ .
\]
Let $E_{12}(p)=E_k\dots E_1$, $E_{12}(s)=F_1\dots F_k$ be factorizations
into nonnegative matrices with offdiagonal entries in $\mathcal H_{ij}$
Then $(I,V)$ is the composition 
$(E_1,I), \cdots ,(E_k,I),(I,F_1), \cdots ,(I,F_k),
(E_k^{-1},I), \cdots ,(E_1^{-1},I)$ 
and therefore $(I,V)$ is extendable.

\noindent
{\bf Case  3 $i\not\in \mathcal C,j\in \mathcal C$:}\\ 
The proof here is essentially as for Case 2.

\noindent
    {\bf Case 4
      $i\in \mathcal C,j\in\mathcal C$:}
    \\ Note first that in this case, the nontrivial matrices $U\{i,j\}$ and $V\{i,j\}$ are $1\times 1$. Let $p$ be the entry of $U\{i,j\}$ and  $s$ the entry of $V\{i,j\}$; we have assumed that $p,s\in \Z_+G$.

The proof is by induction on $K
=\overline{p+s}$, and the  lemma is true for $K=0$. 
Suppose $\overline{p+s}=K>0$ and the lemma holds 
if $\overline{p+s} <K$. 

Here  the  submatrix of the equivalence  $UBV=B'$ containing any change has the 
form 
\begin{equation} \label{case1} 
\begin{pmatrix} 
1&p\\ 0& 1
\end{pmatrix}  
\begin{pmatrix} 
g-1&r\\ 0& h-1
\end{pmatrix} 
\begin{pmatrix} 
1&s\\ 0& 1
\end{pmatrix} 
=
\begin{pmatrix} 
g-1&r'\\ 0& h-1
\end{pmatrix} \ . 
\end{equation} 
where 
\begin{equation}\label{sestar}
r'=r+p(h-1)+(g-1)s
\end{equation}
We use $E(x)$ to denote a 
matrix 
 $
\left(\begin{smallmatrix} 1&x\\0&1
\end{smallmatrix}\right)$; e.g., $U=E(p)$. 
For any $x,y,z$, 
\begin{equation}
\begin{pmatrix} 
1&x\\ 0& 1
\end{pmatrix}  
\begin{pmatrix} 
g-1&y\\ 0& h-1
\end{pmatrix} 
\begin{pmatrix} 
1&z\\ 0& 1
\end{pmatrix} 
=
\begin{pmatrix} 
g-1&
y +x(h-1) + (g-1)z
\\ 0& h-1
\end{pmatrix}  
\end{equation} 
so here the pair $(E(x),E(z))$ acts by adding 
$x(h-1) + (g-1)z$ to the $(1,2)$ entry. 
The equivalence given by $(U,V)$ is a composition 
of  basic elementary equivalences,  
given by $(\myId, E(w))$ or $(E(w),\myId)$, with 
$w\in G$ a summand of 
$p$ or $s$. Such an equivalence acts by adding 
a term $w'-w$ to the $1,2$ position, where 
$w'$ is $wh$ or $gw$. 

\noindent
{\bf Case 4(i):}\\   
Assume $r\neq r'$. Let $r=\sum_w n_ww$ and $r'=\sum_w n'_ww$. 
The images  
$\overline r$ and $\overline{r'}$ under the augmentation 
must be equal. So, there must be some $w\in G$ such 
that $n_w> n'_w$. Therefore $w$ must be a summand of 
$p$ or $s$, and $(\myId,E(w))$ or $(E(w),\myId)$ applied to 
$B$ is a positive equivalence in 
$\mpppcnh$.  Now the equivalence given by $(U,V)$ 
is this positive equivalence followed by one satisfying 
the induction hypothesis. A composition of extendable 
equivalences is extendable. This completes the 
inductive step if $r\neq r'$. 

\noindent
{\bf Case 4(ii):}\\
 Assume $r=r'$ and note that in this case 
\begin{equation}\label{sebullet}
p+s=ph+gs
\end{equation}
according to \eqref{sestar}.
 
Suppose $w_0$ is a summand of $p+s$. 
Then $w_0\in H_{12}$, since $(U,V):B\to B'$ is a 
$\uph$-equivalence. 
Because $B\in \mpppcnh$,  
there must be  a summand 
$x$ of $r$ and $i,j$ such that  $w_0=g^ixh^j$. 
We then have a 
positive equivalence $(E,F): B\to B_0$ defined by 
 \begin{align*} 
&\begin{pmatrix} 
g-1&r\\ 0& h-1
\end{pmatrix} 
\xrightarrow[+]{(\myId,E(x) )} \mbox{}
 \xrightarrow[+]{(\myId,E(gx) )} \cdots
\xrightarrow[+]{(\myId,E(g^{i-1}x) )} 
\\ 
&\quad \quad  
\mbox{}
\xrightarrow[+]{(E(g^{i}x),\myId )} \mbox{}
\xrightarrow[+]{(E(g^{i}xh),\myId )} \cdots 
\xrightarrow[+]{(E(g^{i}xh^{j-1}),\myId )} 
\begin{pmatrix} 
g-1&r+g^ixh^j -x
\\ 0& h-1
\end{pmatrix} 
\end{align*} 
Let $(E_t,F_t)$ be the $t$th of these basic positive equivalences, 
so, $(E,F)=(E_{i+j}\cdots E_1,F_1\cdots F_{i+j})$. 
Define 
\[
(E'_1,F'_1)=\begin{cases}(E(w_0),\myId)&\text{if }w_0\text{ is a summand of }p\\
 (\myId,E(w_0)) &\text{otherwise .}\end{cases}
 \]
  Now $(E'_1,F'_1): B_0
\xrightarrow[+]{ }
E'_1B_0F'_1 =:B_1$, with $B_1(1,2)=B_0(1,2)+ w_1-w_0$, where 
$w_1=w_0h$ if $E'_1=E(w_0)$
and 
$w_1=gw_0$ if $F'_1=E(w_0)$. 
In either case, according to \eqref{sebullet} and the definition of $(E'_1,F'_1)$, $w_1$ is a summand of $p+s$.
Since $B_1(1,2)=B_0(1,2)+ w_1-w_0$, if $w_1 \neq w_0$ then  
$w_1$ must be a summand of $p+s-w_0$, and we may construct 
a positive equivalence $(E'_2,F'_2): B_1\to B_2$ as before, with $w_1$ in 
place of $w_0$  and
$B_2 (1,2)=B_1(1,2) + w_2- w_1
  =B_0(1,2) + w_2- w_0$. Since $p+s$ is finite
and $r=r'$, this process 
must  reach some  $w_m=w_0$, and we set
$(E',F')=(E'_m\cdots E'_1,F'_1\cdots F'_m)$.  Because $B_m=B_0$, 
we have by composition a positive equivalence 
\[ 
(E',F'):
 B \xrightarrow[+]{(E,F)} 
B_0  \xrightarrow[+]{(E',F')} 
B_0 \xrightarrow[+]{(E^{-1},F^{-1})} 
B \ . 
\]
The equivalence 
$ B \xrightarrow{(E,F)} 
B_0  \xrightarrow{(E',F')} 
B_0 $ is extendable because the matrices 
$E_t,F_t,E'_t,F'_t$ are nonnegative. 
Extendability through the remaining basic equivalences 
holds because 
\begin{align*}
&\Big (E_t^{-1}\cdots E_1^{-1} E' E,\ F F'  F_1^{-1}\cdots
F_t^{-1} \Big)  \\ 
=\ &\Big ( E' E_t^{-1}\cdots E_1^{-1} E,\ F F_1^{-1}\cdots F_t^{-1} F' \Big) \\
= \ &\Big (E'E_{t+1}\cdots E_{i+j},\ F_{t+1} \cdots F_{i+j} F'\Big) \  
\end{align*}
and all the matrices in the last line are nonnegative. 
The equivalence 
$(U(E')^{-1},(F')^{-1}V): B\to B$
has the form $(E(p'),E(s'))$ with
  $\overline{p'+s'} = \overline{p+s}-m < \overline{p+s}$,
  and therefore is extendable by the 
induction hypothesis. This finishes the inductive step 
for Case 2(ii). 
\end{proof}

\begin{lemma} \label{unipotentlemma} Suppose 
$U$ and $V$ are matrices in $\upnh$, 
$B$ and $B'$ are in $\mpppcnh$, and 
$UBV= B'$. Then 
$(U,V): B   \xrightarrow[+]{ \ } B'$. 
\end{lemma}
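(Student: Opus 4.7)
The plan is to prove Lemma \ref{unipotentlemma} by induction, at each step isolating a single block $\{i_0,j_0\}$ of $U$ and $V$ and reducing to the two-block scenario handled by Lemma \ref{2by2}. A natural induction parameter would be the cardinality of the set of pairs $(i,j)$ with $i\prec j$ at which $U$ or $V$ has a nontrivial block, or a weighted variant thereof. The base case is trivial: if no such pairs exist then $U=V=\myId$, forcing $B=B'$, and the empty composition of elementary positive equivalences suffices.

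For the inductive step, I would select a pair $(i_0,j_0)$ that is minimal with respect to $\rho$ (cf.\ Definition \ref{rhoandS}) among pairs where $U\{i_0,j_0\}\neq 0$ or $V\{i_0,j_0\}\neq 0$. Minimality guarantees that for every $k$ with $i_0\prec k\prec j_0$ we have $U\{i_0,k\}=V\{k,j_0\}=0$, so no intermediate blocks contribute to the $(i_0,j_0)$ entry of $UBV$. The expansion at block $\{i_0,j_0\}$ then collapses exactly to the two-block formula driving Lemma \ref{2by2}.

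Next I would factor $U=\widehat U\cdot U'$ and $V=V'\cdot \widehat V$, where $\widehat U,\widehat V$ agree with $\myId$ outside block $\{i_0,j_0\}$ and carry $U\{i_0,j_0\}$, $V\{i_0,j_0\}$ there. Setting $B''=\widehat U B\widehat V$, I would invoke Lemma \ref{2by2} on the two-element sub-poset $\{i_0,j_0\}$ to produce a positive equivalence $B\to B''$ in $\mpppcnh$; the inductive hypothesis would then dispose of the remaining equivalence $(U',V')\colon B''\to B'$, whose off-diagonal complexity is strictly smaller (after accounting for any incidental changes in row $i_0$ of $U'$ and column $j_0$ of $V'$, which can be controlled by a more careful choice of $(i_0,j_0)$ or by enriching the induction parameter to a pair like $(\max\rho, \text{count})$ ordered lexicographically).

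The main obstacle will be verifying the hypotheses of Lemma \ref{2by2} for $\widehat U,\widehat V$: the entries of $\widehat U\{i_0,j_0\}$ and $\widehat V\{i_0,j_0\}$ must lie in $\Z_+G$, but elements of $\upnh$ may in general carry negative block entries arising from cancellations in their elementary factorizations. The delicate step is to massage $(\widehat U,\widehat V)$ via auxiliary positive equivalences $B''\to B''$, inserting and later cancelling terms supported on summands of $B\{i_0,i_0\}$ and $B\{j_0,j_0\}$, in the spirit of Case 2(ii) of the proof of Lemma \ref{2by2}, so as to attain nonnegative blocks. A parallel case analysis, turning on whether $(i_0,j_0,D)\in\rc$ for the relevant double cosets $D$, will confirm that $B''$ satisfies all the positivity conditions of Definition \ref{defnmpppcnh} at each intermediate stage.
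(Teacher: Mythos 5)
Your architecture — process blocks of $\mathcal S$ in increasing $\rho$-order, reduce each step to the two-block scenario of Lemma \ref{2by2}, and recognize the negativity of $\upnh$-blocks as the central obstruction — is the right one and matches the paper's strategy. But the resolution you sketch for that obstruction is vague and, taken literally, does not work. You propose to fix the sign problem by ``massaging $(\widehat U,\widehat V)$ via auxiliary positive equivalences $B'' \to B''$,'' but if $\widehat U\{i_0,j_0\}$ has negative coefficients, then $B'' = \widehat U B \widehat V$ may fail to lie in $\mpppcnh$; in that case the asserted positive equivalence $B \to B''$ does not exist to begin with, and a round trip $B'' \to B''$ leaves $\widehat U$, $\widehat V$, and $B''$ all unchanged, so it cannot repair anything.

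The missing idea is that the positivity must be pushed into $B'$ as well as into $B$. The paper's proof chooses at each step a pair of positive matrices $P_s, Q_s$ (supported on block $\{i_s,j_s\}$, with entries of the form $M_s\delta_{i_s}\delta_{i_s j_s}\delta_{j_s}$ for $M_s$ large) so that $(P_s U_{s-1})\{i_s,j_s\}$ and $(V_{s-1}Q_s)\{i_s,j_s\}$ become nonnegative; it then sets $W_s, X_s$ to agree with these blocks (and $\myId$ elsewhere) and applies the positive moves $B_{s-1}\to W_s B_{s-1} X_s$ and $B'_{s-1}\to P_s B'_{s-1} Q_s$ in parallel, updating $U_s = P_s U_{s-1} W_s^{-1}$, $V_s = X_s^{-1} V_{s-1} Q_s$ so that the $\{i_s,j_s\}$ blocks of $(U_s,V_s)$ become zero. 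Running this through all of $\mathcal S$ (not only the initially nonzero blocks — the updates can create nonzero blocks at higher $\rho$), one gets $U_r = V_r = \myId$, hence $WBX = PB'Q$, and the equivalence assembles as $B \to WBX = PB'Q \to B'$. Without this simultaneous positive enlargement of both $B$ and $B'$ there is no way to obtain the nonnegative blocks you need for Lemma \ref{2by2}, and that is precisely the step your proposal does not supply.
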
 
\begin{proof} 

Recalling Definition \ref{rhoandS}, let 
$(i_1,j_1), \dots , (i_r,j_r)$ 
be an enumeration of 
elements of $\mathcal S$  
such that 
$t\leq s \implies \rho (i_t,j_t) \leq \rho (i_s,j_s)$.

We will define various matrices by induction, beginning 
with $B_0=B, B'_0=B',U_0=U, V_0=V$. For $1\leq s \leq r$, 
given $B_{s-1}, B'_{s-1}, U_{s-1},V_{s-1}$ with 
$B_{s-1}, B'_{s-1}\in \mpppcnh$ and 
$U_{s-1},V_{s-1}\in
 \upnh $, we  choose matrices 
$P_s,Q_s$ in $\upnh$, equal to $\myId$ outside 
block $\{i_s,j_s\} $, such that the following 
Positivity Conditions hold: 
\begin{enumerate} 
\item 
For some nonnegative integer $M_s$, 
every entry of $P_s\{i_s,j_s\}$ and 
every entry of $Q_s\{ i_s,j_s\}$ equals 
$M_{s} \delta_i \delta_{ij} \delta_j$.  
\item 
The blocks 
$(P_sU_{s-1})\{i_s,j_s\}$ and 
$(V_{s-1}Q_s)\{ i_s,j_s\}$ have all entries in $\Z_+H_{ij}$. 
\end{enumerate} 
We note that  by taking $M_s$ large in (1), we can achieve (2). 
We then define matrices $W_s, X_s$ in 
$\upnh $, equal to $\myId$ outside block $\{i_s,j_s\}$, 
by setting 
\begin{align*} 
W_s \{i_s,j_s\} &= (P_sU_{s-1}) \{i_s,j_s\} \\ 
X_s \{i_s,j_s\} &= (V_{s-1}Q_s) \{i_s,j_s\}  \ . 
\end{align*} 
Finally we define 
\begin{align*} 
U_s &= P_s U_{s-1} W_s^{-1}
\qquad  
B_s = W_sB_{s-1}X_s 
\\
V_s &= X_s^{-1} V_{s-1} Q_s 
\qquad  
B'_s = P_sB'_{s-1}Q_s 
\end{align*} 
Then $U_s,V_s\in\upnh$ and 
$B_s, B'_s\in \mpppcnh$,  $0\leq s\leq r$. 

For $1\leq s \leq r$, we will verify the following 
claims by induction. 
\begin{align*} 
 (a) &\ \  U_s B_sV_s  = B'_s  \\
 (b) &\ \  
\begin{CD}
B'_{s-1} @>(P_s,Q_s)>+> B'_s  
\end{CD}
    \\
 (c) &\ \      
\begin{CD}
B_{s-1} @>(W_s,X_s)>+> B_s  
\end{CD}
\\
 (d) &\ \     
U_s = P_s \cdots P_1 U W_1^{-1}\cdots W_s^{-1} \quad \text{ and } \\ 
&\ \ 
V_s = X_s^{-1}\cdots X_1^{-1}V Q_1 \cdots Q_s 
 \\
 (e) &\ \      
U_s\{i_t,j_t\} = 0= V_s\{i_t,j_t\} \quad 
\text{ if } 1\leq t \leq s \  . 
\end{align*} 

Before proving $(a)$-$(e)$, suppose all these claims hold. 
Define $P=P_r P_{r-1}\cdots P_1$ and 
$Q= Q_1Q_2 \cdots Q_r$. 
From $(b)$, we have 
\[
\begin{CD}
B'= B'_{0} @>(P_1,Q_1)>+> B'_1  
@>(P_2,Q_2)>+> B'_2 
\cdots    
@>(P_r,Q_r)>+> B'_r = PB'Q  
\end{CD}
\]
and therefore 
$
\begin{CD}
B' @>(P,Q)>+> PB'Q  
\end{CD} 
$. 
Similarly, let $W=W_r\cdots W_1$ and $X= X_1\cdots X_r$. 
From $(c)$ we have 
\[
\begin{CD}
B= B_{0} @>(W_1,X_1)>+> B_1  
@>(W_2,X_2)>+> B_2 
\cdots    
@>(W_r,X_r)>+> B_r  =WBX 
\end{CD}
\]
and therefore 
$
\begin{CD}
B @>(W,X)>+> WBX   
\end{CD} 
$.
Because $\{U_r,V_r\} \subset \upnh$, 
from $(e)$ with $s=r$ we get 
$U_r=\myId=V_r$. 
Using $(d)$ at $s=r$, we then get 
\begin{align*} 
\myId\ =\ U_r\ =&\ P_r \cdots P_1 U W_1^{-1} \cdots W_r^{-1} 
\ =\ PUW^{-1} 
\end{align*} 
and similarly 
$
\myId=V_r = X^{-1}VQ  
$.
Therefore $(PU,VQ)= (W,X)$ and 
\[
\begin{CD}
B @>(PU,VQ)>+> PUBVQ  = B'Q 
@>(P^{-1}U,VQ^{-1})>+> B' 
\end{CD} \ \ .
\]
This shows $(U,V): B\to B'$ is a positive equivalence. 

To finish the proof it remains to verify $(a)$-$(e)$ 
for $1\leq s\leq r$. 

{\it Proof of (a).} 
We have $U_0B_0V_0 =B'_0$. Suppose $0< s \leq r$ and 
$(a)$ holds at $s-1$. Then 
\begin{align*} 
U_sB_sV_s\ =& \ 
\Big( P_sU_{s-1}W_s^{-1} \Big)
\Big( W_sB_{s-1}X_s \Big)
\Big( X_s^{-1}V_{s-1}Q_s\Big) \\ 
=&\ P_s \Big( U_{s-1}B_{s-1}V_{s-1} \Big) Q_s\\
=&\ P_s B'_{s-1}Q_s \ 
= \ B'_s \ .
\end{align*} 

{\it Proof of (b).} We have $B'_0 = B' \in \mpppcnh$. 
Suppose  $1\leq s \leq r$ and  
$B'_{s-1}  \in \mpppcnh$.
An entry in $B'_{s-1}$ could decrease in
  $P_sB_{s-1}Q_s$ only as a result of addition of a term
  $(M_s\delta_i \delta_{ij} \delta_j)(g_j-1)$ or
  $(g_i-1)(M_s\delta_i \delta_{ij} \delta_j)$. But, these terms vanish,
  as $(g_i-1)\delta_i =0=  \delta_j(g_j-1)$. Therefore
  $P_sB_{s-1}Q_s \geq B_{s-1}$. Because $B_{s-1}\in \mpppcnh$,
  this implies $P_sB_{s-1}Q_s \in \mpppcnh$.

Now enumerate the coordinates of the nonzero off-diagonal entries of
$Q_s$ as 
$(a_1, b_1), \dots , (a_T, b_T)$. For $1\leq t \leq T$, let $E_t$ 
be the basic elementary matrix such that $E_t(a_t, b_t)
=Q_s(a_t,b_t)$. Because these entries lie in blocks $\{i_s,j\}$ 
with $i_s\prec j$, we have $Q_s= \prod_{t=1}^TE_t$. This 
$(\myId,Q): B'_{s-1} \to B'_{s-1}Q$ is a composition of equivalences 
\[
\begin{CD}
B'_{s-1}:= B'_{{s-1},0} @>(\myId,E_1)>> B'_{{s-1},1}  
@>(\myId,E_2)>> 
\cdots    
@>(\myId, E_T)>> B'_{{s-1},T} = B'_{s-1}Q_s  
\end{CD}
\]
By induction, for $1\leq t \leq T$,  $E_t$ is nonnegative
   and $B'_{s-1,t} \in \mpppcnh  $. 
It then  follows from 
  Lemma \ref{2by2} that each $(\myId,E_t)$ 
gives a positive equivalence. Thus 
$
\begin{CD}
B'_{s-1} @>(\myId,Q_s )>+> B'_{s-1}  Q_s
\end{CD}
$, and similarly 
$
\begin{CD}
B'_{s-1}Q_s @>(P_s,\myId)>+> PB'_{s-1}  Q_s
\end{CD}
$. By composition, 
$
\begin{CD}
B'_{s-1}Q @>(P_s,Q_s)>+> P_sB'_{s-1}  Q_s
\end{CD} 
$.

{\it Proof of (c).} 
We have $B_0 \in \mpppcnh$. 
Now suppose 
$1\leq s \leq r$ and 
$B_{s-1}\in \mpppcnh$. 
The matrices $W_s$ and $X_s$ are 
in $\upnh $, with all 
entries in $\Z_+G$, and 
$ B_{s-1} \leq W_s B_{s-1} X_s := B_s$. 
Therefore $B_s \in  \mpppcnh$. 
An argument very similar to the 
proof of claim (b) now shows that 
$
\begin{CD}
B'_{s-1} @>(W_s,X_s)>+> B_{s} 
\end{CD} 
$.

{\it Proof of (d).} The claim (d) follows by induction from the 
definitions $U_0=U$, $V_0=V$, 
$U_s = P_s U_{s-1} W_s^{-1}$ and 
$V_s = X_s^{-1} V_{s-1} Q_s$. 
 
{\it Proof of (e).} 
Suppose $1\leq s \leq r$ and $(e)$ holds at $s-1$. 
(At $s-1= 0$, $(e)$ is an empty statement.) We have 
$U_s = P_sU_{s-1}W_s^{-1}$, with $P_s$ and $W_s^{-1}$ equal to 
$\myId$ outside block $\{ i_s, j_s \}$. 
On account of the zero block structure of 
matrices in $\upnh$, we have $U_s = U_{s-1}$ except possibly 
in blocks $\{ i,j\}$ such that $i\preceq i_s$
 or  $j\preceq j_s$. 

At  $(i_s,j_s)$, we have 
\begin{align*} 
U_s\{ i_s,j_s \} = 
& \ \Big( (P_sU_{s-1}) W_s^{-1} \Big) \{ i_s,j_s\} \\ 
=& \ (P_sU_{s-1}) \{ i_s,j_s\}  + W_s^{-1}  \{ i_s,j_s\}\\ 
=& \ (P_sU_{s-1}) \{ i_s,j_s\}  - W_s  \{ i_s,j_s\} \ = \ 0\ . 
\end{align*} 

Now suppose $i\prec i_s$. Then $U_s\{i,j\} =U_{s-1}\{i,j\} $ 
except possibly in the case $j=j_s$, where 
\begin{equation} \label{possiblediff}
U_s\{i,j_s\} -U_{s-1}\{i,j_s\} \  
=\  U_{s-1}\{i,i_s\}  W_s^{-1}\{ i_s,j_s\} \ . 
\end{equation} 
The right side of \eqref{possiblediff} can be nonzero 
only if $i\prec i_s \prec j_s = j$. In this case, 
$\rho (i_s,j_s) < \rho (i,j)$, so $(i,j)$ cannot 
equal $(i_t, j_t)$ for any $t$ less than $s$. Thus if  
$1\leq t<s$, 
then $U_s\{i_t,j_t\} =U_{s-1}\{i_t,j_t\} $, 
which is zero by the induction hypothesis.  

The analogous argument for the case 
$j_s \prec j$ finishes the proof. 
\end{proof}

We make contact to the case with $U,V\in \uph$ from the general case using the following lemma in combination with a key result from \cite{BSullivan}. 

\begin{lemma} \label{generallemma}
Suppose $i\notin \mathcal C$, $E$ is a basic 
elementary matrix in $\elph$;
$E\{j,k\}=\myId\{j,k\}$ when 
$(j,k)\neq (i,i)$; 
$B,B'\in 
\mppph$;  
and 
\[
(E\{i,i\},\myId)\colon
B\{i,i\} 
 \xrightarrow[+]{}
B'\{i,i\} \ . 
\]
Then there exists $V$ in $\upnh$
such that 
\[
(E,V)\colon B \xrightarrow[+]{} EB'V \ .
\]
Similarly, if 
\[
(\myId,E\{i,i\})\colon
B\{i,i\} 
 \xrightarrow[+]{}
B'\{i,i\} \  
\]
then there exists $U$ in $\upnh$
such that 
\[
(U,E)\colon B \xrightarrow[+]{} UB'E \ .
\]
\end{lemma}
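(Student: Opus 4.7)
The plan is to realize the equivalence by applying $E$ on the left as a single basic positive step and then using a right multiplication by $V\in\upnh$ to reconcile the resulting matrix with the declared target $EB'V$.

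First I would unpack the structure. Since $E$ is a basic elementary matrix in $\elph$ with $E\{j,k\}=\myId\{j,k\}$ off block $\{i,i\}$ and $i\notin\mathcal C$, we may write $E=E_{st}(g)$ with $s,t\in\mathcal I_i$, $s\neq t$, and $g\in H_i$. Left multiplication by $E$ modifies only row $s$ of $B$, adding $g$ times row $t$ to row $s$ in each column block $\{i,k\}$ with $k\succeq i$. On block $\{i,i\}$ the result is $B'\{i,i\}$ by hypothesis; on blocks $\{i,k\}$ with $k\succ i$, the added entries lie in $\Z_+H_iH_{ik}\subseteq \Z_+H_{ik}$ and are nonnegative, so both the coset structure and the positivity conditions of Definition \ref{defnmpppcnh} are preserved, including the strict positivity (3)(ii) when $k\in\mathcal C$, since only nonnegative contributions are added to row $s$ and no existing entries are removed. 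Legitimacy of $(E,\myId)\colon B\xrightarrow[+]{} EB$ as a basic positive $\elpnh$ equivalence then requires $g$ to be a summand of $B\{i,i\}(s,t)$, which is automatic from the $H_i$-positivity of $B\{i,i\}$ guaranteed by Definition \ref{defnmpppcnh}(2).

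Next I would construct $V$ and complete the equivalence. Since $V\in\upnh$ has identity diagonal blocks, the $\{i,i\}$ block of $EBV$ remains $B'\{i,i\}$ for every choice of $V$. To reconcile the off-diagonal blocks of the target $EB'V$, I would build $V$ as a product of basic unipotent elementaries in $\upnh$ implementing the required column operations; each such operation is rendered positive and extendable within $\mpppcnh$ via Lemma \ref{2by2} applied to the relevant $2\times 2$ principal submatrices. The ``similarly'' clause for right multiplication by $E$ follows by the symmetric argument with rows and columns interchanged throughout.

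The hard part will be arranging that the column operations bundled into $V$ respect the coset structure and do not destroy the strict positivity condition (3)(ii) at cycle components $k\succ i$. The hypothesis $i\notin\mathcal C$ is essential here: it provides the $H_i$-positivity of $B\{i,i\}$ rather than confining the block to the single-loop form $g_i-1$, giving enough flexibility to carry out all of the needed elementary moves while preserving every condition of Definition \ref{defnmpppcnh}.
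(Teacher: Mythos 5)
Your proof has a genuine gap that makes the argument fail in exactly the case the lemma is designed to handle: you assume the off-diagonal entry of $E$ is a nonnegative group element, but it need not be.

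You write that $E = E_{st}(g)$ with $g \in H_i$ and conclude that applying $E$ on the left "add[s] $g$ times row $t$ to row $s$" so that "the added entries lie in $\Z_+H_iH_{ik}\subseteq \Z_+H_{ik}$ and are nonnegative," making $(E,\myId)\colon B \to EB$ a positive first step. But the hypothesis only says $(E\{i,i\},\myId)\colon B\{i,i\}\xrightarrow[+]{} B'\{i,i\}$ is an elementary positive equivalence \emph{in} $\mppph$; by the paper's definition, that merely requires that $B\{i,i\}$ and $E\{i,i\}B\{i,i\}$ both be $H_i$-positive and that $E\{i,i\}$ be a basic elementary matrix. The off-diagonal entry $v = E(s,t)$ can have negative summands (indeed the whole subtlety of the positive-equivalence formalism is to control such subtractions). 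When $v$ subtracts from row $s$, the resulting blocks $(EB)\{i,\ell\}$ with $\ell \succ i$ can acquire negative entries, so $EB \notin \mppph$ and $(E,\myId)\colon B \to EB$ is not a legal positive step. Your attempted fix — applying $V$ afterward — comes too late, because by then you have already left $\mppph$.

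The paper's proof reverses the order precisely to avoid this: it applies $(\myId,V)$ \emph{first}, where $V\in\upnh$ has blocks $V\{i,\ell\} = L\,\delta_{i\ell}$ for large $L$, so that $BV$ has entries in the $\{i,\ell\}$ blocks inflated enough to absorb any subtraction produced by the subsequent $(E,\myId)$ step. Then $B\xrightarrow[+]{(\myId,V)} BV \xrightarrow[+]{(E,\myId)} EBV$ remains inside $\mppph$ throughout. That ordering, and the exploitation of the $H_i$-positivity of $B\{i,i\}$ to generate arbitrarily large positive contributions via $V$, is the essential idea you are missing.
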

\begin{proof}
We will consider the equivalence $(E,\myId)$; the other 
case is similar. Let $E(s,t)=v$ be the nonzero 
off-diagonal entry of $E$.  
$E$ acts on $B$ from  the left to add $v$ times row $t$ of $B$
to row $s$ of $B$.  
If each block $\{i, \ell\}$ of $EB$ is $H_{i\ell }$-positive (e.g., 
if $v\geq 0$), then set 
$V=\myId$. 

Otherwise, pick $r$ an index for a column through 
the $\{i,i\}$ block.  For a  positive 
integer $L$, let $V$ be the matrix in 
$\upnh$ such that (i) if  $i\prec\ell$, then 
every entry of $V\{i,\ell\}$ equals $L\delta_{i\ell}$ and 
(ii) in other entries, $V$ agrees with $\myId$. 
Then for $(s,q)$ in block $\{i, \ell \}$, 
\[
(EBV)(s,q) \geq (EB)(s,q) + (B(s,r)-B(t,r))(L\delta_{i\ell}) \ .
\]
Because $(EB)\{ i,i \}$ is $H_i$-positive, for sufficiently 
large $L$ the displayed sum must for each 
such $\ell$ be $H_{i\ell}$-positive. Then 
$B\xrightarrow[+]{(\myId,V)} BV$ and  
$EBV\in \mpppcnh$, so 
\[
B\xrightarrow[+]{(\myId,V)} BV
\xrightarrow[+]{(E,\myId)} EBV 
\] 
as required. 
\end{proof}

\begin{proof}[Proof of Theorem \ref{theoremfactor}] 
It follows from Observation \ref{observation} and 
Proposition \ref{gettingpositive} that 
to prove Theorem \ref{theoremfactor} we may assume that $B,B'\in \mppph$.

Thus let $(U,V)\colon B\to B'$ be the 
given $\elpnh$
equivalence, with $B,B' \in \mppph$. 
Set $U' =\oplus_i U\{i,i\}$ and $V' =\oplus_i V\{i,i\}$. 
If $i\in \mathcal C$, then $n_i=1$ and  $U\{i,i\}=(1)=V\{i,i\} $. 
If $i\notin \mathcal C$, then
by \cite[Theorem 6.1]{BSullivan} we have that  
\[
(U\{i,i\},V\{i,i\})\colon \ 
B\{i,i\} \to 
B'\{i,i\} 
\] 
is a positive $\Z H_i$-equivalence through matrices 
which are $H_i$-positive. 
So, there is  a string 
$(E_1,F_1),\dots ,(E_T,F_T)$    
of elementary 
$\elpnh$-equivalences
which accomplishes 
the elementary positive equivalence decomposition inside 
the diagonal blocks, such that each 
$E_t$ and $F_t$ equals the identity outside 
diagonal blocks $\{ i , i\}$ with $i\notin \mathcal C$. 
By Lemma 
\ref{generallemma}, we may find $(U_1,V_1),\dots ,(U_t,V_t)$ 
with each $U_s$ and $V_s$ in $\upnh$
such that 
\[
B\xrightarrow[+]{(U_1,F_1)} \mbox{}
\xrightarrow[+]{(E_1,V_1)} \cdots  
\xrightarrow[+]{(U_t,F_t)} \mbox{}
\xrightarrow[+]{(E_t,V_t)} 
B^* \ . 
\]
Let $X= E_tU_t\cdots E_2U_2E_1U_1$. 
Let $Y= F_1V_1F_2V_2\cdots F_tV_t$. 
Then for all $i$ in $\mathcal P$, 
$X\{i,i\} = U\{i,i\}$ and 
$Y\{i,i\} = V\{i,i\}$, so 
$UX^{-1}\in \upnh$ and 
$Y^{-1}V\in \upnh$. 
It then follows from 
Lemma \ref{unipotentlemma} that 
\[
B^*
\xrightarrow[+]
{(UX^{-1},Y^{-1}V)} B' \ . 
\] 
Thus $(U,V)\colon B\to B'$ is the composition 
\[
B
\xrightarrow[+]
{(X,Y)} B^* 
\xrightarrow[+]
{(UX^{-1},Y^{-1}V)} B' 
\] 
and therefore $(U,V): B \xrightarrow[+]{ \ } B'$. 
\end{proof}


\section{Conclusion of proofs}\label{proof} 

We begin with the promised proof of Theorem \ref{classification}. 

$(1)\implies (2)$: This implication follows directly from Theorem \ref{necessary}.

$(2)\implies (1)$: Suppose (2) holds. Applying first Proposition \ref{gettingpositive} if needed, it follows from Theorem \ref{theoremfactor} that there is a positive $\elpmh$-equivalence from $I-A^{<0>}$ to $I-C^{<0>}$. There is therefore a positive $\mathbb{Z}G$-equivalence from $I-A^{<0>}$ to $I-C^{<0>}$. Since every positive $\mathbb{Z}G$-equivalence induces a $G$-flow equivalence (see Section \ref{backgroundsec}), it follows that $T_{A^{<0>}}$ and $T_{C^{<0>}}$ are $G$-flow equivalent. Since $T_{A^{<0>}}=T_A$ and $T_{C^{<0>}}=T_C$, we thus have that $T_A$ and $T_C$ are $G$-flow equivalent. It follows in a similar way from Proposition \ref{cohomologyaspositive} and Proposition \ref{permutationprop} that $T_B$ and $T_C$ are $G$-flow equivalent. Thus, we have that $T_A$ and $T_B$ are $G$-flow equivalent as wanted.
\qed

Next we describe which equivalence classes of 
matrices arise  in the equivalence  classes we 
use as $G$-flow equivalence invariants.   
(The invariance of these classes under 
stabilization was discussed 
in Section \ref{subsec:stabilization}.)  
\begin{theorem}\label{thm:range} Given $G$, $\mathcal P$, $\mathcal C$,
$\mathcal
  H$, and $\mathbf n$ with $n_i =1$ if and only if $i\in \mathcal C$, 
suppose $B$ is a matrix  in $\mpnh$. Then the following are 
equivalent. 
\begin{enumerate} 
\item 
There is a $\mathbf k \geq \mathbf n$, with $k_i=1$ if and only if 
$i \in\mathcal C $, and a matrix $A$ in $\mopckh$, such that 
$I-A$ is $\elpkh$ to $I-B^{<0>}$, where 
$B^{<0>}$ is the 0-stabilization of $B$ in 
$\mpkh$.
\item The following hold: 
\begin{enumerate} 
\item 
 If $i\in \mathcal C$, then the $1\times 1$ $i$th diagonal 
block of $B$ has the form $[g]$, with $H_i $ generated by $g$. 
\item 
If $i\prec j$, $D\in \mathcal R_{ij}$ and $\{i,j\} \subset \mathcal
C$ and the $1\times 1$ $ij$ block of $B$ is $\sum_{g\in G} n_g
g$, with each $n_g$ in $\Z$, then $\sum_{g\in D} n_g>0$. 
\end{enumerate}
\end{enumerate}
Moreover, given (2), the matrix $A$ can be chosen from $\mpppcmh$, 
where $m_i=1$ if $i\in \mathcal C$ and $m_i =n_i + 1 $ otherwise. 
\end{theorem}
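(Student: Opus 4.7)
The plan is to prove the two implications separately, with the ``moreover'' statement emerging from the construction for $(2) \implies (1)$.

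For $(1) \implies (2)$, assume $A \in \mopckh$ and $(U,V)\colon I-A \to I-B^{<0>}$ is an $\elpkh$ equivalence. Since $k_i = n_i = 1$ for $i \in \mathcal C$, the elementary group inside each cycle-component diagonal block is trivial, so $U\{i,i\} = V\{i,i\} = 1$. A direct block computation of the $\{i,i\}$-block of $U(I-A)V = I - B^{<0>}$ gives $B\{i,i\} = A\{i,i\} = [g_i]$, and because $\mathcal H$ is a coset structure for $A$ we have $H_i = \langle g_i \rangle$; this is (a). For (b), fix $i \prec j$ with $\{i,j\} \subset \mathcal C$ and $D \in \mathcal{R}_{ij}$, and apply the projection $\pi_D$ to the $\{i,j\}$-block of $U(I-A)V = I - B^{<0>}$. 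In every term $U\{i,k\}(I-A)\{k,\ell\}V\{\ell,j\}$ for which $k$ or $\ell$ is strictly between $i$ and $j$, the entries lie in $\ZZ H_{ik'}H_{k'j}$ for some intermediate $k'$, hence vanish under $\pi_D$ by the defining property of $\mathcal{R}_{ij}$. What remains yields
\begin{equation*}
\pi_D A\{i,j\} \;=\; \pi_D B\{i,j\} + (1-g_i)\,\pi_D V\{i,j\} + \pi_D U\{i,j\}\,(1-g_j).
\end{equation*}
The entrywise augmentation annihilates $1-g_i$ and $1-g_j$, so $\sum_{g\in D} n_g = \overline{\pi_D B\{i,j\}} = \overline{\pi_D A\{i,j\}}$. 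This last quantity is positive because $D \in \mathcal{R}_{ij}$ forces every path in $\mathcal G_A$ from $v(i)$ to $v(j)$ of weight in $D$ to avoid intermediate components, and the cycle structure of blocks $i, j$ then forces $A(s_i, s_j)$ to have at least one positive summand from $D$.

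For $(2) \implies (1)$ I aim to construct $A \in \mpppcmh$ with $m_i = 1$ for $i \in \mathcal C$ and $m_i = n_i + 1$ otherwise, so that $I-A$ is $\elpmh$ equivalent to $I - B^{<0>}$; this gives (1) with $\mathbf k = \mathbf m$ together with the ``moreover''. I start from $B^{<0>}$ and adjust blocks using $\elpmh$ operations. For $i \in \mathcal C$, hypothesis (a) already places $B\{i,i\} = [g_i]$ in the desired form. For $i \notin \mathcal C$ I use the extra vertex $v_i$, whose row and column in $I-B^{<0>}$ are the unit vector $e_{v_i}$, as scratch space: composite moves $(E_{s, v_i}(-x), I)\,(I, E_{v_i, t}(-y))$ with $x, y \in \ZZ H_i$ add $xy$ to the $(s,t)$ entry of $A\{i,i\}$, and iterating over a spanning set for $\ZZ H_i$ makes $A\{i,i\}$ $H_i$-positive. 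I then arrange condition \ref{dijcondition} of Definition \ref{defnmpppcnh} for the off-diagonal blocks $A\{i,j\}$ by induction on $\rho(i,j)$ from Definition \ref{rhoandS}. When $\{i,j\} \subset \mathcal C$ and $D \in \mathcal{R}_{ij}$ the identity above runs in reverse: elementary moves $E_{s_i, s_j}(\cdot)$ let me alter $\pi_D A\{i,j\}$ by elements of the subgroup $(1-g_i)\ZZ D + \ZZ D(1-g_j)$, which is exactly the augmentation kernel of $\ZZ D$ (the cosets $H_i d H_j = D$ get collapsed on both sides), and hypothesis (b) supplies the positive augmentation needed to translate $\pi_D A\{i,j\}$ to a positive multiple of a single element of $D$. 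When $D \notin \mathcal{R}_{ij}$, so $D \subset H_{ik}H_{kj}$ for some intermediate $k$, I imitate the composition move from the proof of Proposition \ref{gettingpositive}: iterated elementary moves through block $k$ build the required coefficients in $D$, using that Condition \ref{dijcondition} has already been arranged for $(i,k)$ and $(k,j)$ by the inductive hypothesis. The remaining mixed case (exactly one of $i,j$ in $\mathcal C$) is handled the same way, using the $H$-positive diagonal block obtained in the first step as scratch space on the non-cycle side.

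The main obstacle is coordinating these steps: every move through $v_i$ or through an intermediate block touches many other blocks, so I must verify that positivity already achieved elsewhere is preserved, and that each composite move remains inside $\elpmh$, in particular respecting the coset structure $\mathcal H$. This is precisely the bookkeeping carried out in the proof of Proposition \ref{gettingpositive} and in \cite[Appendix A]{mb:fesftpf}; the only real novelty here is that the starting matrix $B^{<0>}$ lies in $\mpmh$ rather than $\mopcmh$, and the extra rows and columns added for $i \notin \mathcal C$ are exactly the scratch space that makes the construction feasible.
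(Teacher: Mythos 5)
Your argument for $(1)\implies(2)$ follows the same route as the paper and is in fact slightly more careful: the paper's displayed formula for the $\{i,j\}$-block computation informally collapses the sum $\sum_{k,\ell}U\{i,k\}(I-A)\{k,\ell\}V\{\ell,j\}$, whereas you explicitly note that $\pi_D$ kills every term passing through an intermediate component $k'$ because $D\cap H_{ik'}H_{k'j}=\emptyset$ when $D\in\mathcal R_{ij}$. Your justification that $\pi_D A\{i,j\}$ has positive augmentation (using that $\mathcal H$ is a coset structure for $A$, so some path of weight in $D$ exists, and that Condition $\mathcal C_1$ forces it to be a single edge modulo loops in $H_i$ and $H_j$) is also essentially what the paper compresses into ``Condition (2) holds with $A$ in place of $B$.''

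For $(2)\implies(1)$ your overall plan (pass to the $\mathbf m$-stabilization, fix diagonal blocks first, then handle off-diagonal blocks by induction on $\rho$, treating the three cases according to which of $i,j$ lie in $\mathcal C$) is the same as the paper's. The genuine divergence is in how you make the non-cycle diagonal blocks $H_i$-positive. The paper invokes \cite[Proposition 8.8]{BSullivan} as a black box; you attempt to re-derive this inline using the extra vertex $v_i$ as scratch space. This is a legitimate tactic, but as written it is incomplete: the composite move $(E_{s,v_i}(-x),I)(I,E_{v_i,t}(-y))$ does add $xy$ to the $(s,t)$ entry, but it also installs $-x$ in position $(s,v_i)$ and $-y$ in position $(v_i,t)$ of $I-A$, and it leaves the $(v_i,v_i)$ entry untouched at $0$. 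Since $v_i\in\mathcal I_i$, every one of those entries is inside the block $A\{i,i\}$ that must be $H_i$-positive for $A-I\in\mpppcmh$, so they cannot simply be discarded. You need further moves, using the positivity you have already pumped into the other rows/columns, to make the $v_i$-row, $v_i$-column, and $(v_i,v_i)$ entry $H_i$-positive; ``iterating over a spanning set for $\ZZ H_i$'' does not by itself accomplish this. You should either carry out that cleanup explicitly, or simply cite \cite[Proposition 8.8]{BSullivan} as the paper does (which is exactly the content you are trying to reproduce). Your observation that for $\{i,j\}\subset\mathcal C$ and $D\in\mathcal R_{ij}$ the moves $E_{s_i,s_j}(\cdot)$ alter $\pi_D A\{i,j\}$ precisely within the augmentation kernel of $\ZZ D$ is correct (using $H_i=\langle g_i\rangle$, $H_j=\langle g_j\rangle$), and is the same fact the paper uses when it replaces $\sum_{g\in D}n_g\,g$ by $(\sum_{g\in D}n_g)x$.
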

\begin{proof} 
  (2)$\implies$(1):
      With $\mathbf m$ as defined in the \lq\lq
Moreover\rq\rq\ statement, 
let $B'$ be the stabilization of $B$ in $\mopcmh$.
  Let $M= B'-I$,   with diagonal 
blocks $M_i$. It suffices to apply $\elpmh$
equivalences 
to $M$ which produce a matrix  
in $\mpppcmh$ (recall Definition \ref{defnmpppcnh}).  
By  
\cite[Proposition 8.8]{BSullivan}, for $i$ not in $\mathcal C$ the 
matrix $M_i$ 
is $\text{El} (\Z H_i)$-equivalent to an $H_i$-positive
matrix, $M'_i$. After applying a block diagonal $\elpmh$ 
equivalence, we may assume for $i\notin \mathcal C$ that 
$M_i$ is $H_i$-positive. For these $i$, in increasing order: 
for $i\prec   j$, as needed multiply from the right by matrices 
in $\elpmh$ zero outside the $ij$ block to put all entries 
of the $ij$ block of $M$ into $\Z \mathcal H_{ij}$, with 
strictly positive coefficients. Then similarly for $j$ in 
decreasing order: for $i\prec j$, as needed 
 multiply from the left to achieve this positivity. 

At this point, all blocks of $M$ are in form for 
$\mopcmh$ except perhaps the $1\times 1$ $ij$ blocks 
with $\{i,j\}\subset \mathcal C$. 
First, for each $ D \in \mathcal R_{ij}$:  pick an element $x$ from $D$, 
and multiply from the 
left and right by basic elementary matrices, 
of the form $(g_i x g_j)$ in the $ij$ block, to effect the replacement 
of $\sum_{g\in D} n_g g$ with $(\sum_{g\in D} n_g)x$, which by 
(2)(b) is positive.  
For $D$ not in $\mathcal R_{ij}$, the coefficients of 
$\sum_{g\in D} n_g g$ are made positive by elementary
 multiplications  as in the $(i,j,D)\notin \mathcal
 R^{\mathcal C}$ step in the proof of Proposition
 \ref{gettingpositive}. We will refrain from reentering the 
details of this step.
 
(1)$\implies$(2):  
Suppose (1) holds. 
Condition (2) holds with $A$ in place of $B$, 
because $A \in \mopckh$ with $k_i=1$ for $i \in \mathcal C$.  
Let $I -B^{<0>} =U(I-A)V  $ be the assumed 
$\elpkh$-equivalence. For $i\in \mathcal C$, letting   
$A \{i,i\} = (g_i)$, we have 
\begin{align*}
(I -B) \{i,i\} &= 
(I -B^{<0>}) \{i,i\}=U \{i,i\} (I-A) \{i,i\} V \{i,i\} \\  
&= \big( (1)(1-g_i)(1) \big) \ . 
\end{align*} 
Therefore 
(2a) holds for $B$.  
Given $\{i,j\} \subset \mathcal C$, let $a,b,u,v$ denote 
the entries of the singleton $\{i,j\}$ subblocks of $A,B,U,V$. 
For $D\in \mathcal R_{ij}$, 
\begin{align*}
\pi_D \big( (1-b) \big) &=
\pi_D \big( (I -B^{<0>}) \{i,j\} \big) \\
&=\pi_D \big( U \{i,i\} (I-A) \{i,i\} V \{i,i\} \big) \\
&= 
\pi_D \big( (1-a) + u(1-g_j) + (1-g_i)v \big) \ . 
\end{align*} 
Clearly $\pi_D( u(1-g_j)) =0= \pi_D ((1-g_i)v) $, and therefore 
$\pi_D (1-b) = \pi_D(1-a)$, and therefore (2b) holds for $B$.
\end{proof} 

\begin{remark} 
  In Theorem \ref{thm:range}, 
  $I-B^{<0>}$ is a 1-stabilization 
of the matrix $L=I-B$. The realization can be stated in terms of 
1-stabilizations of a matrix $L$ by replacing 
\lq\lq $B$ has the form $g$\rq\rq\ in 2(a) with 
\lq\lq $L$ has the form $1-g$\rq\rq, and 
replacing $\sum_{g\in D} n_g>0$ with 
$\sum_{g\in D} n_g<0$ in 2(b). 
\end{remark}

Lastly, we prove a finiteness result. 
Given $G$ an abelian group, $\mathcal P = 
\{1, \dots , N\}$ a poset  and $A\in \mopG$, 
let $A_k$ be the $k$th diagonal block of $A$ and 
let $d(A)$ be the $N$-tuple $(\det (I-A_1), \dots , \det(I-A_N))$. 
Up to reordering, $d(A)$ is an invariant of $G$-flow equivalence 
of the $G$-SFT $T_A$ defined by $A$. 


\begin{theorem}\label{nonzerodet} 
Let $G$ be a finite abelian group , $\mathcal P=\{ 1, \dots , N\}
$  a poset, $\mathcal H$ a $(G,\mathcal P)$ coset structure,
 and $d=(d_1, \dots , d_N)$  an $N$-tuple  of elements of
$\ZG$ which are regular. 
Then there are only finitely many $\elph$-equivalence classes 
of matrices in the set $\mathfrak M(d):=\{ I-A: A \in \moph ,  d(A) = d \}$.
Consequently there are only finitely many flow equivalence classes 
of $G$-SFTs $T_A$ with $d(A)=d$. 
\end{theorem}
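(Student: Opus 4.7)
The plan is to bound the cokernel of $I-A$ uniformly over $\mathfrak M(d)$, count the resulting isomorphism classes of filtered $\ZG$-modules, and then control the $\elph$-equivalence classes lying over each such class by classical $K$-theoretic finiteness results.

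First I would observe that for any $A \in \moph$ with $d(A) = d$, block upper triangularity with respect to $\mathcal P$ yields $\det_{\ZG}(I-A) = D := \prod_{k=1}^{N} d_k$, a non-zero-divisor in $\ZG$. Viewing $I-A$ as a $\Z$-linear endomorphism of $(\ZG)^n \cong \Z^{n|G|}$, its $\Z$-determinant equals the absolute norm $N(D)$, where $N\colon \ZG \to \Z$ sends $x$ to the determinant of multiplication by $x$ on $\ZG$ (well-defined because $\ZG$ is a free $\Z$-module of rank $|G|$, and $G$ being abelian makes $\ZG$ commutative so this is a true norm). Hence the cokernel $M := \cok(I-A)$ is a finite abelian group of order $N(D)$, bounded \emph{independently of} $\mathbf n$.

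Second, $M$ is a $\ZG$-module carrying a natural $\mathcal P$-filtration $0 = M_0 \subseteq M_1 \subseteq \cdots \subseteq M_N = M$ whose graded pieces are the cokernels $\cok(I-A_k)$ of the diagonal blocks. Finite abelian groups of order at most $N(D)$ admit only finitely many $\ZG$-module structures, and $\mathrm{Ext}^1_{\ZG}$ between any two finite $\ZG$-modules is finite; therefore only finitely many filtered $\ZG$-modules occur as $M$ for $A \in \mathfrak M(d)$.

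Third, I would show that the map sending the $\elph$-equivalence class of $I-A$ to the isomorphism class of its filtered cokernel has finite fibers. Two matrices presenting isomorphic filtered cokernels become equivalent, after stabilization in $\mpnh$, through the subgroup of block-upper-triangular invertible matrices respecting the coset structure $\mathcal H$ (a block-poset variant of Fitting's lemma on matrix presentations). The gap from this block-$\mathrm{GL}$ equivalence to $\elph$-equivalence is measured by the diagonal corrections in the $K_1$-type quotients $K_1(\Z H_i)/(\text{units of }\Z H_i)$, which are finite classical invariants (Bass--Milnor--Serre finiteness of $SK_1$ for integral group rings of finite abelian groups). The consequence for $G$-flow equivalence then follows from Theorem \ref{classification}: the $G$-flow class of $T_A$ is determined by the $\elph$-class of $I-A$ together with finitely many additional choices (a poset isomorphism $\alpha$ and a vector $\gamma \in G^N$). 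The main obstacle will be the third step: the block-poset-coset setting complicates the standard link between matrix equivalence and module presentations, and one must carefully identify the correct ``relative'' $K$-group measuring the fiber and invoke its finiteness.
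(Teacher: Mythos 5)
Your plan correctly identifies the two classical ingredients the paper also uses — Fitting's lemma (to pass from cokernel data to matrix equivalence after stabilization) and the finiteness of $\textnormal{SK}_1(\Z H_i)$ for $H_i$ finite abelian, via Oliver. Your observation that the integral norm $N(\prod_k d_k)$ gives a size bound on $\cok(I-A)$ independent of $\mathbf n$ is correct, as is the finiteness count for filtered $\ZG$-modules of bounded order. However, the two proofs diverge where the real difficulty lies, and your version has a genuine gap precisely there.

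The paper does \emph{not} attempt a ``block-poset Fitting lemma'' identifying the filtered cokernel with a matrix-equivalence class. Instead it applies Fitting and $SK_1$-finiteness \emph{only to each diagonal block} $I-A_i$ separately, getting finitely many choices of diagonal block up to $\El(\Z H_i)$-equivalence; it then fixes a representative diagonal matrix and shows, by completely elementary row operations within $\elpnh$, that the off-diagonal blocks can be reduced to a bounded set. Concretely, letting $\kappa_j$ be the index of the image of $v \mapsto v(I-A_j)$ in $(\Z H_j)^{n_j}$, one adds rows from block $j$ to rows in block $i$ (for $i \prec j$, in increasing order of $j$) to reduce all $\Z$-coefficients in the $ij$-block modulo $\kappa_j$. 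This gives a bounded representative directly, with no appeal to filtered module isomorphism as an intermediate invariant.

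Your step 3 proposes instead to show the map from $\elph$-classes to isomorphism classes of filtered $\ZG$-modules has finite fibers, by first invoking a block/filtered version of Fitting to get block-triangular $\GL$-equivalence and then bounding the gap to $\El$ by a ``relative $K$-group.'' You flag this as the obstacle, and indeed it is: a filtered Fitting lemma respecting the coset constraints ($ij$-blocks constrained to $\Z H_{ij}$, not $\ZG$) is not a standard result and is not proved in the paper. In fact the paper explicitly warns that the $\ZG$-$K$-web (essentially this filtered-module diagram) fails to be a complete invariant of $\elph$-equivalence because of new obstructions, which suggests the naive ``filtered Fitting'' step would need substantial work or might be false as stated. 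Moreover, even granting it, identifying and proving finiteness of a relative $K$-group measuring the block-$\GL$ vs.\ $\elph$ gap in the coset-structured setting is a project in its own right, not a citation to a known theorem. The paper's route sidesteps both issues by the concrete $\kappa_j$-reduction, and that is the key idea your proposal is missing.
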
 

\begin{proof} 
For $A$ in $\mathfrak M(d)$, the set $\mathcal C$ of cycle components 
must be empty, and  for 
 each $i$, the matrix $I-A_i$ is injective and 
the $\Z H_i$-module $\cok (I-A_i)$ has finite size, determined 
by $\det(I-A_i)$. 
We will use some facts from \cite[Section 9]{BSullivan}, which contains 
more detail.  A theorem of Fitting shows that if 
$I-A$ and $I-B$ are injective matrices over $\Z H_i$ with 
isomorphic cokernels, then there are $m,n$ such that 
$(I-A)\oplus I_m$ and $(I-B)\oplus I_n$  are $\GL( \Z H_i)$-equivalent
\cite[Lemma 9.1]{BSullivan}. Because 
$H_i$ is finite abelian, the group $\text{SK}_1(\Z H_i )$ 
is finite  \cite{Oliver}; then by 
\cite[Corollary 9.9]{BSullivan}, there are only finitely many 
$\El (\Z H_i)$-equivalence classes of matrices with 
determinant the regular element $d_i$.  
Given such choices for $1\leq i \leq N$, 
fix $A$ in $ \mopnh$ with diagonal blocks $I-A_i$ 
in the given $\El (\Z H_i )$ classes. 

 Suppose $B\in \moph$  with 
$I-A_i$ and $I-B_i$ are $\El (H_i)$-equivalent for each $i$.  
We first claim that 
$I-B$ is $\elph$-equivalent to a matrix in 
$ \mopnh$ with the same diagonal blocks as $I-A$. 
To show this, for each $i$ let 
$k(i), \ell(i) , m(i)$ be nonnegative integers 
such that   there are $U_i,V_i$ in $\El (m_i, H_i )$ 
such that 
\[
(I-A_i )\oplus I_{k(i)} \ =\ U_i \Big((I-B_i)\oplus I_{\ell (i) }\Big)
V_i \ . 
\]
 Let $\mathbf m = (m_1, \dots , m_N )$ 
and let $A',B'$ be the stabilizations of $A,B$ in $\mopmh$. 
Let $U=U_1 \oplus \cdots \oplus U_N$ and 
$V=V_1 \oplus \cdots \oplus V_N$. Set  
$I-C=U(I-B')V$. Then $I-C$ and $I-B'$ are $\elpmh$ 
equivalent and the $i$th diagonal block of 
$(I-C) $ equals $(I-A_i) \oplus I_{k(i)}$. 
After adding multiples of rows and columns from the 
$I_{k(i)}$, we may produce a matrix $I-D$, 
$\elpmh$-equivalent to $I-B$,  such that $D$ 
is zero outside its principal submatrix ($P$, say) 
on the indices 
used to define $A$. Now $I-P$ is $\elph$-equivalent 
to $I-B$ and its diagonal blocks equal those of $I-A$. 

To finish, it suffices to show $I-P$ is $\elpnh$-equivalent 
to a matrix with bounded entries. 
The $i$th diagonal block of $I-P$ is the 
 $n_i\times n_i$ matrix $I-A_i$.   
Let $\mathcal R_i$ be the image of the space of row vectors 
$(\Z H_i)^{n_i}$ under the map $v\mapsto v(I-A_i)$. 
Let $\kappa_i \in \N$  be the index of $\mathcal R_i$ 
in $(\Z H_i)^{n_i}$. Then 
$\mathcal R_i$  contains 
$\kappa_i(\Z H_i)^{n_i}$.  In the order $j=2,3, \dots , N$  do the 
following: for $i\prec j$, as needed, multiply
$I-P$  from the left by matrices of 
$\elpnh$ which are equal to
$I$ 
outside the $ij$th block  to reduce all $\Z$ coefficients in that 
block to lie in the interval $[0, \kappa_j)$. This shows $I-P$ is
$\elpnh$-equivalent to one of a bounded set of matrices, 
as required. 
 \end{proof} 

\appendix

\section{Cohomology as positive equivalence} \label{cohomologyappendix}

The next proposition was proved in \cite{BSullivan}, 
with (much) worse control over $\mathbf m$,  
using the positive K-theory polynomial strong shift 
equivalence equations from \cite{BoyleWagoner}.
 The elementary argument 
below gives a better bound on $\mathbf m$; and for
the proof of Theorem \ref{necessary}, 
we use the case where $m_i$ is controlled to be $n_i$. 
The identity element of $G$ is denoted $e$. 
 
\begin{proposition} \label{cohomologyaspositive} 
Suppose $D$ is an $n\times n$  diagonal matrix over $\ZZ_+G$ such that 
for each $s$, $D(s,s) = g_s\in G$. Suppose  
$A$ is an $n\times n$ matrix over $\Z_+$ and $B=D^{-1}AD$. 
Then there is an $m\leq n+1 $ 
and $m\times m$ stabilizations $A',B'$ of $A,B$
such that there is a positive $\Z G$ 
equivalence  $\rep{I-A'}\to \rep{I-B'}$. 
 
Now suppose in addition  that 
$A\in \mopcnh$ and for all $i$ in $\mathcal P$ 
that $g_s\in H_i$ whenever $s\in \mathcal I_i$. 
Then $B\in  \mopcnh$,  and there are $\mathbf m$ 
and stabilizations $A',B'$ of $A,B$ 
in $\mopcmh$ such that 
there is a positive $\elpmh$-equivalence
$\rep{I-A'}\to \rep{I-B'}$. The vector $\mathbf m$ can be 
chosen such that for all $i\in \mathcal P$, 
\begin{enumerate} 
\item 
$m_i\leq n_i +1$,  and 
\item if $g_s=e$ for all $s\in \mathcal I_i$ 
then $m_i=n_i$. 
\end{enumerate} 
\end{proposition}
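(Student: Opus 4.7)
The plan is to reduce the general statement to the case of conjugation by a diagonal matrix with a single nontrivial diagonal entry, then treat that case directly using a one-vertex stabilization whose helper vertex is reused across iterations.

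Factor $D = D_{(1)} \cdots D_{(n)}$, where each $D_{(s)}$ equals the identity matrix except that its $(s,s)$-entry is $g_s$. Then $D^{-1} A D$ is obtained by successive conjugations by the $D_{(s)}$, so it suffices to handle each single-entry conjugation. For a conjugation at vertex $s$ by $g = g_s \neq e$, stabilize $A$ by adding one helper vertex $*$ to obtain $A'$, and I would construct an explicit positive $\Z G$ equivalence $\rep{I-A'}\to \rep{I-B'}$ by the following graphical procedure: (i) a reverse row-cut introduces an edge $s \to *$ labeled $g$ (valid because $*$ is isolated, so the forward row-cut that inverts this move is trivial); (ii) one-by-one, for each summand of $A(s,j)$ and each $j$ in the original index set, a reverse column-cut introduces an edge $* \to j$ labeled by $g^{-1}$ times that summand, simultaneously cancelling the matching edge $s \to j$ — this routes the outgoing edges of $s$ through $*$; (iii) a forward column-cut of the edge $s \to *$ labeled $g$ transfers the factor $g$ onto column $*$ so that $(r,*) = A(r,s) g$, mirroring the incoming edges of $s$ right-multiplied by $g$; (iv) a cleanup sequence of further basic positive equivalences absorbs the $*$-incident data back onto vertex $s$ with the correct conjugated labels and leaves $*$ isolated.

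Iterating this construction over all $s$ with $g_s \neq e$, with the helper $*$ being isolated at the start and end of each iteration, yields a positive $\Z G$ equivalence with $m \leq n+1$. For the refined statement with $A \in \mopcnh$ and $g_s \in H_{i(s)}$, the helper is placed in the block $\mathcal I_{i(s)}$; each reverse and forward cut inserts group elements in $H_{i(s)}$ and preserves the coset structure of every off-diagonal block $H_{ij}$, so the equivalence lies in $\elpmh$. This gives $m_i \leq n_i + 1$ with $m_i = n_i$ whenever every $g_s$ with $s \in \mathcal I_i$ is trivial, since then no helper is added in block $i$.

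The principal technical obstacle is the explicit construction in steps (i)--(iv): every intermediate matrix must have entries in $\Z_+ G$ (not merely $\Z G$), and each reverse cut requires the corresponding ``matching'' edges to be present in the current matrix. The subtlest part is step (iv), where the factors of $g$ and $g^{-1}$ temporarily introduced to route edges through $*$ must be consolidated so that the final matrix agrees with $B'$ on the original index set and has $*$ isolated; this will require a carefully ordered combination of forward cuts (to push labels back from column $*$ to column $s$ and from row $*$ to row $s$) and further reverse cuts, interlocked summand-by-summand so that positivity is maintained throughout.
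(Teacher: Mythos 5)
Your overall strategy — factor $D$ into single-entry conjugations and realize each one by routing vertex $s$ through a helper vertex $*$ via row and column cuts — is exactly the strategy of the paper. But the proof as written has a genuine gap at step (iv), and the gap is worse than you suggest, because steps (i)--(iii) do not in fact leave vertex $s$ in the clean state your description implies.

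Trace through what steps (i)--(iii) actually produce. After (i) and (ii), the only outgoing edge of $s$ is $s\to *$ labeled $g$, and $*$ has outgoing edges $*\to j$ labeled $g^{-1}A(s,j)$, including $*\to s$ labeled $g^{-1}A(s,s)$. But a forward column cut in step (iii) only \emph{adds} edges into $*$ — it does not delete anything from column $s$. So after step (iii) the matrix has \emph{both} the old edges $r\to s$ labeled $A(r,s)$ (for $r\neq s$) and the new edges $r\to *$ labeled $A(r,s)g$, plus the stray edge $*\to s$. Step (iv) must therefore (a) remove all of column $s$, (b) remove the stray $*\to s$, (c) transfer the entire row and column of $*$ onto $s$ without further conjugation, and (d) isolate $*$, all while every intermediate matrix stays in $\Z_+G$. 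That is not a ``consolidation''; it is the whole second half of the argument, and ``carefully ordered ... interlocked summand-by-summand'' does not tell us it can be done.

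The clean way to finish — and essentially what the paper does — is to observe that after steps (i)--(iii) you can row-cut all remaining edges into $s$ (valid and label-preserving precisely because $s$ now has no outgoing edges), yielding a matrix $A''$ in which $s$ is isolated and $*$ carries exactly the $B$-data that $s$ should carry. Then run the \emph{same} transfer procedure a second time on $A''$ with the roles of $s$ and $*$ interchanged and with $g$ replaced by $e$; this moves $*$'s data back onto $s$ with no further conjugation and re-isolates $*$, arriving at $B'$. Equivalently (this is the paper's treatment of the $A(s,s)=0$ case), produce $A''$ once from $A$ and once from $B$ by isolating $s$ in each, and compose the first positive equivalence with the inverse of the second. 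Either way, step (iv) is not an ad hoc patch but a second full application of (i)--(iii), and writing it that way is what makes the $\Z_+G$-positivity of every intermediate matrix verifiable. The paper also records the small optimization that when $A(s,s)=0$ no helper vertex is needed at all, which you do not need for the stated bound but is worth noting. Your remarks about placing the helper in block $\mathcal I_{i(s)}$ to stay inside $\elpmh$, and about the resulting bounds $m_i\le n_i+1$ with equality when $g_s=e$ throughout block $i$, are correct once step (iv) is completed as above.
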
 

\begin{proof} 
In the second case, $B$ will be in $\mopcnh$ because 
$\mathcal H$ is a coset structure. 

We will describe given $s$ a positive $\Z G$-equivalence 
which has the effect of multiplying row $s$ from the 
left by $g_s^{-1}$ and multiplying column $s$ from the 
right  by $g_s$. The equivalence will satisfy the 
stabilization bounds and in the second case be 
a positive $\elph$-equivalence.  Applying such an 
equivalence for each $s$ proves the 
proposition. For concreteness, suppose $s=1$ and $g_1=g$.  

We will describe the equivalence as a finite sequence of 
 the row and column cuts 
from Section \ref{backgroundsec}. 
  We first consider the special case that
$A(1,1)=0$. The target matrix will be named $A'$. 
To lighten the notation (avoiding no technical difficulty), 
we  will suppose a nonzero entry of $A$ is a single element of $G$; 
e.g., $A(s,1)=a$ means an edge from vertex $s$ to vertex $1$ is 
labeled by $a$, as in the graph I below. 
\begin{gather}\label{zerocase}
\xymatrix{
s \ar[dr]_a & & t  \\
&  1 \ar[ur]_b & \\ 
& \mathbf I & 
} 
\quad \quad 
\xymatrix{
s \ar[rr]_{ab} & & t  \\
&  1 \ar[ur]_b & \\ 
& \mathbf{II} & 
} 
\quad \quad 
\xymatrix{
s \ar[rr]_{ab} & & t  \\
&  1 &\\ 
& \mathbf{III} & 
} 
\end{gather} 

Multiplying $I-A$ from the left by $E_{s1}(a)$ effects the row cut of  the edge 
from $s$ to $1$ labeled $a$ in I and produces the change I$\to$II.
Do this for every edge into 1, producing a graph in which 
1 has no incoming edge. Then column cut every edge out of 1; the 
effect is to remove those edges, as in III, 
leaving 1 an isolated vertex.  
 Let $A''$ be the matrix produced from $A$ by these moves. 
Note that applying this procedure to 
the matrix $A'$
produces the same matrix $A''$: 
\begin{gather*}
\xymatrix{
s \ar[dr]_{ag^{-1}} & & t  \\
&  1 \ar[ur]_{gb} & 
} 
\quad \quad 
\xymatrix{
s \ar[rr]_{ag^{-1}gb} & & t  \\
&  1 \ar[ur]_{gb} &  
} 
\quad \quad 
\xymatrix{
s \ar[rr]_{ab} & & t  \\
&  1 &  
} 
\end{gather*} 

The positive equivalence for $A\to A''$ postcomposed with the inverse 
of the positive equivalence for $A'\to A''$ gives the required 
positive equivalence $A\to A'$ for this case. 

For the case that $A(1,1)=c\neq 0$, we introduce an additional 
isolated vertex, named $v$.  (If for some vertex $s$ there is 
no edge from $s$ to itself, then by applying the  
move corresponding to I$\to$III in \eqref{zerocase} we could 
isolate $s$ and avoid increasing the number of vertices.) 
 The argument again is described by
a finite sequence of evolving graphs. 

\begin{gather}\label{nonzerocase} 
\xymatrix{
& v & \\ 
s \ar[r]_a& 
  1 \ar@(ld,rd)[]_c
\ar[r]_b
& t \\ 
& \mathbf I & 
} 
\quad \quad 
\xymatrix{
& v\ar[d]^{g^{-1}c} & \\ 
s \ar[r]_a& 
  1 \ar@(ld,rd)[]_c
\ar[r]_b
& t \\ 
& \mathbf{II} & 
}
 \quad \quad 
\xymatrix{
& v
\ar@<+1mm>[d]^{g^{-1}c} 
& \\ 
s \ar[r]_a& 
  1 \ar@<+1mm>[u]^g 
\ar[r]_b
& t \\ 
& \mathbf{III} & 
} 
\end{gather} 

\begin{gather*}
\xymatrix{
& v
\ar@<+1mm>[rd]^{g^{-1}cb}
\ar@(lu,ru)[]^{g^{-1}cg}
& \\ 
s \ar[r]_a& 
  1 \ar[u]^g 
\ar[r]_b
& t \\ 
& \mathbf{IV} & 
} 
\quad \quad 
\xymatrix{
& v
\ar@<+1mm>[rd]^{g^{-1}cb}
\ar@(lu,ru)[]^{g^{-1}cg}
& \\ 
s  \ar@/_1pc/[rr]_{ab}
\ar[ur]^{ag} & 
  1 \ar[u]^g 
\ar[r]^b
& t \\ 
& \mathbf{V} & 
}
 \quad \quad 
\xymatrix{
& v
\ar@<+1mm>[rd]^{g^{-1}cb}
\ar@(lu,ru)[]^{g^{-1}cg}
& \\ 
s  \ar@/_1pc/[rr]_{ab}
\ar[ur]^{ag} & 
  1 
& t \\ 
& \mathbf{VI} & 
} 
\end{gather*} 
\[
\xymatrix{
& v
\ar@<+1mm>[rd]^{g^{-1}b}
\ar@(lu,ru)[]^{g^{-1}cg}
& \\ 
s 
\ar[ur]^{ag} & 
  1 
& t \\ 
& \mathbf{VII} & 
} 
\]

Here is a list of the corresponding positive equivalences. 
\begin{itemize} 
\item 
II$\to$I. Column cut the edge $v\to 1$. 
\item 
III$\to$II. Row cut the edge $1\to v$. 
\item 
III$\to$IV. Row cut the edge $v\to 1$. 
\item 
IV$\to$V. Row cut all incoming edges to $1$. 
\item 
V$\to$VI. Column cut all outgoing edges from $1$. 
\item 
VII$\to$VI. Row cut each outgoing edge from $v$ to 
a different vertex. 
\end{itemize} 
At this point, the move from I to VII in 
\eqref{nonzerocase} has replaced the given matrix $A$ with a 
matrix $A''$ which 
satisfies our conditions, except  
that the vertex $v$ is playing in $A''$ the role we require for vertex $1$. 
To remedy this, apply the procedure I$\to$VII above   
to $A''$, 
 but with 
$(s,t,v,1,e)$ in place of $(s,t,1,v,g)$. We end up with 
the required matrix $A'$ , with the additional isolated vertex $v$ 
(i.e., row $v$ and column $v$ of $A'$ are zero).  

\end{proof} 

\section{Permutation similarity as positive
  equivalence} \label{permutation appendix}

Suppose $A$ is an $n\times n$ matrix over $\Z_+G$ and $P$ 
is an $n\times n$ permutation matrix and $B=P^{-1}AP$. 
Then $A,B$ are elementary strong shift equivalent over 
$\Z_+G$, as $B= (P^{-1})(AP)$ and $ A = (AP)(P^{-1})$, 
and therefore $A$ and $B$ are $\Z G$ positive equivalent 
\cite{BoyleWagoner}. In the next proposition we show that we can obtain this positive equivalence through $(n+1)\times (n+1)$ matrices. We also show that if $A\in \mopnh$ and $P\in\mathcal M_{\mathcal P} (\mathbf n , \Z_+ G)$, then we get a positive $\elph$-equivalence
$\rep{I-A}\to \rep{I-B}$.

\begin{proposition} \label{permutationprop} 
Let $A,B,P,G$ be as above. Suppose there is
an index $s$ with $A(s,s)=0$. Then there is a positive 
$\Z G $-equivalence from $A$ to $B$ through $n\times n$ 
matrices. In any case there are  stabilizations $A',B'$  of 
$A,B$ which are positive $\Z G$-equivalent through 
$(n+1)\times (n+1)$ matrices. 

Now suppose in addition  that 
$A\in \mopcnh$ and $P\in\mathcal M_{\mathcal P} (\mathbf n , \Z_+ )$. 
Then $B\in  \mopcnh$,  and there are $\mathbf m$ 
and stabilizations $A',B'$ of $A,B$ 
in $\mopcmh$ such that 
there is a positive $\elpmh$-equivalence
$(U,V):\rep{I-A'}\to \rep{I-B'}$. The vector $\mathbf m$ can be 
chosen such that $m_i\le n_i+1$ for all $i\in \mathcal P$. If $P\{i,i\}=I$ where $i\in\mathcal P$, then $U$ and $V$ can be chosen such that $U\{i,i\}$ and $V\{i,i\}$ are the identity matrix.
\end{proposition}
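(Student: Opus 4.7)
The plan is to reduce to a transposition within a single block of $\mathcal{P}$ and then execute the swap graphically using an auxiliary ``empty'' vertex, following the template of Proposition \ref{cohomologyaspositive}.

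First, I would observe that because $P\in\mathcal{M}_{\mathcal{P}}(\mathbf{n},\ZZ_+)$ is a permutation and each row/column has a unique $1$, the blocking constraint forces $P(s,t)=1\Rightarrow i(s)=i(t)$; so $P$ is block-diagonal in the $\mathcal{P}$-blocking and decomposes as a product of transpositions, each swapping two indices lying in a common block. Since a composition of positive $\elpmh$ equivalences (with appropriate stabilizations) is again a positive $\elpmh$ equivalence, it suffices to treat a single transposition $(i,j)$ with $i,j\in\mathcal{I}_k$ for some $k\in\mathcal{P}$, and to add at most one extra vertex per block in which $P\{k,k\}\neq I$.

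Next, I would prove the special case where some index $s$ satisfies $A(s,s)=0$: here I use $s$ as a pivot without enlarging the matrix. The transposition $(i,j)$ is executed by three successive ``swaps with an empty slot'': first swap $i$ with $s$ (emptying $i$ and storing its data at $s$), then swap $j$ with $i$ (emptying $j$ and moving $j$'s data to $i$), then swap $s$ with $j$ (moving $i$'s stored data to $j$ and emptying $s$ again). Each sub-swap is built by a graphical sequence in the style of the nonzero case of Proposition \ref{cohomologyaspositive}: one first uses backward cuts (which, when the other endpoint is isolated on the relevant side, cleanly add a single edge because the would-be composed edges do not yet exist) to create copies of $i$'s in- and out-edges at the empty slot, handling a self-loop at $i$ by an analog of graphs I--IV in \eqref{nonzerocase}; then one uses forward cuts to empty the original vertex, paralleling V--VII. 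For the general case, stabilize by adjoining one vertex $v$ to block $k$ (so $m_k=n_k+1$ and $m_\ell=n_\ell$ for $\ell\neq k$); the stabilized matrix $A\oplus[0]$ has $v$ isolated, so the special case applies with $s=v$, and the resulting equivalence leaves $v$ isolated so that the final matrix is a stabilization of $B$.

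For the block-structural assertions, every basic elementary matrix produced in the above construction has both its off-diagonal-entry indices in block $k$, with label a product of labels drawn from entries of $A\{k,k\}$ and hence lying in $H_k$. Row/column operations through block $k$ automatically transform the off-diagonal blocks $A\{k,\ell\}$ and $A\{\ell,k\}$ correctly because $H_k H_{k\ell}\subset H_{k\ell}$ and $H_{\ell k}H_k\subset H_{\ell k}$ by the coset-structure axiom, so the intermediate matrices remain in $\mopcmh$. Consequently, if $P\{i,i\}=I$ for some block $i\neq k$, no enlargement is performed in block $i$ and every basic elementary matrix restricts to the identity on block $i$; products of such matrices therefore satisfy $U\{i,i\}=V\{i,i\}=I$ and $m_i=n_i$, as required.

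The main obstacle is the concrete combinatorial verification that the ``swap with an empty slot'' can be assembled as a chain of basic positive $\elpmh$ equivalences in which every intermediate matrix is nonnegative and respects the coset structure. This is the same kind of bookkeeping as in Proposition \ref{cohomologyaspositive}: one must carefully interleave backward cuts (which add edges only when the adjacent vertex is isolated on the appropriate side) with forward cuts (which propagate through existing structure), treating separately the cases with and without a self-loop at the vertex being moved, and exhibit the explicit graphical sequence so that the claimed $(U,V)$ exists with the stated block-diagonal restrictions.
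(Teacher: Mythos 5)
Your overall strategy matches the paper's: observe that $P$ is block-diagonal, reduce to transpositions within a single block, execute each transposition by conjugating through a pivot index with zero diagonal entry (adjoining at most one auxiliary isolated vertex per block as needed, so that $m_i\le n_i+1$), and realize each pivot swap via the forward/backward row and column cut sequences of Proposition~\ref{cohomologyaspositive} taken with $g=e$. The paper packages this as a decomposition $P=P^*_{i_1}\cdots P^*_{i_N}$ into block-local permutations and then invokes the unblocked first half of the statement block by block, but the content is the same.

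There is, however, a concrete error in your block-structural verification. You assert that every basic elementary matrix arising in the swap has both off-diagonal indices inside block $k$ and label in $H_k$. That is not true: the swap procedure must row-cut every edge \emph{into} the pivot and column-cut every edge \emph{out of} it (the analogues of steps IV$\to$V and V$\to$VI in \eqref{nonzerocase}), and when those edges come from or go to vertices in other blocks $\ell$, the corresponding elementary matrix $E_{s,1}$ or $E_{1,t}$ has one index in $\mathcal I_\ell$ and one in $\mathcal I_k$, with label in $H_{\ell k}$ or $H_{k\ell}$, not $H_k$. This does not break the argument — such a matrix is still in $\elpmh$ exactly because the edge label lies in the right coset set $H_{\ell k}$, and its only nonzero off-diagonal entry sits in an off-diagonal block, so its diagonal blocks $\{i,i\}$ are all identity. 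Thus the claims $m_i\le n_i+1$ and $U\{i,i\}=V\{i,i\}=I$ when $P\{i,i\}=I$ still follow, but you need to give the $\elpmh$ membership argument for these cross-block elementary matrices separately rather than subsuming them under the ``lying in $H_k$'' justification.
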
 

\begin{proof} 
Assume first that there is an index $s$ with $A(s,s)=0$. Let $t$ be an index different from $s$. We will describe a positive $\Z G$-equivalence which has the effect of permuting $s$ and $t$. If $t_1,t_2$ are arbitrary indexes, then we get a positive $\Z G$-equivalence which has the effect of permuting $t_1$ and $t_2$ by first permuting $s$ and $t_1$, then permuting $t_1$ and $t_2$, and then finally permuting $t_2$ and $s$. Since every permutation of $\{1,\dots,n\}$ is the product of transpositions, it will follow that there is a positive $\Z G$-equivalence $\rep{I-A}\to \rep{I-B}$. 

The procedure described in \eqref{zerocase} shows that there is a positive $\Z G$-equivalence $\rep{I-A}\to \rep{I-A_1}$ such that $s$ is an isolated index in $\mathcal G_{A_1}$. If also $A(t,t)=0$, then there is a positive $\Z G$-equivalence $\rep{I-A_1}\to \rep{I-A_2}$ such that $t$ is an isolated index in $\mathcal G_{A_2}$, and if we then postcompose the equivalence $\rep{I-A}\to \rep{I-A_2}$ with its inverse but with the role of $s$ and $t$ interchanged, then we get a positive $\Z G$-equivalence $\rep{I-A}\to \rep{I-A_{  3}}$ where $A_{  3}$ is the matrix obtained from $A$ by permuting $s$ and $t$. If $A(t,t)\ne 0$, then the procedure described in \eqref{nonzerocase} with $g=e$ shows that there is a positive $\Z G$-equivalence $\rep{I-A_1}\to \rep{I-A'_2}$ where $A'_2$ is obtained from $A_2$ by permuting $s$ and $t$. By postcomposing with the inverse of the equivalence $\rep{I-A}\to \rep{I-A_1}$ but with the role of $s$ and $t$ interchanged, we get a positive $\Z G$-equivalence $\rep{I-A}\to \rep{I-A'_3}$ where $A'_3$ is the matrix obtained from $A$ by permuting $s$ and $t$.  

If there is no index $s\in\mathcal G_A$ with $A(s,s)=0$, then we add a zero row and a zero column to $A$ and $B$ to obtain matrices $A'$ and $B'$, and then it follows from the argument above that there is a positive $\Z G$-equivalence $\rep{I-A'}\to \rep{I-B'}$.

Now suppose in addition  that 
$A\in \mopcnh$ and $P\in\mathcal M_{\mathcal P} (\mathbf n , \Z_+ G)$. 
Then $P\{i,j\}=0$ if $i\ne j$. It follows that $B\in  \mopcnh$. We let $P^*_i$ denote the matrix in $\mathcal M_{\mathcal P} (\mathbf n , \Z_+ G)$ such that $P^*_i\{i,i\}=P\{i,i\}$, $P^*_i\{j,j\}=I$ for $j\ne i$, and $P^*_i\{i',j'\}=0$ for $i'\ne j'$. Then $P=P^*_{i_1}P^*_{i_2}\cdots P^*_{i_N}$. Let $A_1=(P^*_{i_1})^{-1}AP^*_{i_1}$, $A_2=(P^*_{i_2})^{-1}A_1P^*_{i_2}$,\dots,$A_N=(P^*_{i_N})^{-1}A_{N-1}P^*_{i_N}=B$.
By
the first half of the proposition,  there are stabilizations $A',A'_1,A'_2,\dots,A'_N=B'$ of $A,A_1,A_2,\dots,A_N=B$ and
positive $\Z G$-equivalences
\[
\rep{I-A'}\to \rep{I-A'_1}\to \rep{I-A'_1}\to
\rep{I-A'_2}\to\dots\to \rep{I-A'_N}=\rep{I-B'}
\]
and that $B'$ can be chosen such that $B'\in\mopcmh$ with $m_i\le n_i+1$ for all $i\in \mathcal P$. It is not difficult to check that the described equivalence $(U,V):\rep{I-A'}\to \rep{I-B'}$ is a positive $\elpmh$-equivalence, and that if $P\{i,i\}=I$ where $i\in\mathcal P$, then $U$ and $V$ can be chosen such that $U\{i,i\}$ and $V\{i,i\}$ are the identity matrix.
\end{proof}

\section{Resolving extensions} \label{resolvingappendix}

\begin{proposition}  \label{labellift}
Suppose $A$ is a matrix in 
$\mopcnh$ and $A'$ is a matrix 
obtained from $A$ by  splitting  a row $s$ into 
two rows.
Let the rows of $A'$ be in 
the same order as corresponding rows of 
$A$, with the interpolation of a new row $s'$ 
directly following $s$. Let $A'$ have the 
natural $\mathcal P$ blocking: $s'$ 
is in the block of $s$, and every other
index is in the block of the row from 
which it was copied.  Let $\widetilde A$ be 
the matrix of size and blocking from $\mathbf{n'}$ 
 obtained by interpolating a zero $s'$ row and column 
into $A$.  

Then $\widetilde{A},A'\in\moprcnh$ and there is a positive $\elpnprimeh$-equivalence 
$(U,V): \rep{I-\widetilde A} \to \rep{I-A'}$.  If $A$ is 
upper triangular and $A(s,s)=A'(s,s)$, then $A'$ 
is upper triangular and 
the matrices $U,V$ can be chosen to be unipotent 
upper triangular. 

Moreover, the same conclusion holds 
if in the above statements
``row'' is replaced by  ``column'' and 
``following'' is replaced by 
``preceding''.
\end{proposition}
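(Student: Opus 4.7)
\emph{Plan.} I would structure the proof in three parts: the structural claim $\widetilde A,A'\in\moprcnh$, the construction of the positive $\elpnprimeh$-equivalence, and the upper-triangular and column-split refinements. For the structural claim, $\widetilde A$ has an isolated vertex at $s'$ that sits outside the irreducible core of the block of $s$, so the essential irreducibility, cycle-component status, and weights group of each diagonal block are preserved, and $\mathcal H$ remains a coset structure. The row-split matrix $A'$ arises by classical state out-splitting, under which the natural bijection of irreducible components preserves strong-connectivity (because column $s'$ of $A'$ duplicates column $s$) and the weights of all paths between the chosen representatives $v(i)$, so $\mathcal H$ is also a coset structure for $A'$.

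The core observation driving the construction is the identity $A'(\cdot,s)=A'(\cdot,s')$: the columns indexed by $s$ and $s'$ of $A'$ are identical, because out-splitting duplicates every in-edge to $s$ (including self-loops). Working in the direction $I-A'\to I-\widetilde A$ and reversing to obtain the desired equivalence, I would carry out three basic positive moves in order. First, the backward column cut $(I,E_{s,s'}(-e))$ is a valid basic positive equivalence because its forward inverse acts on a companion matrix having entry $e$ in position $(s,s')$, and its effect on $I-A'$ is to reduce column $s'$ to the single entry $A^*(s,s')=e$ (in particular $A^*(s',s')=0$). Second, for each $j\ne s'$ and each summand $g$ of $A^*(s',j)$, the forward column cut $(I,E_{s',j}(g))$ uses that $s$ is the only predecessor of $s'$ in $A^*$ (with edge weight $e$) to transfer $g$ from $(s',j)$ into $(s,j)$, thereby emptying row $s'$ and completing row $s$ to match row $s$ of $\widetilde A$. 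Third, the forward row cut $(E_{s,s'}(e),I)$ clears the residual artificial $A^{**}(s,s')=e$ and lands on $I-\widetilde A$. Each group element used is a summand of an entry of $A$ or the artificial $e$, and hence lies in the appropriate $\Z H_{ij}$, so the composition is a positive $\elpnprimeh$-equivalence.

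For the upper-triangular refinement, the hypothesis $A(s,s)=A'(s,s)$ means the self-loop data at $s$ is retained entirely in row $s$ rather than moved to the new row $s'$, so $A'(s',s)=0$; together with upper-triangularity of $A$ (which forces $A'(s',j)=0$ for $j<s$), we conclude that the column-transfer cuts in the second step are only required for $j>s'$. Combined with the cuts at $(s,s')$, where $s<s'$, every basic elementary matrix used has strictly upper-triangular off-diagonal support; each such matrix is unipotent, so the products $U,V$ are unipotent upper triangular. The matrix $A'$ is itself upper triangular by direct inspection of its entries. The column-splitting variant of the proposition follows by applying the row-split argument to the transpose $A^\top$. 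The most delicate step I expect to be the first backward column cut: its positivity relies essentially on the column-coincidence identity $A'(\cdot,s)=A'(\cdot,s')$, and the explicit form of $A^*$ must be verified to ensure the subsequent forward cuts have the required summand structure.
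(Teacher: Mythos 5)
Your construction of the positive equivalence is essentially the paper's own proof traversed in reverse. The paper writes the chain in explicit $3\times 3$ block form as $E_1(I-\widetilde A)E_2E_3E_4E_5 = I-A'$, and its intermediate matrices $A_1$ (after the initial row move) and $A_4$ (just before the final column move) coincide precisely with your $A^{**}$ and $A^{*}$; the backward column cut $(I,E_{s,s'}(-e))$, the transfer cuts out of $s'$, and the final row cut $(E_{s,s'}(e),I)$ are the same three stages. Your treatment of the unipotent upper-triangular refinement (deducing $A'(s',j)=0$ for $j\le s'$ from $A(s,s)=A'(s,s)$ and upper triangularity, so that every elementary matrix used is strictly upper triangular) is correct and matches the paper's reasoning, as does the reduction of the column case to the transpose.

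The genuine gap is in the structural claim $A'\in\moprcnh$. You assert that the out-splitting conjugacy "preserves ... the weights of all paths between the chosen representatives $v(i)$," but this is not automatic when $v(i)=s$, and in fact this is exactly the nontrivial point. The conjugacy is weight-preserving at the level of orbits, but it sends a path of $\mathcal G_A$ starting at $s$ to a path of $\mathcal G_{A'}$ starting at \emph{either} $s$ or $s'$, depending on which split row supplied the initial edge. So the set of weights of paths from $s$ to $v(j)$ in $\mathcal G_{A'}$ is a priori only a subset of $H_{ij}$, and your stated reason (column duplication, which only controls edges \emph{into} $\{s,s'\}$) does not close this. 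The paper's argument does: by column duplication the weight set of $s\to s'$ paths equals the weight set of $s\to s$ cycles, which is a finite semigroup and hence a group containing $e$; pre-composing with an $s\to s'$ path of weight $e$ turns any path from $s'$ into one from $s$ of the same weight. (One must also dispose of the possibility that $s$ has lost all its out-edges, in which case the roles of $s$ and $s'$ are interchanged.) This piece of reasoning needs to be added to your plan; without it the coset-structure preservation is an unsupported assertion rather than a proved step.
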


\begin{proof} 
  Let us first check that $A'\in\moprcnh$ (it is obvious that $\widetilde{A}\in\moprcnh$). It is easy to check that $A'\in\mopcnG\cap\mpnh$, so we just need to show that $\mathcal H$ is a $(G,\mathcal P)$ coset structure for $A'$. Since $\mathcal H$ is a $(G,\mathcal P)$ coset structure for $A$, there is a family of vertices $\{v(i)\}_{i\in\mathcal P}$ such that $v(i)$ belongs to the irreducible core of $A\{i,i\}$ for each $i\in\mathcal P$, and $H_{ij}$ is the set of weights of paths from $v(i)$ to $v(j)$ in $\mathcal G_A$. Let $i,j\in\mathcal P$. We aim to show that the set of weights of paths from $v(i)$ to $v(j)$ in $\mathcal G_{A'}$ is equal to $H_{ij}$.

  Notice that if $p$ is a path in $\mathcal G_A$ not starting at $s$, then there is a path in $\mathcal G_{A'}$ starting and ending at the same vertices as $p$ and with the same weight as $p$. Notice also that if $p$ is a path in $\mathcal G_A$ starting at $s$, then there is a path in $\mathcal G_{A'}$ starting at either $s$ or $s'$ and ending at the same vertex as $p$ and with the same weight as $p$. Similarly, if $p$ is a path in $\mathcal G_{A'}$, then there is a path in $\mathcal G_A$ which has the same weight as $p$ and which starts and ends at the same vertices as $p$ (except of course if $p$ starts/ends at $s'$ in which case the path in $\mathcal G_A$ starts/ends at $s$ instead). It follows that if $v(i)\ne s$, then the set of weights of paths from $v(i)$ to $v(j)$ in $\mathcal G_{A'}$ is equal to $H_{ij}$. 

Suppose that $v(i)= s$ and that $p$ is a path in $\mathcal G_A$ starting at $s$ and that there is a path in $\mathcal G_{A'}$ starting at $s'$ and ending at the same vertex as $p$ and with the same weight as $p$. Suppose that there is a path from $s$ to $s'$ in $\mathcal G_{A'}$ (if there is no path in $\mathcal G_{A'}$, then there must be a path from $s'$ to $s$ because $s=v(i)$ in the irreducible core of $A\{i,i\}$, and the we just interchange the role of $s$ and $s'$). Let $\gamma$ be the weight of this path. Since the set of weights of paths in $\mathcal G_{A'}$ from $s$ to $s$ is equal to the set of weights of paths in $\mathcal G_{A'}$ from $s$ to $s'$ and is a group (because it is a finite semigroup), it follows that there is a path in $\mathcal G_{A'}$ from $s$ to $s$ with weight $\gamma^{-1}$, and thus that there is there is a path in $\mathcal G_{A'}$ starting at $s$ and ending at the same vertex as $p$ and with the same weight as $p$. It follows that the set of weights of paths from $s=v(i)$ to $v(j)$ in $\mathcal G_{A'}$ is equal to $H_{ij}$. This shows that $\mathcal H$ is a $(G,\mathcal P)$ coset structure for $A'$ and thus that $A'\in\moprcnh$.
 
We then show that there is a positive $\elpnprimeh$-equivalence 
$(U,V): I-\widetilde A \to I-A'$.
We write $A$ in a $3\times 3$ block form, giving 
\[
A = 
\begin{pmatrix} 
q&r&t\\ 
u&v&w\\ 
x& y & z 
\end{pmatrix} 
\qquad 
\widetilde A = 
\begin{pmatrix} 
q&r&0&t\\ 
u&v&0&w\\ 
0&0&0&0\\
x& y &0& z 
\end{pmatrix} \ . 
\] 
(We omit the easier proof for the case that 
$s$ is a first or last index of $A$, and the block form is smaller.) 
The central index set of $A$ is  
$\{ s\} $, so $v$ is the $1\times 1$ matrix $A(s,s)$. 
The matrix $A'$ then has the 
block form 
\[
A' = 
\begin{pmatrix} 
q    &  r  &r     &t\\ 
u_1 &v_1 &v_1&w_1\\ 
u_2 &v_2 &v_2&w_2\\ 
x    &y   &  y   &   z 
\end{pmatrix} 
\]           
with $A'$ nonnegative and $(u_1,v_1,w_1)+(u_2,v_2,w_2) =(u,v,w)$. 
We then have a string of positive equivalences: 
\begin{align*}
I-\widetilde A \to E_1(I-\widetilde A) &= 
 \begin{pmatrix} 
1&0&0&0\\
0&1&-1&0\\
0&0&1&0\\
0&0&0&1 
\end{pmatrix} 
\begin{pmatrix} 
1-q&-r&0&-t\\ 
-u&1-v&0&-w\\ 
0 & 0& 1& 0 \\
-x& -y & 0&1- z 
\end{pmatrix} \\
&=
\begin{pmatrix} 
1-q&-r&0&-t\\ 
-u&1-v&-1&-w\\ 
0 & 0& 1& 0 \\
-x& -y & 0&1- z 
\end{pmatrix} 
:= I-A_1 \ . 
\end{align*}

\begin{align*} 
I-A_1 \to 
(I-A_1) E_2 &= 
\begin{pmatrix} 
1-q&-r&0&-t\\ 
-u&1-v&-1&-w\\ 
0 & 0& 1& 0 \\
-x& -y & 0&1- z 
\end{pmatrix} 
 \begin{pmatrix} 
1&0&0&0\\
0&1&0&0\\
-u_2&0&1&0\\
0&0&0&1 
\end{pmatrix} \\ 
&= 
\begin{pmatrix} 
1-q&-r&0&-t\\ 
-u_1&1-v&-1&-w\\ 
-u_2 & 0& 1& 0 \\
-x& -y & 0&1- z 
\end{pmatrix} 
:= I-A_2 
\end{align*}

\begin{align*} 
I-A_2 \to 
(I-A_2) E_3 &= 
\begin{pmatrix} 
1-q&-r&0&-t\\ 
-u_1&1-v&-1&-w\\ 
-u_2 & 0& 1& 0 \\
-x& -y & 0&1- z 
\end{pmatrix} 
 \begin{pmatrix} 
1&0&0&0\\
0&1&0&0\\
0&-v_2&1&0\\
0&0&0&1 
\end{pmatrix} \\ 
&= 
\begin{pmatrix} 
1-q&-r&0&-t\\ 
-u_1&1-v_1&-1&-w\\ 
-u_2 & -v_2& 1& 0 \\
-x& -y & 0&1- z 
\end{pmatrix} 
:= I-A_{  3} 
\end{align*}

\begin{align*} 
I-A_{  3} \to 
(I-A_{  3}) E_4 &= 
\begin{pmatrix} 
1-q&-r&0&-t\\ 
-u_1&1-v_1&-1&-w\\ 
-u_2 & -v_2& 1& 0 \\
-x& -y & 0&1- z 
\end{pmatrix} 
 \begin{pmatrix} 
1&0&0&0\\
0&1&0&0\\
0&0&1&-w_2\\
0&0&0&1 
\end{pmatrix} \\ 
&= 
\begin{pmatrix} 
1-q&-r&0&-t\\ 
-u_1&1-v_1&-1&-w_1\\ 
-u_2 & -v_2& 1& -w_2 \\
-x& -y & 0&1- z 
\end{pmatrix} 
:= I-A_4 
\end{align*}

\begin{align*} 
(I-A_4 ) \to 
(I-A_4 ) E_5 
&= 
\begin{pmatrix} 
1-q&-r&0&-t\\ 
-u_1&1-v_1&-1&-w_1\\ 
-u_2& -v_2& 1& -w_2 \\
-x& -y & 0&1- z 
\end{pmatrix} 
 \begin{pmatrix} 
1&0&0&0\\
0&1&1&0\\
0&0&1&0\\
0&0&0&1 
\end{pmatrix} 
\\
&= 
\begin{pmatrix} 
1-q&-r&-r&-t\\ 
-u_1&1-v_1&-v_1&-w_1\\ 
-u_2 & -v_2& 1-v_2 & -w_2 \\
-x& -y & -y&1- z 
\end{pmatrix} = I-A'\ . 
\end{align*} 
This exhibits the equivalence
$(U,V): I-\widetilde{A} \to I-A'$, 
with 
$U=E_1$ and $V=E_2E_3E_4E_5$. It is clear that $E_1,E_2,E_3,E_4,E_5\in \elpnprimeh$, and it is not difficult to check that $A_1,A_2,A_{  3},A_4\in\moprnh$. It follows that $(U,V): I-\widetilde{A} \to I-A'$ is a positive $\elpnprimeh$-equivalence.  

In general the matrices 
$U,V$ will not be upper triangular, because 
in general $E_2$ and $E_3$ are not upper triangular. However, 
if $A$ is upper triangular, then $u=0$, so $u_1=u_2=0$;  
and if $A'(s,s)=A(s,s)$, then $v_1=v$, so $v_2=v-v_1=0$. 
Thus under the additional assumptions, $A'$ is upper 
triangular and the matrices 
$U=E_1$ and $V=E_2E_3E_4E_5$ are unipotent upper triangular as required. 

The argument for the ``Moreover'' claim is essentially the same, 
and we omit it. 
\end{proof}

\section{$G$-SFTs following Adler-Kitchens-Marcus}
\label{akmappendix}

The purpose of this  appendix is to integrate our
matrix approach with  the classification of $G$-SFTs with 
the group actions framework
of Adler-Kitchens-Marcus \cite{akmgroup,akmfactor}.
 This appendix is not necessary for the statements 
or proofs of the flow equivalence results 
of earlier sections. Throughout, $G$ is a 
finite group. 

We make no conceptual advance on \cite{akmgroup}
(which in turn acknowledges a huge debt to the 
 ergodic-theoretic work \cite{rudolphcounts} of Rudolph).  
Still,  we give a detailed presentation 
of this framework (with some additional details),
as the ideas in \cite{akmgroup}
are intermingled with that paper's focus on almost topological
conjugacy, and the paper does not isolate all the explicit
statements we want. The paper \cite{parrynotes} of Parry also
covers much, but not all, of this framework. 

A matrix $A$ is  
\emph{$G$-primitive} if its entries 
lie in $\Z_+G$ and
in addition there is a positive integer $m$ such that every entry of 
$A^m$ is $ G$-positive.
An SFT is {\it nonwandering} if it has no wandering orbit;
equivalently, it 
is the disjoint union of finitely many irreducible SFTs 
(its {\it irreducible components}).  
A nonwandering/irreducible/mixing $G$-SFT is a 
$G$-SFT $(Y,T)$ which as an SFT is 
 nonwandering/irreducible/mixing. 
A nonwandering $G$-SFT was defined to be 
{\it $G$-transitive} \cite[Section 4]{akmfactor} if the $G$ action on 
irreducible components is transitive. A 
 nonwandering $G$-SFT is 
$G$-transitive if and only if the canonical factor map 
collapsing $G$-orbits maps each irreducible component 
onto the same irreducible SFT. Clearly the classification of nonwandering
$G$-SFTs reduces to the classification of $G$-transitive nonwandering
$G$-SFTs. 

 Let $G$ be a finite group and 
let $(Y,T)$ be a nonwandering $G$-transitive $G$-SFT. 
We take this left $G$-SFT $(Y,T) $ to be  
$Y= X\times G$ with $T: (x,g) \to (\sigma x, g \tau(x))$ 
with $\tau: X\to G$  continuous, 
and left $G$ action by $g: (x,h) \to (x,gh)$, 
as in Section \ref{backgroundsec} (recall our Standing Convention \ref{leftconvention}).   
Let  $C$ be an irreducible component of $Y$, with 
cyclically moving subsets 
$C^0, \dots , C^{p-1}$ . 
For $g\in G$, let 
$gC:=\{(x,gh): (x,h) \in C\}$. Then $gC$ is an  irreducible 
component of $Y$. 
The map $(x,h)\mapsto (x,gh)$ sending $C$ 
to $gC$ is a topological conjugacy of
SFTs  (but not of $G$-SFTs, when $G$ is not abelian).
The {\it stabilizer} of $C$ is 
the subgroup $H_C= H= \{ g\in G: gC=C\}$. For $g\in G$, we have 
$H_{gC} = gH_Cg^{-1}$.

The next result explains how to 
reduce the classification of nonwandering $G$-SFTs to the 
classification of irreducible $K$-SFTs, for
subgroups $K$ 
of $G$. 

\begin{theorem} \label{reductiontoirreducible}  
Suppose $G$ is a finite group and $(Y,T)$ and $(Y',T')$ are nonwandering
$G$-transitive $G$-SFTs, containing irreducible components $C,C'$ 
(resp.).
Let $H$ denote the stabilizer $H_C$  of $C$. 
Then the following are equivalent. 
\begin{enumerate} 
\item 
$(Y,T)$ and $(Y',T')$ are $G$-conjugate. 
\item 
There exists $g$ in $G$ such that 
the stabilizer of $gC'$ is $H$
(i.e., the stabilizers of $C$ and $C'$ are conjugate subgroups in $G$)  
and the irreducible $H$-SFTs $C$ and $gC'$ are 
$H$-conjugate.  
\end{enumerate} 
\end{theorem}

\begin{proof}[Proof of Theorem \ref{reductiontoirreducible}]
$(1)\implies (2)$: A $G$-conjugacy $(Y,T)\to (Y',T')$ 
will restrict to 
an $H$-conjugacy from  $C$ 
 to some irreducible  component $D$ of 
$Y'$ with stabilizer $H_D = H$. By the $G$-transitivity, 
there is some $g\in G$ such that $D=gC'$.  

$(2) \implies (1)$: 

Let $D=gC'$ and let $\phi_C: C\to D$ be a conjugacy of $H$-SFTs.
Pick elements $g_i$ of $G$, $0\leq i < |G/H|$, such that
$g_0=e$ and 
the $g_iH$ are the distinct left cosets of $H$ in $G$.
Define $\phi: G\to G$ by setting $\phi (g_i x) = g_i  \phi_C (x) $
for $g_i x $ in $g_iC$. Then for $x \in C$ and $g=g_ih\in g_iH$,
we have
\[
\phi (gx) = \phi (g_ihx) = g_i\phi_C (hx)
= g_ih\phi_C(x) = g\phi (x) \ .
\]
For $g_ix\in g_iC$ and $g\in G$,
it follows that $\phi (g(g_ix)) =
gg_i \phi (x) =g \phi (g_ix)$. Thus, $\phi g = g\phi$ for all $g$.
Then, $\phi T = T'\phi$, because for $g_ix\in g_i C$,
\begin{align*}
\phi (T(g_ix)) &= \phi(g_i(Tx)) = g_i\phi_C(Tx))
= g_iT'(\phi_C x) \\
&= T' (g_i (\phi_C x)) 
= T' (\phi (g_i x))\ .
\end{align*}
\end{proof} 


For a reduction of the classification of irreducible $G$-SFTs 
to a structure on mixing SFTs, we continue the
Adler-Kitchens-Marcus analysis.  Suppose
$H$ is a finite group and $T:Y\to Y$ is an irreducible
$H$-SFT (we use $H$ to match letters in \cite{akmgroup}) with period $p>1$.
Let $C^0, \dots , C^{p-1}$ 
denote the cyclically moving subsets of $Y$: the $C^i$ are disjoint; 
$T$ maps $C^i $ onto 
$C^{i+1}$
(superscripts interpreted mod $p$); and for each $i$,  the 
restriction of $T^p$ to $C^i$ is a mixing SFT.  
For $0\leq i < p$, set 

\begin{align*}
H^i_C= H^i &
=\{ g\in G: gC^0 = C^i \}
= \{  g\in H: gC^0 \cap C^i \neq \emptyset \} \\
&=\{ g\in G: gC^j = C^i \}\ , \quad
\text{for any }j\in \{ 0, 1, \dots , p-1\} 
\ .
\end{align*} 
$H^0$ is a normal subgroup of $H$. 
The sets $H^i$ are disjoint, with union $H$; each 
$H^i$, if nonempty,  is a left coset of $H^0$.  
Identifying $H^0$ and $H^p$, define $\kappa_T
=\min \{ i\in \mathbb N : 0< i \leq p, H^i \neq \emptyset \}$.
Then $\kappa_T$ divides $p$, and for $0<i<p$, $H^i$ is
nonempty if and only if $\kappa_T$ divides $i$.

It can happen that $H^1$ is
empty\footnote{The statements \cite[Part (iii) of the
    p.22 Lemma and p.23 Corollary]{akmgroup} neglect the
  possibility $H^1 = \emptyset$,
but the p.23 proof addresses it.}  (i.e., $ \kappa_T >1)$.  
For example, if $H=\{e\}$ and $T$ is a single  orbit of
length $p$, then $\kappa_T = p$. 
If $H= \Z_2 =\{e,g \}$, $T$ is a single orbit of length $6$,
and $g$ acts by $T^3$, then
$H^0=\{e\},H^3=\{g\} $ and 
$\kappa_T=3$.  However, there is a straightforward interpretation
of $\kappa_T$, as follows. (Recall, for a property $P$, $T$ is
totally $P$ if $T^k$ is $P$ for all $k>0$.) 

\begin{proposition} \label{meaning of kappa}
  Suppose $(Y,T)$ is an irreducible $H$-SFT. Let $\overline T$
  be the irreducible SFT which is the factor of $T$  obtained
  by collapsing $H$-orbits. Then $\kappa_T$ is the period of $\overline T$.
  The following are equivalent.
  \begin{enumerate}
  \item
    $\kappa_T = 1$.
  \item
    $T$ is totally $H$-transitive.
  \item
    $\overline T$ is mixing.
    \end{enumerate} 
\end{proposition}

\begin{proof}
  For $(1) \iff (2)$, note $H$-transitivity of all powers of $T$
  is equivalent to transitivity of the $H$-action on cyclically
  moving subsets, which is equivalent to $\kappa_T=1$. 
  For $(2) \iff (3)$, note
  every power of $T$ is $H$-transitive if and only if ever power
  of $\overline T$ is irreducible. This happens if and only if
  the irreducible SFT $\overline T$ is mixing.

  Now let $k$  be the period of $\overline T$.
  $\kappa_T$ is the smallest positive integer $j$ such that
  $T^j$ is the disjoint union of $j$ irreducible SFTs,
  each of which has $\kappa =1$. 
  For $j<k$, irreducible components of $\overline T^j$ are not mixing,
  so irreducible components of $T^j$ cannot have $\kappa =1$.
On the other hand, 
 $(\overline T)^k$
  is the disjoint union of $k$ mixing SFTs, so $T^k$ is the
  disjoint union of $k$ $H$-SFTs, each of which has $\kappa =1$.
Thus $\kappa_T=k$. 
  \end{proof}

Reduction to the case $\kappa_T=1$  clarifies issues and
simplifies notation. 
Note, for $k=\kappa_T$,
the restriction of $T^k$ to any of its irreducible components 
is $H$-invariant and therefore an $H$-SFT. 

\begin{proposition} \label{kappa=1reduction} Suppose
  $(Y,T)$ and $(Y',T')$ are irreducible $H$-SFTs
  with period $p>1$, with $\kappa_T = \kappa_{T'}:=k >1$.
  Then the restriction of $T^k$ or $(T')^k$ to any of its
  irreducible components has $\kappa =1$. 
If $C$ is an irreducible component for $T^k$, then
a conjugacy of $H$-SFTs $T,T'$ restricts to a conjugacy of
$H$-SFTs $T^k|C$, $(T')^k|C'$, for some irreducible component
$C'$ of $T^k$. Conversely, any conjugacy of $H$-SFTs
$T^k|C$, $(T')^k|C'$ extends uniquely to a conjugacy of
$H$-SFTS $T,T'$.
\end{proposition}

\begin{proof} 
  We will prove the extension claim. 
  Suppose $\phi_0:C\to C'$ is a conjugacy of $H$-SFTs
  $T^k|C$, $(T')^k|C'$. The
   unique
  extension of $\phi_0$ to a topological conjugacy of $T,T'$ as
  SFTs is given by 
  $\phi: T^ix\mapsto (T')^i(\phi_0x)$,
  for $x\in C$ and $0\leq i < k $.
  For $h\in H$, $0\leq i <k$, $x\in X$ and $y=T^ix$,
we then have
$  \phi (hy) = \phi (hT^ix)=\phi(T^ihx)=
(T')^i\phi_0 (hx) = (T')^ih\phi_0 (x)
= h(T')^i\phi_0 (x)=
h\phi(T^ix)=h\phi(y)$.
\end{proof}

Now suppose $T$ is an irreducible $H$-SFT of period $p>1$ with
$H^1=H^1(T)$ nonempty.
Pick $c$ in  $H^1$. We have $H^0$  a normal subgroup of $H$, 
$H^i = c^iH^0$ and the disjoint sets $H^i$ are  the cosets of $H^0$.
We call the  subgroup $H^0$ of $H$ the 
{\it primitive stabilizer} of $C$. We call 
$H^1$ the {\it stabilizer coset} (\lq\lq primitive 
stabilizer coset\rq\rq\ would be more accurate, but 
lengthier). Adler-Kitchens-Marcus are not responsible for the terms  
\lq\lq primitive stabilizer\rq\rq\ and 
\lq\lq stabilizer coset\rq\rq.

Given irreducible $H$-SFTs $T,T'$  of period $p$,
let $T_0$, $T'_0$ be  mixing SFTs given
by restriction of $T^p, (T')^p$ to some cyclically
moving subset of $T,T'$. If $p>1$ and $\kappa_T=1$, then
$T_0$ and $T'_0$ are not $H$-SFTs, but they are $H_0$-SFTs. 
However, a natural candidate reduction fails badly, as
 in the following example.

  \begin{example}\label{badhexample}
    Let $H= \Z_2 =\{e,g\}$.     We define two irreducible,
    period 2 $G$-SFTs, $T$ and $T'$, with $H^0=\{e\}$ and
    $H^1 =\{ g\}$, such that the restrictions $T_0,T'_0$ of
    $T^2$ and $(T')^2$ to irreducible components are conjugate
    $H_0$-SFTs, but $T$ and $T'$ are not conjugate as $H$-SFTs.
    Let $T=T_A$ for $A = g\left(\begin{smallmatrix} 1&2\\2&1
    \end{smallmatrix}\right)$, and similarly define 
    $T'$ from
    $A' = g\left(\begin{smallmatrix} 2&1\\1&2
\end{smallmatrix}\right)$.
    The SFTs $g^{-1}T$ and $g^{-1}T'$ are not conjugate, having
    different numbers of fixed points. But, $T_0$ and $T'_0$
    are conjugate, because $A^2=(A')^2$.
    \end{example} 

To find some reduction of the classification of
irreducible $H$-SFTs to a classification defined on
mixing SFTs, 
one is forced to consider the $\alpha$-skew $H$-SFTs.  
For this, we continue below to follow \cite{akmgroup}.



Suppose $H$ is a finite group and $\alpha:H\to H$ is a
group automorphism.
In \cite{akmgroup}, Adler, Kitchens and Marcus
defined a $\Z \otimes_{\alpha} H$ action on an 
SFT $T$ (we will call this pair an $\alpha$-skew $H$-SFT) to be 
 an embedding of $H$ as a group
of homeomorphisms such that $Th = \alpha (h) T$
(i.e. $Th (x) = \alpha (h) T(x)$ for
all $x$ and $h$). (The $H$-SFT case is the case that
$\alpha$ is the identity.) They showed
(see \cite[Observation 1, p.4]{akmgroup})   
that 
an $\alpha$-skew $H$-SFT can  be presented very concretely, as a
one-step SFT with an embedding $h\mapsto \pi_h$
of $H$ into the group of permutations of the alphabet, such
that for $x=(x_n)$ and $h\in H$, $hx$ is defined by 
$ (hx)_n = \pi_{\alpha^n(h)} (x_n)$.
They also showed
(see \cite[Theorem 1]{akmgroup}) 
these skew SFTs  are abundant:
for every $H$ and $\alpha$, every positive entropy irreducible SFT admits
a (not necessarily free)  $\alpha$-skew $H$-SFT,
which (by  \cite[Theorem 3]{akmgroup}) 
is then a 1-1 a.e. factor of an 
$\alpha$-skew $H$-SFT which is free (the $H$-orbit of $x$
has cardinality $|H|$, for every $x$).

Let $T$ be an irreducible $H$-SFT of period $p>1$,
with $H^1(T)$ nonempty. Fix $c$ in $H^1(T)$, and define $S=c^{-1}T$
(i.e., $S(x) = c^{-1}T(x)$). If
$c$ is in the center of $H$, then $S$ is an $H$-SFT;
otherwise, it is not. Two $\alpha$-skew $H$-SFTs are
by definition topologically
conjugate if they are conjugate as SFTs by a conjugacy intertwining the
$H$ actions. 

\begin{theorem}\label{skewsftreduction} 
  Suppose $T$ is
  an irreducible $H$-SFT of period $p>1$,
  with   nonempty coset $H^1(T)$. 
 Fix $c$ in $H^1(T)$, and define $S=c^{-1}T$.
  Let the cyclically moving subsets of $T$ be
  $C^i$, $0\leq i < p$. Let $S_i$ be the restriction of $S$ to $C^i$.
 Define $\alpha : h \to c^{-1}hc$ (with domain given by context). 
  Then the following hold. 
  \begin{enumerate}
  \item
    With the given $H$ action, 
    $S$ is a free $\alpha$-skew $H$-SFT
    (and by restriction of the action, $S$ is an
    $\alpha$-skew $H_0$-SFT).
  \item
    Each $S_i: C^i\to C^i$ is a free mixing ${\alpha}$-skew $H_0$-SFT.
\item Suppose $T'$ is another     
  irreducible $H$-SFT. 
  Assume the period of $T'$ is also
  $p$, and $H^1(T')=H^1(T)=H^1\neq \emptyset $.
  (These are necessary conditions for conjugacy of $T,T'$ as
  $H$-SFTs.) Let $C'^i$, $0\leq i < p$, be the cyclically moving
  subsets of $T'$. 
  Define $S'= c^{-1}T'$ and $S'_i=S'|{C'^i}$. 
  Then the following hold. 
  \begin{enumerate}
    \item  A conjugacy  $\phi$ of $H$-SFTs $T,T'$ restricts to a
      conjugacy of mixing $\alpha$-skew $H_0$ SFTs $S_0,S'_i$,
      for some $i$. Then $\phi \circ T^{-i}$ is a conjugacy of
the      mixing $\alpha$-skew $H_0$-SFTs $S_0,S'_0$. 
    \item
      Given a  conjugacy $\phi_0 : C^0 \to C'^0 $ 
      of the $\alpha$-skew $H_0$-SFTs $S_0,S'_0$,
      there is a unique conjugacy 
      $\phi$ of $H$-SFTs $T,T'$ such that $\phi = \phi_0$
      on $C^0$.
  \end{enumerate}
  \end{enumerate}       
\end{theorem}

\begin{proof}
  (1) Suppose $c^r=e$. Because $cS=Sc$, it follows that
  $S^r=(c^{-1}T)^r=T^r$, an SFT. As a root of an SFT, $S$
  must be an SFT. Next, given $h\in H$, we have 
    $S(hx) = c^{-1}T(hx)=c^{-1}hT(x)=c^{-1}hc(c^{-1}T)(x)=
  \alpha (h) (Sx)$. 
  $S$ is free because an $H$-SFT is by definition (in this paper) free.
  
  (2)    Clearly  $C^i$ is mapped to $C^i$ 
  by $S_i$ and $H_0$. Also, 
  $(S_i)^{pr} = (c^{-1}T)^{pr}|_{C^i}   =(T^p)^r|_{C^i}$.
  Roots and powers of  mixing SFTs are mixing SFTs,
  so $S_i$
    is a mixing $\alpha$-skew $H_0$-SFT.

    (3) The claim (a) follows from parts (1) and (2).
    Now suppose $\phi_0$ is given as
    in (b). 
    For $x\in C^0$ and     $0\leq i < p$, define
    $\phi (c^i x) = c^i \phi_0 (x)$. This $\phi$ is the 
 only possible extension of $\phi_0$ to
 a conjugacy of the $H$-SFTs $T,T'$.

 We claim  $S'\phi  = \phi S $.  
    For this, suppose $y=c^ix$, with $x\in C^0$ and     $0\leq i < p$. 
    Note $cS=Sc$  and $cS' = S'c$. So, 
    we have
    $\phi (Sy) = \phi (S(c^i x))
    =\phi (c^iS(x)) 
    =c^i\phi_0 (S x)
    =c^iS'_0\phi_0 ( x) = S'(c^i\phi_0 ( x) ) = S' \phi (y) $ . 

     Next, we claim $c\phi = \phi c$. For
    $x\in C^i $ with $0\leq i <p-1$, this is clear.
    For $y=c^{p-1} x \in C^{p-1}$, we have 
    $\phi (cy) = \phi_0 (c^px) = c^p \phi_0 (x)
     =c(c^{p-1}\phi_0(x)) = c\phi (c^{p-1} x)
     =c\phi(y) $.

We now have $\phi T =T' \phi$, because 
          $\phi T = \phi cS = c \phi S = c S' \phi  =T' \phi$. 
For $h$ in $H_0$ and $y\in Y$, it remains to  check
$\phi (hy)= h\phi y$.
Let $y=c^ix \in C^i$, $0\leq i <p$.
     Then
     $\phi (hy) = \phi (hc^ix) = \phi (c^i(c^{-i}hc^ix))
     =   c^i \phi_0 (c^{-i}hc^ix)
          =   c^i (c^{-i}hc^i) \phi_0 x 
    = hc^i \phi_0 x = h \phi (y) $ . 
    \end{proof}

  Theorem \ref{skewsftreduction}  reduces the problem of classifying irreducible
  $H$-SFTs, for all $H$, to the problem of classifying mixing 
   $\alpha$-skew $H$-SFTs (with $H$ acting freely),
  for all $\alpha $ and $H$.  For mixing $H$-SFTs, there is a
  reasonably satisfactory framework  of invariants arising from
  a theory of strong shift equivalence of matrices over $\Z H$
  (or  elementary equivalence of matrices over $\Z [t]$.
  One naturally has an analogous (somewhat ill defined) problem:

  \begin{problem} Find a satisfactory classification scheme 
    for mixing $\alpha$-skew $H$-SFTs. 
    \end{problem}

Finally, we  turn to relating matrix properties to the Adler-Kitchens-Marcus 
setting. 
Let $A$ be a square matrix over $\Z_+G$ with augmentation 
$\overline A$ over $\Z_+$ as in Section \ref{backgroundsec}.  
Let $a_{ij} = A(i,j)  $ and 
define nonnegative integers $a_{ijkg}$ by $A^k(i,j) = \sum_{g\in G}
a_{ijkg} g$; let  $a_{ijg}$ denote $a_{ij1g}$.

We recall some terminology.  $A$ is irreducible/primitive if $\overline A$ is
irreducible/primitive.  $A$ is nondegenerate if it has no zero 
row and no zero column. The nondegenerate core of $A$ is 
its maximum nondegenerate principal submatrix. For a property 
P, $A$ is essentially P if its nondegenerate core is P.  
$A$ is $G$-primitive if there exists $k>0$ such that 
$a_{ijkg}>0 $ for all $i,j,g$.

\begin{definition} \label{groupsdefinition}
Let $A$ be a square matrix over $\Z_+G$. 
For an index $i$ of $A$, the {\it weights group} (at $i$) is 
\[
W_i(A)= \{ g\in G: \exists k>0 \text{ such that }a_{iikg}>0\} \  
\] 
and the {\it ratio group} (at $i$) is 
\begin{align*}
\Delta_i (A)
&= \{ gh^{-1} : \exists k \text{ such that } a_{ijkg}>0 
\text{ and } a_{ijkh}>0 \} \\ 
&= \{ gh^{-1} : \exists k \text{ such that } a_{iikg}>0 
\text{ and } a_{iikh}>0 \}.
\end{align*}
\end{definition} 
$W_i(A)$ is clearly a finite semigroup, and hence a group. 
$\Delta_i(A)$ is a group, because given $(g_2)^j = e$,
we have
\[
g_1h_1^{-1}g_2h_2^{-1}
=
(g_1h_1^{j-1}g_2)(h_2h_1^j)^{-1} \ .
\]
$\Delta_i(A)$
 is named after a \lq\lq ratio group\rq\rq\ which 
plays an analogous role in the theory of Markov shifts 
\cite{MarcusTuncelwps,ParrySchmidt}.

Our next task is to compute the stabilizer data for $T_A$ 
from the matrix $A$. 
First  we recall a standard reduction; see the citation for a proof. 

\begin{proposition} \label{Hreduction}
\cite[Proposition 4.4]{BSullivan} 
Let $A$ be an irreducible matrix over $\Z_+G$. 
Let $i$ be an index of $A$ and let $H=W_i(A)$. Then there is a
diagonal 
matrix $D$ over $\Z_+G$ with each diagonal entry in $G$ 
(i.e., $\overline D = I$)  such that every entry of $DAD^{-1} $ lies 
in $\Z_+H$.  (It suffices for each $j$ to set $D(j,j)=g$ for
some $g$ such that for some $k$, $a_{ijkg}>0$.)  
\end{proposition}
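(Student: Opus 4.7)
The plan is to construct the diagonal matrix $D$ by choosing its diagonal entries to be weights of paths from the distinguished vertex $i$ to each other vertex, and then to verify that conjugation by $D$ rescales every edge weight into $H = W_i(A)$. Throughout, let $T_j$ denote the set of weights of paths in $\mathcal G_A$ from $i$ to $j$, and $S_j$ the set of weights of paths from $j$ to $i$; irreducibility of $A$ guarantees that both sets are nonempty for every vertex $j$.

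The main step will be the following lemma: for every vertex $k$ and every choice of $d_k \in T_k$, the set $T_k$ equals the right coset $H d_k$. To prove this, I would fix any $s \in S_k$ and observe that for arbitrary $t, t' \in T_k$, both $ts$ and $t's$ lie in $H$ (since they are weights of closed paths at $i$), hence $t(t')^{-1} = (ts)(t's)^{-1} \in H$. This gives $T_k \subseteq H d_k$, and the reverse inclusion is immediate because prepending a closed path at $i$ of weight $h$ to a path of weight $d_k$ produces a path from $i$ to $k$ of weight $h d_k$.

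With the lemma in hand, I would define $D$ to be the diagonal matrix with $D(j,j) = d_j$, choosing $d_j \in T_j$ arbitrarily for each $j$ (and $d_i = e$). Then $\overline D = I$. For any entry $A(j,k) = \sum_g a_{jkg}\, g$, whenever $a_{jkg} > 0$ there is an edge $j \to k$ of weight $g$, so $d_j g$ is the weight of a path from $i$ to $k$, i.e. $d_j g \in T_k = H d_k$. Thus $d_j g d_k^{-1} \in H$, which shows that every entry of $DAD^{-1} = \sum_g a_{jkg}\, d_j g d_k^{-1}$ lies in $\mathbb{Z}_+ H$, as required.

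I do not anticipate a serious obstacle here; the only subtle point is proving the coset identity $T_k = H d_k$, and the argument above handles it cleanly using only irreducibility and the finiteness of $G$. The construction is essentially a change-of-basepoint trick of the same flavor as the one used already in Definition \ref{defn:cosetstructureofA} and in the proof of Proposition \ref{pro:right form}, so the verification of $\overline D = I$ and the membership $d_j g d_k^{-1} \in H$ should be routine.
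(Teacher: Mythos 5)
The paper itself does not prove this proposition; it cites \cite[Proposition 4.4]{BSullivan}, so there is no in-paper argument to compare against. Your proof is correct and is the standard change-of-basepoint argument: the key lemma $T_k = H d_k$ is established cleanly (given $t, t' \in T_k$ and any $s \in S_k$, irreducibility puts $ts$ and $t's$ in $H$, so $t(t')^{-1} \in H$; the reverse inclusion follows by prepending closed loops at $i$), and the verification that $(DAD^{-1})(j,k) = d_j A(j,k) d_k^{-1}$ has entries in $\Z_+ H$ then follows immediately because $d_j g \in T_k$ whenever $g$ is an edge weight $j \to k$. One small remark: for the conclusion you only actually need the inclusion $T_k \subseteq H d_k$, not the full coset equality, though having both is harmless and the reverse inclusion costs nothing.
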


An important technical point for proofs below is the following 
observation.  
\begin{remark} \label{vertexremark} 
Suppose  $A$ is an essentially irreducible square matrix over $\Z_+G$, 
$(w,g), (x,h)\in X_{\overline{A}}\times G$, $g=h$, and the 
initial vertices of $x_0$ and $w_0$ are equal. Then 
$(w,g)$ and $(x,h)$ are in the same cyclically moving subset 
of the same irreducible component of $X_{\overline{A}}\times G$. 
\end{remark} 

\begin{proposition} \label{weightsgroupprop}
Let $A$ be an irreducible matrix over $\Z_+G$. 
Let $x$ be a point in $X_{\overline A}$ 
with $x_0$ beginning at index $i$. 
Let $C$ be an irreducible component of $T_A$ 
containing  $(x,e)$ and choose $C^0, \dots , C^{p-1}$ 
such that $(x,e)\in C^0$. Then 
the following hold. 
\begin{enumerate} 
\item 
The stabilizer 
$H_C$ is the weights group $W_i(A)$. 
\item  
The matrix $B=DAD^{-1}$ over $\Z_+H$ from  
Theorem \ref{Hreduction}
defines an irreducible $H$-SFT $T_B$ which is $G$-cohomologous 
to the $H$-SFT $T|_C$. 
\item  The primitive 
stabilizer $ H_C^0 $ is the ratio group 
$\Delta_i (A)$. 
\end{enumerate} 
\end{proposition}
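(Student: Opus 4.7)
The plan is to prove the four parts in sequence, reusing the same toolkit throughout: Remark \ref{vertexremark} (initial vertex together with $G$-coordinate determines the cyclically moving subset within an irreducible component), Proposition \ref{Hreduction} (a diagonal $G$-conjugation brings $A$ into $\Z_+H$ when $H=W_i(A)$), topological transitivity of irreducible SFTs together with mixing of $T_A^p$ on each $C^j$, and the observation that $G$ acts on $\{C^0,\dots,C^{p-1}\}$ through a homomorphism $m\colon G\to\Z/p$ (forced by $g\circ T_A=T_A\circ g$ and $T_A$ cyclically permuting the $C^j$).

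For (1), I would show two inclusions. Given $g\in W_i(A)$ with a loop of length $k$ and weight $g$ at $i$, pick a point $x$ starting with this loop; then $T_A^k(x,e)=(\sigma^k x,g)\in C$, and since $\sigma^k x$ starts at $i$, Remark \ref{vertexremark} identifies its cyclically moving subset with that of $(x,g)=g\cdot(x,e)$, forcing $g\cdot(x,e)\in C$ and $g\in H_C$. Conversely, for $g\in H_C$ the cylinders $U=\{(w,e)\in C:w_0\text{ starts at }i\}$ and $V=\{(w,g)\in C:w_0\text{ starts at }i\}$ are both nonempty (by Remark \ref{vertexremark}), so topological transitivity of the irreducible SFT $C$ gives $k$ with $T_A^k(U)\cap V\ne\emptyset$, producing a length-$k$ loop at $i$ of weight $g$. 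Part (2) is essentially bookkeeping: Proposition \ref{Hreduction} supplies $D$ with $B=DAD^{-1}$ over $\Z_+H$ irreducible (since $\overline B=\overline A$); the diagonal $D$ is itself a cocycle witnessing $\tau_A\sim\tau_B$, and Facts \ref{groupexfacts} promotes this to a $G$-conjugacy $T_A\to T_B$ which restricts to identify $C$ with an irreducible $H$-component of $T_B$.

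Part (3) runs similarly. For $\Delta_i(A)\subset H_C^0$, take loops at $i$ of common length $k$ and weights $g_1,g_2$ with corresponding starting points $x_s$; Remark \ref{vertexremark} places $(x_s,e),(\sigma^k x_s,e)\in C^0$, so $T_A^k(x_s,e)=g_s\cdot(\sigma^k x_s,e)\in g_s\cdot C^0=C^{m(g_s)}$, which must equal $C^{k\bmod p}$, forcing $m(g_1)=m(g_2)$ and hence $g_1 g_2^{-1}\in\ker m=H_C^0$. For the reverse, given $g\in H_C^0$ the mixing of $T_A^p$ on $C^0$ applied to $U=\{(w,e)\in C^0:w_0\text{ starts at }i\}$ and $V=\{(w,g)\in C^0:w_0\text{ starts at }i\}$ (both nonempty since $g\cdot(x,e)=(x,g)\in C^0$) produces loops at $i$ of a common length $np$ with weights $e$ and $g$, writing $g$ as a ratio in $\Delta_i(A)$. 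For (4), using (3), it suffices to show $g\in H_C^1$, since then the left coset equality $H_C^1=gH_C^0=g\Delta_i(A)$ follows; pick $x$ with $x_0$ the edge of weight $g$ from $i$ to $j$ (so $(x,e)\in C^0$ and $T_A(x,e)=(\sigma x,g)\in C^1$), extend $i\to j$ by a path $j\to i$ (available by irreducibility of $\overline A$) into a loop at $i$, and apply (3) to extract $m(g)=1$.

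The principal obstacle will be the noncommutative bookkeeping in (3) and (4). One must verify that $g_1 g_2^{-1}$, not $g_2^{-1}g_1$, is the element lying in $H_C^0$; this relies on the left-action convention $g\cdot(x,h)=(x,gh)$ together with $m$ being a homomorphism satisfying $m(g^{-1})=-m(g)$. In (4) one must also track carefully the cyclic class of $(\sigma x,e)$ when $\sigma x$ starts at $j\ne i$, which is why the detour through a closed walk via irreducibility of $\overline A$ is needed to reduce the computation of $m(g)$ to an application of (3).
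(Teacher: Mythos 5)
Your proofs of (1)--(3) follow the paper's, with only cosmetic differences: in (1) the paper realizes the weight-$g$ loop by a periodic point (so $\sigma^k x = x$), where you use a general point and Remark \ref{vertexremark}; in the converse of (3) you invoke mixing of $T_A^p$ on $C^0$ directly to produce weight-$e$ and weight-$g$ loops at $i$ of a common length $np$, where the paper exhibits one weight-$g$ loop and concatenates with weight-$e$ loops of all large lengths divisible by $p$ (the same mixing fact). Both are fine.

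For (4) the paper writes only ``Clear,'' and your sketch has a gap you yourself flag but do not close. You correctly note that $T_A(x,e)=g\cdot(\sigma x,e)\in C^1$ and that $(\sigma x,e)$, starting at $j$, lies in some $C^l$ with $l$ depending on $j$; what follows is $m(g)=1-l$ in $\Z/p$, not $m(g)=1$. The proposed detour does not help: completing $i\to j$ by a weight-$h$ path $j\to i$ of length $m$ to a loop at $i$ of weight $gh$ and length $m+1$ determines $m(gh)=(m+1)\bmod p$, but recovering $m(g)$ needs $m(h)=(l+m)\bmod p$, which carries the same unknown $l$. The difficulty is real, and in fact (4) fails as literally stated. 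Take $G=\Z/2=\{e,t\}$ and
\[
A=\begin{pmatrix}0&t&e\\ t&0&e\\ e&e&0\end{pmatrix}.
\]
Then $\overline A$ is primitive, $T_A$ is irreducible of period $p=2$, $W_1(A)=G$, $\Delta_1(A)=\{e\}$, and $H_C^1=\{t\}$; yet $a_{13e}>0$, so (4) would force $H_C^1=e\Delta_1(A)=\{e\}$, which is false (while $a_{12t}>0$ gives the correct $\{t\}$). What your computation actually proves, correctly, is $H_C^{1-l}=g\Delta_i(A)$ with $l$ the cyclic index of the $e$-level over vertex $j$. The intended form of (4) is the case $l=0$ (e.g.\ for any $k,g$ with $a_{iikg}>0$ one has $H_C^{k\bmod p}=g\Delta_i(A)$), and in that case your argument closes without the detour.
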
 
\begin{proof}  (1): 
Suppose $g\in H_C$; then $(x,g)\in C$. By irreducibility of $C$, 
there must then be s ome path $z_0\dots z_{k-1}$ 
in 
$X_{\overline A}$
from 
$i$ to $i$ with weight $g$. Therefore $g\in W_i(A)$.

Conversely, suppose $g\in W_i(A)$. Then there is 
$k>0$ and a periodic point $w$  in $X_{\overline A}$ 
with $\ell (w_0) \ell (w_1)\cdots \ell (w_{k-1})=g$ (here $\ell (w_n)$ denotes the label of the edge $w_n$ in $\mathcal{G}_A$, see Section \ref{backgroundsec}) 
such that $i$ equals  the initial vertex of $w_0$ and the terminal vertex 
of $x_{k-1}$. The point $(w,e)$ must be in $C$. Therefore 
$(w,g)\in C$. Thus $gC \cap C \neq \emptyset$, so $gC=C$ 
and $g\in H_C$. 

(2): The diagonal matrix $D$ gives the $G$-cohomology. $T_B$ 
is irreducible because $C$ is irreducible.

(3): Given  $g,h,k$ as in the definition of $\Delta_i(A)$, 
using Remark \ref{vertexremark} one can see $g$ and $h$ are in 
$H^k_C$, and therefore $gh^{-1} \in H^0_C$. Conversely, 
suppose $g\in H^0_C$. Then $g$ there  is $k>0$ and a path 
$w_{0}\cdots w_{kp-1}$ from $i$ to $i$ with weight $g$. 
Then for arbitrarily large $j>0$ there is 
a path (by concatenations) of length $jp$ from $i$ to $i$ 
with weight $g$. For sufficiently large $j$, there is also 
a path of length $jp$ from $i$ to $i$ with weight $e$,
because the $H^0_C$-SFT $(T_A)^p|_{C^0}$ is mixing.
\end{proof}

\begin{proposition} \label{GreducibilitySummary}
  Let $G$ be a finite group and $A$ a square matrix
  over $\Z_+G$, defining a $G$-SFT $T_A$ as in Section 
\ref{backgroundsec}.   The following hold. 
\begin{enumerate} 
\item 
$A$ is essentially $G$-primitive $\iff$ $T_A$ is  mixing.  
\item
$A$ is essentially irreducible with a 
  weights group $W_i(A)=G$ $\iff$  the $G$-SFT $T_A$ is irreducible.  
\item
$A$ is essentially irreducible $\iff$ $T_A$ is  $G$-transitive 
and nonwandering. 

\item
$A$ is essentially irreducible, with a 
    weights group $W_i(A)=G$ and with $\overline A$
    essentially primitive $\iff$  $T_A$ is irreducible
    with $\kappa = 1$. 

\end{enumerate} 
\end{proposition} 
\begin{proof}
  (1)  is \cite[Cor. B.7]{BoSc2}.
  (2) is clear.
  (3) follows from (2) and part (1) of Proposition \ref{weightsgroupprop}.
  Then (4) follows from Proposition \ref{meaning of kappa}. 
  \end{proof}

Finally, we give algorithmically a presentation
(essentially following \cite{ParrySchmidt}) 
for the
classifying $\alpha$-skew $H_0$ SFT, from a given  presenting matrix
$A$ over $\Z_+ H$. 
 For an example, let $\Z_2 = \{ 1,c\}$ and
$A = \left( \begin{smallmatrix} c&1\\1&c \end{smallmatrix}\right)$.
Then $W_1(A)=\Z_2$, $\Delta_1(A)= \{1\}=H^0  $ and $H^1=\{c \}$;
with $D = \left( \begin{smallmatrix} c&0\\0&1 \end{smallmatrix}\right)$
and $\mathcal A =\left( \begin{smallmatrix} 1&1\\1&1 \end{smallmatrix}\right)$, 
we have 
and
$DAD^{-1} = c \mathcal A$.  

\begin{proposition} \label{ratiomatrixpresentation}
  Suppose $A$ is an  irreducible matrix over $\Z_+H$,
with  $\overline A$   primitive,     
and $H$ equal to a weights group $W_i(A)$ such that
$W_i(A) \neq \Delta (A)$ (i.e., $H\neq H^0$, so $T_A$ is
not mixing). 
Let $\beta$ be the weight of
a point of period $n+1$ from $i$ to $i$; let $\gamma$ be the weight
of a point of period $n$ from $i$ to $i$; let $c=\gamma^{-1}\beta$.
Pick $N$ such that
for each $j$,  there is a path of length $N$
from $i$ to $j$; choose $d_j$  the weight of some such path.
Let $D$ be the diagonal matrix with $D(j,j)=d_j$.

Then $DAD^{-1}= c \mathcal A$, where $\mathcal A$ is a
$H_0$-primitive matrix.  

$c^{-1}T$ acts on $C^0$ in $X\times G$ by the rule
$(x,g) \mapsto (\sigma x, (c^{-1}gc)\tau_{\mathcal A}(x))$,
where $\tau_{\mathcal A}(x)$ is the $\mathcal A$ label of the
edge $x_0$.

  For $c$  in the center of $H$,  the $H_0$-SFT
  $c^{-1}T_A$ is conjugate to $T_{\mathcal A}$. If $H^1$ contains
  an element of the center of $H$, then this element can be
  chosen as $c$.
  
\end{proposition} 
\begin{proof}
  If $z$ is a point of period $j$ from $i$ to $i$ in
  $X_{\overline A}$, with weight $h$, then $h$ must be in
  $H^j$ ($j$ interpreted mod $p$). This is because
  $T^j (z,e) = (z,h) $, and $h$ takes $(z,e)$ to $(z,h)$.
  It follows that the chosen $c$ lies in $H^1$. Also,
  let $g$ be the weight of a path from $k$ to $i$; then 
  \[
  DAD^{-1}(j,k) = (d_ja_{jk})(d_k)^{-1}
  = (d_ja_{jk}g)(d_kg)^{-1} \ . 
\] 
Interpreting  $(d_ja_{jk}g)$ and $(d_kg)$ as weights
of paths from $i$ to $i$, we see every entry of
$DAD^{-1}$ lies in $H^1$. Likewise, 
  every entry of
  $c^{-1}DAD^{-1}$ lies in $H^0.$ Finally,
  given another element $c'$ of $H^1$, let $c' = ch$ with $h\in H^0$;
  then we may pass from $c\mathcal A $ to $(ch)(h^{-1}\mathcal A)$.
  Finally, $\mathcal A$ being $H_0$-primitive follows from
  $c^{-1}T$ being mixing, as the latter means that for a large $n$,
  $T^n$ maps some point $(x,e)$ with initial vertex $i$ to a
  point with $(z,h)$ where $z$ has initial vertex $j$.
  Here, $h = c^{-n}eg$ such that $a_{ijng}>0$. This forces 
  $A$ to be $H$-primitive. 

The image of  $(x,g)\in X \times G$ under $c^{-1}T_A$ computes to be 
  \[
  (\sigma x, c^{-1}g\tau_A(x)) =
  (\sigma x, (c^{-1}gc)c^{-1}\tau_A(x))
  =(\sigma x, (c^{-1}gc)\tau_{\mathcal A}(x)) \ .
  \]
The final claim follows from 
  \[
  c^{-1}T_A : (x,g) \mapsto (\sigma x, c^{-1}g\tau_A(x))
  = (\sigma x, gc^{-1}\tau_A(x))  = (\sigma x, g\tau_{\mathcal A} (x)) \ .
  \]
  \end{proof}

\section{A special case}\label{specialcaseappendix}

Recall,  $G$ denotes a finite group.
The general theorems of this paper reduce
the classification of $G$-SFTs to an algebraic problem which is
far beyond the scope of this paper. Still,
in this section we study the algebraic invariants of
a natural initial class, including a complete solution in  
a meaningful (though very special)  case. The argument points to 
 algebraic challenges for  the general case.

Throughout, $\mathcal P$ is the poset $\mathcal P_2$, and
$\mathcal H$ is the coset structure for which $H_{11}=H_{12}=H_{22}=G$. 
To facilitate a quicker overview, proofs of some results do not
immediately follow statements.

Given $p,q,x$ in $\Z G$, let $M(p,q,x)=
\left(\begin{smallmatrix} p& x\\ 0&q \end{smallmatrix}\right)$. 
Let $\mathcal M^{++}(p,q,x)$ be the set of matrices $A$ 
in $\mopnG$ with coset structure $\mathcal H$  such that $I-A$ 
and $M(p,q,x)$ are $\text{El}_{\mathcal P}(\Z G)$-equivalent (i.e., they 
 have 1-stabilizations which for some $\mathbf n$ 
 are $\text{El}_{\mathcal P}(\mathbf n, \Z G)$-equivalent). 
 When $G$ is abelian, this forces
$\det(I-A\{1,1\})=p$ and 
$\det(I-A\{2,2\})=q$.
Let 
$\mathcal M^{++}(p,q) = \cup_x M^{++}(p,q,x)$.
In this appendix, we
study algebraic invariants of $G$-flow equivalence for the $G$-SFTs
defined by $\mathcal M^{++}(p,q) = \cup_x M^{++}(p,q,x)$: for general $G$,
then abelian $G$ satisfying a $K$-theory constraint, and finally for
$G=\Z_2$, where we give  a solution which is 
  complete and algorithmically
  practical. For example,  when $G=\Z_2$ 
and 
$\mathcal M^{++}(p,q)$ consists of finitely many $G$-FE classes,
 we can count them 
(Theorem \ref{classifforcounts}).

Before turning to the algebra, we
note the following consequence of
Proposition \ref{gettingpositive} (or a simple exercise), which shows
the algebraic study corresponds to actual $G$-SFTs. 

\begin{proposition}
Suppose $G$ is a finite group. 
For all $p,q,x$ in $\Z G$, 
$\mathcal M^{++}(p,q,x)$ is nonempty. 
\end{proposition}

Now we consider an arbitrary finite $G$. 
Given $p$ in $\Z G$ we define
$\ut(p)$ to be the set of $y$ in $\ZG$ such that
$\mu_y: v\mapsto yv$ induces an automorphism of 
$\Z G / (p\ZG)$. The induced map is
a right $G$-module homomorphism; it is a
 right $G$-module automorphism
 if and only if it an abelian group automorphism. 
The map   $\mu_y$ induces an automorphism
 if there exist $v,w,x $ in $\ZG$ such that
$vy = 1 +pw$ and $yv= 1+ px$. When $G$ is abelian,
this means simply that $[y]$ is a unit in
the quotient ring $\ZG / p(\ZG)$. 
Similarly, given $q\in \ZG$ 
we define
$\widetilde{\mathcal W}(q)$ to be the set of $z$ in $\ZG$ such that
$v\mapsto vz$ defines a group automorphism 
(equivalently, a left $G$-module
automorphism) of 
$\ZG / (\ZG)q$. This means there exist
$v,w,x $ in $\ZG$ such that
$yv = 1 + wq$ and $vy= 1+ xq$.

We need a little more. Let $M_k(p,q,x)$ be the 1-stabilization
of $M(p,q,x)$ in $\mpnzg$ with $\mathbf n = (k,k)$.
So, for $M=M_k(p,q,x)$, $M=I$ except that $M_{1,1}=p$,
$M_{k+1,k+1}=q$ and $M_{1,k+1}=x$. A matrix $A$ has a 1-stabilization
$\elzg$-equivalent to a 1-stabilization of $M(p,q,x)$
if and only if for all/any sufficiently large $k$, $A$ has a 1-stabilization
$\elzg$-equivalent to  $M_k(p,q,x)$. 
Now we define
$\ut^{eq}(p)$ to be the set of 
$ y$ in $ \ut(p)$ such that
for some $k$, there is an $\text{El}(k+1,\Z G)$ 
 equivalence
$U(p\oplus I_k)V = (p\oplus I_k)$ such that $U(1,1) = y$.
It will be convenient to write this equivalence in the form 
$U(p\oplus I_k) = (p\oplus I_k)W$ ($W$ =$V^{-1}$). 
Similarly,
we define
$\widetilde{\mathcal W}^{eq}(q)$ to be the set of 
$ z$ in $ \widetilde{\mathcal W}(q)$ such that
for some $k$, there is an
$\text{El}(k+1,\Z G)$ 
 equivalence
$U(q\oplus I_k) = (q\oplus I_k)W$ such that $W(1,1) = z$.

Finally, given $p,q$ in $\ZG$ we define the abelian group 
$L(p,q) = \{ pc + dq \in \ZG: c\in \ZG , d\in \ZG\}$.
When $G$ is abelian, $L(p,q)$ is also an ideal in $\ZG$.
For $G$ not abelian, $L(p,q)$  
 need not be even a onesided ideal. 
\begin{theorem}\label{theoremtwobytwo}
  Suppose $G$ is a finite group, $p,q,x\in\ZG$, and $A,A'$ are matrices in $\mathcal M^{++}(p,q,x)$,
  $\mathcal M^{++}(p,q,x')$ respectively. Then the following are equivalent. 
  \begin{enumerate}
  \item
    The $G$-SFTs $T_A$, $T_{A'}$ are $G$-flow equivalent.
  \item
    $M(p,q,x)$ and $M(p,q,x')$ have 1-stabilizations which are
    $\elpzg$-equivalent.
\item     
  There exist $y\in \ut^{eq}(p)$ and
  $z\in \widetilde{\mathcal W}^{eq}(p)$ such that $yx-x'z\in L(p,q)$.
  \end{enumerate}
Suppose $G$ is abelian, and consider quotient groups as rings. 
  Then  $\ut (p)$ is the set of $x$ in $\ZG$ such
  that $[x]$ is a unit in  $\ZG/\gen{p}$ (and similarly
  for $\ut (q)$). 
  Condition (3) holds 
 if and only if
 there exist elements $y\in \ut^{eq}(p)$,
  $z\in \widetilde{\mathcal U}^{eq}(q)$ such that
  $[y][x]=[z][x']$ in 
 $\ZG/L(p,q)$. 
  \end{theorem}

  Given $p,q$, with $L=L(p,q)$ 
  Theorem \ref{theoremtwobytwo} tells us\begin{footnote}{The relation (3)
    in the statement of Theorem \ref{theoremtwobytwo}
        is a special case version of an adaptation to $\ZG$
of  the invariant introduced by  Huang  in
\cite{dh:fersft} 
    for flow equivalence of reducible
    SFTs with two irreducible components.}\end{footnote}
    that the $G$-flow equivalence
  classes of $G$-SFTs defined from matrices in $\mathcal M^{++}(p,q)$
  are in bijective correspondence with
    the set
  of full orbits of
  points in the abelian group $\ZG /L$ under the action of
  the semigroup of homomorphisms $v+L\mapsto yv +L$
  and $v+L\mapsto vz +L$
  coming from $y\in \ut^{eq}(p), z\in \mathcal W^{eq}(q)$. 
  Often the group $\ZG / L(p,q)$ is finite, and in this case,
  with $G$ abelian, that orbit relation on $\ZG /L(p,q)$ can
  be computed mechanically.

  We now turn to some proofs. 

  \begin{proof}[Proof of Theorem \ref{theoremtwobytwo}]
    $(1)\iff (2)$: This follows from the classifying Theorem
    \ref{classification2}, by our choice of $\mathcal H$,
    because the only permutation of
    $\{1,2 \}$ respecting the $\mathcal P_2$ 
     relation $\preceq $ is the identity.

    $(2)\implies (3)$: For some $k$,
    we have an equivalence $UM_k(p,q,x) = M_k(p,q,x')W$
    of $2k\times 2k$ matrices, 
    which we can write in block form as 
    \begin{align} \label{cokernelequivalence} 
&\begin{pmatrix} 
U\{1,1\} &       U\{1,2\} \\ 
 0 &       U\{2,2\}
\end{pmatrix} 
\begin{pmatrix} 
    \left(\begin{smallmatrix}
      p&0\\0&I_{k-1}
    \end{smallmatrix} \right) &
    \left(\begin{smallmatrix}
      x&0\\0&0
    \end{smallmatrix} \right) \\ 
        \left(\begin{smallmatrix}
      0&0\\0&0
        \end{smallmatrix} \right) &
    \left(\begin{smallmatrix}
      q&0\\0&I_{k-1}
    \end{smallmatrix} \right)
\end{pmatrix} \\ \notag 
=
&\begin{pmatrix} 
    \left(\begin{smallmatrix}
      p&0\\0&I_{k-1}
    \end{smallmatrix} \right) &
    \left(\begin{smallmatrix}
      x'&0\\0&0
    \end{smallmatrix} \right) \\ 
        \left(\begin{smallmatrix}
      0&0\\0&0
        \end{smallmatrix} \right) &
            \left(\begin{smallmatrix}
      q&0\\0&I_{k-1}
    \end{smallmatrix} \right)
\end{pmatrix}
\begin{pmatrix} 
W\{1,1\} &    W\{1,2\} \\
 0 &       W\{2,2\}
\end{pmatrix}
\ .
\end{align}
    Computation of the $1,k+1$ entry of \eqref{cokernelequivalence}
    produces
    \[ U_{1,1}x + U_{1,k+1}q = pW_{1,k+1} + x' W_{k+1,k+1} \ .
    \]
    Set     $y=U_{1,1}$ and $ z= W_{k+1,k+1}$. If follows that
    $yx - x'z \in L(p,q)$. 
    
    Next, view    $(\ZG)^{k}$ as a set of column vectors, and
    let $v = (v_1, \dots , v_{k})^T$ denote an
    element of $(\ZG)^{k}$. 
    Because 
$U\{1,1\}    \left(\begin{smallmatrix}
      p&0\\0&I_k
    \end{smallmatrix} \right) =
        \left(\begin{smallmatrix}
      p&0\\0&I_k
        \end{smallmatrix} \right)W\{1,1\}$,
        the map $v\mapsto U\{1,1\}v$ induces a
        group automorphism (also a right $\ZG$-module
        automorphism)
        of 
        $\cok  \left(\begin{smallmatrix}
      p&0\\0&I_k
        \end{smallmatrix} \right)
= (\ZG)^{k+1}/ \left(\begin{smallmatrix}
      p&0\\0&I_k
    \end{smallmatrix} \right)(\ZG)^{k+1}$. 
The map $\pi : v \mapsto v_1$ induces an isomorphism         
        $\cok  \left(\begin{smallmatrix}
      p&0\\0&I_k
        \end{smallmatrix} \right)
\to \cok (p) =\ZG /p (ZG) $. Here $\pi$ pushes
the automorphism of
       $\cok  \left(\begin{smallmatrix}
      p&0\\0&I_k
        \end{smallmatrix} \right)$ 
down to the
automorphism of $\ZG /p (ZG) $ induced by $v_1 \mapsto yv_1$.
A similar argument, using the equivalence
$U\{2,2\}    \left(\begin{smallmatrix}
      q&0\\0&I_k
    \end{smallmatrix} \right) =
        \left(\begin{smallmatrix}
      q&0\\0&I_k
        \end{smallmatrix} \right)W\{2,2\}$
        and considering 
        the cokernel of
        $\left(\begin{smallmatrix}
      q&0\\0&I_k
    \end{smallmatrix} \right) =
        \left(\begin{smallmatrix}
      q&0\\0&I_k
        \end{smallmatrix} \right)$ with respect to
        the action on row vectors, shows that
        the rule $v_{k+1 }\mapsto v_{k+1} z$ induces an
        automorphism of 
        $\ZG / (\ZG)q$. This finishes the proof that
        $(2)\implies (3)$.

        $(3)\implies (2)$:
        By assumption, for large enough $k$ we have an elementary equivalence
        $G
        \left(\begin{smallmatrix}
      p&0\\0&I_{k-1}
        \end{smallmatrix} \right)
        H =
                \left(\begin{smallmatrix}
      p&0\\0&I_{k-1}
                \end{smallmatrix} \right)$
                such that $G_{1,1}= y$.
                Then
                \[
                \begin{pmatrix} G&0\\0&I_{k} \end{pmatrix}
\begin{pmatrix} 
  \left(\begin{smallmatrix}
      p&0\\0&I_{k-1}
    \end{smallmatrix} \right) &
    \left(\begin{smallmatrix}
      x&0\\0&0
    \end{smallmatrix} \right) \\ 
        \left(\begin{smallmatrix}
      0&0\\0&0
        \end{smallmatrix} \right) &
    \left(\begin{smallmatrix}
      q&0\\0&I_{k-1}
    \end{smallmatrix} \right)
\end{pmatrix} 
\begin{pmatrix} H&0\\0&I_k \end{pmatrix}
=
\begin{pmatrix} 
  \left(\begin{smallmatrix}
      p&0\\0&I_{k-1}
    \end{smallmatrix} \right) &
    \left(\begin{smallmatrix}
     y x&0\\w&0
    \end{smallmatrix} \right) \\ 
        \left(\begin{smallmatrix}
      0&0\\0&0
        \end{smallmatrix} \right) &
    \left(\begin{smallmatrix}
      q&0\\0&I_{k-1}
    \end{smallmatrix} \right)
\end{pmatrix} 
  \]
  where $w$ is a size $k-1$ column vector. Adding
  suitable multiples of columns $2, \dots , k$ to
  column $k+1$ to zero out $w$ implements a
  unipotent (hence elementary) equivalence to 
  $M(p,q,yx)$, which therefore is elementary
  equivalent to   $M(p,q,x)$. Similarly
  $M(p,q,x')$ is elementary
  equivalent to   $M(p,q,x'z)$. 
  Because $yx-x'z \in L(p,q)$, say
  $yx + pr = x'z + sq$, another application of unipotent equivalence
shows 
$M(p,q,yx)$ and  $M(p,q,x'z)$ are $\elpzg$-equivalent.

The final claims, for abelian $G$, are easily checked. 
\end{proof}

  We recall some facts from the book \cite[pages 1-4]{Oliver} of Oliver. 
  Suppose the finite group $G$ is  abelian. 
  Then  $SK_1( \ZG )$ is  the subgroup of $K_1 (\ZG )$ 
  represented by matrices with determinant 1; it is    
  the torsion subgroup of $K_1 (\ZG )$, and it is finite.
  The meaning of $SK_1( \ZG )$ being
  trivial is precisely that  every matrix over $\ZG$
  with determinant 1 has a 1-stabilization which is an elementary
  matrix over  $\ZG$. 
Being abelian, $G$  
 is the direct sum of its Sylow $p$-subgroups,
  and $SK_1( \ZG )$ is trivial if and only if
  one of the following hold\begin{footnote}{See 
    \cite[pp. 1-19]{Oliver} for an overview of
    $K_1(\ZG)$ and its history.}\end{footnote}
    (in which $C_k$ denotes the cyclic
  group of order $k$).
  \begin{enumerate}
  \item $G$ is a direct sum of copies of $C_2$.
  \item
Each Sylow $p$-subgroup of $G$ has the form $C_{p^n}$
    or  $C_p\oplus C_{p^n}$. 
  \end{enumerate}

  \begin{proposition} \label{u=ueq}
    Suppose $G$ is  abelian and
    $SK_1(\ZG)$ is trivial. Then 
    $  \ut^{eq}(p)=\ut(p)$
    and     $  \widetilde{\mathcal W}^{eq}(q)=\widetilde{\mathcal W}(q)$.  
  \end{proposition}

  \begin{proof}
    Because $G$ is abelian, obviously it is enough to
    prove   $  \ut^{eq}(p)=\ut(p)$.  
So, suppose
 $y\in \ut(p)$; 
we will show $y\in \ut^{eq}(p)$. 
Because $x\mapsto yx$ induces an automorphism of $\ZG /p(\ZG)$ 
there exist $a$ in $\ZG$ (implementing the inverse automorphism) 
and $b$ in $\ZG$ such that $ay = 1+bp$. Thus there is an
$\text{SL}(2,\ZG)$-equivalence 
\begin{align*}
    \left(\begin{smallmatrix}  y&p \\ b & a
  \end{smallmatrix}\right)   
  \left(\begin{smallmatrix}  p&0 \\ 0 & 1
  \end{smallmatrix}\right)   
  \left(\begin{smallmatrix}  a& -1 \\ -bp & y 
  \end{smallmatrix}\right)   
     = 
  \left(\begin{smallmatrix}  yp&p \\ bp & a
  \end{smallmatrix}\right)   
  \left(\begin{smallmatrix}  a& -1 \\ -bp & y 
  \end{smallmatrix}\right)   
  = 
  \left(\begin{smallmatrix}  ypa -bp^2&0 \\ 0 & -bp+ ay 
  \end{smallmatrix}\right)   
= \left(\begin{smallmatrix}  p&0 \\ 0 & 1
  \end{smallmatrix}\right)   
\end{align*}
and so, for arbitrary $k>0$ there is also an
$\text{SL}(k+2, \ZG)$
 equivalence
\[
    \left(\begin{smallmatrix}  y&p&0 \\ b & a &0 \\ 0 &0& I_k 
  \end{smallmatrix}\right)   
  \left(\begin{smallmatrix}  p&0&0 \\ 0 & 1 & 0 \\ 0&0&I_k 
  \end{smallmatrix}\right)   
  \left(\begin{smallmatrix}  a& -1 &0\\ -bp & y &0 \\ 0&0&I_k
  \end{smallmatrix}\right)   
  =
    \left(\begin{smallmatrix}  p&0&0 \\ 0 & 1 & 0 \\ 0&0&I_k 
    \end{smallmatrix}\right)   \ .
\]    
Because $SK_1(\ZG)$ is trivial, for some $k$ this $\text{SL}(k+2, \ZG)$-equivalence is
an $\text{El}(k+2, \ZG)$-equivalence. This shows $y\in \ut^{eq}$. 
  \end{proof}

  To finish, we  work out 
  the classification in complete detail for the case $G=\Z_2$. From
  here, $G$ denotes $\Z_2$. We use  $\Z_n$ to denote
  $\Z/n\Z$ and $G =\{e,g\}$.  
  
  {\bf The ring $R \cong \ZG $ and its ideals.}
  Let $R$ denote the subring of $\Z^2$ consisting of all
  $(\alpha , \beta)$ with $\alpha \equiv \beta \mod 2$.
  The map $\ZG \to R$ given by $ae+bg\mapsto (a+b,a-b)$
  is a well known ring isomorphism (with inverse
  $(\alpha, \beta)\mapsto \frac{\alpha+\beta}2 e + \frac{\alpha-\beta}2g$).
  We will  work with $R$ rather than $\ZG$.
  We define $E$ to be the even elements of $R$, i.e. the $(\alpha, \beta)$
  with $\alpha $ and $\beta$ even integers. The set $R\setminus E$ is 
  the set of odd elements. 
  Let  $E_+=\{ (\alpha , 0) \in R\} = \gen{(2,0)}$.
    Let  $E_-=\{ ( 0, \beta) \in R\} = \gen{(0,2)}$.
    $J$ will always refer to an ideal in $R$. Given $J$,
    let $J_+,J_-$ be the ideals and $j_+,j_-$ the nonnegative integers
    such that $J_+=E_+ \cap J = j_+E_+ = \gen{(2j_+,0)}$ and
    $J_-=E_- \cap J = j_-E_- = \gen{(0,2j_-)}$.
        If $(\alpha,\beta) \in J$, then $(2\alpha,0)\in J_+$ and
    $(0,2j_-)\in J_-$ From this we deduce that
        either $J=J_+\oplus J_-$ or $|J/(J_+\oplus J_-)|=2$.
        $J$ has rank 2 (as an abelian group) if
        and only if $j_+ \neq 0 \neq j_-$.
        If $|J/(J_+\oplus J_-)|=2$, then  $J$ is a rank two principal ideal,
    $J=\gen{j}$ with $j=(j_+,j_-)$.   For a nonnegative vector $v=(a,b)$,
    $\gamma_v:R \to \Z_a \oplus \Z_b$ denotes 
     the obvious map  $(\alpha, \beta) \mapsto ([\alpha],[\beta])$.

    \begin{proposition}\label{quotienttypes}
      For $J$ an ideal of $R$,  by cases 
      the following maps $\rho$ give a presentation of the abelian group
      epimorphism $R\to R/J$ (i.e., they define group epimorphisms
      with kernel $J$). 
      \begin{enumerate}
      \item $J=\gen{j}$, $j$ odd, $j=(j_+,j_-)$. Then
                $\rho : R\to \Z_{j_+} \oplus \Z_{j_-}$. 
        \begin{enumerate}
          \item 
        $\rho : (\alpha ,\beta) \mapsto \gamma_j(\alpha /2, \beta/2)$ 
        if $(\alpha ,\beta)\in E$, 
        \item
          $\rho : x \mapsto
          \gamma_j(x-j)$ 
          if $(\alpha ,\beta)\in R\setminus E$.
        \end{enumerate}
      \item $J= j_+E_+ \oplus j_-E_-$. Then
        $\rho =\gamma_{2j}:  R\to \gamma_{2j}R$, where
        $\gamma_{2j}R=\{ (a,b) \in  \Z_{2j_+} \oplus \Z_{2j_-}:
        a\equiv b \text{ mod } 2 \}$.        
      \item
        $J = \gen{j}$ rank 2 (i.e. $j_+\neq 0 \neq j_-$), $j=(j_+,j_-)$ even.
        Let $\overline j = \gamma_{2j}(j)$. Then 
        $\rho :  R\to \gamma_{2j}R/\{ 0, \overline j\}$.
          Here $\rho$ is $\gamma_{2j}$ followed by the quotient
          map from $\gamma_{2j}R$ with kernel the
          two-point subgroup $\{ 0, \overline j\}$.
          Moreover, $\rho (x) = \rho (x')$ if and only if
            $\gamma_{2j} (x)=\gamma_{2j}(x')$ or
            $\gamma_{2j} (x-j)=\gamma_{2j}(x')$.
      \end{enumerate}
    \end{proposition}
    \begin{proof} Let $J' =\gen{(2j_+,0),(0,2j_-)}$. 

      {\bf Case 1.}
      If $x$ is even, then $x\in J$ iff $x\in
J'$. If $x$ is odd, then
      $x\in J$ iff $ x-r\in 
      J'$.
      Let $\rho' (x) = \gamma_{2j}(x)$ if $x\in E$, and
      $\rho' (x) = \gamma_{2j}(x-j)$ if $x\in R\setminus E$.
      Then $\rho': R \to
      \Z_{2j_+} \oplus \Z_{2j_-}$ is a homomorphism and $\ker (\rho')=J$.
      The image of $\rho'$ is
      the subgroup $H$ of $([\alpha],[\beta])$ with $\alpha$ and $\beta $
      even. The map 
      $([\alpha] ,[\beta] )\mapsto ([\alpha /2], [\beta/2])$
      is a group isomorphism  $H\to \Z_{j_+} \oplus \Z_{j_-}$.

      {\bf Case 2.} This is clear, because
      $J = J'$.

      {\bf Case 3.} 
       $\{ 0, \overline j\}$ is a group, because $2\overline j =0$ in $\Z_{2j_+} \oplus \Z_{2j_-}$. Then  
\[
\ker (\rho ) = 
\gamma_{2j}^{-1} \{0, \overline j\} = 
J'  \cup  ( j+J' )
 = J \ .
 \]
 The final ``Moreover'' statement follows. 
    \end{proof}

    In Case 1 above,
    on account of the final map
    $H\to \Z_{j_+} \oplus \Z_{j_-}$, 
    the group epimorphism  $\rho$  is  not
    a ring homomorphism
    (unless $J=R$)
    .

    Given  $v\in R$, we let $\mu_v$ denote the map $R\to R$ given
by $w\mapsto vw$, and also (for a lighter notation) the
 homomorphism 
induced on a quotient group  of $R$. 
For an ideal $J$, let $\ut(J)$ be the set of $v$
in $R$ such that $\mu_v$ is  an
automorphism of $R/J$.
For elements $v_1, \dots , v_k$
of $R$, $\ut ( v_1, \dots , v_k)$ denotes $\ut (J)$ for $J
=\gen{v_1, \dots , v_k}$.

\begin{theorem} \label{Junitstheorem}
   $\ut(J)$ is characterized 
  by cases.
  \begin{enumerate}
  \item $J=\gen{j}$, $j=(j_+,j_-)$ odd.
Then    $(\alpha,\beta)\in\ut(J)$ if and only if
    $    \gcd(\alpha,j_+)=1=\gcd(\beta,j_-)$ .
\item $J \subset E$. 
  Then 
$(\alpha,\beta)\in\ut(J)$ if and only if
$(\alpha,\beta)$ is odd with  $    \gcd(\alpha,j_+)=1=\gcd(\beta,j_-)$ .
  \end{enumerate}
\end{theorem}
In Case 1, $(2^k, 2^{\ell})\in \ut(J)$ if $k$ and $\ell$ are
positive. In Case 2, if $J=\gen{j}$ with $j$  even, then 
the requirement that $(\alpha , \beta)$ must be odd is redundant;
if $J=  j_+E_+ \oplus j_-E_-$, then
$J\subset E$, and  when $(j_+,j_-)$ is odd the
requirement is not redundant. For $k$ nonzero in $\Z$,
$\gcd (k,0) = |k|$. So, if  $j_+ \neq 0 =j_-$, Theorem \ref{Junitstheorem}
gives 
$\ut(J) = \{ (\alpha , \pm 1)\in R\colon \gcd(\alpha ,j_+)=1\}$.
Likewise, if  $j_+ = 0 \neq j_-$, then 
$\ut(J) = \{ (\pm 1, \beta)\in R\colon \gcd(\alpha ,j_+)=1\}$. 
If $J=\gen{0}$, then $\ut(J) =\{\pm 1, \pm 1\}$, the units of $R$.
If $J=R$, then $\ut(J)=R$. 
\begin{proof}[Proof of Theorem \ref{Junitstheorem}]
  Let $v=(a,b)\in R$. 
We use the maps $\rho $ from 
Proposition \ref{quotienttypes} which present $R\to R/J$.

  {\bf Case 1.} Here $\rho: R\to \Z_{j_+} \oplus \Z_{j_-}$. 
 For $x=(\alpha , \beta)\in E$ 
  the induced  map $\rho (x)\mapsto \rho (vx)$ is 
  \[
    \big([\alpha/2] , [\beta/2]\big) \mapsto  
    \big([a\alpha/2] , [b\beta/2]\big) =
    \big(a[\alpha/2] , b[\beta/2]\big) \ ,
\]
    and for $x\in R\setminus E$ the map is 
    \begin{align*}
      \big([(\alpha-j_+)/2] , [(\beta-j_-)/2]\big) \mapsto &\, 
\big([(a\alpha-j_+)/2] , [(b\beta-j_-)/2]\big) \\
=&\, \big(a[(\alpha-j_+)/2] , b[(\beta-j_-)/2]\big)
\end{align*}
where the last equality holds because $(j_+,j_-)$ is odd. 
It follows that 
the induced map $\mu_v$ is an automorphism  
if and only the gcd conditions hold.

{\bf Case 2.} 
First suppose $J= j_+E_+ \oplus j_-E_-$.
Here $\rho= \gamma_{2j}: R \to \Z_{2j_+}\oplus \Z_{2j_-}$.
 The map $\rho(x)\mapsto \rho (vx)$ is
$([\alpha], [\beta]) \mapsto ([a\alpha ],[b\beta])$,
so it is an automorphism iff $\gcd (a,2j_+)=1=\gcd (b,2j_-)$.

Lastly, suppose      $J=\gen{j}$, with $j=(j_+,j_-)$ even and 
$j_+ \neq 0 \neq j_-$.
Here $\rho :  R\to (\Z_{2j_+} \oplus \Z_{2j_-})/\{ 0, \overline j\}$
presents $R\to R/J$.
If $v$ is even or the gcd condition fails,
then $\mu_v  $ as an endomorphism of
$(\Z_{2j_+} \oplus \Z_{2j_-})/\{0,\overline j\}$ has a nontrivial kernel.
If  $v$ is odd, then
$\mu_v  $ as an endomorphism of
$(\Z_{2j_+} \oplus \Z_{2j_-})$ fixes $\overline j$,
so it  defines an automorphism of
$(\Z_{2j_+} \oplus \Z_{2j_-})/\{0,\overline j\}$ if and only if
defines an automorphism of
$(\Z_{2j_+} \oplus \Z_{2j_-})$, which is equivalent to the gcd conditions. 
\end{proof}

\begin{corollary} \label{Junitscorollary} For elements $p,q$ of $R$, let
  $J= \gen{p,q}$. Then
  \[
\ut (p) \cup \ut (q)  \ \subset \ 
  \ut (p,q) = \{ vw :
v\in \ut (p), w \in \ut (q) \}\ .
  \]

  \end{corollary}

\begin{proof}
  For an ideal $J$ and element $x$ of $R$,
  $x\in \ut (J)$ iff its image in $R/J$, considered as a ring,
  is a unit. As  a unit in $R/\gen{p}$ pushes down to a unit in
  $R/\gen{p,q}$, we have $\ut (p)\subset \ut (p,q)$, and  likewise
  $\ut (q)\subset \ut (p,q)$.  
  
It remains to show  $\ut (p,q) \subset \{ vw :
v\in \ut (p), w \in \ut (q) \}$.
 We appeal to Theorem \ref{Junitstheorem}, by cases. 
  Define $d_+ = \gcd (\alpha_p, \alpha_q)$ if at least one of
  $\alpha_p, \alpha_q$ is nonzero, and $d_- = \gcd(\beta_p, \beta_q) $
  if at least one of $\beta_p, \beta_q$ is nonzero.

  {\bf Case I.} $J=\gen{r}$ with $r=(\alpha_r,\beta_r)$ odd.

  By Lemma \ref{doubled}, $r=(d_+, d_-)$. 
  If e.g. 
  $\gcd (\alpha,\alpha_r)=1$ and $\alpha $ is an odd prime,
  then $(\alpha,1)\in \ut (p)\cup \ut(q)$. We see that 
   an odd element $u$ of $\ut(p,q)$
 is the product 
  of elements of the form
  $(\alpha , 1)$,  $(1, \beta)$ from $\ut (p)\cup \ut(q)$.
  If $k$ and $\ell$ are positive, then
  $(2^k,2^{\ell}) \in \ut (p)\cup \ut(q)$,
  because at least one of $p$ and $q$ must be odd
  (because $r$ is odd). Thus $\ut (p)\cup \ut(q)$ generates
  $\ut (p,q)$. A product of several elements from $\ut (p)\cup \ut(q)$
  is a product of a single element of $\ut (p)$ and
  a single element of $ \ut(q)$.

{\bf Case II.} 
$J\subset E$.
Here $p$ and $q$ must be even, and 
there can be no odd element in any of $\ut(p),\ut(q),\ut(J)$.
The rest of the argument proceeds as in Case I by considering
gcd and generating units $(\alpha,1)$, $(1,\beta)$.
\end{proof}

\begin{definition} \label{lastequivdefn}
  Given elements $x,x'$ of a ring $S$, we say
  $x\sim x'$ in $S$ if there is a multiplicative
  unit $u$ in $S$ such that $ux=x'$.
\end{definition}

In the product ring $\Z_m \oplus \Z_n$, the units
are the pairs $([a],[b])$ such that
$\gcd (a,m)=1=\gcd (b,n)$, and these are the units
defining $\sim$ in  part (3) of the statement of
Theorem \ref{classifeasydecide} below.
(The map $\rho$ in part (3a) is not a ring homomorphism,
but the logic of the proof does not need it to be.) 
Also, 
$\mathcal M^{++}(p,q,x)$ was defined with $\{ p,q,x\} \subset \ZG$. 
In Theorem \ref{classifeasydecide}, 
we use the corresponding elements in $R$,
without introducing more notation. E.g., $x=ae+bg$ becomes
$x=(a+b, a-b)$.


\begin{theorem}\label{classifeasydecide} 
  Let $J$ be the ideal $ \gen{p,q}$ in $R$, with $J\cap E = j_+E_+ \oplus j_-E_-$.
Suppose $x,x'$ are in $R$. Then the 
following are equivalent.
\begin{enumerate}
    \item 
  Matrices in $\mathcal M^{++}(p,q,x)$ and
  $\mathcal M^{++}(p,q,x')$ define $G$-flow equivalent $G$-SFTs.
      \item 
        $[x]\sim [x']$ in $R/J$. 
      \item The  conditions below hold, according to the type of  $J$. 
    \begin{enumerate}
    \item  $J=\gen{j}$ with $j=(j_+,j_-)$ odd.  
      Then    $\rho (x) \sim \rho (x')$ in $\Z_{j_+}\oplus \Z_{j_-}$, 
                where  
          \begin{enumerate}
          \item $\rho (x) = \gamma_j(x/2)$ if $x\in E$, and
            \item 
              $\rho (x) = \gamma_j((x-j)/2)$ if $x\notin E$.
                      \end{enumerate} 
      \item
        $J= j_+ E_+ \oplus j_-E_- $ .  
        Then $\gamma_{2j} (x) \sim \gamma_{2j} (x')$ in  
        $\Z_{2j_+}\oplus \Z_{2j_-}$.  
      \item $J=\gen{j}$, $j=(j_+, j_-)\in E$, $j_+\neq 0 \neq j_-$.
        \\
        Then  $\gamma_{2j} (x) \sim \gamma_{2j}(x')$ or 
        $\gamma_{2j} (x-j ) \sim \gamma_{2j} x'$ in
        $\Z_{2j_+}\oplus \Z_{2j_-}$. 
                \end{enumerate}
  \end{enumerate}
  \end{theorem}

\begin{proof}
  Because $SK_1(\Z_2)$ is trivial, it follows from
  Proposition \ref{u=ueq} and Theorem \ref{theoremtwobytwo} that
  (1) holds if and only there are elements $y\in\ut (p)$ and
  $z\in\ut(q)$ such that $[y][x]=[z][x']$ in $R/J$. 
    By Corollary \ref{Junitscorollary}, these elements $y,z$
  exist if and only if
  $[x]\sim [x']$ in $R/J$. We have shown $(1)\iff (2)$.

  We next show $(1)\iff (3)$. 
  Let $x=(\alpha , \beta)$ and $x' = (\alpha', \beta')$ be elements of $R$. 
  We will use the map $\rho$ of Proposition \ref{quotienttypes} which  
  presents $R\to R/J$. Then
  \begin{align}
\notag    &[x]\sim [x'] \text{ in } R/J
    \iff \\ 
\notag     &\exists (a,b) \in \ut (J)\ ,  
         [(a\alpha,b\beta)] = [(\alpha',b\beta)] \text{ in } R/J
                  \iff \\ 
                  \label{quotidian}              &
                  \exists (a,b) \in \ut (J)\ ,\rho ((a\alpha, b\beta)) =
                  \rho ((\alpha', \beta')) \ . 
  \end{align}
  
  {\bf Case (a)}: $J=\gen{j}$ with $j=(j_+,j_-)$ odd.
  Here $\rho: R \to \Z_{j_+} \oplus \Z_{j_-}$. 
By Theorem \ref{Junitstheorem} and Corollary \ref{Junitscorollary},
$(a,b)\in \ut (J)$ if and
only if $\gcd (a, j_+) = 1 = \gcd (b,j_-)$.

Suppose $(a,b)\in \ut (J)$ and 
$(\alpha, \beta) \in R$. Then
\begin{align*} 
  \rho (\alpha, \beta) & = 
  \begin{cases}
  \gamma_j(\alpha /2, \beta/2)
  &\text{if } (\alpha,\beta) \in E \ , \\ 
  \gamma_j((\alpha -j_+)/2, (\beta -j_-)/2))
  & \text{if } (\alpha,\beta) \in R\setminus E \ ;
  \end{cases}\\ \\
  \rho (a\alpha, b\beta) & = 
  \begin{cases}
  \gamma_j(a \alpha /2, b \beta/2)
  & \text{if } (\alpha,\beta) \in E  \ , \\ \\
  \gamma_j(a \alpha /2, b \beta/2)
  & \text{if } (\alpha,\beta) \notin E \text{ and }\\
  &\qquad \qquad\quad (a,b) \in E \ , \\
  \gamma_j((a\alpha -j_+)/2, (b\beta -j_-)/2))
  &  \text{if } (\alpha,\beta) \notin E \text{ and }\\
  &\qquad \qquad\quad (a,b)\notin E .
  \end{cases} 
\end{align*}
In each case, we can check that $\rho (a\alpha, b\beta)$ equals the
image of $\rho(\alpha ,\beta)$ under the
automorphism of $\Z_{j_+} \oplus \Z_{j_-}$ defined by
$([m],[n])\mapsto ([am],[bn])$. Therefore \eqref{quotidian} is equivalent
to $\rho (x) \sim \rho (x')$ in $\Z_{j_+} \oplus \Z_{j_-}$.

{\bf Case (b)}: 
$J= j_+ E_+ \oplus j_-E_- = \gen{(2j_+,0), (0,2j_-)}$.
Here $\rho = \gamma_{2j}$.
By Theorem \ref{Junitstheorem} and Corollary \ref{Junitscorollary},
$(a,b)\in \ut (J)$ if and
only if $\gcd (a, 2j_+) = 1 = \gcd (b,2j_-)$.
Thus \eqref{quotidian} is equivalent to
$(\alpha,\beta) \sim (\alpha',\beta')$ in
$\Z_{2j_+} \oplus \Z_{2j_-}$. 

{\bf Case (c)}: $J=\gen{j}$, $j=(j_+,j_-)\in E$, $j_- \neq 0 \neq j_+$ . 
By Proposition \ref{quotienttypes}, here $\rho (x) = \rho (x')$
if and only if $\gamma_{2j}(x)= \gamma_{2j}(x')$ or
$ \gamma_{2j}(x+j) = \gamma_{2j}(x')$. So,
\eqref{quotidian} holds iff
there exists $(a,b)\in \ut (J)$ such that
\begin{equation} \label{quotidian2}
  \gamma_{2j}((a\alpha,b\beta)) = \gamma_{2j}(\alpha', \beta') \quad
  \text{or} \quad 
  \gamma_{2j}((a\alpha,b\beta)+j) = \gamma_{2j}(\alpha', \beta') \ .
  \end{equation} 
As in Case (b),
$(a,b)\in \ut (J)$ if and
only if $\gcd (a, 2j_+) = 1 = \gcd (b,2j_-)$.
For $(a,b)\in\ut (J)$, 
$   \gamma_{2j}((a\alpha,b\beta)+j)
= \gamma_{2j}((a(\alpha +j_+),b(\beta+j_-)) $
because $(a,b)$ is odd. 
Thus \eqref{quotidian2} is equivalent to
$\gamma_{2j}(x)\sim \gamma_{2j}(x')$ or
$\gamma_{2j}(x+j)\sim \gamma_{2j}(x')$ in
$\Z_{2j_+}\oplus \Z_{2j_-}$. 

\end{proof}

The next two propositions explain how to compute the ideal $J$
in the form of Theorem \ref{classifeasydecide} from its generating
polynomials $p,q$. The (trivial) Proposition \ref{soeasy} is
stated to give a complete list of possibilities. 

\begin{proposition}\label{soeasy}
Let $p=(\alpha_p, \beta_p)$ and
$q=(\alpha_q, \beta_q)$.
Suppose $J=\gen{p,q}$ with $\{p,q\} \subset E_+ \cup E_-\}$
(i.e., $\alpha_p\beta_p=0=\alpha_q\beta_q$). 
  \begin{enumerate}
    \item 
      $J=0$ if and only if $p=q=0$.
      \item 
        Suppose $\beta_p = 0=\beta_q$ and at least one of $\alpha_p,\alpha_q$
        is nonzero. Let $d_+= \gcd (\alpha_p,\alpha_q)$. 
        Then $J$ has rank 1, and
        $J= \gen{ (d_+,0)} =j_+E_+$ with  $j_+= d_+/2$.
              \item 
        Suppose $\alpha_p = 0=\alpha_q$ and at least one of $\beta_p,\beta_q$
        is nonzero. Let $d_-= \gcd (\beta_p,\beta_q)$.
        Then $J$ has rank 1, and
               $J= \gen{ (0,d_-)}= j_-E_-$ 
        with  $j_-= d_-/2$.

        \item Suppose  one of $p,q$ is
          $(2m,0)\neq 0$ and the other is $(0,2n)\neq 0$ with $m\neq 0\neq n$.
          Then $J$ has rank 2, and 
          $J=j_+E_+\oplus j_-E_-$ with $(j_+, j_-) = (m,n)$.
  \end{enumerate}
\end{proposition}

\begin{proposition}\label{doubled}
  Suppose  
  elements
  $p=(\alpha_p, \beta_p) $ and $q=(\alpha_q, \beta_q)  $
  generate a rank 2 ideal $J= \gen{p,q}$,
  and at least one of $p,q$ lies outside $E_+ \cup E_-$. 
    Set $d_+= \gcd (\alpha_p, \alpha_q)$ and $d_-=
    \gcd (\beta_p, \beta_q)$.
Then     one of the following holds.
  \begin{enumerate}
  \item
    $J=  \frac{d_+}{2}E_+ \oplus  \frac{d_-}{2} E_-
= \gen{(d_+,0), (0,d_-)}$ . 
  \item $J$ is the principal ideal 
    $\gen{(d_+ , d_-)} $.
  \end{enumerate}
Write $\alpha_p= r_p d_+$,
$\beta_p=  s_p d_-$, 
$\alpha_q= r_q  d_+$  and $\beta_q=  s_q  d_-$. 
Then  $J$ is principal if and only if
  \begin{equation} \label{congruence} 
    r_p\equiv s_p \mod 2 \quad \textnormal{and}
    \quad r_q\equiv s_q \mod 2\ .
  \end{equation}
    \end{proposition} 
\begin{proof}
The claim when  $\{p,q\} \subset E_+ \cup E_-$
is obvious. Now 
  suppose $p \notin (E_+ \cup E_-)$ (the argument when
  $q \notin (E_+ \cup E_-)$ is essentially the same). 
Then   $\alpha_p \neq 0 \neq \beta_p$.  
  Let $H$ be the subgroup of $R$  generated by the groups
$H_p = \alpha_p E_+ \oplus \beta_p E_- $ and 
$
H_q = \alpha_q E_+ \oplus \beta_q E_- $. 
Then 
  \begin{align*} H &=
    d_+E_+ \oplus d_- E_- \\
    &= \gen{(2d_+,0) , (0, 2d_-)} \ \subset J
    \subset \ \gen{(d_+,0) , (0, d_-)} \ . 
\end{align*}
    There are integers $\ell_1, \ell_2$ such that
  $d_+  = \ell_1 \alpha_p  +
    \ell_2 \alpha_q
    $. Let 
  $v=\ell_1 p + \ell_2 q $.
    Then
    \begin{align*}
v &= \ell_1(\alpha_p, \beta_p)
+ \ell_2 (\alpha_q, \beta_q) \\ 
&= \ell_1(r_pd_+, s_pd_-)
+ \ell_2 (r_qd_+,s_qd_-) \\ 
& = \big((\ell_1r_p+\ell_2r_q)d_+, (\ell_1s_p + \ell_2 s_q)d_-\big)
 = \big(d_+, c_1d_-\big) 
  \end{align*} 
 for some integer $c_1$. 
 Likewise for some $w$ in $J$ we have
 $w= (c_2 d_+ , d_-) \in J$.
 So, $(d_+ , d_-) \in J$, and $(d_+,0)\in J$
 iff $(0,d_-) \in J$. 

  We can now see that $J$ is principal if and only the following
  holds for all integers $\ell_1, \ell_2$:
  \begin{equation} \label{evenodds} 
    (\ell_1 r_p + \ell_2 r_q) \textnormal{ is odd }
    \iff 
    (\ell_1 s_p + \ell_2 s_q ) \textnormal{ is odd .}
    \end{equation} 
Clearly \eqref{congruence} implies 
\eqref{evenodds}. For the converse,
  assume \eqref{evenodds}.
If $r_p,r_q$ are both odd, then \eqref{congruence} holds,
because $s_p,s_q$ are not both even, as $\gcd (s_p, s_q)=1$. 
Suppose $r_p$ is even; then  $r_q$ is odd. 
Let  integers $\ell_1, \ell_2$ give  $\ell_1 r_p + \ell_2 r_q =1$.
Here, 
 $\ell_2$ must be odd, but we may choose $\ell_1$ even or odd, because 
$(\ell_1 + r_q)r_p + (\ell_2 - r_p)r_q = 1$.
Because 
$\ell_1 s_p + \ell_2 s_q $ must be odd in either case,  
it follows that 
 $s_p$ must be even, which forces $s_q$ to be odd,
and therefore \eqref{congruence} holds.
The argument for the case that $r_q$ is even is essentially the
same. 
\end{proof}

To use Theorem \ref{classifeasydecide},
we recall  some elementary number theory.
Suppose $n$ is a nonnegative integer.
For integers $\alpha$ and $ \alpha'$, 
$[\alpha]\sim [\alpha']$ in $\Z_n$ means there exists
an integer $a$ such that $\gcd (a,n)=1$
and $[a\alpha]= [\alpha']$. 
If $n=0$, then 
$[a]\sim [a']$ means 
$a= \pm a'$.
If  $n=1$ then  
$[a]\sim [a']$  holds for all $a,a'$. 
Suppose $n=p^m$ with $p$ prime and $m>0$.
Write $y$ in $\Z_{p^m}$  as  $y= \sum_{j=0}^{m-1}y_jp^j$
with each $y_j$ in $\{ 0, 1, \dots , p-1 \}$;
 $y$ is a unit if and only if $y_0 \neq 0$.  
Now,  $[y] \sim [y']$ in $\Z_{p^m}$ if and only if
$y=0=y'$ or 
$\min \{ j: y_j \neq  0\} =
\min \{ j: y'_j \neq 0\}$.
Equivalently,
$\max\{k: 1\leq k\leq m, p^k|y\} =
\max\{k: 1\leq k\leq m, p^k|y'\}$. 
For the equivalence relation $[a]\sim [a']$
 in
$\Z_{p^m}$, there are exactly $m+1$ equivalence classes. 

Finally, suppose $n>1$ 
with prime power factorization 
$n =\prod_{i=1}^k p_i^{m_i}$.
As a ring, $\Z_n$ is isomorphic to 
$\prod_{i=1}^k \Z_{p^{m_i}}$. 
In this presentation, let $x=(x_1, \dots , x_k)$
and $x'=(x'_1, \dots , x'_k)$. Then $x\sim x'$
in $\prod_{i=1}^k p_i^{m_i}$ if and only if 
$x_i\sim x'_i$
in $\Z_{ p_i^{m_i}}$, for $1\leq i\leq k$.
To express this another way,
for $a$ in $\Z$ and $p=p_i$, let $\delta_{p,n}(a)
= \max\{k: 1\leq k \leq m_i: p^k | a\}$.
Then  $[a]\sim [a']$ in $\Z_n$ if and only if
$\delta_{p,n}(a)=\delta_{p,n}(a')$, for each prime
$p$ dividing $n$. Given $a,a',n$ this is straightforward
to compute. 

For $n =\prod_{i=1}^k p_i^{m_i}$ as above, 
define $\kappa (n) = \prod_{i=1}^k (m_i + 1)$,
and also define  $\kappa (1)=1$.
Then for positive integers $m,n$
 it follows that
for the equivalence relation
$\sim $ in the product ring $\Z_m\oplus \Z_n$,
the number of equivalence
classes is $\kappa (m)\kappa (n)$.

We will need to count $\sim$ classes
in  $\Z_m\oplus \Z_n$ represented by elements
of  $\gamma_{2j}R=\{([a],[b]): a+b \equiv 0 \text{ mod } 2\}$.
Let $j_+ = 2^{k_+}g_+$, with $g_+$ an odd integer.  
The $\sim$  classes
in $\Z_{2j_+}$ arising from odd and even integers are disjoint.
The number of classes arising from odd integers is $\kappa (g)$; 
the number arising from even integers is $(k_++1)\kappa (g)$.
The analogous statements hold for $j_-=2^{k_-}g_-$, with $g_-$
 an odd integer. 
So, in $\gamma_{2j}R$, the number of $\sim$ classes arising from
odd elements of $R$ is $\kappa_{g_+}\kappa_{g_-}$ and the
number arising from even elements of $R$ is
$(k_++1)(k_-+1)\kappa_{g_+}\kappa_{g_-}$.

\begin{theorem} \label{classifforcounts}
  Let $J= \gen{p,q}$ be rank 2 in $R$, i.e. $J\cap E = j_+E_+ \oplus j_-E_-$
  with $j_+\neq 0 \neq j_-$.
  Write $j_+ = 2^{k_+}g_+$, $j_- = 2^{k_-}g_+$ with $g_+, g_-$ odd integers. 
  Let $f_{p,q} $ be the number of distinct $G$-flow equivalence
  classes defined by matrices in $\mathcal M(p,q)$. There are three
  cases: 
    \begin{enumerate}
    \item  $J=\gen{j}$, with $j$ odd. 
    \item $J = j_+E_+ \oplus j_-E_-= \gen{(2j_+,0),(0,2j_-)}$ .
    \item $J=\gen{j}$, with $j$ even.
    \end{enumerate}
    The computation of $f_{p,q}$ in these cases is 
    \begin{enumerate}
    \item       $f_{p,q} = \kappa(g_+)\kappa(g_-)
     \  =\ 
      \kappa (j_+) \kappa (j_-)$.
    \item    $f_{p,q} = \kappa(g_+)\kappa(g_-)\, \big((k_++1)(k_-+1)+1\big)$.
    \item  
$  f_{p,q} = \kappa (g_+)\kappa (g_-)\, (k_+ k_- +2)  $ . 
  \end{enumerate}
  \end{theorem}

\begin{proof}
 The cases (1) and (2) follow immediately from
 Theorem \ref{classifeasydecide} and 
 the discussion preceding the theorem.  Now suppose we are in case (3).
 For $x\in R$, let $C(x)$ be the $\sim$ class of
$\gamma_{2j}(x) $  in $\gamma_{2j}R$. 
 By Theorem \ref{classifeasydecide}, we must compute the number of
 distinct sets of the form $C(x) \cup C(x+j)$. 
For any integer $\alpha$ and odd prime $p$,
 $\delta_{p,2j_+}(\alpha +j_+) = \delta_{p,2j_+}(\alpha )$.
If $\delta_{2,2j_+}(\alpha )= i$, then 
\begin{alignat*}{2}
  \delta_{2,2j_+}(\alpha +j_+ ) &= i \quad
& &  \text{if } i< k_+ \\
  &= k_++1 \quad
&&  \text{if } i= k_+\\
  &= k_+ \ \ && 
  \text{if } i=k_+ +1 
   \ . 
\end{alignat*}
The analogous statements hold for  $j_-, k_-$.
Thus $C(x)\cup C(x+j)=C(x)$ 
 if  $x=(\alpha , \beta )$ with 
$  \delta_{2,2j_+}(\alpha ) < k_+ $ and
 $  \delta_{2,2j_-}(\beta ) < k_- $.
 The number of  sets $C(x)$ of this form 
 is 
 $\kappa(g_+)\kappa(g_-)\big((k_+-1)(k_--1)+1)$.
 (Note $k_+\geq 1 $ and $k_- \geq 1$ because $j$ is even.)

 If $x=(\alpha , \beta)$ with 
 $  \delta_{2,2j_+}(\alpha ) \in \{ k_+, k_++1\}$,
 then $C(x)$ and $C(x+j)$ are disjoint;  
 also, 
 $\alpha$ is an even  number, so 
$(\alpha, \beta)\in R$ iff $\beta $ is even. 
 Let $C^+_i$ be the collection of sets $C(x)$ for which
 $x=(\alpha , \beta )\in R$ with 
 $\delta_{2,2j_+}(\alpha )=i$. Then
   $|C^+_{k_+}| =| C^+_{k_++1}|   
   = \kappa (g_+) \kappa (g_-) (k_-+1) $ .
   The collections    $C^+_{k_+}, C^+_{k_++1}$ are disjoint,
   and the rule $C(x)\mapsto C(x+j)$ induces a
   bijection $C^+_{k_+}\leftrightarrow  C^+_{k_++1}$.
   So,  $C^+_{k_+}\cup  C^+_{k_++1}$ gives rise to
   $  \kappa (g_+) \kappa (g_-) (k_-+1) $ distinct sets
   of the form $C(x)\cup C(x+j)$.
   We can reverse the roles of $+$ and $-$ and say likewise
that 
$C^-_{k_-}\cup  C^-_{k_-+1}$ gives rise to
   $  \kappa (g_-) \kappa (g_+) (k_++1) $ distinct sets
   of the form $C(x)\cup C(x+j)$.

   Finally, to compute $f_{p,q}$ we add the three counts above,
   and correct for the doublecount:
\begin{align*}
  f_{p,q} &=
  \kappa (g_+)\kappa (g_-)
\Big( \big(  (k_+-1)(k_--1)+1 \big) 
+  \big(
k_++1\big) +   \big(k_-+1\big)  -2 \Big) \\
&=  \kappa (g_+)\kappa (g_-)
  (k_+k_-+2) \ . 
\end{align*} 
\end{proof}

\begin{example} \label{z2example}
  We will work out the classification of $G$-FE classes
  in $\mathcal M^{++}(p,q)$ for the example
  $p=(12,96)$, $q=(8,24)$. (These elements of $R$ correspond to the
  elements $54e-42g$, $16e-8g$ in $\ZG$.) Here
  $d_+=\gcd(12,8)=4$ and $d_-=\gcd(96,24)=24$.
We compute  
  $(12,96)= (r_pd_+, s_pd_-)=(3(4), 4(24))$, and
see $r_p\not\equiv s_p \mod 2$.
By  Proposition \ref{doubled},  it follows that
$\gen{p,q}= \gen{(2j_+,0), (0,2j_-)} $
with $(2j_+,2j_-) =(d_+,d_-)=(4,24)$.
By Theorem \ref{classifeasydecide},
matrices in $\mathcal M^{++}(p,q,x)$, $\mathcal M^{++}(p,q,x')$
define $G$-FE $G$-SFTs iff $\gamma_{(4,24)}(x)\sim \gamma_{(4,24)}(x)$
in $\Z_4\oplus \Z_{24}$. We will write this as $x\sim x'$.
Write $x=(\alpha, \beta), x'=(\alpha',\beta')$.
As $4=2^2$ and $24=2^33^1$, we have $x\sim x'$ iff the following hold:
$\delta_{2,4}(\alpha)=\delta_{2,4}(\alpha')$,
$\delta_{2,24}(\beta)=\delta_{2,24}(\beta')$, and
$\delta_{3,24}(\beta)=\delta_{3,24}(\beta')$. The allowed
combinations of possible values of these invariants is displayed
in Table 1, 
with a few examples. 

\begin{table} [ht]
  \caption{} 
  \centering
  \begin{tabular}{|c| c c c|}
    \hline 
    $(\alpha, \beta )$\ \  & $\delta_{2,4}(\alpha)$ \ \ &
    $\delta_{2,24}(\beta)$ \ \ & $\delta_{3,24}(\beta)$ \\
    \hline
    odd  & 0    & 0   & 0,1 \\
    even & 1,2  & 1,2,3  & 0,1  \\
    \hline
    (0,18)& 2 & 1 &  1 \\
    (2,6) & 1 & 1 & 1 \\
    (3,11)& 0 & 0 & 0 \\
    \hline
  \end{tabular}
  \label{tablesim}
\end{table}
We can check that e.g. $(0,6)\sim (0,18) \not\sim (0,12)$ and
$(1,3) \sim (3,15)\not\sim (3, 11)$. 
From Table 1 we see that 14 $\sim$ classes arise here for
elements of $R$, consistent with Theorem
\ref{classifforcounts}(2), which in our case gives
\begin{align*}
(j_+,j_-) & =(2,12) = (2^{k_+}g_+,2^{k_-} g_-) = (2^1(1), 2^3(3)) \\ 
f_{p,q} & = \kappa(g_+)\kappa(g_-)\, ( \, (k_++1)(k_-+1)+1\, ) \\
&=(1)(2)\, (\, (1+1)(2+1)+1)\, )= 14 \ . \ \ \qed 
\end{align*}
\end{example}

Next, we 
work to Proposition \ref{decidempqx}, which shows 
how to determine
        when a matrix $A$  lies in some $\mathcal M^{++}(p,q,x)$, and then
         compute $p,q,x$. 
         Two matrices are $\text{El}(R)$-equivalent when there exists $k$
         such that they have 1-stabilizations of size $k\times k$ which are
         $\text{El}(k,R)$-equivalent. If $B$ is $\text{El}(R)$-equivalent
         to a $1\times 1$ matrix $(p)$, then $p$ must be $\det(B)$. 

         \begin{lemma}\label{smithlemma}
Given an $n\times n$  matrix $B$  over $R$, 
  there is an algorithm which determines whether
  $B$ is  $\textnormal{El}(n,R)$-equivalent to
  some $1\times 1$ matrix $(p)$, and
  computes an explicit
  $\El(n,R)$-equivalence
from $B$ to  $(p\oplus I_{n-1})$,  
when this is the case. 
    \end{lemma} 
         \begin{proof}
A  matrix over $ R$ (even a $2 \times 2$ matrix)
need not be $\text{GL}(R)$
equivalent to a diagonal  matrix, or even a triangular matrix
(see e.g.\cite[Example 8.7]{BSullivan}). 
But, for the  special case we consider here,  a
modification of the Smith form argument applies to
produce the  
      explicit $\El(n,R)$-equivalence required for (1), or
a failure which shows there is no such equivalence. 
      For detail, see \cite[Lemma 8.2, Remark 8.3]{BSullivan}, and the
      algorithmic       proof of \cite[Lemma 8.2]{BSullivan}.
    \end{proof}

    \begin{proposition} \label{decidempqx}
      Suppose $B\in \mathcal M_{\mathcal P}(\mathbf n, R)$,
      with $\mathbf n = (n_1, n_2)$ and $\mathcal P = \mathcal P_2$.
      There is an algorithm to determine whether there exist
      $p,q,x$ such that $B$ is $\El_{\mathcal P}(\mathbf n, R)$-equivalent
      to the matrix $M(p,q,x)=
      \left(\begin{smallmatrix} p& x\\ 0&q \end{smallmatrix}\right)$, and
      in this case to compute $p,q,x$. 
    \end{proposition}

    \begin{proof}
             The required elementary equivalence will exist only if
             there are $\El(R)$-equivalences of the diagonal blocks of $B$
             to $(p)$ and $(q)$.
             By Lemma \ref{smithlemma}, we can decide
             whether this occurs, and in the case it does we can
             construct elementary equivalences, 
      $UB\{1,1\}V = (p)\oplus I_{n_1-1}$ and
      $U'B\{2,2\}V' = (q)\oplus I_{n_2-1}$. 
Given this data, we
       define an elementary equivalence of $B$,
       \begin{align*}
         \begin{pmatrix} U& 0\\ 0 & U'\end{pmatrix}
           \begin{pmatrix} B\{1,1\}& B\{1,2\}\\ 0 & B\{2,2\}\end{pmatrix}
         \begin{pmatrix} V& 0\\ 0 & V'\end{pmatrix}             
           = \begin{pmatrix}
             \left(\begin{smallmatrix} p&0\\0&I_{n_1-1}
             \end{smallmatrix}\right) & Y \\ 0&
             \left(\begin{smallmatrix} q&0\\0&I_{n_2-1}
             \end{smallmatrix}\right) \end{pmatrix} \  .
           \end{align*}
           Then for suitable blocks $Z,Z'$,
           \begin{align*}
             \begin{pmatrix} I& Z\\ 0 & I \end{pmatrix}
\begin{pmatrix}
             \left(\begin{smallmatrix} p&0\\0&I_{n_1-1}
             \end{smallmatrix}\right) & Y \\ 0&
             \left(\begin{smallmatrix} q&0\\0&I_{n_2-1}
             \end{smallmatrix}\right) \end{pmatrix}
\begin{pmatrix} I& Z'\\ 0 & I \end{pmatrix}
=
\begin{pmatrix}
             \left(\begin{smallmatrix} p&0\\0&I_{n_1-1}
             \end{smallmatrix}\right) &
\left(\begin{smallmatrix} x&0\\0&0
             \end{smallmatrix}\right) 
 \\ 0&
             \left(\begin{smallmatrix} q&0\\0&I_{n_2-1}
             \end{smallmatrix}\right) \end{pmatrix}
           \end{align*}
           where $x$ is the upper left entry of $Y$.
\end{proof}  

Note, for $B$ in Proposition \ref{decidempqx}: 
if  $B$ is not $\El(n,R)$-equivalent to
a 1-stabilization of the matrix $(p)\oplus I_{n-1}$,
then no  1-stabilization of $B$ is 
$\El(n,R)$-equivalent to
a 1-stabilization of the matrix $(p)\oplus I_{n-1}$.



\mbox{}\\ \small
\noindent
\textsc{Department of Mathematics, 
University of Maryland,
College Park, MD 20742-4015, USA}

\emph{E-mail address: }\texttt{mmb@math.umd.edu}\\[0.5cm]

\noindent
\textsc{Department of Science and Technology, University of the Faroe
  Islands, Vestara Bryggja 15, FO-100 T\'orshavn, The Faroe Islands}

\emph{E-mail address: }\texttt{toke.carlsen@gmail.com}\\[0.5cm]

\noindent
\textsc{Department of Mathematical Sciences,  University of Copenhagen, DK-2100 Copenhagen \O, Denmark}

\emph{E-mail address: }\texttt{eilers@math.ku.dk}

\end{document}